\newcommand{\A}{\mathbf{A}}
\newcommand{\G}{\mathbf{G}}
\newcommand{\N}{\mathbb{N}}
\renewcommand{\P}{\mathbf{P}}
\newcommand{\Q}{\mathbb{Q}}
\newcommand{\Z}{\mathbb{Z}}
\newcommand{\sC}{\mathcal{C}}
\newcommand{\sI}{\mathcal{I}}
\newcommand{\sL}{\mathcal{L}}
\newcommand{\sO}{\mathcal{O}}
\newcommand{\bZ}{\mathbb{Z}}
\newcommand{\Cb}{{\overline{C}}}
\newcommand{\Xb}{{\overline{X}}}
\newcommand{\CuXY}{\sC_{(\Xb,Y)}}
\newcommand{\Cor}{\operatorname{\mathbf{Cor}}}
\newcommand{\HI}{\operatorname{\mathbf{HI}}}
\newcommand{\Rec}{{\operatorname{\mathbf{Rec}}}}
\newcommand{\PST}{{\operatorname{\mathbf{PST}}}}
\newcommand{\Ker}{\operatorname{Ker}}
\newcommand{\Coker}{\operatorname{Coker}}
\newcommand{\Tr}{\operatorname{Tr}}
\newcommand{\Div}{\operatorname{Div}}
\newcommand{\Pic}{\operatorname{Pic}}
\newcommand{\Spec}{\operatorname{Spec}}
\newcommand{\Sm}{\operatorname{\mathbf{Sm}}}
\newcommand{\Sch}{\operatorname{\mathbf{Sch}}}
\newcommand{\Ab}{\operatorname{\mathbf{Ab}}}
\newcommand{\by}[1]{\overset{#1}{\longrightarrow}}
\newcommand{\iso}{\by{\sim}}
\newcommand{\tr}{{\operatorname{tr}}}
\newcommand{\op}{{\operatorname{op}}}
\newcommand{\red}{{\operatorname{red}}}
\newcommand{\Zar}{{\operatorname{Zar}}}
\newcommand{\Nis}{{\operatorname{Nis}}}
\newcommand{\inj}{\hookrightarrow}
\newcommand{\surj}{\rightarrow\!\!\!\!\!\rightarrow}
\newcommand{\Surj}{\relbar\joinrel\surj}
\newcommand{\Jac}{{\operatorname{Jac}}}
\newcommand{\id}{{\operatorname{id}}}
\newcommand{\codim}{{\operatorname{codim}}}
\newcommand{\ch}{{\operatorname{ch}}}
\newcommand{\Sym}{{\operatorname{Sym}}}
\renewcommand{\lim}{\operatornamewithlimits{\varprojlim}}
\newcommand{\colim}{\operatornamewithlimits{\varinjlim}}
\renewcommand{\phi}{\varphi}
\renewcommand{\epsilon}{\varepsilon}
\renewcommand{\div}{\operatorname{div}}
\newcommand{\Zb}{{\overline{Z}}}
\def\indlim#1{\underset{{\underset{#1}{\longrightarrow}}}{\mathrm{lim}}\; }
\def\rmapo#1{\overset{#1}{\longrightarrow}}
\def\bZ{\mathbb{Z}}
\def\Ztr{\bZ_\tr}
\def\rC#1{rel\sC(#1)}
\def\pb{\overline{p}}
\def\Xd#1{X_{(#1)}}
\def\Ud#1{U_{(#1)}}
\newcommand{\til}{\widetilde}
\newcommand{\Vb}{\overline{V}}
\newcounter{spec}
\newenvironment{thlist}{\begin{list}{\rm{(\roman{spec})}}%
{\usecounter{spec}\labelwidth=20pt\itemindent=0pt\labelsep=10pt}}%
{\end{list}}%
\newtheorem{lemma}{Lemma}[subsection]
\newtheorem{thm}[lemma]{Theorem}
\newtheorem{theorem}{Theorem}
\newtheorem{prop}[lemma]{Proposition}
\newtheorem{proposition}[lemma]{Proposition}
\newtheorem{cor}[lemma]{Corollary}
\newtheorem{corollary}[lemma]{Corollary}
\theoremstyle{definition}
\newtheorem{defn}[lemma]{Definition}
\newtheorem{definition}[lemma]{Definition}
\newtheorem{para}[lemma]{}
\theoremstyle{remark}
\newtheorem{qn}{Question}
\newtheorem{rk}[lemma]{Remark}
\newtheorem{remark}[lemma]{Remark}
\newtheorem{claim}[lemma]{Claim}
\newtheorem{conj}{Conjecture}
\begin{document}
\title{Reciprocity sheaves}
\author{Bruno Kahn}
\address{IMJ-PRG\\Case 247\\
4 place Jussieu\\
75252 Paris Cedex 05\\
France}
\email{bruno.kahn@imj-prg.fr}
\author{Shuji Saito}
\address{Interactive Research Center of Science\\
Graduate School of Science and Engineering\\
Tokyo Institute of Technology\\
2-12-1 Okayama, Meguro\\ Tokyo 152-8551\\ Japan}
\email{sshuji@msb.biglobe.ne.jp}
\author[Takao Yamazaki]{Takao Yamazaki\\With two appendices by Kay R\"ulling}
\address{Institute of Mathematics\\ Tohoku University\\ Aoba\\ Sendai 980-8578\\ Japan}
\email{ytakao@math.tohoku.ac.jp}
\address{Freie Universit\"at Berlin, Arnimallee 7, 14195 Berlin, Germany}
\email{kay.ruelling@fu-berlin.de}
\date{\today}
\thanks{The first author acknowledges the support of Agence Nationale de la Recherche (ANR) under reference ANR-12-BL01-0005. 
The second author is supported by JSPS KAKENHI Grant (22340003).
The third author is supported by JSPS KAKENHI Grant (22684001, 24654001).}

\begin{abstract}
We start developing a notion of reciprocity sheaves,
generalizing Voevodsky's 
homotopy invariant presheaves with transfers which were used in the construction of 
his triangulated categories of motives. 
We hope reciprocity sheaves will eventually lead to 
the definition of larger triangulated categories of motivic nature, 
encompassing non homotopy invariant phenomena.
\end{abstract}

\subjclass[2010]{19E15 (14F42, 19D45, 19F15)}

\keywords{presheaves with transfers, homotopy invariance, Weil reciprocity}

\maketitle


\tableofcontents

\section*{Introduction}\label{intro}

In this paper, we start developing a notion of reciprocity sheaves modelled on Voevodsky's theory of presheaves with transfers. Reciprocity is a weaker condition than homotopy invariance, used by Voevodsky as the main building block for constructing his triangulated categories of motives in \cite{voetri}; as in \cite[p. 195]{voetri}, we hope reciprocity sheaves will eventually lead to the definition of  larger triangulated categories of motivic nature, encompassing non homotopy invariant phenomena  which emerged in \cite{MR, iv-ru, KR, KS1, KS2}.


In the whole paper we fix a base field $k$. Our reciprocity sheaves form a full abelian subcategory $\Rec$ of $\PST$,
the abelian category of presheaves with transfers on the category $\Sm$ of smooth schemes over $k$ (see \ref{PST} for the definition).
It contains the subcategory $\HI$ of objects $F\in \PST$ which are homotopy invariant 
(i.e. $F(X)\simeq F(\A^1_X)$ for $X\in \Sm$). It also contains the object represented by a smooth  commutative algebraic group $G$ over $k$ (recall such an object is in $\HI$
if and only if $G^0$ is a semi-abelian variety). Typical examples of objects of $\Rec$ not contained in $\HI$ are
the additive group $\G_a$ and the modules of absolute K\"ahler differentials $\Omega^i$. 

As predecessors of reciprocity sheaves, there were notions of reciprocity functors studied 
in \cite{MR} and \cite{iv-ru}.
All definitions are inspired by the following theorem of Rosenlicht-Serre \cite[Ch. III]{gacl}.

%
%
%

\begin{theorem}\label{thm.RS}
Assume $k$ is algebraically closed. Let $G$ be a smooth connected commutative algebraic group over  $k$ and 
$\alpha: C \to G$ be a morphism where $C$ is a smooth irreducible curve over $k$.
Let $C\hookrightarrow \Cb$ be its smooth compactification. Then there is an effective divisor $D$ on $\Cb$
supported in $\Cb-C$ such that $\alpha$ has modulus $D$
in the sense of Rosenlicht-Serre, 
which means
\[
\underset{x\in C(k)}{\sum}\; v_x(f) \alpha(x) =0
~\text{in}~G(k)\quad\text{ for any } f\in G(\Cb,D),
\]
where $v_x:k(C)^\times \to \bZ$ is the normalized valuation at $x$,
and
\begin{equation}\label{eq.GCD}
G(\Cb,D):= \bigcap_{x\in D} \Ker\big(\sO_{\Cb,x}^\times \to \sO_{D,x}^\times\big).
\end{equation}
\end{theorem}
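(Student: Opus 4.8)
The plan is to reduce the statement to the structure theory of commutative algebraic groups, treating separately the three building blocks: $\G_m$, abelian varieties, and $\G_a$ (and, in positive characteristic, Witt vectors, but the classical Rosenlicht--Serre argument works directly with the filtration of $G$). Concretely, a smooth connected commutative algebraic group $G$ over the algebraically closed field $k$ sits in an extension $0 \to L \to G \to A \to 0$ with $A$ an abelian variety and $L$ a linear group, and $L$ is itself an extension of a torus by a unipotent group. So I would first dispose of the affine case and the abelian variety case, and then glue.

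\emph{Step 1: construction of $D$.} Since $\alpha\colon C \to G$ is a morphism from a smooth curve, it extends to a rational map $\Cb \dashrightarrow G$, hence to a morphism on an open subset; at each point $x \in \Cb - C$ the map $\alpha$ has a well-defined ``pole order'' measured against the group law of $G$. For $G = \G_m$ this is literally the order of the pole (or zero) of the corresponding rational function, for $G = A$ an abelian variety $\alpha$ actually extends to all of $\Cb$ (an abelian variety has no rational curves, so $\Cb \dashrightarrow A$ is a morphism), and for $G = \G_a$ it is the pole order of the corresponding function in $k(C)$. In the general filtered situation one defines $D = \sum_{x \in \Cb - C} n_x \cdot x$ by taking $n_x$ large enough to bound the pole order at $x$ of the ``affine part'' of $\alpha$, using that $\alpha$ factors through a finitely generated subgroup and the extension is classified by finitely many data. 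The point here is only finiteness: away from $\Cb - C$ nothing happens, and at each of the finitely many points the local contribution is finite.

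\emph{Step 2: the reciprocity identity.} Fix $f \in G(\Cb, D)$ as in \eqref{eq.GCD}; thus $f$ is a unit in $\sO_{\Cb,x}$ congruent to $1$ modulo $\fm_x^{n_x}$ for each $x \in D$. The assertion $\sum_{x \in C(k)} v_x(f)\,\alpha(x) = 0$ in $G(k)$ is proved case by case. For $G = A$ an abelian variety, $\alpha$ extends to $\Cb$, and $\sum_{x \in \Cb(k)} v_x(f)\,\alpha(x)$ is the image under $\Cb \to A$ (extended to divisors of degree zero, i.e. through $\Jac(\Cb)$) of the principal divisor $\divi(f)$, which is $0$ in $\Jac$; since $f$ has neither zeros nor poles on $\Cb - C$ (being a unit there), the sum over $C(k)$ equals the sum over $\Cb(k)$, so it vanishes. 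For $G = \G_m$, one checks that $\sum_x v_x(f)\,\alpha(x) = \prod_x \alpha(x)^{v_x(f)}$ equals a Weil-reciprocity-type symbol $\prod_x (g, f)_x$ where $g \in k(C)^\times$ represents $\alpha$; Weil reciprocity gives $\prod_{x \in \Cb} (g,f)_x = 1$, and the congruence $f \equiv 1 \bmod \fm_x^{n_x}$ at the bad points $x \in D$, with $n_x$ chosen to dominate $v_x(g)$, kills the local symbols there, leaving the product over $C(k)$. For $G = \G_a$, the statement becomes $\sum_x v_x(f)\,a(x) = 0$ for $a \in k(C)$ regular on $C$ with controlled poles on $\Cb - C$, which is the residue theorem $\sum_x \operatorname{res}_x(a\, df/f) = 0$ on $\Cb$ together with the vanishing of the residues at the bad points forced by $f \equiv 1 \bmod \fm_x^{n_x}$ for $n_x$ large.

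\emph{Step 3: assembling the filtration.} To pass from the building blocks to a general $G$, one argues by induction on the length of a composition series of $G$ by subgroups with quotients among $\G_a$, $\G_m$, $A$. Given $0 \to G' \to G \to G'' \to 0$, pick $D'$ for the composite $C \to G \to G''$ and $D''$ for a lift of the obstruction into $G'$ (which exists after enlarging $C$, or by the vanishing of $H^1$ of an affine curve with values in the relevant sheaf), and set $D = D' + D''$ with multiplicities suitably increased; the reciprocity identity for $G$ then follows from those for $G'$ and $G''$ by the snake-type diagram chase, because $G(\Cb, D)$ maps into both $G'(\Cb, D'')$-type and $G''(\Cb, D')$-type groups once the congruences are deep enough. \emph{The main obstacle} is precisely this last gluing: one must choose the divisor $D$ uniformly so that the same $f \in G(\Cb, D)$ simultaneously satisfies the congruence conditions needed for the sub and the quotient, and one must control the non-functoriality coming from the fact that a section of $G''$ need not lift to a \emph{regular} map into $G$ — this is handled by the classical device of replacing $G$ by a suitable covering or by working with the generalized Jacobian $J_\fm$ of $(\Cb, D)$ and Serre's theorem that $\alpha$ factors through $J_\fm$ for $D \gg 0$, which is really the content one is reproving. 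I expect to cite \cite[Ch. III]{gacl} for the hard analytic input (the residue theorem computation and the existence of the modulus) rather than redo it.
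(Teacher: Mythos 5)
First, a point of comparison: the paper does not prove Theorem \ref{thm.RS} at all --- it quotes it from \cite[Ch.~III]{gacl} as classical motivation --- so your closing decision to ``cite [GACL, Ch.~III] for the hard input'' is in effect exactly what the paper does. Your Step 2 is correct for each of the three building blocks taken separately: Abel--Jacobi for an abelian variety, Weil reciprocity for $\G_m$, and the residue theorem for $\G_a$.

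The genuine gap is Step 3. For a non-split extension $0 \to G' \to G \to G'' \to 0$, the quantity $\sum_x v_x(f)\,\alpha(x)$ does not decompose into a $G'$-part and a $G''$-part: the vanishing of its image in $G''(k)$ only places it in $G'(k)$, and the resulting element of $G'(k)$ is not the local-symbol sum of any morphism $C \to G'$ to which an inductive hypothesis could apply (in general no such morphism is attached to $\alpha$). A concrete failure: for $G=W_2$ (Witt vectors of length $2$) in characteristic $p$, a nonsplit self-extension of $\G_a$, a morphism $\alpha=(a_0,a_1)$ has $\sum_x v_x(f)\alpha(x)$ computed with Witt addition, so its second component involves the carry polynomials and is not $\sum_x v_x(f)a_1(x)$; reciprocity for $a_0$ and $a_1$ as $\G_a$-valued maps gives nothing. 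Likewise, for a semi-abelian $G$ the local symbol sees the extension class, not just the torus and abelian quotients. This is why both Serre and the paper's treatment of the generalization (Theorem \ref{thm:alg-gp}) avoid composition series entirely: in characteristic $0$ one argues once and for all with invariant differential forms and the residue theorem ([GACL, III, Prop.~10]); in characteristic $p$ one uses the decomposition $\theta\colon G \to G_1 \times U$ with \emph{finite kernel}, $G_1$ semi-abelian and $U$ unipotent (Lemma \ref{lem:alg-gp-decomp}), the fact that a modulus for $\theta\circ\alpha$ is one for $\alpha$ when $\ker\theta$ is finite (Proposition \ref{prop:finite-kernel}), an embedding $U\hookrightarrow GL_r$ for the unipotent part, and homotopy invariance for the semi-abelian part. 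As written, your ``snake-type diagram chase'' does not close, and the proposal would need to be rebuilt along these lines.
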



A distinguished feature of our reciprocity sheaves is their relation to relative Chow groups of zero cycles with moduli
studied in \cite{KS1}:
Let $X$ be a smooth variety over $k$ and choose a compactification
$X\hookrightarrow \Xb$ with $\Xb$ integral and proper over $k$.
For a (not necessarily reduced) closed subscheme $Y\subset \Xb$ with $X=\Xb-Y$, 
the relative Chow group $CH_0(\Xb,Y)$ of zero cycles is the quotient of the group of zero-cycles $Z_0(X)$ on $X$
by ``rational equivalence with modulus $Y$''. More precisely one defines {in \cite{KS1}}
\[
CH_0(\Xb,Y) = \Coker\Big(\Phi(\Xb,Y)(\Spec k)  \rmapo{\partial} Z_0(X)\Big),
\]
where 
\begin{equation}\label{eq1.intro}
\Phi(\Xb,Y)(\Spec k) = \underset{\phi:\Cb\to \Xb}{\bigoplus} \; G(\Cb,\phi^*Y),
\end{equation}
the direct sum being over all finite morphisms $\phi:\Cb\to \Xb$ 
where $\Cb$ is a normal proper curve over $k$ such that $\phi(\Cb)\not\subset Y$,  
$G(\Cb,\phi^*Y)$ is defined as \eqref{eq.GCD} with $\phi^*Y=\Cb\times_{\Xb} Y$, and $\partial$
is induced by the divisor map on $\Cb$ and the pushforward of zero-cycles by $\phi$.

Now the key idea to define reciprocity sheaves is to enhance the abelian group $CH_0(\Xb,Y)$ 
into an object $h(\Xb,Y)$ in $\PST$. It is the cokernel of a map in $\PST$:
\[
 \Phi(\Xb,Y) \to \Ztr(X),
\]
where $\Ztr(X)$ is the object of $\PST$ represented by $X$ and $\Phi(\Xb,Y)\in \PST$ is defined by an analogue of \eqref{eq1.intro}. We have 
\begin{equation}\label{CHh}
h(\Xb,Y)(\Spec k)= CH_0(\Xb,Y).
\end{equation}

To give more details, we introduce a terminology: a pair $(\Xb,Y)$ is called a \emph{modulus pair} if 
$\Xb$ is integral and proper over $k$, $Y\subset \Xb$ is a (possibly non-reduced) closed subscheme and $X:=\Xb-Y$ is smooth and quasi-affine. 
Then, 
for a modulus pair $(\Xb,Y)$ and 
for a section $a\in F(X)$ of a presheaf with transfers $F$, 
we define the notion of $a$ having modulus $Y$,
or $Y$ being a modulus for $a$ (see Def. \ref{def.modulus}),
as a generalization of the modulus 
in the sense of Rosenlicht-Serre (see Thm. \ref{thm.RS}). 

Then $F\in \PST$ is defined to have reciprocity 
(or to be a reciprocity presheaf) 
if for any $a\in F(X)$ with $X$ quasi-affine smooth over $k$ and for any dense open immersion $X\hookrightarrow \Xb$ with $\Xb$ integral proper over $k$, 
there exists a closed subscheme $Y\subset \Xb$ such that $X=\Xb-Y$ and that $Y$ is a modulus for $a$.
The first main result is the following.


\begin{theorem}[see Theorem \protect{\ref{thm:main-rep}}]\label{thm:main-rep.intro}
Let $(\Xb, Y)$ be a modulus pair with $X=\Xb-Y$.
Then the functor
\[ \PST \to \Ab, \quad
F \mapsto \{ a \in F(X) ~|~ a ~
\text{has modulus $Y$} \}\] is represented by an object $h(\Xb, Y)\in \PST$.
If moreover $Y$ is a Cartier divisor on $\Xb$, 
then $h(\Xb, Y)$ has reciprocity.
\end{theorem}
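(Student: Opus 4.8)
The plan is to construct the representing object $h(\Xb,Y)$ directly as a cokernel in $\PST$ and then check the universal property and the reciprocity assertion separately. First I would define, for a modulus pair $(\Xb,Y)$, a presheaf-with-transfers $\Phi(\Xb,Y)$ whose value on a smooth $S$ is the direct sum $\bigoplus_{\phi}\, G_m(\Cb_S,\phi^*Y)$ indexed by the relevant finite curves $\phi\colon \Cb\to \Xb\times S$ (with $\phi(\Cb)\not\subset Y\times S$), the transfer structure coming from pullback and pushforward of such curves along finite correspondences; then I would define a boundary map $\partial\colon \Phi(\Xb,Y)\to \Ztr(X)$ in $\PST$ using the divisor map on each $\Cb$ together with the pushforward of zero-cycles by $\phi$, exactly as in the group-level definition \eqref{eq1.intro}, and set
\[
h(\Xb,Y) := \Coker\bigl(\Phi(\Xb,Y) \xrightarrow{\partial} \Ztr(X)\bigr) \in \PST.
\]
By construction \eqref{CHh} holds on $\Spec k$, and more generally $h(\Xb,Y)(S)$ is the relative Chow group of the base-changed pair, so the main content is to identify $\Hom_{\PST}(h(\Xb,Y),F)$ with the subgroup of $F(X)$ of elements with modulus $Y$.

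For the universal property: a map $h(\Xb,Y)\to F$ is the same as a map $\Ztr(X)\to F$ killing the image of $\partial$, i.e.\ (by Yoneda for $\Ztr$) an element $a\in F(X)$ such that the composite $\Phi(\Xb,Y)\to \Ztr(X)\to F$ vanishes. Unwinding the transfer structure on $\Phi(\Xb,Y)$, I expect this vanishing to be equivalent, after evaluating on all smooth $S$ and using that $\Phi$ is generated by the ``curve'' generators, to the statement that for every finite $\phi\colon \Cb\to \Xb\times S$ as above the pairing of $\phi^*a\in F(C_S)$ against every $f\in G_m(\Cb_S,\phi^*Y)$ vanishes — which is precisely Definition~\ref{def.modulus} of $a$ having modulus $Y$. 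Here one has to be careful that testing on generators of the presheaf $\Phi$ (rather than just on $\Spec k$) really does capture the full modulus condition; I would handle this by checking that the modulus condition is itself defined curve-by-curve and base-change compatibly, so that $\Spec k$-level vanishing over all $S$ suffices. This matching of two a priori different conditions is the step I expect to be the main obstacle, as it requires a careful bookkeeping of how finite correspondences act on the indexing set of curves and on the relative units $G_m(\Cb,\phi^*Y)$.

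Finally, for the reciprocity statement when $Y$ is a Cartier divisor: one must show that the canonical element $a_0 \in h(\Xb,Y)(X)$ — the image of $\id_X$ under $\Ztr(X)\to h(\Xb,Y)$ — has reciprocity, and then that this forces all of $h(\Xb,Y)$ to have reciprocity since $h(\Xb,Y)$ is generated as a presheaf-with-transfers by $a_0$ (every section is a transfer-pullback of $a_0$, and the class of reciprocity presheaves, being a subcategory of $\PST$ closed under the relevant operations, is stable under this). So it reduces to: given a quasi-affine smooth $U$, a section $b\in h(\Xb,Y)(U)$, and a compactification $U\hookrightarrow \Ub$, produce a modulus $W\subset\Ub$ for $b$. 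Since $b$ lifts (Nisnevich- or Zariski-locally, which suffices for the modulus condition by its local nature) to a finite correspondence, hence to a zero-cycle construction on $U$, one can pull back the divisor $Y$ along the associated curves and take $W$ to be (a suitable multiple of) the closure of $\phi^*Y$; the Cartier hypothesis on $Y$ is what guarantees $\phi^*Y$ is a well-defined effective Cartier divisor on each curve, so that the whole construction stays within the framework of modulus pairs. Checking that this $W$ actually serves as a modulus for $b$ then comes down to the compatibility of the defining relations of $h(\Xb,Y)$ with the pairing against relative units, i.e.\ essentially a reciprocity-law computation on curves of the type in Theorem~\ref{thm.RS}.
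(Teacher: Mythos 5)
Your construction of $h(\Xb,Y)$ as $\Coker\bigl(\Phi(\Xb,Y)\to\Ztr(X)\bigr)$ and your verification of the universal property follow the paper's argument for the first assertion essentially verbatim: once $\Phi(\Xb,Y)$ is shown to be a presheaf with transfers and $\tau$ a morphism in $\PST$, representability is indeed tautological, and you have correctly located the technical work in the compatibility of the transfer structure on the indexing curves with the divisor map.

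The second assertion is where you have a genuine gap. Your reduction is: every section $b\in h(\Xb,Y)(V)$ is the image of the canonical generator under a finite correspondence $\tilde b\in\Cor(V,X)$, one chooses $W\subset\Vb$ by comparing pullbacks of $Y$ and of $\Vb-V$ on the closure of $\tilde b$, and then ``checking that $W$ is a modulus for $b$ comes down to a reciprocity-law computation on curves.'' But that check is not a curve-level reciprocity law at all; it is the assertion that $\tilde b$ induces a map $\Phi(\Vb,W)\to\Phi(\Xb,Y)$ commuting with the two boundary maps $\tau$, so that the composite $\Ztr(V)\to\Ztr(X)\to h(\Xb,Y)$ kills the image of $\Phi(\Vb,W)$. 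Constructing this map is the hard point: a test curve $\Cb\to\Vb\times S$ composed with $\tilde b$ gives a scheme that is proper but in general \emph{not finite} over $\Xb\times S$, so one must pass to the Stein factorization $\Cb'\to\Cb''\to\Xb\times S$ and transport relative units along the contraction $\Cb'\to\Cb''$. This requires the isomorphism $G(\Cb'',D)\iso G(\Cb',f^*D)$ for $f$ proper surjective with $f_*\sO_{\Cb'}=\sO_{\Cb''}$, whose proof uses $f^*\sI_D\iso\sI_{f^*D}$ and the projection formula — and \emph{that} is where the hypothesis that $Y$ (hence $D$) is Cartier enters. Your stated role for the Cartier hypothesis (``it guarantees $\phi^*Y$ is a well-defined effective Cartier divisor on each curve'') is not the real issue: scheme-theoretic pullback of a closed subscheme is always defined, and nothing in the definition of a modulus pair requires $W$ to be Cartier. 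Without the descent of $G(-,-)$ along Stein factorizations your argument cannot be completed, so this step needs to be supplied rather than asserted.
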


It will become clear from its construction that
$h(\Xb, Y)$ satisfies Formula \eqref{CHh}.
When $\Xb$ is smooth of dimension $1$, $h(\Xb,Y)$ recovers Rosenlicht's generalized Jacobian of \cite[Ch. V]{gacl}, see Proposition \ref{p13.1}.

It is easy to see that reciprocity sheaves form a full abelian subcategory $\Rec$ of $\PST$.
Theorems \ref{thm:HI.intro}, \ref{thm:Algr.intro}  and \ref{kay} show that $\Rec$ contains reasonably many interesting objects of $\PST$.

\begin{theorem}[see Theorem \protect{\ref{thm:HI.intro2}}]\label{thm:HI.intro}
If $F\in \PST$ is homotopy invariant, $F$ has reciprocity.
\end{theorem}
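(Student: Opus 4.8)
We must show that a homotopy invariant $F \in \PST$ has reciprocity; that is, given $a \in F(X)$ with $X$ quasi-affine smooth, and a dense open immersion $X \hookrightarrow \Xb$ into an integral proper $k$-scheme, we must exhibit a closed $Y \subset \Xb$ with $\Xb - Y = X$ such that $Y$ is a modulus for $a$ in the sense of Definition \ref{def.modulus}.

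**The plan.**
The plan is to reduce to the curve case, where the statement becomes the classical fact that a homotopy invariant presheaf with transfers, restricted along a curve, kills rational equivalence with *any* sufficiently large modulus — indeed in the homotopy invariant world a modulus is essentially a formality. First I would unwind the definition of "$Y$ is a modulus for $a$": it says that for every normal proper curve $\Cb$ over $k$ and every $k$-morphism $\phi : \Cb \to \Xb$ with $\phi(\Cb) \not\subset Y$ (so that $\Cb_\phi := \phi^{-1}(X)$ is a dense open in $\Cb$), and for every section in the relevant relative group $G(\Cb, \phi^* Y)$, the induced "reciprocity" relation holds after applying $F$ and the transfer structure. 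Concretely, pulling $a$ back along $\phi|_{\Cb_\phi}$ gives an element of $F(\Cb_\phi)$, and we must check that it pairs trivially with the boundary coming from $G(\Cb, \phi^* Y)$.

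**Reducing to curves and invoking homotopy invariance.**
So fix such a $\phi : \Cb \to \Xb$. Because $F$ is homotopy invariant, the key input is the theory of Voevodsky (as developed in \cite{voetri} and the Mazza–Voevodsky–Weibel account): a homotopy invariant presheaf with transfers has a well-defined action that factors through $\A^1$-homotopy, and in particular, on a smooth curve $\Cb_\phi$ with smooth compactification $\Cb$, the element $\phi^* a \in F(\Cb_\phi)$ "comes from the Jacobian" — more precisely, transfers from $\Cb_\phi$ into $F$ factor through $h_0^{\A^1}$ of $\Cb$, which on rational points is $\mathrm{Pic}(\Cb_\phi)$ (the quotient of $Z_0(\Cb_\phi)$ by linear equivalence). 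The point is then that for a homotopy invariant $F$, the relation that must hold for $Y$ to be a modulus collapses to: $\sum_x v_x(f)\, \phi^* a|_x = 0$ in $F(k)$ for every $f \in k(\Cb)^\times$ that is a unit along $\phi^* Y$. But since $F$ is homotopy invariant this holds for *all* $f \in k(\Cb)^\times$ (the divisor of a rational function is rationally equivalent to zero, hence killed), with no condition along $\phi^*Y$ at all. Hence *any* $Y$ with $\Xb - Y = X$ is a modulus — e.g. one may take $Y$ to be any closed subscheme with support $\Xb \setminus X$, such as the reduced structure. This is exactly the statement that $h(\Xb, Y) \to \Ztr(X)$ factors, for homotopy invariant $F$, in a way independent of $Y$.

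**The main obstacle.**
The technical heart — and the step I expect to be the real work — is justifying the passage "transfers from a smooth curve into a homotopy invariant $F$ factor through rational equivalence of zero-cycles", i.e. that one may ignore the modulus condition entirely. This is essentially Voevodsky's comparison of $h_0$ of a curve with its Picard functor, and it relies on the fact that a homotopy invariant presheaf with transfers is (after Nisnevich sheafification) strictly homotopy invariant, together with the Gersten/localization machinery that identifies $H^0_{\Nis}$ of $h_0(\Cb_\phi)$ correctly. One must be careful that the relevant "reciprocity pairing" in Definition \ref{def.modulus} is set up via correspondences in $\PST$, so that "$\phi^* a$ pairs trivially with $\divi(f)$" genuinely means the transfer of $a$ along the graph of the cycle $\divi(f) \in Z_0(X)$ vanishes; checking that this transfer is computed by restricting to the curve and using homotopy invariance there is where one invokes the full strength of the homotopy invariant theory. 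Modulo this — which is standard — the proof is a short reduction.
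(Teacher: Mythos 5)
Your overall strategy --- show that for a homotopy invariant $F$ \emph{every} closed subscheme $Y$ with support $\Xb-X$ is a modulus, by reducing the pairing to the Suslin--Voevodsky computation of $h_0$ of a curve --- is the same as the paper's (Theorem \ref{thm:HI.intro2}(1)). But two steps, as written, do not work.

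First, you have unwound Definition \ref{def.modulus} incorrectly: the test data are not curves $\phi:\Cb\to\Xb$ over $k$, but finite morphisms $\phi:\Cb\to\Xb\times S$ over an arbitrary connected $S\in\Sm$ whose generic fibre over $S$ is one-dimensional, and the conclusion $(\phi_*\div_{\Cb}(f))^*(a)=0$ must hold in $F(S)$. Reducing to the case where $S$ is a point is not free; it requires an injectivity statement $F(S)\hookrightarrow F(k(S))$ (cf.\ Remark \ref{rem:reduction-to-curve}), which for homotopy invariant $F$ is itself a theorem, and even then one lands on curves over the function field $k(S)$, not over $k$. The paper sidesteps this entirely by arguing relatively over $S$: by \cite[Lemma 7.5]{mvw} the pairing $\Cor(S,X)\times F(X)\to F(S)$ factors through the \emph{relative} Suslin homology $H_0^S(X_S/S)$, and the vanishing is checked there.

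Second, your key claim --- that homotopy invariance lets one drop the condition on $f$ along $\phi^*Y$ entirely, ``since the divisor of a rational function is rationally equivalent to zero'' --- is false. Rational equivalence on the compactification $\Cb$ is irrelevant, because $a$ lives only on the open part $C$; the relation actually available is the one defining $H_0^S(C/S)$, and by the Suslin--Voevodsky identification $\til H{}_0^S(C/S)\cong\til\Pic(\Cb,Y_{\red})$ (whence Corollary \ref{cor:vanishing-of-g-in-h}) the class of $\div_{\Cb}(f)$ dies there precisely because $f\equiv 1$ on the \emph{reduced} boundary. What homotopy invariance buys is only that the multiplicities of $Y$ do not matter, not that the congruence can be dropped. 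Concretely (char $k\neq 2$): take $F=\G_m$, $\Cb=\P^1\supset C=\G_m$, $a=t\in\sO^\times(\G_m)$, and $f=(t-1)/(t-2)$. Then $f$ is a unit along $\{0,\infty\}$ but $f(0)=1/2\neq 1$, and $(\div_{\Cb}(f))^*(a)=a(1)\,a(2)^{-1}=1/2\neq 1$ in $\G_m(k)$. So the correct assertion is that $Y_{\red}$ (hence any $Y$) is a modulus, and its proof requires the relative Picard computation for the open curve, not rational equivalence on $\Cb$.
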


Recall from \cite{spsz} and \cite{bar-kahn} that a presheaf represented by a commutative algebraic group has the structure of a presheaf with transfers.

\begin{theorem}[see Theorem \protect{\ref{thm:alg-gp}}]\label{thm:Algr.intro}
If $F\in \PST$ is represented by a smooth commutative algebraic group, $F$ has reciprocity.
\end{theorem}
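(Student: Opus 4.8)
The plan is to reduce to the Rosenlicht--Serre theorem (Theorem~\ref{thm.RS}) on curves and then to control, uniformly over all curves, the modulus they force. Fix $F=G$ (the presheaf represented by a smooth commutative algebraic group $G$), a smooth quasi-affine $X/k$, a section $a\in F(X)$, i.e.\ a morphism $a\colon X\to G$, and a dense open immersion $X\hookrightarrow\Xb$ with $\Xb$ integral and proper. Unwinding Definition~\ref{def.modulus}, producing a modulus for $a$ means finding an effective closed $Y\subset\Xb$ with $X=\Xb-Y$ such that for every finite $\phi\colon\Cb\to\Xb$ from a normal proper curve with $\phi(\Cb)\not\subset Y$, setting $C=\phi^{-1}(X)$, the morphism $\phi^{*}a\colon C\to G$ admits $\phi^{*}Y$ as a modulus in the Rosenlicht--Serre sense. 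So the content of the theorem is a \emph{uniform} bound on the moduli of all curve-restrictions of $a$.

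First I would reduce to $k$ algebraically closed: both $\phi^{*}a$ and the Rosenlicht--Serre condition (the vanishing of a zero-cycle's image in $G(\bar k)\supset G(k)$) are insensitive to base-field extension, and since $\bar k$ is a filtered union of finite subextensions and, on a fixed curve, moduli form a directed set under componentwise maximum, a Galois-descent argument yields a $k$-rational modulus from one over $\bar k$ (passing first to the perfect closure so as to use Chevalley's theorem, with inseparable descent handled likewise). I would then run a dévissage along the structure of $G$. We may assume $G$ connected ($G/G^{0}$ is finite, hence homotopy invariant, and the resulting extension is treated below). By Chevalley--Barsotti, $0\to G_{\mathrm{aff}}\to G\to A\to 0$ with $A$ an abelian variety and $G_{\mathrm{aff}}\simeq T\times U$, $T$ a torus, $U$ unipotent; over $\bar k$, $U$ is built out of copies of $\G_{a}$ (and, in characteristic $p$, Witt-vector groups). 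Semi-abelian varieties are homotopy invariant, hence lie in $\Rec$ by Theorem~\ref{thm:HI.intro}, which disposes of the torus and abelian-variety directions. For the base case $G=\G_{a}$ one may invoke Theorem~\ref{thm:main-rep.intro} directly: the generalized Jacobian of $(\P^{1},2[\infty])$ is $\G_{a}$ (Proposition~\ref{p13.1}) and $2[\infty]$ is Cartier, so $\G_{a}=h(\P^{1},2[\infty])\in\Rec$; alternatively one checks it by hand, the residue theorem on proper curves being precisely Rosenlicht--Serre reciprocity for $\G_{a}$, which shows that any $Y$ with $\mathrm{mult}_{D}Y\ge\operatorname{ord}_{D}(a)_{\infty}+1$ for every prime divisor $D\subset\Xb-X$ is a modulus (the pole order of $\phi^{*}a$ at a point $y$ being at most $\mathrm{mult}_{y}\phi^{*}(a)_{\infty}\le\mathrm{mult}_{y}\phi^{*}Y-1$). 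The characteristic-$p$ pieces ($W_{n}$, forms of $\G_{a}$) are handled the same way using Witt-vector residues, or again via suitable generalized Jacobians.

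What remains --- and this is where I expect the main obstacle to lie --- is to propagate reciprocity across the generally non-split extensions of the dévissage: one must show that if $0\to F'\to F\to F''\to 0$ in $\PST$ with $F',F''\in\Rec$, then $F\in\Rec$. (Note that $\Rec$ is only \emph{a priori} stable under sub- and quotient objects, not under extensions, so this is a genuine point.) Given $a\in F(X)$ with image $\bar a\in F''(X)$ and a modulus $Y''$ for $\bar a$, the composite $\Phi(\Xb,Y'')\by{\partial}\Ztr(X)\by{a}F$ lands in $F'$, and one must produce a single enlargement $Y\ge Y''$, bounded in terms of $a$ alone, that kills it. Equivalently --- and here one genuinely uses that $G$ is a finite-dimensional algebraic group, not an arbitrary object of $\PST$ --- one needs the local conductor of $\phi^{*}a$ at $y$ to grow at most linearly in the ramification index of $\phi$ at $y$, uniformly in $\phi$, so that it is dominated by $\mathrm{mult}_{y}\phi^{*}Y$ for a fixed $Y$ depending only on $a$: a ``universal modulus'' for $a\colon X\to G$ on $\Xb$. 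I expect establishing this uniform linear bound on conductors under ramified pullback to be the crux; the earlier steps amount to Rosenlicht--Serre, the residue theorem (or Theorem~\ref{thm:main-rep.intro}), and bookkeeping. One may alternatively import the needed input from the theory of generalized Albanese varieties with modulus.
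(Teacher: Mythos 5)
There is a genuine gap, and it sits exactly where you place it yourself: your d\'evissage along the Chevalley--Barsotti filtration requires showing that $\Rec$ (or at least the class of representable objects in it) is closed under the non-split extensions $0 \to G_{\mathrm{aff}} \to G \to A \to 0$, and you leave this unproved. The paper explicitly lists closure of $\Rec$ under extensions in $\PST$ as an \emph{open question}, and its proof of Theorem \ref{thm:alg-gp} is structured precisely to avoid it. In characteristic zero there is no d\'evissage at all: one picks a basis $\omega_1,\dots,\omega_d$ of invariant differentials on the whole group $G$, chooses $n$ with $a^*\omega_i\in\Gamma(\Xb,\Omega^1_{\Xb/k}(nW))$, and invokes Serre's Prop.~10 of [GACL, III] on each curve; this is the ``uniform linear bound on conductors'' you correctly identify as the crux, but it is obtained for $G$ directly, not assembled from the graded pieces. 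In positive characteristic the extension is replaced by a homomorphism $\theta:G\to G_1\times U$ with \emph{finite kernel} onto a \emph{product} of a semi-abelian variety and a unipotent group (Lemma \ref{lem:alg-gp-decomp}), and reciprocity is pulled back along $\theta$ via Proposition \ref{prop:finite-kernel}, whose proof rests on the local criterion of [GACL, III, Prop.~9] (constancy of $[g]_*(b):U_g\to G$) rather than on any extension-stability of $\Rec$. Your sketch of ``bounded enlargement $Y\ge Y''$ killing the image in $F'$'' is exactly the step that nobody knows how to do for general objects of $\Rec$, and for algebraic groups it is the finite-kernel/invariant-forms arguments that substitute for it.

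Two smaller points. First, deducing the $\G_a$ case from Proposition \ref{p13.1} is circular: the proof of that proposition invokes Theorem \ref{thm:Algr.intro} (reciprocity of the generalized Jacobian) as an input. Your alternative via residues is sound and is essentially Proposition \ref{prop:unip} specialized to $\G_a$, but note that by Remark \ref{rem:reduction-to-curve} the curves one must treat live over arbitrary function fields $K=k(S)$, not over $k$ or $\bar k$, so the Rosenlicht--Serre-type statements must be available over such $K$ (as in the paper's citations of [GACL, III, Prop.~10 and 15]). Second, your reduction to $k$ algebraically closed is in the right spirit but glosses over how the modulus found over $\bar k$ descends; the paper's trick is to take $Y$ with $\sI_Y=\sI_{(\Xb_{\bar k}-X_{\bar k})_\red}^n$, which is automatically defined over $k$, and then use injectivity of $G(S)\to G(S_{\bar k})$ --- no Galois or inseparable descent of the modulus itself is needed.
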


In the appendices, Kay R\"ulling proves:

\begin{theorem}[see Theorems \protect{\ref{thm-Omega-RF}} and \protect{\ref{thm-Witt-RF}}] \label{kay}
a) The presheaf with transfers $X\mapsto H^0(X,\Omega^i_X)$ has reciprocity, where $\Omega^i_X$ denotes the sheaf of absolute K\"ahler differentials. If $k$ is perfect, the same is true with $\Omega^i_X$ replaced by relative differentials $\Omega^i_{X/k}$.\\
b) If $k$ is perfect of positive characteristic, the de Rham-Witt presheaves $X\mapsto H^0(X,W_n \Omega^i_X)$ have structures of presheaves with transfers and have reciprocity.
\end{theorem}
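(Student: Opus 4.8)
The plan is to verify the reciprocity condition directly, in the curve-theoretic form of Definition~\ref{def.modulus}, and to treat parts (a) and (b) in parallel through one residue mechanism. First one must endow $X\mapsto H^0(X,\Omega^i_X)$ and $X\mapsto H^0(X,W_n\Omega^i_X)$ with transfers: for a finite surjective morphism of smooth $k$-schemes the trace on K\"ahler differentials (relative or absolute) is classical, as is the trace on the de Rham--Witt complex, and the point is to extend these to all finite correspondences and check additivity and compatibility with composition. Here one invokes the functoriality of pushforward for the de Rham and de Rham--Witt complexes due to Chatzistamatiou--R\"ulling; the relative case over a perfect field and the cases $i\ge1$ follow by compatibility with the inclusions into the absolute (resp.\ full) complex.

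Next, reduce reciprocity to curves. By the shape of Definition~\ref{def.modulus}, to show a given $\omega\in H^0(X,\Omega^i_X)$ has a modulus it suffices, for every finite morphism $\phi\colon\Cb\to\Xb$ from a normal proper curve (possibly over an extension of $k$, or in a family over some $U\in\Sm$) with $\phi(\Cb)\not\subset Y$, to bound the modulus of $\phi^*\omega$ on $C:=\phi^{-1}(X)$. Unwinding the transfer structure, the modulus condition for $\phi^*\omega$ tested against a unit $g\in\G_m(\Cb,\phi^*Y)$ becomes the residue identity
\[
\sum_{x\in C}v_x(g)\,\Tr_{k(x)/k}\bigl((\phi^*\omega)|_x\bigr)\;=\;\sum_{x\in\Cb}\Res_x\!\bigl(d\log(g)\wedge\phi^*\omega\bigr),
\]
and similarly for $W_n\Omega^i$ with $d\log(g)\in W_n\Omega^1$. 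The local input is that at $x\in\Cb$ the form $\phi^*\omega$ has pole order bounded in terms of that of $\omega$ along the component of $\Xb\setminus X$ through $\phi(x)$; so if $\phi^*Y$ has multiplicity at $x$ exceeding this bound --- by $1$ for K\"ahler differentials, and by an explicit amount depending on $n$, $p$ and the Frobenius level (in the spirit of the Brylinski--Kato conductor) for $W_n\Omega^i$ --- then $d\log(g)$ vanishes to high enough order that $d\log(g)\wedge\phi^*\omega$ is regular at $x$, so $\Res_x=0$. Taking $Y$ on $\Xb$ with these multiplicities along the components of $\Xb\setminus X$ (and adjoining, harmlessly, the lower-dimensional part of $\Xb\setminus X$) makes every term at $\phi^*Y$ vanish.

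The right-hand side above then equals $\sum_{x\in\Cb}\Res_x(d\log(g)\wedge\phi^*\omega)=0$ by the residue theorem on the proper curve $\Cb$ --- for $i=0$ this is the classical residue theorem used already in the Rosenlicht--Serre Theorem~\ref{thm.RS}; for absolute K\"ahler differentials one reduces to the classical case via the filtration of $\Omega^\bullet_{-/\Z}$ with graded pieces $\Omega^j_{k/\Z}\otimes\Omega^{\bullet-j}_{-/k}$, which is compatible with residues; for $W_n\Omega^\bullet$ one uses R\"ulling's residue theorem for the de Rham--Witt complex on curves. The family version over $U\in\Sm$ follows by the same computation performed fiberwise (or over the generic point). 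Hence $\sum_{x\in C}v_x(g)\Tr_{k(x)/k}((\phi^*\omega)|_x)=0$ for all $\phi$ and all $g$, i.e.\ $Y$ is a modulus for $\omega$; so $X\mapsto H^0(X,\Omega^i_X)$ and $X\mapsto H^0(X,W_n\Omega^i_X)$ have reciprocity.

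The main obstacle is part (b). On one side one needs the full functoriality of the de Rham--Witt trace under composition of finite correspondences, which rests on Chatzistamatiou--R\"ulling's pushforward theory; on the other, the local conductor estimate for $W_n\Omega^\bullet$ of a complete discrete valuation field is genuinely delicate, because Frobenius and Verschiebung do not respect the valuation filtration naively, so controlling the vanishing order of $d\log(g)$ for $g\equiv1$ modulo a high power of the uniformizer requires the finer ramification structure of $W_n\Omega^\bullet$. For part (a) the analogous local estimate is elementary and the global input is classical, so the K\"ahler case is comparatively soft.
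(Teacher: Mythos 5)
Your overall strategy --- transfers via the Chatzistamatiou--R\"ulling pushforward, reduction of the modulus condition to a proper curve, a pole-order estimate at the boundary played against a global vanishing statement --- is exactly the strategy of the appendix, and for part (a) over a perfect field your argument is essentially Theorem \ref{thm-Omega-RF}. Two remarks on (a). The appendix does not sum residues over all of $\Cb$ (which would force you to define residues of absolute, resp.\ de Rham--Witt, forms at the boundary points); it writes the sum of local transfer terms as $-\delta(\tfrac{df}{f}\wedge b)$ in $H^1_D(C,\Omega^{i+1}_{C/k})$ and notes that this class dies in $H^1(\Cb,\Omega^{i+1}_{\Cb/k})$ once $\tfrac{df}{f}\wedge b$ extends to $\Cb\setminus D$ --- the residue theorem in local-cohomology form, which moreover works directly over a positive-dimensional base $S$ via $p_{\varphi*}$. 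For the absolute case your filtration of $\Omega^\bullet_{-/\Z}$ with graded pieces $\Omega^j_{k/\Z}\otimes\Omega^{\bullet-j}_{-/k}$ is not what the appendix does, and it is weaker than it looks: the transfer structure itself must first be constructed on absolute differentials, and compatibility of the filtration with the correspondence action is not free. The appendix instead writes $H^0(X,\Omega^i_X)$ as a colimit of $H^0(X_B,\Omega^i_{X_B/k_0})$ over smooth models over the prime field $k_0$, which is perfect, so both the transfers and the reciprocity statement reduce to the relative case already settled.

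The genuine gap is in part (b). You correctly locate the difficulty in the local boundary estimate for $W_n\Omega^\bullet$, but you declare it ``the main obstacle'' instead of overcoming it, and that estimate is the entire content of Theorem \ref{thm-Witt-RF} beyond formal bookkeeping. What must be shown is: if $b=b_0/[t]^r$ with $b_0\in W_n\Omega^i_A$ and $f=1+t^sg$, then $d[f]\cdot b$ is regular provided $s\ge p^{n-1}r+1$. The proof uses the expansion $[1+t^sg]=[1]+\sum_{j=0}^{n-1}V^j([t]^sg_j)$ and the identities $V^j(x)y=V^j(x\,F^j(y))$ and $dV^j=V^jdF^j\cdots$ to rewrite $d(V^j([t]^sh))\,b$ as a sum of terms of the form $dV^j([t]^{s-p^jr}\cdots)$, $V^j([t]^{s-p^jr}\cdots)$ and $V^j([t]^{s-(p^jr+1)}\cdots)$; the Frobenius twist $F^j(b_0/[t]^r)=F^j(b_0)/[t]^{p^jr}$ is precisely why the modulus must grow like $p^{n-1}r+1$ rather than $r+1$. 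Your appeal to a Brylinski--Kato-style conductor gestures in the right direction but does not substitute for this computation; without it the argument for (b) does not close.
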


An open problem is the following:

\begin{qn}
Is $\Rec$ closed under extensions in $\PST$?
\end{qn}

The next results extend part of Voevodsky's main theorems for homotopy invariant presheaves with transfers to
reciprocity sheaves: 
cf. \cite[Prop. 11.1]{mvw} 
for \ref{thm:inj.intro} and \cite[Th. 22.1, 2.2, 22.15]{mvw} for \ref{thm:ZarNis.intro}.

\begin{theorem}[see Theorem \protect{\ref{thm:inj} and Corollary \ref{cor:inj}}]\label{thm:inj.intro} Let $F\in \PST$ be a reciprocity presheaf.
\begin{enumerate}\item 
Let $X$ be a smooth semi-local $k$-scheme,
and $V \subset X$ an open dense subset.
Then the map $F(X) \to F(V)$ is injective.
\item For an open dense immersion $U \subset X$ in $\Sm$, $F_{\Zar}(X) \to F_{\Zar}(U)$ is injective, where $F_{\Zar}$ is the Zariski sheafification of $F$ as a presheaf.
\item If $F(E)=0$ for any field $E$, then $F_{\Zar}=0$.
\end{enumerate}
\end{theorem}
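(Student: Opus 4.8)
The plan is to prove the substantive statement (1); parts (2) and (3) will then follow formally from (1) by passing to stalks and to cofiltered limits of schemes, as in the homotopy invariant case \cite{mvw}. For (1), let $X$ be smooth semi-local, $V\subset X$ dense open and $a\in F(X)$ with $a|_V=0$; we must show $a=0$. Working component by component we may assume $X$ integral. By prime avoidance, any closed nowhere dense subset of $X$ lies in an effective Cartier divisor $D\subsetneq X$, and $F(X)\to F(X-D)$ factors through $F(X)\to F(V)$, so we may assume $V=X-D$. Writing $F(X)=\colim_{X_0}F(X_0)$ over the smooth affine neighbourhoods $X_0$ of the chosen points, we choose a finite type model: $X_0$ smooth affine integral, $D_0\subset X_0$ an effective Cartier divisor restricting to $D$, and $\tilde a\in F(X_0)$ restricting to $a$, and—shrinking $X_0$ around the chosen points—with $\tilde a|_{X_0-D_0}=0$ (here $X_0-D_0$ is affine, hence quasi-affine). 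Since $F$ has reciprocity, fix an integral proper $\Xb_0\supset X_0$ and a closed $Y\subset\Xb_0$ with $X_0=\Xb_0-Y$ such that $Y$ is a modulus for $\tilde a$; then $(\Xb_0,Y)$ is a modulus pair. By the defining property of ``$\tilde a$ has modulus $Y$'' (Definition \ref{def.modulus})—equivalently, by the factorization of $\tilde a\colon\Ztr(X_0)\to F$ through $h(\Xb_0,Y)=\Coker\big(\Phi(\Xb_0,Y)\to\Ztr(X_0)\big)$ of Theorem \ref{thm:main-rep.intro}—for every $W$ and every $g\in\Phi(\Xb_0,Y)(W)$, the induced finite correspondence $\partial(g)\in\Cor(W,X_0)$ satisfies $\partial(g)^*\tilde a=0$ in $F(W)$.

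It now suffices to exhibit, over the semi-local $X$ (using the convention $\Cor(X,-)=\colim\Cor(X',-)$), an element $g\in\Phi(\Xb_0,Y)(X)$ with $\partial(g)=\Delta_X-\gamma$ in $\Cor(X,X_0)$, where $\Delta_X$ is the graph of $X\hookrightarrow X_0$ and $\gamma=j_*\gamma'$ with $j\colon X_0-D_0\hookrightarrow X_0$ and $\gamma'\in\Cor(X,X_0-D_0)$. Indeed, applying $\partial(g)^*\tilde a=0$ with $W=X$ gives $a=\Delta_X^*\tilde a=\gamma^*\tilde a=(\gamma')^*(\tilde a|_{X_0-D_0})=0$. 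Unwinding the definition of $\Phi$, producing such a $g$ means finding a proper normal relative curve $\phi\colon\overline{\mathcal C}\to\Xb_0\times_k X$ over $X$ with $\phi(\overline{\mathcal C})\not\subset Y\times_k X$, together with a rational function $g$ on $\overline{\mathcal C}$ lying in $G(\overline{\mathcal C},\phi^*(Y\times_k X))$, whose relative divisor $\operatorname{div}_{\mathcal C/X}(g)$ (over $\mathcal C=\phi^{-1}(X_0\times_k X)$) is the diagonal section minus a cycle lying over $X_0-D_0$. In short: one must move the diagonal off $D_0$ by a rational equivalence over the semi-local base $X$ that is congruent to $1$ along the pull-back of $Y$.

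The hard part is exactly this moving lemma for the diagonal, with modulus. I would attack it by a relative-curve presentation: applying Gabber's geometric presentation lemma to $(X,D\cup\text{(bad locus)})$—shrinking around the chosen points and semi-localizing (this is where a perfectness hypothesis on $k$ may enter, or must be circumvented)—realize $X$ as an open subscheme of a smooth relative curve over a smooth semi-local base $T$ of dimension $\dim X-1$, finite over $\P^1_T$, with $D$ finite over $T$ and the boundary in good position, and, after adjusting $\Xb_0$ by a blow-up, compatibly compactified. Pulling this relative curve back along $X\to T$ turns $\Delta_X$ into a section of a relative curve over the semi-local $X$; on its generic fibre—a curve over the function field of $X$—the required $g$ exists by Rosenlicht's approximation lemma underlying Theorem \ref{thm.RS}. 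The content of the lemma is to carry this out uniformly over the semi-local base while matching the congruence condition along $Y$: since $D$ is finite over $T$, the locus to be avoided is fibrewise a finite set of closed points sitting inside $X_0$, disjoint from the boundary locus carrying the condition ``$g\equiv1$ along $Y$'', so the two constraints are independent and can be imposed simultaneously; the passage from curves to the semi-local base would rest on the approximation and prime-avoidance techniques for zero-cycles with moduli of \cite{KS1}. (For $\dim X=1$ there is no base and this is essentially Theorem \ref{thm.RS}.) I expect this moving lemma—in particular its uniformity over the semi-local base and its interaction with an arbitrary modulus $Y$—to be the main obstacle.

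Finally, parts (2) and (3) are formal. For $x\in X$ the stalk $(F_{\Zar})_x$ equals the stalk $F_x=F(\mathcal O_{X,x})$, since sheafification preserves stalks. For (2): $F_{\Zar}$ being a Zariski sheaf, $F_{\Zar}(X)$ injects into $\prod_{x\in X}F(\mathcal O_{X,x})$; if $a\in F_{\Zar}(X)$ restricts to $0$ in $F_{\Zar}(U)$, then representing $a$ near each $x$ by a section of $F$ and restricting to $\Spec\mathcal O_{X,x}$ shows that $a_x\in F(\mathcal O_{X,x})$ dies in $F(V')$ for some dense open $V'\subset\Spec\mathcal O_{X,x}$ (note $\Spec\mathcal O_{X,x}$ contains the generic point of $X$, which lies in $U$); by (1) applied to the local—hence semi-local—smooth scheme $\Spec\mathcal O_{X,x}$, $a_x=0$, so $a=0$. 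For (3): by (1) and passage to the colimit over dense opens, $F(\mathcal O_{X,x})$ injects into $F(k(X))$, which is $0$ by hypothesis; hence all stalks of $F_{\Zar}$ vanish, i.e. $F_{\Zar}=0$.
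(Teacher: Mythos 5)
Your overall strategy is the right one and is essentially the paper's: reduce to moving the diagonal of $X$ off $D$ by a relative rational equivalence that is congruent to $1$ along the modulus, with the geometric input coming from a Gabber-type presentation of $X$ as a relative curve. The formal deductions of (2) and (3) from (1) are correct and match the paper (which cites \cite[Lemma 22.8, Cor. 11.2]{mvw}). But part (1) as written has a genuine gap: the ``moving lemma for the diagonal, with modulus'' that you correctly isolate as the crux is not proved — you only outline an attack and yourself flag it as the expected main obstacle. In the paper this step is carried out via Mark Walker's standard triples (Proposition \ref{prop:walker}), the splitting of a triple over a neighbourhood of finitely many points (Lemma \ref{lem:exist-s-tri}), and the key Proposition \ref{prop:factor}. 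The mechanism you are missing is the exact sequence
\[
0\to G(\Xb,Y)\xrightarrow{\div_{\Xb}} c(X/S)\xrightarrow{\tau} \Pic(\Xb,Y)\to 0
\]
of \eqref{eq2.2}: once the triple is split over $U$ (i.e.\ the line bundle of the graph of $U\hookrightarrow X$ is trivialized over $U\times_S Z$), the class of the diagonal in $\Pic(\Xb_U,Y)$ lifts to $\Pic(\Xb_U,Y\sqcup Z_U)$ and hence, by surjectivity of $\tau$, to a cycle in $c((X\setminus Z)_U/U)$; the difference from the diagonal is then exactly $\div$ of a function in $G(\Xb_U,Y)$, which is killed by the (weak) modulus property of $pr_X^*a$. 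This replaces both the ``uniformity over the semi-local base'' and the ``Rosenlicht approximation on the generic fibre'' that you defer to \cite{KS1}. Note also that the modulus must be chosen \emph{after} the presentation, as a (weak) modulus for $pr_X^*a$ relative to the curve fibration $\Xb_U\to U$, rather than fixed globally at the outset as in your setup; the paper's notion of weak reciprocity (Lemma \ref{recimplyweakrec}) is designed precisely for this.

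A second, smaller but real gap: Walker's/Gabber's presentation requires $k$ infinite. The paper treats finite $k$ by a norm argument using $f_*f^*=[E:k]$ for finite separable $E/k$ (Proposition \ref{pr.pretheory2}), which is itself not automatic and is proved there using weak reciprocity. Your proposal does not address the finite base field case at all.
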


We note that in this theorem, (2) and (3) are easy consequences of (1) (cf.  \cite[Lemma 22.8 and Cor. 11.2]{mvw}).

\begin{theorem}[see Theorems \protect{\ref{thm:zar-trans}, \ref{thm:zar-rec} and \ref{thm:zar-nis-rec}}] \label{thm:ZarNis.intro} Let $F\in \PST$ be a reciprocity presheaf.
\begin{enumerate}
\item 
$F_{\Zar}$ has a unique structure of presheaf with transfers
such that $F \to F_{\Zar}$ is a morphism in $\PST$.
\item
If $k$ is perfect, $F_{\Zar}$ has reciprocity.
\item 
We have 
$F_{\Zar}=F_{\Nis}$, where $F_{\Nis}$ is the Nisnevich sheafification of $F$ as a presheaf.
\end{enumerate}
\end{theorem}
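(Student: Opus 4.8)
We treat the three assertions in the order (1), (3), (2); each is the reciprocity-sheaf counterpart of a theorem of Voevodsky for homotopy invariant presheaves with transfers, and the skeleton of each argument will follow \cite{mvw}, with the injectivity statements of Theorem~\ref{thm:inj.intro} playing the role of \cite[Prop.~11.1, Cor.~11.2]{mvw}.

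For (1), the first step is to show that $F$ and $F_{\Zar}$ agree on smooth semi-local schemes. Injectivity of $F(S)\to F_{\Zar}(S)$ follows from Theorem~\ref{thm:inj.intro}(1): a section killed by a Zariski cover is killed after restriction to the (semi-local) set of generic points, hence vanishes. Surjectivity is immediate when $S$ is local --- a section of $F_{\Zar}$ is, on a small enough neighbourhood, the image of an honest section of $F$ --- and is deduced in the semi-local case by a patching argument, again using \ref{thm:inj.intro}(1). Granting this, one builds the transfer $\alpha^{*}\colon F_{\Zar}(Y)\to F_{\Zar}(X)$ attached to $\alpha\in\Cor(X,Y)$ by working Zariski-locally on $X$: at a point $x$, the restricted correspondence $\alpha|_{\Spec\mathcal{O}_{X,x}}$ has support finite over the local scheme $\Spec\mathcal{O}_{X,x}$, hence factors through a semi-localization $\Spec\mathcal{O}_{Y,T}$ of $Y$, and one forms the composite
\[
F_{\Zar}(Y)\longrightarrow F_{\Zar}(\mathcal{O}_{Y,T})=F(\mathcal{O}_{Y,T})\xrightarrow{\;(\alpha|_{\Spec\mathcal{O}_{X,x}})^{*}\;}F(\mathcal{O}_{X,x})=(F_{\Zar})_{x}.
\]
These maps are compatible with generization, so by the sheaf property of $F_{\Zar}$ they glue to a map $\alpha^{*}$; additivity and functoriality in $\alpha$, as well as the uniqueness of the $\PST$-structure, then follow because all the data are determined after restriction to the local rings of $X$ and $F_{\Zar}$ is separated.

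For (3), there is a canonical map $F_{\Zar}\to F_{\Nis}$, and it suffices to check that $F_{\Zar}$ is already a Nisnevich sheaf, i.e. that it satisfies descent for every elementary distinguished square (Cartesian, with one leg an open immersion $U\hookrightarrow X$ and the other an étale map $p\colon V\to X$ which is an isomorphism over $X\setminus U$). After shrinking the square one may assume $p$ finite; the transpose of its graph is then a finite correspondence from $X$ to $V$, which by (1) induces a ``trace'' on $F_{\Zar}$, and Voevodsky's classical argument --- together with the injectivity of \ref{thm:inj.intro}(2) --- yields exactness of
\[
0\to F_{\Zar}(X)\to F_{\Zar}(U)\oplus F_{\Zar}(V)\to F_{\Zar}(U\times_{X}V).
\]
Hence $F_{\Zar}=F_{\Nis}$; no hypothesis on $k$ is needed for this part.

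For (2), assume $k$ perfect, let $a\in F_{\Zar}(X)$ with $X$ integral smooth quasi-affine, and fix a compactification $X\hookrightarrow\Xb$ with $\Xb$ integral proper. Choose finitely many smooth quasi-affine dense opens $X_{\lambda}\subset X$ covering $X$ on which $a$ lifts to $a_{\lambda}\in F(X_{\lambda})$; since $F$ has reciprocity, each $a_{\lambda}$ admits a modulus $Y_{\lambda}$ on $\Xb$ with $\Xb\setminus Y_{\lambda}=X_{\lambda}$. It remains to manufacture from the $Y_{\lambda}$ a single closed subscheme $Y$ with $\Xb\setminus Y=X$ which is a modulus for $a$. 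Using (1) and the fact that $F$ and $F_{\Zar}$ coincide on the local rings of normal proper curves (which are discrete valuation rings since $k$ is perfect), one checks that for every finite $\phi\colon\Cb\to\Xb$ from a normal proper curve not mapping into $\Xb\setminus X$ the pullback $\phi^{*}a$ is controlled near the boundary $\phi^{-1}(\Xb\setminus X)$ by the divisors $\phi^{*}Y_{\lambda}$; a descent-and-finiteness argument --- in which the perfectness of $k$ enters, through the smoothness of normal curves and resolution of auxiliary surfaces, much as in \cite[Lect.~22]{mvw} --- should then bound all of these uniformly by one effective divisor supported on $\Xb\setminus X$. Producing such a $Y$ \emph{uniformly in $\phi$} is the main obstacle, and the step where the argument genuinely departs from the homotopy invariant situation, in which reciprocity holds automatically by Theorem~\ref{thm:HI.intro}.
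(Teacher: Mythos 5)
Your proposal has a genuine gap in each of the three parts, and in each case what is missing is precisely where the paper does its real work. For (1) and (3) you reduce everything to ``a patching argument using \ref{thm:inj.intro}(1)'' and to ``Voevodsky's classical argument together with the injectivity of \ref{thm:inj.intro}(2)''. But injectivity is a separation statement; it gives neither the surjectivity of $F(S)\to F_{\Zar}(S)$ for $S$ smooth semi-local (gluing sections of $F$ itself over a semi-local base is a Mayer--Vietoris property of $F$, not a consequence of injectivity), nor the exactness of the complex attached to an upper distinguished square. Both are exactly the content of MV-effaceability (Definition \ref{def:mv-eff}), and the paper's route is: prove that weak reciprocity implies MV-effaceability (Theorem \ref{thm:key-tech}, whose proof occupies all of Section \ref{sec:pf-keytheorem} and rests on the relative Picard constructions of Proposition \ref{prop:key-tech}), and then invoke the proofs of \cite[Th.~22.15]{mvw} and \cite[Th.~22.2]{mvw}, which use only MV-effaceability. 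The delicate point there, absent from your sketch, is that for a reciprocity presheaf the chain homotopy $(s_1,s_2)$ of \eqref{eq:homotopy} cannot be defined globally: the moduli $V,W$ must be chosen depending on the sections one wishes to efface, so one only gets vanishing on cohomology rather than a contracting homotopy (Remark \ref{r10.1}). None of this is recoverable from Theorem \ref{thm:inj.intro} alone.

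For (2) you set up the covering $X=\bigcup_i U_i$ and the moduli $Y_i$ correctly, but you then identify the production of a single modulus $Y$ ``uniformly in $\phi$'' as the main obstacle and leave it unresolved (``should then bound''), invoking resolution of auxiliary surfaces that the paper never needs. The paper's resolution is short: take $Y:=Y_1\times_{\Xb}\cdots\times_{\Xb}Y_r$, the scheme-theoretic intersection, so that $|Y|=\bigcap_i|Y_i|=\Xb-X$; reduce to a normal proper curve $\Cb$ over $K=k(S)$ via Corollary \ref{cor:inj} and Remark \ref{rem:reduction-to-curve}; and then argue pointwise with local symbols (Proposition \ref{prop:loc-symbol-over-fields}). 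At a boundary point $x$ of $\Cb$ the multiplicity of $x$ in $\phi^*Y$ is the minimum of its multiplicities $m_{i,x}$ in the $\phi^*Y_i$ over those $i$ with $\phi^{-1}(U_i\times K)\neq\emptyset$, and this minimum is attained by some $i_0$; the modulus condition for $a|_{U_{i_0}}$ gives $(U_x^{(m_{i_0,x})},\psi^*a)_x=0$, which is exactly condition $(\heartsuit)$ for $Y$ at $x$. There is thus no uniformity problem to solve, and no input beyond the local symbol formalism already established in Section \ref{weakrec}.
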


Theorems \ref{thm:inj.intro} and \ref{thm:ZarNis.intro} may be viewed as the degree $0$ part of the following conjecture:

\begin{conj}\label{conj1}
Suppose $k$ is perfect.
Let $F$ be a reciprocity presheaf which is a sheaf for the Nisnevich topology.
\begin{enumerate}
\item
(Gersten's conjecture.) For any smooth semi-local $k$-scheme $X$ essentially of finite type, the Cousin resolution
\begin{multline*}
0\to F(X)\to \bigoplus_{x\in X^{(0)}}F(x)\to \bigoplus_{x\in X^{(1)}} H^1_{x}(X, F)\to \\
\dots  \to \bigoplus_{x\in X^{(n)}} H^n_{x}(X, F)\to \dots
\end{multline*}
is universally exact in the sense of \cite{grayson}.
\item
The presheaves $H^i_\Zar(-,F)$ and $H^i_\Nis(-,F)$ coincide, and have reciprocity.
\end{enumerate}
\end{conj}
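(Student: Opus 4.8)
The two assertions are intertwined — in Voevodsky's homotopy invariant theory, Gersten's conjecture and the homotopy invariance of the cohomology presheaves are proved together by induction on dimension (cf. \cite[Lect.~23--24]{mvw}) — so the plan is to prove (1) and (2) simultaneously by induction on $\dim X$, replacing ``homotopy invariance'' throughout by a modulus-controlled substitute built from the representing objects $h(\Xb,Y)$ of Theorem~\ref{thm:main-rep.intro}. First I would extend the formalism (reciprocity, transfers, the modulus condition) from $\Sm$ to schemes essentially of finite type over $k$ by a limit argument, so that everything makes sense on a smooth semi-local $X$; then, granting the purity statement below, the coniveau spectral sequence $E_1^{p,q}=\bigoplus_{x\in X^{(p)}}H^{p+q}_x(X,F)\Rightarrow H^{p+q}_\Nis(X,F)$ collapses onto the Cousin complex of the statement, and one is reduced to showing that this complex is a universally exact resolution over semi-local $X$.

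Next I would run the Gabber/Quillen presentation argument to reduce the exactness of the Cousin complex to two local inputs. The first is injectivity in each degree: $H^i_\Nis(X,F)$ injects into $\bigoplus_{x\in X^{(i)}}H^i_x(X,F)$ for $X$ smooth semi-local; the degree-$0$ case is Theorem~\ref{thm:inj.intro}, and the higher-degree cases should follow once the cohomology presheaves are known to have reciprocity (the inductive hypothesis). The second is codimension-one purity: for $X$ smooth semi-local and $Z\subset X$ a smooth irreducible divisor, $H^i_Z(X,F)=0$ for $i\neq 1$, with a Gysin isomorphism $H^1_Z(X,F)\cong F_Z$, where $F_Z$ is the ``restriction of $F$ along $Z$''. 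A d\'evissage on the support then reduces the whole statement to these two inputs together with the inductive hypothesis applied along $Z$.

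The main point, and the step I expect to be the real obstacle, is this purity statement, where reciprocity must do the work that homotopy invariance does in Voevodsky's setting. In the homotopy invariant case one shows that a section of $F$ near $Z$ which vanishes along $Z$ is locally trivial by writing down an $\A^1$-homotopy of correspondences contracting the neighbourhood onto $Z$. Here the interval $\A^1$ should be replaced by the cube $(\P^1,\infty)$ regarded as a modulus pair, and the homotopy must be performed inside \emph{admissible} correspondences for suitable modulus pairs: given a section $a$ with modulus $Y$ (Theorem~\ref{thm:main-rep.intro}), the conductor $Y$ is what forces the family of correspondences one needs to stay admissible, at the cost of enlarging the modulus. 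Making this precise — a ``moving lemma with modulus'' controlling admissibility under the deformations produced by the presentation lemma — is the crux; it is exactly the kind of statement that seems to require the triangulated category of motives with modulus envisaged in the introduction, with $(\P^1,\infty)$-invariance in place of $\A^1$-invariance.

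Finally, once purity is available in dimensions $<\dim X$, the Gersten resolution identifies $H^i_\Nis(-,F)$ Nisnevich-locally with a direct sum of pushforwards of the sheaves $F_Z$ along closed immersions; each $F_Z$ has reciprocity by the inductive hypothesis, and reciprocity is stable under finite pushforward, so $H^i_\Nis(-,F)$ has reciprocity, which is (2); a transfer structure on it is then constructed as in Theorem~\ref{thm:ZarNis.intro}(1). Universal exactness in the sense of \cite{grayson} is built into the argument, because the construction of the resolution via the presentation lemma is functorial in filtered colimits of smooth schemes. The identity $H^i_\Zar=H^i_\Nis$ follows from Theorem~\ref{thm:inj.intro}(2)--(3) applied to the (now reciprocity) presheaves $H^i_\Nis(-,F)$, exactly as parts (2)--(3) there were deduced from part (1), and the induction closes.
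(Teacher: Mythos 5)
This statement is not a theorem of the paper: it is stated as an open conjecture (the authors only note that it is known when $F$ is homotopy invariant, by Voevodsky's results), so there is no proof in the paper to compare against, and your proposal should be judged as a research plan rather than as a proof. As a plan it correctly identifies the shape of Voevodsky's argument (coniveau spectral sequence, Gabber/Quillen presentation, simultaneous induction on dimension), but it does not constitute a proof, because the step you yourself flag as ``the real obstacle'' --- codimension-one purity $H^i_Z(X,F)=0$ for $i\neq 1$ with a Gysin isomorphism $H^1_Z(X,F)\cong F_Z$, to be obtained from a ``moving lemma with modulus'' --- is exactly the open content of the conjecture and is nowhere established. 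Reciprocity gives you, for each individual section $a$, \emph{some} modulus $Y$ depending on $a$; it does not give a uniform $(\P^1,\infty)$-invariance or a contracting family of admissible correspondences, and the paper's own results (Theorem \ref{thm:P1invariance-pretheory} is only degree-$0$ $\P^1$-invariance, and Remark \ref{r10.1} explains why the non-uniformity of the modulus already obstructs building a global chain homotopy even in the Mayer--Vietoris situation) indicate that this is a genuine difficulty, not a routine adaptation.

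Several auxiliary claims in your last paragraph are also unsupported: that reciprocity is stable under finite pushforward along a closed immersion $Z\hookrightarrow X$ (note $F_Z$ is not even defined --- $Z$ need not carry a canonical reciprocity structure induced from $F$); that the higher-degree injectivity statements follow from Theorem \ref{thm:inj.intro} ``once the cohomology presheaves are known to have reciprocity'' (this is circular with part (2) of the conjecture unless the induction is set up much more carefully); and that universal exactness in Grayson's sense is ``built into the argument.'' So the proposal is a reasonable sketch of how one might attack the conjecture, but it contains no new input at the decisive point, and the statement remains open.
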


Note that the conjecture is known if $F$ is homotopy invariant 
(\cite[Th. 4.37, 5.6 and 5.7]{voepre}).

As remarked before, there are objects of $\Rec$ which are not homotopy invariant.
But we show the following result in \S \ref{P1inva}.

\begin{theorem}[see Theorem \protect{\ref{thm:P1invariance-pretheory}}] \label{thm:P1invariance}
Let $F$ be a reciprocity presheaf which is separated for the Zariski topology.
Then $F$ is $\P^1$-invariant, namely for any $X\in \Sm$, the projection $p_X:\P^1_X \to X$
induces $p_X^*: F(X) \iso F(\P^1_X)$.
\end{theorem}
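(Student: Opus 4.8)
The plan is to reduce $\P^1$-invariance to the computation of $h(\P^1,\{0,\infty\})$, or rather of the universal modulus object attached to the standard pair. The key observation is that $\P^1_X \to X$ has the two sections $0$ and $\infty$, so $p_X^*$ is split injective once we know it is injective; by Theorem~\ref{thm:inj.intro}(2) (applied to the separated-for-Zariski hypothesis, which is exactly what makes $F\to F_{\Zar}$ injective on a variety) it suffices to prove injectivity of $F(X)\to F(\P^1_X)$ after Zariski sheafification, and hence — since $\A^1_X\subset \P^1_X$ is open dense — to prove that the restriction $F(\P^1_X)\to F(\A^1_X)$ together with $p_X^*$ forces any class pulled back from $X$ and vanishing on $\A^1_X$ to vanish. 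Equivalently, I want to show that a section $a\in F(\P^1_X)$ which restricts to $0$ on one of the two sections, say $\infty$, and whose restriction to $\A^1_X$ comes from $F(X)$, is itself $0$: this will be handled by the reciprocity/modulus machinery.

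The heart of the argument is the following: for $a\in F(\P^1_X)$, reciprocity (Theorem~\ref{thm:main-rep.intro}, representability) provides an effective modulus, i.e. a closed subscheme $D\subset \P^1_X\times\Xb$-type compactification supported away from the smooth quasi-affine locus. The point is that on $\P^1$ the only relevant boundary is $\{0\}\cup\{\infty\}$, and the modulus condition on the affine line $\A^1$ with boundary a multiple $m\cdot\{\infty\}$ is, by the Rosenlicht–Serre computation (Theorem~\ref{thm.RS}) together with the curve case of Theorem~\ref{thm:main-rep.intro}, governed by $h(\P^1, m\{\infty\})$, which one computes to be homotopy invariant along $\A^1$ in the sense that sections with such a modulus and trivial at $0$ and $\infty$ extend uniquely — concretely, $\mathbb{G}_a$-type contributions are killed because $f\mapsto f$ on $\sO_{\P^1,\infty}^\times$ with the modulus forces constancy. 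So the strategy is: pull $a$ back along the "addition/homotopy" correspondence $\P^1_X\times\A^1 \to \P^1_X$ (the standard contraction of $\P^1$ onto a point through $\A^1$, e.g. $(t,s)\mapsto st$), observe that this correspondence is compatible with the moduli (the pullback of $D$ along it still has support away from the open part, because the contraction sends $\{0\}$ and $\{\infty\}$ to $\{0\}$ and $\{\infty\}$), and then evaluate at $s=0$ and $s=1$ using the two sections to conclude $a = p_X^* i^* a$ where $i$ is the section at, say, $0$.

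Concretely, the steps I would carry out, in order: (1) reduce to $X$ local (or even $X=\Spec$ of a field, then semi-local) via Theorem~\ref{thm:inj.intro} and the separatedness hypothesis, so that I may work with honest sections rather than sheafified ones; (2) recall that $p_X^*$ is split by $0^*$ (the section at $0$), hence it remains to prove $p_X^*$ surjective, i.e. that every $a\in F(\P^1_X)$ equals $p_X^* 0^* a$; (3) construct the finite correspondence $h\colon \P^1_X\times\A^1 \dashrightarrow \P^1_X$ (or rather its closure, a finite correspondence in $\Cor$) realizing the contraction $t\mapsto st$, and check that $a':=h^* a\in F(\P^1_X\times\A^1)$ makes sense and restricts to $a$ at $s=1$ and to $p^*0^*a$ at $s=0$; (4) show $a'$ lies in the subgroup of $F$-sections on $\P^1_X\times \A^1$ admitting a modulus supported on $(\{0\}\cup\{\infty\})\times \P^1$ (no boundary in the $\A^1$-direction), using that $F$ has reciprocity and that $h$ extends to a morphism of modulus pairs $(\P^1\times\P^1, (\{0\}\cup\{\infty\})\times\P^1)\to(\P^1,\{0\}\cup\{\infty\})$ because the contraction is a morphism there; (5) invoke the key lemma — which is where the modulus formalism does the work — that any section of $F$ on $\P^1_X\times\A^1$ with modulus along $\{0,\infty\}\times\P^1$ and agreeing at $s=0,1$ with classes that are pulled back from $X$ must have its two specializations equal, i.e. $s=0$ and $s=1$ give the same element. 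Combining (3)–(5) gives $a = p_X^*0^*a$, proving $\P^1$-invariance.

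The main obstacle is step (4)–(5): one must verify that the contraction correspondence is genuinely a morphism of modulus pairs — in particular that the pullback of the modulus divisor $D$ (from the reciprocity of $a$) along $h$ has no component along the new $\A^1$-direction — and then one needs the input that a reciprocity section on $(\P^1)^2_X$ with such a bi-modulus, trivialized suitably on the boundary sections, is insensitive to the $\A^1$-parameter. This last point is essentially a relative version of the curve computation $h(\P^1, m\{\infty\})$ combined with the semicontinuity/specialization properties of the modulus established in the proof of Theorem~\ref{thm:main-rep.intro}; morally it says that $\P^1$-invariance is the "zeroth-order" shadow of the fact that $\G_a$ has modulus bounded by the boundary degree, so that no $\G_a$-direction survives after contracting along $\A^1$. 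I expect this to require the careful bookkeeping of moduli under composition with correspondences that occupies the sections preceding Theorem~\ref{thm:P1invariance-pretheory}.
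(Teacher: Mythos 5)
There is a genuine gap, and it sits exactly where you locate the ``heart'' of your argument, in steps (3)--(5). The contraction $(t,s)\mapsto st$ does \emph{not} define a finite correspondence from $\P^1_X\times\A^1$ to $\P^1_X$: in the chart $t'=1/t$ the closure of its graph is $\{ut'=s\}$, whose fibre over the point $(t',s)=(0,0)$, i.e.\ over $(\infty,0)$, is all of $\P^1$; the closure is therefore not finite over the source. This is the standard obstruction to $\A^1$-contracting $\P^1$ even with transfers, and no modulus bookkeeping repairs it — if such a correspondence existed, your argument would prove $\P^1$-invariance for \emph{every} homotopy invariant presheaf with transfers by pure $\A^1$-homotopy, with no use of reciprocity at all. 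Step (5) has an independent problem: comparing the specializations at $s=0$ and $s=1$ via the reciprocity relation amounts to applying it to the unit $s/(s-1)$ on the compactified parameter line, and $s/(s-1)=1+s^{-1}+\dots$ lies in $G(\P^1,m\{\infty\})$ only for $m=1$. A reciprocity section is only guaranteed \emph{some} modulus, possibly non-reduced at $s=\infty$ (this is precisely how $\G_a$ escapes homotopy invariance), so the two specializations of $a'$ need not agree. Finally, $\P^1_X\times\A^1$ is not quasi-affine, so it does not even fit into a modulus pair in the sense of the definition; the relevant notion here is the relative one (weak modulus for $\A^1_S/S$).

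The paper's proof avoids both issues by splitting the statement differently. First it proves $\P^1$-\emph{rigidity}, namely $i_0^*=i_\infty^*$ on $F(\P^1_S)$ (Proposition \ref{p8.1}): apply weak reciprocity to $\A^1_S/S$ with its two good compactifications $j_0$ and $j_\infty$ and to the units $1+t^{-n}$ and $1+t^{n}$, which for $n\gg 0$ satisfy the required congruence for whatever weak modulus reciprocity supplies; since $(1+t^n)/(1+t^{-n})=t^n$ has divisor $n(0)-n(\infty)$, one gets $n(i_0^*a-i_\infty^*a)=0$, and running this for both $n$ and $n+1$ removes the coefficient. Then rigidity is upgraded to invariance using only Zariski separatedness (Proposition \ref{p2.1.2}), via the two genuine morphisms $\phi_1,\phi_2:\P^1\times\A^1_S\to\P^1_S$ given by $(u_0:u_1,t)\mapsto(u_0:u_1+tu_0)$ and its flip, together with the covering of $\P^1_S$ by the two affine charts. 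If you want to salvage your outline, replace the contraction correspondence by this rigidity statement: the functions $1+t^{\pm n}$ play the role you wanted the homotopy to play, and they work for arbitrary (non-reduced) moduli precisely because $n$ can be taken large.
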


This is related to the approach to Gersten's conjecture in \cite[\S 5.4]{cthk} (where the perfectness of $k$ is not required).
Note that $\P^1$-in\-var\-iance is weaker than reciprocity
(Remark \ref{rem:rec-vs-p1inv}).

\subsection*{Acknowledgements} We thank Rin Sugiyama for pointing out a mistake in our initial proof of \eqref{eq.PhiPST}.

\newpage

\numberwithin{equation}{section}

\section{Notation}\label{s1}


\setcounter{subsection}{1}

In the whole paper we fix a base field $k$. 
Let $\Sm$ be the category of separated smooth schemes of finite type over $k$.
For $S\in \Sm$, let $\Sch/S$ be the category of schemes of finite type over $S$. We put $\Sch := \Sch/\Spec k$.

\begin{para}\label{G(C,D)}
For an integral scheme $\Cb$ and a closed subscheme $D\subset \Cb$, we put
\begin{equation}\label{eq:def-of-gcd}
\begin{aligned}
G(\Cb,D) & = 
 \underset{x\in D}{\bigcap} \Ker\big(\sO_{\Cb,x}^\times \to \sO_{D,x}^\times\big)\\
&= \indlim {D\subset U\subset \Cb} \Gamma(U,\Ker(\sO_U^* \to \sO_{D}^*)), \\
\end{aligned}
\end{equation}
where $U$ ranges over open subsets of $\Cb$ containing $D$.
\end{para}

\begin{para}\label{c(X/S)}
Let $S\in \Sm$.
For $X\in \Sch/S$, $c(X/S)$ denotes the free abelian group on the set of  closed integral subschemes of $X$ which are finite over $S$ and surjective over a component of $S$: 
this group is denoted by $C_0(X/S)$ in \cite[\S 3]{sus-voe2} and by $c_{equi}(X/S,0)$ in \cite[\S 2]{voepre}. 

For any morphism $f:T\to S$ in $\Sm$, there is a homomorphism 
\[
f^*:c(X/S)\to c(X\times_S T/T)
\]
called pull-back of cycles (see \cite[\S 3.5]{SV1} and \cite[p. 90]{voepre}, where $f^*$ is denoted by $cycl(f)$; see also \cite[Ex. 1A.12]{mvw}).

For a morphism $f : X \to Y$ in $\Sch/S$, a push-forward 
\[
f_* : c(X/S) \to c(Y/S)
\]
is defined as follows (see \cite[Paragraph before Prop. 2.1]{voepre}):
let $Z \subset X$ be a closed integral subscheme which belongs to $c(X/S)$.
Since it is finite over $S$, its image $f(Z)$ in $Y$ defines an element of $c(Y/S)$,
and $f|_Z : Z \to f(Z)$ is finite and surjective.
We then define $f_*(Z) = \deg(f|_Z) f(Z)$.
\end{para}

\begin{para}\label{PST}
Recall the category $\Cor$ of finite correspondences and the category of presheaves with transfers $\PST$  \cite[\S 2.1, \S 3.1]{voetri}: the objects of $\Cor$ are those of $\Sm$ and for $X,Y\in \Sm$, the group of morphisms 
$\Cor(X,Y)$ is $c(X\times Y/X)$. The category $\PST$ is the category of contravariant functors $\Cor \to \Ab$.

Let $\widetilde{\Sm}$ be the category of
$k$-schemes $X$ which are written as limits 
$X= \lim_{i \in I} X_i$ over a filtered set $I$ 
where $X_i \in \Sm$  
and all transition maps are open immersions.
We frequently allow $F\in \PST$ to take values 
on objects of $\widetilde{\Sm}$ by
$F(X) := \colim_{i \in I} F(X_i)$.

\end{para}

\section{Reciprocity sheaves and representability}\label{representability}


In this section we introduce reciprocity sheaves, our fundamental objects of study in this paper, and prove Theorem \ref{thm:main-rep.intro}.

\subsection{Definition of reciprocity sheaves}\label{s2.1}

\begin{defn}\label{def.moduluspair}
A modulus pair is a pair $(\Xb,Y)$, 
where $\Xb\in \Sch/k$ is integral and proper over $k$ 
and $Y\subset X$ is 
a closed subscheme such that $X=\Xb-|Y|$ is quasi-affine and smooth over $k$,
where $|Y|$ is the support of $Y$.
\end{defn}

Let $(\Xb, Y)$ be a modulus pair with $X=\Xb-|Y|$.
For $S\in \Sm$ we consider commutative diagrams
(which we denote by $(\phi : \Cb \to \Xb \times S)$)
\begin{equation}\label{eq:diag-modulus-cond}
\xymatrix{
&\Xb\\
\Cb \ar[r]^{\phi}\ar[ru]^{\gamma_\phi} \ar[rd]_{p_{\phi}} &
\Xb \times S \ar[d]\ar[u]
\\
& S
}
\end{equation}
where
\begin{enumerate}
\item[(A)]
$\Cb \in \Sch$ is integral normal and $\phi$ is a finite morphism.
\item[(B)]
For some generic point $\eta$ of $S$,  $\dim \Cb \times_S \eta=1$.
\item[(C)]  The image of $\gamma_{\phi}$ is not contained in $Y$.
\end{enumerate}

These conditions imply that $p_{\phi}$ is proper and surjective over a connected component of $S$. 

Let  $G(\Cb,\gamma_{\phi}^*Y)$ be as in \eqref{eq:def-of-gcd} for $D =\gamma_{\phi}^*Y=\phi^*(Y\times S)$.
We will see in \S \ref{sect:const-div-g} that the divisor map on $\Cb$ induces
\begin{equation}\label{eq:div-modulus}
\div_{\Cb}: G(\Cb,\gamma_{\phi}^*Y) \to c(C/S),
\end{equation}
where $C=\Cb-|\gamma_{\phi}^*Y|$ and $c(C/S)$ is defined in \ref{c(X/S)}.

\begin{defn}\label{def.modulus}
Let $F$ be a presheaf with transfers, $(\Xb, Y)$ a modulus pair with $X=\Xb-|Y|$, and $a \in F(X)$.
We say $Y$ is a modulus for $a$ (or $a$ has modulus $Y$) if,
for any diagram \eqref{eq:diag-modulus-cond}, we have
\begin{equation}\label{eq.modulusdef}
(\phi_* \div_{\Cb}(f))^*(a) = 0 ~\text{in}~ F(S) \quad\text{ for any } f\in G(\Cb,\gamma_{\phi}^*Y), 
\end{equation}
where $\phi_* : c(C/S) \to c(X\times S/S)=\Cor(S, X)$ (see \ref{c(X/S)}).
\end{defn}

\begin{definition}\label{def.reciprocity}
We say $F \in \PST$ has {\it reciprocity} 
(or $F$ is a {\it reciprocity presheaf})
if for any quasi-affine $X\in \Sm$, 
any $a \in F(X)$, and any open immersion $X\hookrightarrow \Xb$ 
with $\Xb$ integral proper over $k$, 
$a$ has modulus $Y$ for 
some closed subscheme $Y\subset \Xb$ such that $X=\Xb-|Y|$.
Let $\Rec$ be the full subcategory of $\PST$
consisting of reciprocity presheaves.
\end{definition}

\begin{remark}
It is evident that $\Rec$ is closed in $\PST$ under
taking sub and quotient objects,
so that $\Rec$ is an abelian subcategory of $\PST$.
\end{remark}

We now state the main result of this section.

\begin{thm}\label{thm:main-rep}
Let $(\Xb, Y)$ a modulus pair with $X=\Xb-|Y|$.
\begin{enumerate}
\item
The functor
\[ \PST \to \Ab, \quad
F \mapsto  \{ a \in F(X) ~|~ a ~
\text{has modulus $Y$} \}
\]
is represented by a presheaf with transfers
$h(\Xb, Y)$.
\item
Suppose that $Y$ is a Cartier divisor on $\Xb$.
Then $h(\Xb, Y)$ has reciprocity.
\end{enumerate}
\end{thm}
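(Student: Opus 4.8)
The plan is to construct $h(\Xb,Y)$ explicitly as a cokernel in $\PST$ and then verify the universal property and the reciprocity assertion separately. For part (1), I would define, for each $S\in\Sm$, the group $\Phi(\Xb,Y)(S)$ to be the direct sum $\bigoplus_{(\phi:\Cb\to\Xb\times S)} G(\Cb,\gamma_\phi^*Y)$ over all diagrams \eqref{eq:diag-modulus-cond} satisfying (A), (B), (C). The maps $\div_{\Cb}$ of \eqref{eq:div-modulus} followed by the push-forwards $\phi_*:c(C/S)\to\Cor(S,X)$ assemble into a map $\partial:\Phi(\Xb,Y)\to\Ztr(X)$, and one sets $h(\Xb,Y):=\Coker(\partial)$. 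The first task is to check that $\Phi(\Xb,Y)$ is genuinely an object of $\PST$, i.e.\ that it is functorial for finite correspondences: given $T\to S$ (or more generally an elementary correspondence in $\Cor(T,S)$), one must pull back the diagrams \eqref{eq:diag-modulus-cond}, which amounts to base-changing $\Cb$ along $T\to S$, normalizing, and checking that conditions (A)--(C) are preserved and that $G(\Cb,\gamma_\phi^*Y)$ and $\div_{\Cb}$ behave compatibly under this operation; this is the analogue of the pull-back of cycles recalled in \ref{c(X/S)}, and it is here that the quasi-affineness of $X$ and the normality hypothesis in (A) are used to keep everything well-defined. Once $\partial$ is a morphism in $\PST$, the universal property is essentially a tautology: by Yoneda, $\Hom_{\PST}(\Ztr(X),F)=F(X)$, and a section $a\in F(X)$ extends to $\Hom_{\PST}(h(\Xb,Y),F)$ if and only if $a\circ\partial=0$, which by the definitions \ref{def.modulus} of ``$a$ has modulus $Y$'' is exactly the vanishing \eqref{eq.modulusdef} for all diagrams \eqref{eq:diag-modulus-cond}.

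For part (2), assume $Y$ is a Cartier divisor. I must show $h(\Xb,Y)$ itself has reciprocity, i.e.\ for any quasi-affine $W\in\Sm$, any $b\in h(\Xb,Y)(W)$, and any compactification $W\hookrightarrow\Wb$ with $\Wb$ integral proper, there is a closed $Z\subset\Wb$ with $W=\Wb-|Z|$ which is a modulus for $b$. By construction $b$ is represented by an element of $\Ztr(X)(W)=\Cor(W,X)$, i.e.\ by a finite correspondence $\alpha$, modulo the image of $\partial$. The idea is to take for $b$ the canonical class of $\alpha$ and produce $Z$ from the geometry of $\alpha$ together with the divisor $Y$: intuitively, $Z$ should be a modulus that ``dominates'' $\phi^*Y$ along the components of $\alpha$ once one spreads $\alpha$ out over a compactification. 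Concretely I would: (i) reduce to the case where $\alpha$ is a single integral closed subscheme $V\subset X\times W$ finite surjective over $W$; (ii) take the closure $\Vb$ of $V$ in $\Xb\times\Wb$ and its normalization $\Vb'$, with projections $q:\Vb'\to\Xb$ and $r:\Vb'\to\Wb$; (iii) use the Cartier divisor $q^*Y$ on $\Vb'$ to define, via a suitable norm/push-forward construction relative to the generically finite map $r$, a closed subscheme $Z\subset\Wb$ supported on $\Wb-W$; and (iv) verify the modulus condition \eqref{eq.modulusdef} for $(\Wb,Z)$ by pulling back an arbitrary test diagram \eqref{eq:diag-modulus-cond} $\psi:\Db\to\Wb\times S$ through $\Vb'$, thereby producing a test diagram for $(\Xb,Y)$, to which the defining relation $\partial=0$ in $h(\Xb,Y)$ applies. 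The Cartier hypothesis is what makes step (iii) legitimate, since it guarantees $q^*Y$ is again an effective Cartier divisor on the normal scheme $\Vb'$ and that its push-forward along $r$ makes sense; without it one cannot control pull-backs of $Y$ along arbitrary curves.

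The main obstacle I anticipate is step (2)(iii)--(iv): constructing $Z$ functorially enough that the modulus condition transfers, and in particular checking the compatibility $\psi_*\div_{\Db}(g)$ for a function $g\in G(\Db,\psi^*Z)$ with the relation imposed by $\partial$ on $h(\Xb,Y)$. This requires a ``transitivity of moduli'' statement: if a correspondence factors through $V$ and $V$ has been compactified with a modulus bounding $q^*Y$, then composing a curve-test for $Z$ with $V$ yields a curve-test for $Y$ in the sense of \ref{def.modulus}, so that the image $\partial(g)$ lands in the subgroup already killed in $h(\Xb,Y)(S)$. Making this precise amounts to a careful analysis of how $G(-,-)$ and $\div$ interact under finite push-forward and normalization of fibre products of curves, essentially a relative version of the classical fact (behind Theorem \ref{thm.RS}) that a modulus for $\alpha\circ\psi$ can be read off from a modulus for $\alpha$. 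I expect the bookkeeping here—keeping track of which components of the various normalized curves dominate which base components, and ensuring conditions (A)--(C) survive—to be the technically delicate heart of the argument, whereas the representability in part (1) and the Yoneda reformulation of the universal property should be formal.
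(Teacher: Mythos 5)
Part (1) of your proposal coincides with the paper's construction: the same presheaf $\Phi(\Xb,Y)$, the same map $\tau=\phi_*\circ\div_{\Cb}$ into $\Ztr(X)$, and the same Yoneda tautology for the universal property. The only substantive work is the functoriality of $\Phi(\Xb,Y)$ for finite correspondences, which you correctly anticipate; the paper handles it by restricting to the subset $\Lambda(\phi,Z)$ of components of $\Cb\times_S Z$ that are generically one-dimensional and not contained in the boundary, and by showing (Lemma \ref{acHI.lem1}) that the divisors of the relevant functions never meet the discarded components.

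For part (2) your overall strategy (lift $b$ to $\alpha\in\Cor(W,X)$, compactify $\alpha$ in $\Xb\times\overline{W}$, choose $Z$ on $\overline{W}$ dominating $q^*Y$, transfer test curves) is the paper's, but steps (iii)--(iv) contain a genuine gap and a misdiagnosis. First, $Z$ should not be obtained by any norm or push-forward of $q^*Y$ along $r$: what the transitivity argument needs is the inequality $r^*Z\geq q^*Y$ of closed subschemes on $\Vb'$, and $r_*q^*Y$ has no reason to satisfy it. Since $q^*Y$ is supported on $r^{-1}(\overline{W}-W)$ (Lemma \ref{l1.1}), it suffices to take $Z$ to be a high enough infinitesimal thickening of $(\overline{W}-W)_{\red}$ (Lemma \ref{lem:admissible-induce-mor}); no Cartier hypothesis is involved here. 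Second, and more seriously: when you pull a test diagram $\psi:\overline{D}\to\overline{W}\times S$ back through $\Vb'$, the resulting curve maps to $\Xb\times S$ only \emph{properly}, not finitely, so it is \emph{not} a test diagram for $(\Xb,Y)$ in the sense of conditions (A)--(C), and the element of $\Phi(\Xb,Y)(S)$ you need does not yet exist. One must pass to the Stein factorization of this proper map and descend the function $g$ from the normalized fibre-product curve to the finite part of the factorization. That descent is Proposition \ref{lem:pullback} of the paper, namely $G(\Cb,D)\iso G(\Cb',D')$ for $f:\Cb'\to\Cb$ proper surjective with $f_*\sO_{\Cb'}=\sO_{\Cb}$ and $D$ an effective Cartier divisor, and \emph{this} is where the hypothesis that $Y$ is Cartier actually enters — not, as you suggest, in making a push-forward of $q^*Y$ well defined. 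Without this step your argument cannot produce the element of $\Phi(\Xb,Y)(S)$ whose image is $\alpha\circ(\psi_*\div_{\overline{D}}(g))$, so the required relation in $h(\Xb,Y)(S)$ is not established.
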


This theorem is proven as follows.
The object $h(\Xb, Y)$ is constructed 
as the cokernel of a map
$\tau: \Phi(\Xb, Y) \to \Z_{\tr}(X)$ in $\PST$,
which we describe in \S \ref{sect:reformulation}
(see Proposition \ref{prop:rep-sheaf-is-a-pst}).
After proving auxiliary lemmas in \S \ref{sect:aux-lemmas},
we construct $\Phi(\Xb, Y)$ and $\tau$
in \S \ref{sect:pf-of-prop-rep-sheaf-is-a-pst}
and in \S \ref{sect:const-div-g} respectively.
Then (1) becomes obvious from its construction.
The proof of (2) occupies
\S \ref{sect:adm-corr}--\ref{s2.9}.
We introduce a notion of {\it admissible correspondences}
in \S \ref{sect:adm-corr}.
Using a preliminary result proven in \S \ref{sect:inv-prop-g},
we show in \S \ref{secti:functoriality-Phi} a functoriality of $\Phi(\Xb, Y)$
with respect to admissible correspondences,
from which (2) is deduced in \S \ref{s2.9}.

\begin{remark}\label{rem:universality-hxy}
Let $(\Xb, Y)$ and $X$ be as in Theorem \ref{thm:main-rep} (1).
By definition, 
there is a surjection $\pi : \Z_{\tr}(X) \to h(\Xb, Y)$
with the following universal property:
let $F \in \PST$ and $a \in F(X)$,
which we regard as a morphism $a : \Z_{\tr}(X) \to F$
by Yoneda's lemma.
Then $a$ factors through $\pi$ if and only if
$Y$ is a modulus for $a$.
\end{remark}

\begin{cor}\label{coro:reduction-to-cartier}
Let $F \in \PST$. Then $F$ has reciprocity if for any quasi-affine $X \in \Sm$ and any $a \in F(X)$,
there is a modulus pair $(\Xb, Y)$ with $X=\Xb-|Y|$
such that $Y$ is a Cartier divisor on $\Xb$ and $Y$ is a modulus for $a$.
\end{cor}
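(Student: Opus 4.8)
The plan is to use Theorem \ref{thm:main-rep} to upgrade the hypothesis --- which produces a good modulus pair for merely \emph{one} compactification of $X$ --- to the ``for every compactification $\Xb$'' statement demanded by Definition \ref{def.reciprocity}. The key point is that when the boundary $Y_0$ is a Cartier divisor, the representing object $h(\Xb_0,Y_0)$ is itself a reciprocity presheaf, so reciprocity can be transported from $\Xb_0$ to all other compactifications through a morphism $h(\Xb_0,Y_0)\to F$.

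First I would record an elementary functoriality property of the modulus condition: if $g\colon F\to F'$ is a morphism in $\PST$, $(\Xb,Y)$ is a modulus pair with $X=\Xb-|Y|$, and $a\in F(X)$ has modulus $Y$, then $g_X(a)\in F'(X)$ has modulus $Y$. This is immediate from Definition \ref{def.modulus}: for any diagram \eqref{eq:diag-modulus-cond} and any $f\in G(\Cb,\gamma_\phi^*Y)$ we have $(\phi_*\div_{\Cb}(f))^*(g_X(a))=g_S\big((\phi_*\div_{\Cb}(f))^*(a)\big)=g_S(0)=0$, since morphisms in $\PST$ are compatible with the action of finite correspondences.

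Now let $X\in\Sm$ be quasi-affine, $a\in F(X)$, and fix an arbitrary open immersion $X\hookrightarrow\Xb$ with $\Xb$ integral and proper over $k$. By hypothesis, choose a modulus pair $(\Xb_0,Y_0)$ with $X=\Xb_0-|Y_0|$, $Y_0$ a Cartier divisor, and $Y_0$ a modulus for $a$. By Remark \ref{rem:universality-hxy}, the morphism $\Z_{\tr}(X)\to F$ corresponding to $a$ under Yoneda factors as $\bar a\circ\pi$ through the universal surjection $\pi\colon\Z_{\tr}(X)\to h(\Xb_0,Y_0)$, for some $\bar a\colon h(\Xb_0,Y_0)\to F$; let $\bar\pi\in h(\Xb_0,Y_0)(X)$ be the element corresponding to $\pi$ under Yoneda, so that $\bar a_X(\bar\pi)=a$. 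Since $Y_0$ is a Cartier divisor, $h(\Xb_0,Y_0)$ has reciprocity by Theorem \ref{thm:main-rep}(2). Applying this to $\bar\pi$ and to the chosen compactification $X\hookrightarrow\Xb$ produces a closed subscheme $Y\subset\Xb$ with $X=\Xb-|Y|$ such that $Y$ is a modulus for $\bar\pi$; by the functoriality property applied to $\bar a$, $Y$ is then a modulus for $\bar a_X(\bar\pi)=a$. As $\Xb$ was arbitrary, this shows $F$ has reciprocity.

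I do not expect a serious obstacle here: essentially all the difficulty has been absorbed into Theorem \ref{thm:main-rep}, and the only new ingredient, the functoriality property above, is a triviality. The genuinely hard statement is rather Theorem \ref{thm:main-rep}(2) --- that a Cartier modulus makes $h(\Xb_0,Y_0)$ a reciprocity presheaf --- since it is exactly what permits the passage from the single ``good'' compactification $\Xb_0$ to an arbitrary $\Xb$.
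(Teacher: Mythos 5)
Your proof is correct and is essentially the paper's own argument: both factor $a\colon\Z_{\tr}(X)\to F$ through $\pi\colon\Z_{\tr}(X)\to h(\Xb_0,Y_0)$, invoke Theorem \ref{thm:main-rep}(2) to get a modulus $Y$ on the arbitrary compactification for the universal element, and then transport it to $a$. The only cosmetic difference is that the paper phrases the last step via a second factorization through $h(\Xb,Y)$ (Remark \ref{rem:universality-hxy}), whereas you make explicit the equivalent functoriality of the modulus condition under morphisms in $\PST$.
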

\begin{proof}
Let $X \in \Sm$ be quasi-affine
and $a \in F(X)$.
Let $(\Xb, Y)$ be a modulus pair given by the hypothesis.
Take any open immersion $X\hookrightarrow \Xb'$ with $\Xb'$ integral proper over $k$.
We need to find a closed subscheme $Y' \subset \Xb'$ such that $|Y'|=\Xb'-X$ and that $Y'$ is a modulus for $a$.
Let $a : \Z_\tr(X) \to F$ be the map in $\PST$ corresponding to $a\in F(X)$.
By the assumption it factors as
\[ \Ztr(X) \overset{\pi}{\to} h(\Xb, Y) \overset{\theta}{\to} F. \]
Noting that $Y$ is a Cartier divisor on $\Xb$, $h(\Xb, Y)$ has reciprocity by Theorem \ref{thm:main-rep}(2).
Hence there exists a closed subscheme $Y' \subset \Xb'$ such that $|Y'|=\Xb'-X$ and that $Y'$ is a modulus 
for $\pi \in h(\Xb, Y)(X)$. This implies that $\pi$ factors through
$\Ztr(X)\to h(\Xb',Y')$ and hence so does $a$, which means $Y'$ is a modulus for $a$.
\end{proof}

\begin{remark}\label{rem:resol}
For any given $X\in\Sm$, we can use Nagata's compactification theorem, then blowup and normalization to find a modulus pair $(\Xb,Y)$ 
such that $\Xb$ is normal and $Y$ is a Cartier divisor.
If $\ch(k)=0$, then $\Xb$ can be taken to be smooth and $\Xb-X$ to be the support of
a normal crossing divisor on $\Xb$.
\end{remark}

%

\subsection{Reformulation of Theorem \ref{thm:main-rep} (1)}
\label{sect:reformulation}
\begin{definition}\label{def:relative-g-sheaf}
Let $(\Xb, Y)$ be a modulus pair.
For $S\in \Sm$, let $\sC_{(\Xb, Y)}(S)$ be the  class of all morphisms $\phi : \Cb \to \Xb \times S$
satisfying Conditions (A), (B), (C) of \S \ref{s2.1}, and put 
(see \eqref{eq:diag-modulus-cond})
\[  \Phi(\Xb, Y)(S) = 
\bigoplus_{(\phi : \Cb \to \Xb \times S) \in \sC_{(\Xb, Y)}(S)} 
G(\Cb, \gamma_{\phi}^* Y) 
\]

\end{definition}

For $( \Cb \by{\phi} \Xb \times S) \in \sC_{(\Xb, Y)}(S)$
and $f \in G(\Cb, \gamma_{\phi}^*Y)$,
we prove that $\phi_* \div_{\Cb}(f)$ belongs to $\Cor(S, X)$
in \S \ref{sect:const-div-g} below,
thereby obtaining a map
\begin{equation}\label{eq:div-g-cor}
 G(\Cb, \gamma_{\phi}^*Y) \to \Cor(S, X).
\end{equation}

Collecting these maps, we get
\begin{equation}\label{eq:map-g-to-cor}
 \tau_S: \Phi(\Xb, Y)(S) \to \Cor(S, X).
\end{equation}
Theorem \ref{thm:main-rep} (1) follows immediately from the following proposition.

\begin{proposition}\label{prop:rep-sheaf-is-a-pst}
The assignment $S \mapsto \Phi(\Xb, Y)(S)$ gives a presheaf with transfers, and the $\tau_S$ define a morphism 
\[
\tau: \Phi(\Xb, Y) \to \Z_{\tr}(X)\;\;\text{in } \PST.
\]
Moreover, 
\[ h(\Xb, Y) := \Coker(\Phi(\Xb, Y) \to \Z_{\tr}(X)) \in \PST \]
represents the functor in Theorem \ref{thm:main-rep} (1).
\end{proposition}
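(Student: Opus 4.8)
The plan is to verify the three assertions in the order stated, the first two being essentially bookkeeping with correspondences and the last being a formal consequence of the universal property built into the construction.

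\textbf{Step 1: $\Phi(\Xb,Y)$ is a presheaf with transfers.} First I would make precise the functoriality in $S$. Given a finite correspondence $\beta \in \Cor(T,S)$ one must produce a map $\Phi(\Xb,Y)(S)\to\Phi(\Xb,Y)(T)$; it suffices to treat an elementary correspondence, i.e.\ an integral $W\subset T\times S$ finite and surjective over a component of $T$. Given $(\phi:\Cb\to\Xb\times S)\in\sC_{(\Xb,Y)}(S)$, I would form the base change $\Cb\times_S W$, normalize it (or take the disjoint union of normalizations of its integral components), and compose with the projections to get finitely many morphisms $\Cb_j\to\Xb\times T$. One checks Conditions (A), (B), (C) of \S\ref{s2.1} hold for each $\Cb_j$: normality is arranged by construction, the fiber-dimension condition (B) is preserved because $W$ is finite over $T$, and (C) holds because $\gamma_\phi$ avoids $Y$ and finite base change does not change images of generic points into $\Xb$. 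Then pullback of units along $\Cb_j\to\Cb$ sends $G(\Cb,\gamma_\phi^*Y)$ into $G(\Cb_j,\gamma_{\phi_j}^*Y)$, since $\phi_j^*(Y\times T)$ pulls back from $\phi^*(Y\times S)$; summing over $j$ gives the required map. Compatibility with composition of correspondences is then a diagram chase using associativity of fiber products and the projection formula for pullback/pushforward of cycles. I expect the main technical annoyance here to be the standard one with normalization not being a functor on the nose: one must fix, once and for all, the convention (normalization of each integral component, with multiplicities) and check that the induced maps compose correctly; this is where I would spend the most care.

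\textbf{Step 2: $\tau$ is a morphism in $\PST$.} Once $\div_{\Cb}$ and $\phi_*$ are shown (in \S\ref{sect:const-div-g}, which I may assume) to land in $\Cor(S,X)$, I need that $\tau_S$ is natural, i.e.\ for $\beta\in\Cor(T,S)$ the square relating $\tau_S,\tau_T$ and the transition maps on both sides commutes. On the $\Z_{\tr}(X)$ side the transition map is composition $\gamma\mapsto\gamma\circ\beta$; on the $\Phi$ side it is the base-change map of Step 1. Commutativity reduces to the identity $\beta\circ(\phi_*\div_{\Cb}(f)) = \sum_j (\phi_j)_*\div_{\Cb_j}(f\circ(\text{pr}))$ in $\Cor(T,X)$, which is exactly the compatibility of the divisor map and proper pushforward with flat (or, in general, cycle-theoretic) base change — i.e.\ $cycl(\beta)$ commutes with $\div$ and with $\phi_*$ — together with the fact that the pullback of the rational function $f$ to $\Cb_j$ is a unit on the relevant open. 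These are all statements in \cite{SV1}, \cite{voepre}, \cite{mvw} about $cycl$, applied componentwise.

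\textbf{Step 3: the cokernel represents the functor.} Set $h(\Xb,Y)=\Coker(\tau)$ in $\PST$, which exists since $\PST$ is abelian with cokernels computed sectionwise. For $F\in\PST$, a morphism $h(\Xb,Y)\to F$ is by definition a morphism $a:\Z_{\tr}(X)\to F$ with $a\circ\tau = 0$; by Yoneda $a$ is an element of $F(X)$, and the condition $a\circ\tau=0$ says precisely that for every $S$, every diagram \eqref{eq:diag-modulus-cond}, and every $f\in G(\Cb,\gamma_\phi^*Y)$ one has $a\circ(\phi_*\div_{\Cb}(f)) = (\phi_*\div_{\Cb}(f))^*(a) = 0$ in $F(S)$ — which is exactly Definition \ref{def.modulus} of $Y$ being a modulus for $a$. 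Hence $\Hom_{\PST}(h(\Xb,Y),F)$ is naturally identified with $\{a\in F(X)\mid Y$ is a modulus for $a\}$, functorially in $F$, which is the assertion; the surjection $\pi:\Z_{\tr}(X)\to h(\Xb,Y)$ of Remark \ref{rem:universality-hxy} is the cokernel projection. This step is purely formal once Steps 1 and 2 are in place.
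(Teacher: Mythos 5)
Your overall architecture (transfers on $\Phi$, naturality of $\tau$, formal identification of the cokernel's universal property) matches the paper's, and Step 3 is correct and is exactly what the paper does. But Step 1 contains a genuine gap. You assert that \emph{every} irreducible component $\Cb_j$ of $\Cb\times_S W$ satisfies conditions (A), (B), (C), with (B) ``preserved because $W$ is finite over $T$'' and (C) ``because finite base change does not change images of generic points.'' Neither claim is true in general. Since $Z\to S$ (resp.\ $W\to S$) need not be dominant or flat, a component $T$ of $\Cb\times_S Z$ can have generic fibre over $Z$ of dimension $>1$ (fibre dimension of $\Cb\to S$ is only semicontinuous, and over a non-generic point of $S$ it can jump), and a component can be entirely contained in the preimage of $Y$. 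This is why the paper does \emph{not} use all components: it singles out the subset $\Lambda(\phi,Z)$ of components with $\dim T_\eta=1$ and $T\not\subset|\beta^*(\gamma_\phi^*Y)|$ (see \eqref{eq:def-Lambda}), defines $Z^*$ as a sum only over $T\in\Lambda(\phi,Z)$, and then must prove that the discarded components are harmless — i.e.\ that $|\beta^*(\div_{\Cb}(g))|$ meets only components in $\Lambda(\phi,Z)$, and that on those components the pullback $h_T^*(g)$ is defined (the component is not swallowed by the divisor of $g$). This is the content of Lemmas \ref{lem:alpha}, \ref{acHI.lem1} and \ref{oesterle}, which rest on the quasi-affineness of $X$ (forcing $|\div_{\Cb}(g)|$ to be finite over $S$) and on a fibre-dimension estimate; it is the technical heart of the proposition and is absent from your proposal. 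Without it, your ``sum over $j$'' is not even well-defined as a map into $\Phi(\Xb,Y)(T)$, and the compatibility in Step 2 would fail on the bad components.

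Relatedly, in Step 2 you reduce the commutativity of the naturality square to ``compatibility of $\div$ and proper pushforward with cycle-theoretic base change'' and cite it as known. The identity actually needed is $f^*\div_{\Cb}(g)=\sum_{T\in\Lambda(\phi,Z)} m_T\,(i_T)_*\div_{T^N}(h_T^*g)$ in $c(C_Z/Z)$, and the paper proves it rather than cites it: after reducing (via injectivity of $\Cor(S',X)\to\Cor(V,X)$ for $V\subset S'$ dense open, and perfectness of the base) to $Z$ the graph of a flat map or of a regular codimension-one immersion, it invokes Fulton's commutativity of intersection products of Cartier divisors and the formula $D\cdot[W]=\pi_*\div_{W^N}(g|_{W^N})$. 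Your appeal to general properties of $cycl$ is plausible in outline but skips the reduction steps and the intersection-theoretic verification that make the argument correct; also the composition law \eqref{eq.PhiPST} for $\Phi$ itself requires the component-matching and length computation $m_Tm_{T'}=n_jm_{T''}$, which your ``diagram chase'' does not supply. In short: right skeleton, but the dimension-theoretic control of the components of the base change — the part that makes quasi-affineness of $X$ indispensable — is missing.
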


\begin{remark}
Formula \eqref{CHh} is obvious from the definition.
\end{remark}

The proof of this proposition 
is given in \S\S \ref{sect:pf-of-prop-rep-sheaf-is-a-pst}--\ref{sect:const-div-g}.
Before this, we need some preparation which is the object of the next subsection.
Until the end of \S \ref{sect:const-div-g},
we fix a modulus pair $(\Xb,Y)$ and put $X=\Xb-Y$.

\subsection{Auxiliary lemmas}\label{sect:aux-lemmas}
Let $(\phi : \Cb \to \Xb \times S) \in \sC_{(\Xb, Y)}(S)$ with $S$ connected.
Put $C := \Cb - |\gamma_{\phi}^* Y|$.
We consider a condition
for a Cartier divisor $\alpha$ on $\Cb$:
\begin{equation}\label{eq:cartier-div-in-c}
\text{the support $|\alpha|$ of $\alpha$ is contained in $C$.}
\end{equation}

\begin{lemma}\label{lem:alpha}
\begin{enumerate}
\item
If $\alpha$ satisfies \eqref{eq:cartier-div-in-c},
then any irreducible component $D$ of $|\alpha|$
is finite and surjective over $S$.
\item
For any $g \in G(\Cb, \gamma_{\phi}^* Y)$,
$\alpha = \div(g)$ satisfies \eqref{eq:cartier-div-in-c}.
\end{enumerate}
\end{lemma}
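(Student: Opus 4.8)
\textbf{Proof plan for Lemma \ref{lem:alpha}.}

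The plan is to reduce everything to the elementary geometry of a finite morphism $\phi:\Cb\to\Xb\times S$ between integral schemes, using that $\Cb$ is normal and that the fiber over a generic point $\eta$ of $S$ is one-dimensional by Condition (B). For part (1), let $D$ be an irreducible component of $|\alpha|$. Since $|\alpha|\subset C$ by hypothesis and $\gamma_\phi(\Cb)\not\subset Y$ (Condition (C)), the point is that $D$ must dominate $S$: indeed, $p_\phi=pr_S\circ\phi$ is proper (as noted after \eqref{eq:diag-modulus-cond}), and over the generic point $\eta$ the scheme $\Cb\times_S\eta$ is a normal curve, so $\phi$ restricted there is finite onto a subscheme of $\Xb\times\eta$; the divisor $\alpha$, being Cartier of pure codimension one on $\Cb$ and supported in $C$, meets the generic fiber in a zero-dimensional closed subscheme, hence $D\cap(\Cb\times_S\eta)$ has dimension $\dim\Cb - 1 - \dim S = \dim\eta$-relative dimension zero over $\eta$. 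First I would argue that $D\to S$ is dominant (otherwise $D$ would lie in a fiber, forcing $\dim D\le \dim\Cb-1$ but concentrated over a proper closed subset, and one checks this contradicts $D$ being a \emph{divisorial} component meeting the generic fiber), then that $D\to S$ is quasi-finite (its fibers are contained in fibers of $\phi$, which are finite since $\phi$ is finite), and finally that $D\to S$ is proper (restriction of the proper map $p_\phi$ to the closed subscheme $D$); quasi-finite plus proper gives finite, and finite plus dominant over the integral base component gives surjective. Thus $D$ defines a class in $c(C/S)$ as required.

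For part (2), let $g\in G(\Cb,\gamma_\phi^*Y)$ and set $\alpha=\div(g)$; I must show $|\alpha|\subset C=\Cb-|\gamma_\phi^*Y|$, i.e. that $g$ has neither a zero nor a pole along any codimension-one point $x$ of $\Cb$ lying in $|\gamma_\phi^*Y|$. By definition \eqref{eq:def-of-gcd}, $g$ lies in $\Ker(\sO_{\Cb,x}^\times\to\sO_{D,x}^\times)$ for every point $x$ of $D:=\gamma_\phi^*Y$; in particular, for any codimension-one $x\in|D|$, $g$ is a \emph{unit} in $\sO_{\Cb,x}$ (it maps to a unit, hence is a unit, as $\sO_{\Cb,x}$ is local and $\Cb$ is normal so $\sO_{\Cb,x}$ is a DVR at codimension-one points), and a unit in a DVR has valuation zero. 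Hence $v_x(g)=0$ for all codimension-one $x\in|D|$, so $|\div(g)|$ avoids $|D|$, which is exactly \eqref{eq:cartier-div-in-c}.

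The main obstacle I anticipate is the dimension bookkeeping in part (1) when $S$ is not a curve and $\Cb$ has higher dimension: one has to be careful that "irreducible component of the support of a Cartier divisor" really does force relative dimension zero over $S$, using normality of $\Cb$ (so that Cartier divisors are honest Weil divisors of pure codimension one) together with Condition (B) to pin down the relative dimension of $\Cb$ over $S$. Once the generic fiber is a normal curve and $|\alpha|$ is disjoint from the part of $\Cb$ not dominating $S$, the properness-plus-quasi-finiteness argument is routine. Part (2) is essentially immediate from the definition of $G(\Cb,D)$ and the fact that codimension-one local rings of the normal scheme $\Cb$ are DVRs, so I expect no difficulty there.
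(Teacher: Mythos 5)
There is a genuine gap in your part (1): you never use the quasi-affineness of $X$, and the lemma is false without it. The point of the hypothesis is precisely to exclude components of $|\alpha|$ that are \emph{vertical} over $S$. For instance, take $S=\A^1$ and $\Cb=\P^1\times\A^1$ with $p_\phi$ the second projection: the principal Cartier divisor $\alpha=\div(t)=\P^1\times\{0\}$ has support contained in any open set containing that fibre, yet is not finite over $S$. Your argument for dominance breaks exactly here: an irreducible component $D$ of $|\alpha|$ need \emph{not} meet the generic fibre, and the dimension count $\dim D=\dim\Cb-1=\dim S$ is perfectly consistent with $D$ being an entire special fibre, so there is no contradiction to ``check''. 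Your quasi-finiteness step is also incorrect: the fibres of $D\to S$ are contained in fibres of $p_\phi=pr_S\circ\phi$ (which are curves), not in fibres of the finite map $\phi:\Cb\to\Xb\times S$; finiteness of $\phi$ only makes $D_s$ finite over its image in $\Xb\times s$, which can be one-dimensional. The paper's route is: $X$ quasi-affine and $\phi$ finite imply $C$ is quasi-affine over $S$, hence so is its closed subscheme $D$; on the other hand $D$ is closed in $\Cb$, hence proper over $S$; and a quasi-affine proper morphism is finite (Lemma \ref{lqf}). Surjectivity then follows from $\dim D=\dim\Cb-1=\dim S$ (purity of codimension one on the normal scheme $\Cb$ plus Condition (B)). You should restructure part (1) around this quasi-affine-plus-proper argument.

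Part (2) is essentially right but aimed at the wrong points: showing $v_x(g)=0$ at the codimension-one points of $|D|$ only proves that no component of $|\div(g)|$ is \emph{contained} in $|D|$, whereas \eqref{eq:cartier-div-in-c} demands $|\div(g)|\cap|D|=\emptyset$. The correct (and simpler) observation is that, by definition of $G(\Cb,D)$, $g$ lies in $\sO_{\Cb,x}^\times$ for \emph{every} $x\in D$, so $\div(g)$ vanishes on a whole open neighbourhood of $|D|$ and its support is therefore disjoint from $|D|$; this is what the paper means by ``obvious from definition''.
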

\begin{proof}
(1)
Since $X$ is quasi-affine, $C$ is quasi-affine over $S$, 
and hence $D$ is quasi-affine over $S$. 
Since $D$ is closed in $\Cb$, it is proper over $S$ hence finite by Lemma \ref{lqf} below.
Since $D$ is pure of codimension one in $\Cb$, $\dim(D)=\dim(S)$ and $D\to S$ must be surjective, hence (1).
(2) is obvious from definition.
\end{proof}

\begin{lemma}\label{lqf} Let $X,S\in \Sch$. Then a morphism $f:X\to S$ which is quasi-affine and proper is finite.
\end{lemma}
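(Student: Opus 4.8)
The plan is to reduce the statement to the well-known facts that a proper affine morphism is finite and that a proper open immersion is a clopen immersion; thus the whole point will be to show that $f$ is affine. Granting this, $f$ is affine and universally closed (being proper), hence $f_*\sO_X$ is integral over $\sO_S$; as $f$ is of finite type, $f_*\sO_X$ is then a finite $\sO_S$-algebra, i.e.\ $f$ is finite.

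To prove that $f$ is affine, first observe that $f$, being proper, is quasi-compact and quasi-separated, so $\sA := f_*\sO_X$ is a quasi-coherent $\sO_S$-algebra and we may form the relative spectrum $p : W := \Spec_S(\sA) \to S$, which is affine by construction. The canonical $S$-morphism $u : X \to W$ factors $f$ as $f = p\circ u$, and by the very definition of a quasi-affine morphism $u$ is an open immersion. Since $p$ is separated and $f = p\circ u$ is proper, the cancellation property of proper morphisms gives that $u$ is proper; being at once an open immersion and universally closed, $u$ identifies $X$ with an open and closed subscheme of $W$, so $u$ is a closed immersion, in particular affine. Hence $f = p\circ u$ is a composite of affine morphisms, and is therefore affine, as desired.

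The only points needing care are the quasi-coherence of $f_*\sO_X$ (which is exactly where properness, rather than mere quasi-affineness, is used) and the two standard facts invoked at the outset; I would either cite them or insert one-line arguments (a universally closed affine morphism is integral, and integral plus finite type is finite; a proper morphism is universally closed, and a universally closed open immersion has clopen image). None of this is a real obstacle. A more hands-on alternative would be to check finiteness fibrewise and apply Zariski's main theorem (finite $=$ proper $+$ quasi-finite), but verifying quasi-finiteness of the fibres already reduces to the case $S$ a point, which is essentially the same argument, so the relative-spectrum route seems cleanest.
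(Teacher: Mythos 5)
Your proof is correct and follows essentially the same route as the paper: factor $f$ through the relative spectrum of $f_*\sO_X$ as (open immersion) $\circ$ (affine), use cancellation to see the open immersion is proper, conclude $f$ is affine, and finish with affine $+$ proper $=$ finite. The only cosmetic difference is that the paper notes the open immersion is dense and hence an isomorphism, whereas you settle for it being a closed immersion; both suffice.
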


\begin{proof} Factor $f$ as $X\by{j} X'\by{f'} S$, where $j$ is a dense open immersion and $f'$ is affine. Since $f$ is proper, $j$ is proper, hence is an isomorphism. Therefore $f$ is affine and the lemma is well-known.
\end{proof}

Let $Z \in \Sch$ be integral with generic point $\eta$,
and let $f : Z \to S$ be a morphism in $\Sch$.
We denote by $\tilde{\Lambda}(\phi, Z)$
the set of all irreducible components of $\Cb \times_S Z$,
and by $\beta : \Cb \times_S Z \to \Cb$ the projection map.
Define
\begin{equation}\label{eq:def-Lambda}
 \Lambda(\phi, Z) := \{ T \in \tilde{\Lambda}(\phi, Z)
~|~ \dim T_{\eta} =1,~~ T \not\subset |\beta^*(\gamma_{\phi}^*Y)| \},
\end{equation}
where $T_\eta := T \times_Z \eta$.

\begin{lemma}\label{acHI.lem1}
Let $T \in \tilde{\Lambda}(\phi, Z)$
and let $\alpha$ be a Cartier divisor on $\Cb$
satisfying \eqref{eq:cartier-div-in-c}.
(Note that $\beta^*(\alpha)$ is well-defined by Lemma \ref{lem:alpha} (1).)
\begin{enumerate}
\item
Let $s \in Z$ and let $\Sigma$ be 
an irreducible component of $T_s := T \times_Z s$.
\begin{enumerate}
\item
We have
$\dim \Sigma \geq 1$
and $\Sigma \not\subset |\beta^*(\alpha)|$.
\item
If $\Sigma \cap |\beta^*(\alpha)| \not= \emptyset$,
then we have $\dim \Sigma = 1$.
\end{enumerate}
\item
If $T \cap |\beta^*(\alpha)| \not= \emptyset$,
then $T \in \Lambda(\phi, Z)$.
\end{enumerate}
\end{lemma}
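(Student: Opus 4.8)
The plan is to reduce parts (1) and (2) to a single dimension‑theoretic input:

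($\star$) \emph{every irreducible component $T$ of $\Cb\times_S Z$ is ``relatively at least $1$‑dimensional'' over $Z$; that is, for every $s\in Z$, each irreducible component of the fibre $T_s=T\times_Z s$ (when this fibre is nonempty) has dimension $\geq 1$.}

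Given ($\star$), everything follows by combining it with the finiteness of $|\beta^*(\alpha)|$ over $Z$ (which comes from Lemma~\ref{lem:alpha}(1) and Lemma~\ref{lqf}) and with the classical fact that the support of a nonzero Cartier divisor on an integral scheme has pure codimension one.

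\textbf{Reductions and proof of ($\star$).} As in the proof of Lemma~\ref{lem:alpha} we may assume $S$ integral, with generic point $\eta_S$; then by Condition (B), $p:\Cb\to S$ is proper and surjective, $\Cb$ is integral, $\dim\Cb_{\eta_S}=1$, so $\dim\Cb=\dim S+1$. By the dimension formula applied to the dominant morphism $p$ of integral schemes of finite type over $k$, every irreducible component of every fibre $\Cb_s$ ($s\in S$) has dimension $\geq\dim\Cb-\dim S=1$; base changing, for $z\in Z$ with image $\sigma=f(z)$, the fibre $(\Cb\times_S Z)_z=\Cb_\sigma\otimes_{k(\sigma)}k(z)$ again has all components of dimension $\geq 1$. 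To pass from this statement about $\Cb\times_S Z$ to the same statement about an individual component $T$, I would use generic flatness: there is a dense open $Z^{\circ}\subseteq Z$ over which $q:\Cb\times_S Z\to Z$ is flat; over $Z^{\circ}$, flatness (going‑down) forces every component of $\Cb\times_S Z^{\circ}$ to dominate $Z^{\circ}$, so by upper semicontinuity of fibre dimension every fibre of such a component has all components of dimension $\geq 1$. The components of $\Cb\times_S Z$ disjoint from $q^{-1}(Z^{\circ})$ are contained in $\Cb\times_S\overline{Z\setminus Z^{\circ}}$, hence are components of $\Cb\times_S Z'$ for some integral closed $Z'\subsetneq Z$, and one concludes by Noetherian induction on $\dim Z$ (the case $\dim Z=0$ being the fibre statement above). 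This establishes ($\star$), which I expect to be the main technical point.

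\textbf{Proof of (1).} For (a): $\dim\Sigma\geq 1$ is exactly ($\star$). Moreover $\Sigma\subseteq(\Cb\times_S Z)_s$, while $|\beta^*(\alpha)|\cap(\Cb\times_S Z)_s\subseteq(|\alpha|\times_S Z)_s$, which is finite over $k(s)$ by Lemma~\ref{lem:alpha}(1), hence $0$‑dimensional; since $\dim\Sigma\geq 1$, this forces $\Sigma\not\subseteq|\beta^*(\alpha)|$. For (b): by (a) the integral scheme $\Sigma$ is not contained in $|\beta^*(\alpha)|$, so $\alpha$ pulls back along $\Sigma\hookrightarrow\Cb\times_S Z\xrightarrow{\beta}\Cb$ to a Cartier divisor on $\Sigma$, and this divisor is nonzero because $\Sigma\cap|\beta^*(\alpha)|\neq\emptyset$; its support $\Sigma\cap|\beta^*(\alpha)|$ therefore has pure codimension one in $\Sigma$, i.e. dimension $\dim\Sigma-1$. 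But this support also lies in the $0$‑dimensional set $(|\alpha|\times_S Z)_s$, so $\dim\Sigma-1\leq 0$; together with (a) this gives $\dim\Sigma=1$.

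\textbf{Proof of (2).} Since $|\alpha|\subseteq C=\Cb-|\gamma_{\phi}^*Y|$ by \eqref{eq:cartier-div-in-c}, the closed sets $|\alpha|$ and $|\gamma_{\phi}^*Y|$ are disjoint, hence so are $|\beta^*(\alpha)|=\beta^{-1}(|\alpha|)$ and $|\beta^*(\gamma_{\phi}^*Y)|=\beta^{-1}(|\gamma_{\phi}^*Y|)$; as $T$ meets the former it cannot be contained in the latter, so $T\not\subseteq|\beta^*(\gamma_{\phi}^*Y)|$. It remains to show $\dim T_{\eta}=1$ with $\eta$ the generic point of $Z$. Pick $x\in T\cap|\beta^*(\alpha)|$, put $s=q(x)$, and let $\Sigma$ be the component of $T_s$ through $x$, so $\dim\Sigma=1$ by (1)(b). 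By ($\star$), the generic fibre of $T\to\overline{q(T)}$ has dimension $\geq 1$; by upper semicontinuity of fibre dimension it has dimension $\leq\dim\Sigma=1$, so it is $1$‑dimensional and $\dim T=\dim\overline{q(T)}+1$. On the other hand, arguing as in (1)(b) but on $T$ in place of $\Sigma$, $\alpha$ pulls back to a nonzero Cartier divisor on the integral scheme $T$, whose support $T\cap|\beta^*(\alpha)|$ has pure dimension $\dim T-1$ and is contained in $|\alpha|\times_S Z$, finite over $Z$ by Lemma~\ref{lem:alpha}(1) and Lemma~\ref{lqf}; hence $\dim T-1\leq\dim Z$, i.e. $\dim\overline{q(T)}\leq\dim Z$. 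The remaining point is to upgrade this to $\overline{q(T)}=Z$, i.e. to prove that $T$ actually dominates $Z$; once this is known, $\dim T_{\eta}=1$ follows. Establishing that $T$ dominates $Z$ is where I expect the real difficulty to lie: it should come from a closer analysis of which components of $|\alpha|\times_S Z$ carry the point $x$ — in particular whether all such components dominate $Z$ — presumably by exploiting that $|\alpha|$ is equidimensional and finite over the regular scheme $S$, or by an extra induction reducing to the case where $f:Z\to S$ is dominant.
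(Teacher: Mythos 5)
Your parts (1a) and (1b) are correct. Your route to the key lower bound ($\star$) --- generic flatness plus Noetherian induction on $\dim Z$ --- is sound but heavier than necessary: the paper obtains the same statement from Chevalley's semicontinuity theorem combined with the unconditional inequality $\dim T\geq\dim\Cb+\dim Z-\dim S=\dim Z+1$, valid for \emph{every} irreducible component $T$ of $\Cb\times_S Z$ whether or not it dominates $Z$ (this is isolated as Lemma \ref{oesterle}, proved by the diagonal trick and Serre's codimension inequality). Part (1b) is the same argument as the paper's.

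The genuine gap is exactly the one you flag at the end of (2): you never prove that $T$ dominates $Z$, and without that the conclusion $\dim T_\eta=1$ is not reached. The missing input is not a finer analysis of which components of $|\alpha|\times_S Z$ dominate $Z$; it is again Lemma \ref{oesterle}. Two ways to close the gap. (a) You already deduced $\dim T=\dim\overline{q(T)}+1$ from $T\cap|\beta^*(\alpha)|\neq\emptyset$; combined with $\dim T\geq\dim Z+1$ this gives $\dim\overline{q(T)}\geq\dim Z$, hence $\overline{q(T)}=Z$ (a proper closed irreducible subset of the integral finite-type scheme $Z$ has strictly smaller dimension), and then $\dim T_\eta=1$. (b) The paper argues contrapositively: if $W:=\overline{q(T)}\neq Z$ with generic point $\xi$, then $\dim T_\xi=\dim T-\dim W\geq\dim Z+1-\dim W>1$, so by Chevalley every component of every fibre of $T\to W$ has dimension $>1$, and (1b) then forces $T\cap|\beta^*(\alpha)|=\emptyset$, contradicting the hypothesis; if $W=Z$ one applies (1b) at $s=\eta$. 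Note also that your intermediate ``on the other hand'' step (the support of $\beta^*(\alpha)|_T$ has dimension $\dim T-1\leq\dim Z$) only yields $\dim\overline{q(T)}\leq\dim Z$, which is vacuous since $\overline{q(T)}\subseteq Z$.
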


\begin{proof}
(1a) 
The first assertion follows from  Chevalley's theorem applied 
at $f(s)$ \cite[13.1.1]{ega4}. 
As all components of $\beta^*(\alpha) \times_S s$ are finite over $s$ 
by Lemma \ref{lem:alpha} (1),
the second assertion follows from the first.

(1b)
On the one hand we have $\dim \Sigma \geq 1$ by (1a).
On the other hand
$\Sigma \cap |\beta^*(\alpha)|$ is 
of codimension $\leq 1$ in $\Sigma$ since 
it is the support of a Cartier divisor on $\Sigma$,
and hence we get $\dim \Sigma \leq 1$ noting $\dim \Sigma \cap
|\beta^*(\alpha)|=0$ by Lemma \ref{lem:alpha} (1). 
This proves (1b).

(2)
The assumption  $T \cap |\beta^*(\alpha)| \not= \emptyset$
and \eqref{eq:cartier-div-in-c}
imply $T \not\subset |\beta^*(\gamma_{\phi}^*Y)|$.
It remains to show $\dim T_\eta = 1$.
Let $W\subset Z$ be the closure of the image of $T\to Z$ 
and $\xi\in W$ be the generic point. 
We separate two cases.

i) Assume $W=Z$ (so that $\xi=\eta$). Then 
$T_\eta \cap |\beta^*(\alpha)| \not= \emptyset$ 
by Lemma \ref{lem:alpha} (1).
Thus we may apply (1b) with $s=\eta$
to conclude $\dim T_{\eta}=1$.

ii) Assume $W \not= Z$.
We show
$T \cap |\beta^*(\alpha)|=\emptyset$.
By Lemma \ref{oesterle} below, we have
$\dim T \geq  \dim Z +1$, 
hence $\dim T_\xi = \dim T -\dim W >1$ since $W\not=Z$.
By Chevalley's theorem this implies that 
for any $s \in W$ and
for any irreducible component $\Sigma$ of $T_s$, 
we have $\dim\Sigma>1$.  
Hence by (1b) we conclude 
$\Sigma \cap |\beta^*(\alpha)| = \emptyset$.
This proves $T \cap |\beta^*(\alpha)| = \emptyset$.
\end{proof}

\begin{lemma} \label{oesterle} Let $S,Z,C\in \Sch$ and let $p:C\to S$, $f:Z\to S$ be two morphisms. Then, for any irreducible component $T$ of $C\times_S Z$, one has
\[\dim T\ge \dim C +\dim Z -\dim S.\]
\end{lemma}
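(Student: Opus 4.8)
The plan is to reduce the inequality to Krull's height theorem by using that the diagonal of $S$ is a regular immersion of codimension $\dim S$; this forces me to assume $S$ is smooth over $k$, which is the case in every application of the lemma in this paper. Some such hypothesis on $S$ is in fact necessary: for $S=\Spec k[x,y,u,v]/(xy-uv)$ (the affine quadric cone), $C=V(x,u)$ and $Z=V(y,v)$ regarded as closed subschemes of $S$, one has $C\times_SZ=V(x,y,u,v)=\Spec k$, so there $\dim T=0<1=\dim C+\dim Z-\dim S$. So I take the standing hypothesis to be that $S$ is smooth over $k$; I may also assume $C,S,Z$ affine and, as holds in the applications, $C$ and $Z$ integral.

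First I would use that, $S$ being smooth over $k$ of dimension $d$, the diagonal $\Delta_S\colon S\hookrightarrow S\times_kS$ is a regular closed immersion of codimension $d$, i.e.\ Zariski-locally the vanishing locus of a regular sequence of length $d$. From the cartesian square $C\times_SZ=(C\times_kZ)\times_{S\times_kS}S$ (with $S\to S\times_kS$ the diagonal) it follows that $C\times_SZ\hookrightarrow C\times_kZ$ is, Zariski-locally on $C\times_kZ$, cut out by $d$ elements; hence by Krull's height theorem every irreducible component $T$ of $C\times_SZ$ satisfies $\codim(T,\,C\times_kZ)\le d$.

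Next I would invoke that, $C$ and $Z$ being integral of finite type over $k$, the product $C\times_kZ$ is equidimensional of dimension $\dim C+\dim Z$ — the standard statement about products of varieties over a field, which one reduces to the case $C=\A^{\dim C}_k$, $Z=\A^{\dim Z}_k$ by Noether normalization. Combining the two steps gives $\dim T\ge\dim(C\times_kZ)-d=\dim C+\dim Z-\dim S$.

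I expect the real difficulty to be conceptual rather than computational: the inequality genuinely depends on the regularity of $S$ (as the quadric cone shows), so any proof must somewhere use that the diagonal embedding of $S$ is a regular immersion; granting that, what remains is the elementary codimension count above, plus the routine bookkeeping in the reduction to $C,Z$ integral and the appeal to equidimensionality of $C\times_kZ$. If one insisted on the lemma for an arbitrary $S\in\Sch$, one would have to add a flatness hypothesis on $p$ or $f$ and argue instead with the dimension formula for flat morphisms.
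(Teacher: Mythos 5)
Your argument follows the same basic route as the paper's: both proofs realize $C\times_S Z$ as the inverse image of the diagonal $\Delta_S\subset S\times S$ under $C\times_k Z\to S\times S$ and then bound the codimension of $T$ in $C\times_k Z$ by $\codim_{S\times S}\Delta_S$. The difference is that the paper simply cites Bourbaki (Alg\`ebre commutative, Ch.~VIII, proof of Cor.~4) for the inequality $\codim_{C\times Z}T\le\codim_{S\times S}\Delta_S$ and states the lemma for arbitrary $S\in\Sch$, whereas you make the mechanism explicit (regularity of the diagonal immersion plus Krull's height theorem) and, crucially, observe that some regularity hypothesis on $S$ is genuinely needed. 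Your quadric-cone example is correct and in fact shows that the paper's intermediate inequality itself fails there: one has $\codim_{C\times Z}T=4>3=\codim_{S\times S}\Delta_S$. So the lemma as printed is imprecise; it is harmless for the paper because in its only application (Lemma~\ref{acHI.lem1}) the base $S$ lies in $\Sm$ and $C=\Cb$, $Z$ are integral. Two small caveats on your write-up: the final reduction to $C,Z$ integral is not mere bookkeeping, since the stated inequality is also false for non-equidimensional $C$ or $Z$ (a component $T$ lying over a low-dimensional component of $C$ need not satisfy the bound), so integrality, or at least equidimensionality, should be kept as a standing hypothesis rather than "reduced to"; and one should check that $\dim S$ in the inequality is the dimension of the component of $S$ hit by $T$, which is automatic when $S$ is connected and smooth.
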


\begin{proof}
\footnote{We thank J. \OE sterl\'e for his help in this proof.} 
Viewing $C\times_S Z$ as a the inverse image in $C\times Z$ of the diagonal $\Delta_S\subset S\times S$ via the projection $C\times Z\to S\times S$, we reduce to show
\[\codim_{C\times Z} T  \le \codim_{S\times S} \Delta_S\]
which easily follows from \cite[Ch. VIII, p. 34, proof of Cor. 4]{bbki}. 
\end{proof}


\subsection{$\Phi(\Xb, Y)$ is a presheaf with transfers.}
\label{sect:pf-of-prop-rep-sheaf-is-a-pst}
For $S, S' \in \Sm$ and $Z\in \Cor(S',S)$, 
we define a homomorphism 
\begin{equation}\label{eq.Z*a}
Z^*:\Phi(\Xb,Y)(S) \to \Phi(\Xb,Y)(S')
\end{equation}
(see Definition \ref{def:relative-g-sheaf}).
We may assume $S, S', Z$ are integral;
the definition is then extended linearly to the general case.

Take $(\phi:\Cb \to \Xb\times S)\in \CuXY(S)$
and $T \in \Lambda(\phi, Z)$.
Let $T^N \to T$ be the normalization and 
consider the composite map
\begin{equation}\label{eq:phi-t}
\phi_T : T^N \to \Cb \times_S Z \rmapo{\phi\times_S Z} \Xb\times Z \to \Xb \times S'.
\end{equation}
Noting that $\phi$ and $Z\to S'$ are finite, so is $\phi_T$ and we have
\begin{equation}\label{eq.phi'}
(\phi_T:T^N \to \Xb\times S') \in \CuXY(S').
\end{equation}
We have a commutative diagram
\[
\xymatrix{
& \Xb& \\
\Cb   \ar[ru]^{\gamma_{\phi}} & \ar[l]_{h_T} T^N \ar[u]_{\gamma_{\phi_T}}  \\
}\]
We define a map
\begin{equation}\label{eq:def-z-t-star}
Z_T^*: G(\Cb,\gamma_{\phi}^*Y) 
\rmapo{h_T^*} 
G(T^N,\gamma_{\phi_T}^*Y) 
\hookrightarrow \Phi(\Xb,Y)(S'),
\end{equation}
where the second comes from \eqref{eq.phi'}. 
To explain $h_T^*$, take $g\in  G(\Cb,\gamma_{\phi}^*Y)$
and put $\Sigma=|\div_{\Cb}(g)|$. 
Note $g\in \Gamma(\Cb\setminus \Sigma,\sO^\times_{\Cb})$ 
since $\Cb$ is normal.
By Lemma \ref{acHI.lem1} (1a), 
$T^N - h_T^{-1}(\Sigma)$ is a dense open subset of $T^N$
and we get 
$h_T^*(g) \in \Gamma(T^N - h_T^{-1}(\Sigma), \sO^\times_{T^N})$.
As $\gamma_{\phi_T}^*(Y) = h_T^*(\gamma_{\phi}(Y))$,
we find
$h_T^*(g) \in G(T^N, \gamma_{\phi_T}^*(Y))$.

Letting $m_T$ be the multiplicity of $T$ in $\Cb \times_S Z$, we then define
\[
Z^*= \underset{T \in \Lambda(\phi, Z)}{\sum} \; m_T \cdot Z_T^*\;:\;
G(\Cb,\gamma_{\phi}^*Y) \to \Phi(\Xb,Y)(S')
\]
which induces the desired map \eqref{eq.Z*a}. 


Let $S, S', S'' \in \Sm$ and $Z \in \Cor(S', S)$, $Z' \in \Cor(S'', S')$.
We need to show
\begin{equation}\label{eq.PhiPST}
{Z'}^* Z^* = (ZZ')^* : 
\Phi(\Xb,Y)(S) \to \Phi(\Xb,Y)(S'')
\end{equation}
so that $\Phi(\Xb,Y)$ is an object of $\PST$.
To verify \eqref{eq.PhiPST}, one may suppose $S, S', Z$ and $Z'$ are integral. Let $W_1, \dots, W_r$ be the irreducible components of 
$Z\times_{S'} Z'\subset S\times S''$ 
and $n_j$ the multiplicity of $W_j$.
Then we have $ZZ' = \sum_j n_j W_j$ 
(see \cite[Proof of Lemma 4.1.15]{deglise}).
Consider the following two subclasses of $\CuXY(S'')$
(where $\Lambda(-,-)$ is as in \eqref{eq:def-Lambda} and $\phi_{T'}$ is as in \eqref{eq:phi-t}):
\begin{align*}
&\Xi_1=\{ \phi_{T'} 
~|~ T' \in \Lambda(\phi_T, Z')~ \text{for some}~ T \in \Lambda(\phi, Z) \},
\\
&\Xi_2=\underset{1\leq j\leq r}{\bigcup}\; 
\{ \phi_{T'} 
~|~ T' \in \Lambda(\phi, W_j) \}.
\end{align*}
Note that in the definition of $\Xi_2$, the sum is disjoint since 
the morphism $T'\to W_j$ for $T' \in \Lambda(\phi, W_j)$ is surjective. 
For $T \in \Lambda(\phi, Z)$ and
$T' \in \Lambda(\phi_T, Z')$
(so that $\phi_{T'} \in \Xi_1$),
we have a commutative diagram
\begin{equation}\label{eq:Xidiagram}
\xymatrix{
T' \ar[r]^{\hskip -20pt\hookrightarrow} \ar[d] & T^N\times_{S'} Z' \ar[d] \\
\tilde{T'}  \ar[r]^{\hskip -20pt\hookrightarrow} & T\times_{S'} Z'\ar[r]^{\hskip -20pt\hookrightarrow} 
& \Cb\times_S Z\times_{S'} Z'&\Cb\times_S W_j \ar[l]_{\hskip 20pt\hookleftarrow}\\
}\end{equation}
where $\tilde{T'}$ is the image of $T'$ in $T\times_{S'} Z'$. 
It suffices to prove the following:
\begin{enumerate}
\item[(i)]
There is an inclusion $\Xi_2\hookrightarrow \Xi_1$.
\item[(ii)]
For 
$\phi_{T'} \in  \Xi_1$, 
if there is a Cartier divisor $\alpha$ on $\Cb$ such that $|\alpha|\subset C$ 
and that $\tilde{T'}\cap (|\alpha|\times_S Z\times_{S'} Z')\not=\emptyset$, 
then 
$\phi_{T'} \in \Xi_2$.
\item[(iii)]
Let 
$T \in \Lambda(\phi, Z),~ T' \in \Lambda(\phi_T, Z'),~
 T'' \in \Lambda(\phi, W_j), ~1 \leq j \leq r$
and suppose
$\phi_{T'} \in \Xi_1$ corresponds to $\phi_{T''} \in \Xi_2$ by (i).
Then we have $m_T m_{T'} = n_j m_{T''}$.
\end{enumerate}
We show (i). Take $\phi_{T''}$ from $\Xi_2$.
The canonical map ${T''}^N \to \Cb$ must factor through the normalization ${T}^N$ of 
some irreducible component $T$ of $\Cb \times_{S} Z$.
We claim $T \in \Lambda(\phi, Z)$
(see \eqref{eq:def-Lambda}),
which yields a desired correspondence.
It is obvious $T \not\subset |\beta^*(\gamma_{\phi}^*Y)|$.
If $\dim T_{\eta} \not= 1$,
Lemma \ref{acHI.lem1} (1a)
shows that $\dim T_{\eta} > 1$ 
and hence we get $\dim {T''}_{\eta} > 1$ 
by Chevalley's theorem \cite[13.1.1]{ega4},
which contradicts $T'' \in \Lambda(\phi, W_j)$.
Thus we get $\dim T_{\eta} = 1$, and (i) is proved.

Next we show (ii). In view of \eqref{eq:Xidiagram}, there is $j\in [1,r]$ and an irrreducible component of $T''$
of $\Cb\times_S W_j$ such that the map $T' \to \Cb\times_S Z\times_{S'} Z'$ factors through $T''$. 
By the assumption we have $T''\cap (|\alpha|\times_S W_j)\not=\emptyset$. 
By Lemma \ref{acHI.lem1} (2), this implies $T''\in \Lambda(\phi, W_j)$.

Finally (iii) can be seen by a computation:
\[
\begin{split}
m_T m_{T'} &=
l(\mathcal{O}_{T \times_{S'} Z', \eta_{T'}})
l(\mathcal{O}_{\Cb \times_{S} Z, \eta_{T}})
\\
&=
l(\mathcal{O}_{Z \times_{S'} Z', \eta_{W_j}})
l(\mathcal{O}_{T \times_{Z} W_j, \eta_{T''}})
l(\mathcal{O}_{\Cb \times_{S} Z, \eta_{T}})
\\
&=
l(\mathcal{O}_{Z \times_{S'} Z', \eta_{W_j}})
l(\mathcal{O}_{\Cb \times_{S} W_j, \eta_{T''}})
= n_j m_{T''},
\end{split}
\]
where we denote by $l(R)$ 
the length of an Artin local ring $R$,
and by $\eta_V$ the generic point of an integral scheme $V$.


\begin{remark}
Here is an example 
where
the inclusion $\Xi_2 \hookrightarrow \Xi_1$ in (i) is not surjective.
\footnote{
This example is communicated to us by R. Sugiyama.
}
Let $\Xb = \P^1 \times \P^1$ and 
$Y:=\infty \times \P^1 \cup \P^1 \times \infty$.
Let $B$ be the blow-up of $S:=\A^2$ at $0$,
regarded as a closed subscheme of $\P^1 \times S$.
Denote by $i : B \hookrightarrow \P^1 \times S$ the inclusion map.
Put $\Cb := \P^1 \times B$
and define $\phi = \id_{\P^1} \times i : \Cb \to \Xb \times S$.
Then $\phi$ defines an element of $\CuXY(S)$.
Set $S=\A^2 \supset Z=S'=0 \times \A^1
\supset Z'=S''=0 \times 0$.
Let $E \subset B$ be the exceptional curve,
$L \subset B$ the strict transform of $S'$,
and $p := L \cap E$.
Then $\Xi_2 = \emptyset$,
because
$\Cb \times_S S'' = \P^1 \times E$ does not belong to 
$\Lambda(\phi, Z')$ \eqref{eq:def-Lambda}.
On the other hand,
we have $\Xi_1 = \{ \phi_{\P^1 \times p} \}$ 
because 
$\Lambda(\phi, Z)=\{ \P^1 \times L \}$
and
$\Lambda(\phi_{\P^1 \times L}, Z')=\{ \P^1 \times p \}$.
\end{remark}

\subsection{Construction of $\tau$}
\label{sect:const-div-g}
Let $S \in \Sm$,
 $(\phi : \Cb \to \Xb \times S) \in \sC_{(\Xb, Y)}(S)$
and put 
$C = \phi^{-1}(X \times S)$.
Lemma \ref{lem:alpha} shows that
$\div_{\Cb}(g)$ satisfies \eqref{eq:cartier-div-in-c}
for any $g\in G(\Cb, \gamma_\phi^*(Y))$.
Hence the divisor map on $\Cb$ induces \eqref{eq:div-modulus}.
We obtain the map $\tau_S$ in \eqref{eq:map-g-to-cor} by composing \eqref{eq:div-modulus} with 
\[ \phi_* : c(C/S) \to \Cor(S, X)=c(X \times S/S). \]

To show $\tau$ is a morphism in $\PST$,
we need to show that the following diagram commutes
for any $Z \in \Cor(S', S), ~S, S' \in \Sm$:
\begin{equation}\label{eq.PhiCor}
\xymatrix{
\Phi(\Xb,Y)(S) \ar[r]^{\tau_S} \ar[d]^{Z^*} & \Cor(S,X)\ar[d]^{Z^*}\\
\Phi(\Xb,Y)(S') \ar[r]^{\tau_{S'}} &  \Cor(S',X).}
\end{equation}
As $\Cor(S',X) \to \Cor(S' \times k',X \times k')$ is injective 
for any extension $k'/k$, we may suppose $k$ is perfect.
Also note that $\Cor(S',X) \to \Cor(V,X)$ is injective for any 
open dense immersion $V \to S'$.
After such a base change, 
we may assume $Z$ is regular and hence $Z \in \Sm$
by the assumption that $k$ is perfect.
By \eqref{eq.PhiPST} we may assume $Z$ is either 
(i) the transpose of a finite surjective morphism $S=Z \to S'$
or (ii) the graph of a morphism $f : S'=Z \to S$.
In the case of (i), we have $c(C/S) = c(C/S')$ 
and the statement becomes trivial.
We consider the case (ii).
By shrinking $Z$ further,
$f:Z \to S$ can be written as the composite of 
a flat map and 
regular immersions of codimension one in $S$. 
Again by \eqref{eq.PhiPST}, 
we may assume $f$ is one such morphism.
Here we present a proof 
of the commutativity of \eqref{eq.PhiCor}
assuming $f$ is a regular immersion of codimension one in $S$.
We omit the proof for the case $f$ is flat,
as it can be shown by a similar (and much easier) way.

Let $(\phi:\Cb \to \Xb\times S) \in \sC_{(\Xb, Y)}(S)$.
Then the desired assertion will follow from the commutativity of the diagram
%
%
%
%
%
%
%
%
\[
\xymatrix{
G(\Cb,\gamma_{\phi}^*Y) \ar[r]^{\div_{\Cb}}\ar[d]^{\sum h_T^*} & 
c(C/S) \ar[r]^{\phi_*} \ar[d]^{f^*} & 
c(X\times S/S)\ar[d]^{f^*} 
\\
\displaystyle\bigoplus_{T \in \Lambda(\phi, Z)} 
G(T^N,\gamma_{\phi_T}^*Y)\ar[dr]_{\bigoplus \div_{T^N}} &
c(C_Z/Z) \ar[r]^{\phi_*} & 
c(X\times Z/Z) 
\\
&\displaystyle\bigoplus_{T \in \Lambda(\phi, Z)} c(T^{0}/Z) 
\ar[u]_{\sum m_T (i_T)_*} 
 }
\]
where 
$C = \Cb - |\gamma_\phi^* Y|$, $T^{0} := T^N - |\gamma_{\phi_T}^* Y|$,
$C_Z=C\times_S Z$, and $i_T:T^0\to C_Z$ is the natural map.
(See \eqref{eq:def-z-t-star} for $h_T^*$
and \S \ref{c(X/S)} for $f^*$).
The commutativity of the right squares is obvious. 
To prove that of the left pentagon,
we use the following facts:
\begin{enumerate}
\item
For a Cartier divisor $\iota: D \hookrightarrow C$, we have an identity :  
\[
D\cdot [C_Z] = C_Z\cdot [D]\;\;\text{ as cycles on $D\cap C_Z$},
\]
where $[C_Z]$ (resp. $[D]$) is the cycle on $C$ associated to the Cartier divisor $C_Z$ (resp. $D$)
and $D\cdot -$ (resp. $C_Z\cdot -$) is the intersection product (see \cite[Thm.2.4]{fulton}). 
\item
If $D=\div_C(g)$ for $g\in k(C)^\times$ and $W\subset C$ is integral closed such that $W\not\subset |D|$, we have
\[ 
D\cdot [W] = \div_W(g_{|W}) = \pi_*\div_{W^N}(g_{|W^N}),
\]
where $\pi:W^N \to W$ is the normalization of $W$.
\end{enumerate}
Take $g \in G(\Cb, \gamma_{\phi}^*Y)$.
We have $\div_{\Cb}(g)=\div_C(g)$
and
\[
f^* \div_{\Cb}(g) = 
C_Z \cdot [\div_{\Cb}(g)]
\overset{(1)}{=} 
\div_{\Cb}(g) \cdot [C_Z].
\]
For each $T \in \Lambda(\phi, Z)$, we have
\[ \div_{\Cb}(g) \cdot [T^0] 
\overset{(2)}{=} \div_{T^0}(g|_{T^0})
= \div_{T^N}(h_T^*(g)).
\]
Noting $C_Z = \sum_{T \in \Lambda(\phi, Z)} m_T T^0$
as a divisor on $C$,
we get the desired commutativity.

The last claim of Proposition \ref{prop:rep-sheaf-is-a-pst} (representability) is now tautologically true.
This completes the proof of Proposition \ref{prop:rep-sheaf-is-a-pst}.
\qed

\subsection{Admissible correspondences}\label{sect:adm-corr}
In order to prove Theorem \ref{thm:main-rep} (2),
we need to show a functoriality of 
$\Phi(\Xb, Y)$ with respect to modulus pairs $(\Xb, Y)$: this will be done in Proposition \ref{prop:admissible-induce-mor}. In this subsection, we introduce a notion of admissible correspondences, and prove their existence in suitable cases.

For two closed subschemes $Y_1, Y_2$ in a scheme $\Xb$, we write $Y_1 \geq Y_2$
if the inclusion $Y_2 \to \Xb$ factors through $Y_1 \to \Xb$
(equivalently, if for all $x \in \Xb$ one has
$\sI_{Y_1, x} \subset \sI_{Y_2, x}$,
where $\sI_{Y_i}$ is the ideal sheaf of $Y_i$).

\begin{definition}\label{def:modulus-pair}
Let $M_i = (\Xb_i, Y_i) ~(i=1, 2)$ be modulus pairs
and put $X_i = \Xb_i \setminus |Y_i|$.
Let $Z \in \Cor(X_1, X_2)$ be an integral finite correspondence.
We write $\bar Z^N$ for the normalization
of the closure of $Z$ in $\Xb_1 \times \Xb_2$
and $p_i : \bar Z^N \to \Xb_i$
for the canonical morphisms for $i=1, 2$.
We say $Z$ is \emph{admissible} for $(M_1, M_2)$
if $p_1^* Y_1 \geq p_2^* Y_2$.
An element of $\Cor(X_1, X_2)$ is called admissible
if all of its irreducible components are admissible.
\end{definition}

\begin{lemma}\label{lem:admissible-induce-mor}
Let $Z \in \Cor(X_1, X_2)$, where $X_1, X_2 \in \Sm$ are quasi-affine.
For $i=1, 2$, let $\Xb_i \in \Sch$ be a normal proper $k$-scheme
which contains $X_i$ as an open dense subset.
Let $Y_2 \subset \Xb_2$ be a closed subscheme
supported on $\Xb_2 - X_2$.
Then there exists a closed subscheme
$Y_1 \subset \Xb_1$ supported on $\Xb_1 - X_1$
such that $Z$ is admissible
for $((\Xb_1, Y_1), (\Xb_2, Y_2))$. 
\end{lemma}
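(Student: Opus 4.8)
The plan is to reduce to the case where $Z$ is integral, then to produce the subscheme $Y_1$ by pulling back $Y_2$ along $p_2$, transporting it along $p_1$, and if necessary enlarging it so that it is supported exactly on $\Xb_1 - X_1$. More precisely, write $\bar Z^N$ for the normalization of the closure of $Z$ in $\Xb_1\times\Xb_2$ with the two projections $p_i:\bar Z^N\to\Xb_i$. The first key observation is that $p_2^*Y_2$ is a closed subscheme of $\bar Z^N$ supported on $p_2^{-1}(\Xb_2-X_2)$; since $Z\in\Cor(X_1,X_2)$, the open part of $\bar Z^N$ lying over $X_1$ maps into $X_2$ (the correspondence is finite over $X_1$, hence its closure meets the fibre over any point of $X_1$ only in points of $X_2$), so $|p_2^*Y_2|$ is contained in $p_1^{-1}(\Xb_1-X_1)$. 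Thus $p_1\bigl(|p_2^*Y_2|\bigr)$ is a closed subset of $\Xb_1$ contained in the closed set $\Xb_1-X_1$.

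Next I would use that $p_1:\bar Z^N\to\Xb_1$ is a finite (hence proper, affine) surjective morphism of integral schemes, with $\Xb_1$ normal and $\bar Z^N$ normal: this is where the hypothesis ``$\Xb_1$ normal'' and ``$Z$ integral'' get used. The point is that for a finite surjective morphism $p_1$ between integral normal schemes and a closed subscheme $D=p_2^*Y_2$ of the source, one can find a closed subscheme $Y_1'$ of the target with $p_1^*Y_1'\ge D$: concretely, working locally on an affine open $\Spec A\subset\Xb_1$ with preimage $\Spec B$, $B$ a finite $A$-algebra which is a normal domain, and $D$ cut out by an ideal $J\subset B$, one takes $I=J\cap A$ (or, to be safe about the scheme structure, a suitable power, or the ideal generated by norms of a set of generators of $J$) and sets $Y_1'=V(I)$; then $IB\subset J$, i.e. $p_1^*Y_1'\ge D$ over that chart, and these glue because $\Xb_1$ is covered by finitely many such affines ($\Xb_1$ being of finite type, hence quasi-compact). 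Since $|D|\subset p_1^{-1}(\Xb_1-X_1)$ we may moreover arrange $|Y_1'|\subset\Xb_1-X_1$ by intersecting $Y_1'$ with a neighbourhood, or simply by replacing $I$ by $I+\mathcal I_{\Xb_1-X_1}^{\,m}$-type modifications; finally, to ensure $Y_1$ is supported on \emph{all} of $\Xb_1-X_1$ as Definition \ref{def:modulus-pair} requires of a modulus pair, I enlarge $Y_1'$ to $Y_1$ by adding the reduced subscheme $(\Xb_1-X_1)_{\red}$ (taking the intersection of ideal sheaves, which only makes $p_1^*Y_1$ larger, so admissibility $p_1^*Y_1\ge p_1^*Y_1'\ge p_2^*Y_2$ is preserved). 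The reduction to $Z$ integral at the start is harmless: if $Z=\sum n_j Z_j$, apply the above to each $Z_j$ to get $Y_1^{(j)}$, and take $Y_1$ with ideal sheaf $\bigcap_j\mathcal I_{Y_1^{(j)}}$, which dominates each $Y_1^{(j)}$ and hence makes every component admissible.

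The main obstacle I expect is the scheme-theoretic (as opposed to merely set-theoretic) statement: guaranteeing an \emph{ideal} $I\subset A$, not just a closed set, with $IB$ contained in a prescribed ideal $J$ of the finite extension $B$, while keeping $V(I)$ supported in the right place. Set-theoretically this is trivial (images of closed sets under finite maps are closed, and pullback preserves supports up to radical), but to get $p_1^*Y_1\ge p_2^*Y_2$ on the nose one must control the ideal sheaves, which is exactly why normality of $\Xb_1$ and finiteness of $p_1$ are invoked — they make $B$ a finite $A$-module so that $J$, being finitely generated over $B$, contains $IB$ for $I$ a suitably chosen ideal of $A$ (e.g. generated by a power of the product of the ``norms'' or minimal polynomials' constant terms of generators of $J$). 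I would isolate this as a small commutative-algebra lemma before assembling the global argument.
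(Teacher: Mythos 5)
Your set-theoretic analysis is correct and is the same as the paper's: since $Z\to X_1$ is finite, hence proper, the dense open immersion $Z\inj p_1^{-1}(X_1)$ inside the closure $\Zb$ is an isomorphism, so $\Zb-Z=p_1^{-1}(\Xb_1-X_1)_\red$ and $|p_2^*Y_2|\subseteq p_1^{-1}(\Xb_1-X_1)$. But the construction you build on top of this rests on a false premise: $p_1\colon\Zb^N\to\Xb_1$ is \emph{not} finite in general, only proper. Finiteness of $Z\to X_1$ does not pass to the closure; over points of $\Xb_1-X_1$ the fibres of $p_1$ can be positive-dimensional (take $Z$ the graph of the identity of $\A^2$, $\Xb_1=\P^2$, $\Xb_2=\P^1\times\P^1$: the closure of $Z$ is the blowup of $\P^2$ at two points of the line at infinity). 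Consequently the preimage of an affine chart $\Spec A$ need not be affine, its ring of functions need not be a finite $A$-module, and the contraction $I=J\cap A$ (or the norm construction) cannot be carried out as you describe. The fact that your argument "needs" normality of $\Xb_1$ is itself a warning sign: the paper's proof never uses it.

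The fix is the remark you make only in passing ("replacing $I$ by $\sI_{\Xb_1-X_1}^{\,m}$-type modifications"), promoted to the entire proof: do not try to push the ideal of $p_2^*Y_2$ down along $p_1$ at all. Let $\sI_0$ be the ideal sheaf of $(\Xb_1-X_1)_\red$ and $\sJ$ that of $p_2^*Y_2$ on $\Zb$. The inclusion of supports gives $\sI_0\sO_{\Zb}\subseteq\sqrt{\sJ}$, and since $\Zb$ is noetherian, $\sqrt{\sJ}^{\,n}\subseteq\sJ$ for some $n$; hence $\sI_0^n\sO_{\Zb}\subseteq\sJ$. Taking $Y_1:=V(\sI_0^n)$ gives $p_1^*Y_1\ge p_2^*Y_2$ on $\Zb$, hence on $\Zb^N$, and $|Y_1|=\Xb_1-X_1$ exactly, with no further patching needed. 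This is the paper's argument; it requires neither finiteness of $p_1$ nor normality of $\Xb_1$, and it also simplifies your reduction to integral $Z$ (just take $n$ large enough for all components at once).
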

\begin{proof}
We may assume $Z\subset X_1\times X_2$ integral and finite surjective over $X_1$.
Let $\Zb\subset \Xb_1\times\Xb_2$ be the closure of $Z$ and $p_i:\Zb\to \Xb_i ~(i=1, 2)$  be 
the natural maps. We remark 
\begin{equation}\label{lem:admissible-induce-mor.eq}
p_1^{-1}(\Xb_1-X_1)_\red= \Zb-Z \;\;\text{ and }\;\;p_2^{-1}(Y_2)_{\red}\subset \Zb-Z.
\end{equation}
The first assertion follows from Lemma \ref{l1.1} below noting that $Z\to X_1$ is finite.
The second assertion is obvious since $p_2$ induces $Z\to X_2$. 
Hence one can find a closed subscheme $Y_1 \subset \Xb_1$ supported on $\Xb_1-X_1$ such that $p_1^*Y_1\geq p_2^*Y_2$ on $\Zb$,\footnote{Note that if $F\inj W$ is a closed immersion in  $\Sch$ defined by the ideal sheaf $\sI$, there exists $n>0$ such that $\sI\supseteq \sI_0^n$ where $\sI_0$ is the ideal sheaf of $F_\red$.
} and therefore on $\Zb^N$.  
\end{proof}

\begin{lemma}\label{l1.1} Let
\[\begin{CD}
X@>j>> \bar X\\
@V{p}VV @V{\bar p}VV\\
Y@>j'>> \bar Y
\end{CD}\]
be a commutative diagram of schemes, where $j,j'$ are  dense open immersions and $p$ is proper. Then $\bar p^{-1}(Y) = X$.
\end{lemma}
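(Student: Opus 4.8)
The plan is to show that $X$ coincides with the open subscheme $X' := \bar p^{-1}(Y)$ of $\bar X$ (it is open since $Y$ is open in $\bar Y$). First I would check the inclusion $X \subseteq X'$: commutativity of the square gives $\bar p \circ j = j' \circ p$, hence $\bar p(j(X)) \subseteq j'(Y) = Y$, so the open immersion $j$ factors through $X' \hookrightarrow \bar X$ as an open immersion $i : X \to X'$ with $j$ equal to $i$ followed by $X' \hookrightarrow \bar X$. Since $j(X)$ is dense in $\bar X$ and $X \subseteq X' \subseteq \bar X$ with $X'$ open, the image $i(X)$ is dense in $X'$ as well (closure in an open subspace is the intersection with it).

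Next I would prove that $i$ is proper. Setting $q := \bar p|_{X'} : X' \to Y$, one has $q \circ i = p$; since $\bar p$ is separated, so is $q$, and then the cancellation property for proper morphisms applied to the proper map $p = q \circ i$ yields that $i$ is proper. To finish, I would note that $i$, being simultaneously an open immersion and a proper (hence universally closed) morphism, identifies $X$ with a subset of $X'$ that is at once open and closed; as this subset is dense by the first step, it is all of $X'$. Thus $i$ is an isomorphism, and $X = X' = \bar p^{-1}(Y)$ as subschemes of $\bar X$.

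The only point that merits care is the separatedness of $\bar p$ invoked in the cancellation step; I expect this to be a remark rather than a genuine obstacle, since in the situation where the lemma is applied $\bar p$ is even proper — it arises as the restriction of a projection $\Xb_1 \times \Xb_2 \to \Xb_1$ with $\Xb_2$ proper to a closed subscheme. Everything else in the argument is formal.
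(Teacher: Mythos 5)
Your proposal is correct and is essentially the paper's own argument, just written out in full: the paper sets $Z=\bar p^{-1}(Y)$ and observes that the dense open immersion $X\hookrightarrow Z$ is proper (by cancellation against the separated map $Z\to Y$, exactly as you do), hence an isomorphism. Your extra remark about the separatedness of $\bar p$ is a reasonable point of care but poses no obstacle in the paper's applications.
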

\begin{proof} Let $Z=\bar p^{-1}(Y)$. The dense open immersion $X\inj Z$ is proper hence an isomorphism.
\end{proof}

\subsection{An invariance property for $G(\Cb,D)$}\label{sect:inv-prop-g}

\begin{prop}\label{lem:pullback}
Let  $f:\Cb' \to \Cb$ be a proper surjective morphism of $k$-varieties and $D \subset \Cb$ be an effective
 Cartier divisor.
Assume $f_*\sO_{\Cb'}=\sO_{\Cb'}$.
Then, for $D'=\Cb' \times_\Cb D$, we have   
\[ 
f^*: G(\Cb, D) \iso G(\Cb', D').
\]
\end{prop}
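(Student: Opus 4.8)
The plan is to analyze the map $f^*$ on the level of the definition \eqref{eq:def-of-gcd} of $G(\Cb,D)$, reducing everything to statements about sections of $\sO^\times$. First recall that for an integral scheme $\Cb$ and an effective Cartier divisor $D$, an element of $G(\Cb,D)$ is a rational function $g$ which is a unit in $\sO_{\Cb,x}$ for every $x\in D$ and which maps to $1$ in $\sO_{D,x}^\times$; equivalently, after passing to an open $U\supseteq D$, $g\in\Gamma(U,\sO_U^\times)$ with $g\equiv 1 \bmod \sI_D$. Since $f$ is surjective, $f^*$ is injective on rational functions, so the only point is to check that $f^*$ lands in $G(\Cb',D')$ and that it is surjective onto it.

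The key input is the hypothesis $f_*\sO_{\Cb'}=\sO_{\Cb}$ (as written, with the understanding that the natural map $\sO_{\Cb}\to f_*\sO_{\Cb'}$ is an isomorphism). First I would treat the target: given $g\in G(\Cb,D)$, pick an open $U\supseteq D$ on which $g$ is a unit congruent to $1$ modulo $\sI_D$; then $f^*g$ is a unit on $f^{-1}(U)\supseteq f^{-1}(D)\supseteq D'$ (note $D'$ is defined by the ideal $\sI_D\sO_{\Cb'}$, so $f^{-1}(D)$ and $D'$ have the same support and $g\equiv 1$ modulo $\sI_D\sO_{\Cb'}$), hence $f^*g\in G(\Cb',D')$. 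For injectivity, surjectivity of $f$ suffices as noted. The main work is surjectivity of $f^*$ onto $G(\Cb',D')$: given $h\in G(\Cb',D')$, both $h$ and $h^{-1}$ are global sections of $\sO_{\Cb'}$ over $f^{-1}(U)$ for a suitable $U\supseteq D$, so by $f_*\sO_{\Cb'}=\sO_{\Cb}$ applied over $U$, there exist $g,g'\in\Gamma(U,\sO_U)$ with $f^*g=h$, $f^*g'=h^{-1}$; then $f^*(gg'-1)=0$ and surjectivity of $f$ forces $gg'=1$ on $U$, so $g\in\Gamma(U,\sO_U^\times)$. Finally, since $h\equiv 1 \bmod \sI_D\sO_{\Cb'}$ and $f_*$ is compatible with the closed subscheme structures — more precisely, $\sO_{\Cb}\to f_*\sO_{\Cb'}$ induces, modulo $\sI_D$, the map $\sO_D\to f_*\sO_{D'}$ which is injective because $\sO_D\to\sO_{D'}$ is — one deduces $g\equiv 1 \bmod \sI_D$, i.e. $g\in G(\Cb,D)$ with $f^*g=h$.

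The step I expect to be the main obstacle is the last one: propagating the congruence $h\equiv 1$ from $\Cb'$ down to $\Cb$. This requires knowing that $\sO_{\Cb}\to f_*\sO_{\Cb'}$ stays injective after reduction modulo $\sI_D$, equivalently that $\sO_D\to\Gamma(D',\sO_{D'})$ is injective; since $D$ is an effective Cartier divisor and $f$ is surjective, $D'=\Cb'\times_\Cb D$ surjects onto $D$, which gives the injectivity of $\sO_D(V)\to\sO_{D'}(f^{-1}V)$ for $V$ affine open in $D$. Combining this with the identity $f_*\sO_{\Cb'}=\sO_{\Cb}$ and a short diagram chase with the exact sequences $0\to\sI_D\to\sO_{\Cb}\to\sO_D\to 0$ and its pullback yields $g\equiv 1\bmod\sI_D$, completing the proof. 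A remark worth including: the hypothesis is exactly what makes this work, and it is satisfied when $f$ is the normalization of a curve in the reciprocity-sheaf context, or more generally whenever $f$ has geometrically connected fibres and $\Cb$ is normal.
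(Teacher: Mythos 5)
Your overall route coincides with the paper's: reduce to a sheaf-level statement over neighbourhoods $U\supseteq D$ and $U'=f^{-1}(U)$, use $f_*\sO_{\Cb'}=\sO_{\Cb}$ to descend units, and reduce the congruence $h\equiv 1$ to the injectivity of $\sO_D\to f_*\sO_{D'}$. (One point you elide: to know that every $h\in G(\Cb',D')$ is already defined on some $f^{-1}(U)$ with $U\supseteq D$ open, you need that such opens are cofinal among the neighbourhoods of $f^{-1}(D)$; this uses properness of $f$ and is Lemma \ref{l3.2.1} of the paper.)

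The genuine gap is in your justification of the key injectivity $\sO_D\to f_*\sO_{D'}$. You derive it from the surjectivity of $D'\to D$, but topological surjectivity only forces injectivity on structure sheaves when the target is reduced, whereas here $D$ is an arbitrary effective Cartier divisor and the relevant cases in the paper (moduli of the form $nY_0$) are non-reduced. The implication genuinely fails: for the normalization $f:\Spec k[t]\to \Spec k[t^2,t^3]$ and $D=V(t^2)$, the class of $t^3$ is a nonzero nilpotent of $\sO_D$ that dies in $\sO_{D'}$, even though $D'\to D$ is surjective (of course this $f$ violates $f_*\sO_{\Cb'}=\sO_{\Cb}$, which shows that the hypotheses, and not surjectivity, must carry the argument). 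The correct reasoning --- and the only place the Cartier hypothesis actually enters --- is the paper's: since $\sI_D$ is invertible one has $\sI_{D'}=f^*\sI_D$, so the projection formula gives $f_*\sI_{D'}\simeq \sI_D\otimes f_*\sO_{U'}=\sI_D$; hence $\sI_D\to f_*\sI_{D'}$ is surjective, and a diagram chase with $0\to\sI_D\to\sO_U\to\sO_D\to 0$ against its pushed-forward pullback (using $\sO_U\iso f_*\sO_{U'}$) yields the injectivity of $\sO_D\to f_*\sO_{D'}$. Your ``short diagram chase'' gestures at this, but the surjectivity of $\sI_D\to f_*\sI_{D'}$ is precisely the input that must be supplied, and it does not follow from the surjectivity of $D'\to D$.
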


For the proof, we need a lemma.

\begin{lemma} \label{l3.2.1} Let $f:\Cb' \to \Cb$ be a proper surjective morphism of schemes.
Let $D \subset \Cb$ be a closed subscheme. Then the system
\[\{f^{-1}(U)\mid U\supset D, U \text{ open}\}\]
is cofinal among the open neighbourhoods of $f^{-1}(D)$ in $\Cb'$.
\end{lemma}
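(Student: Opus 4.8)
The plan is to reduce everything to the single fact that a proper morphism of schemes is a closed map. Concretely, fix an open subset $V \subseteq \Cb'$ with $f^{-1}(D) \subseteq V$; the goal is to produce an open $U \subseteq \Cb$ with $D \subseteq U$ and $f^{-1}(U) \subseteq V$, which is exactly the asserted cofinality.

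First I would set $W := \Cb' \setminus V$, a closed subset of $\Cb'$, and consider its image $f(W) \subseteq \Cb$. Since $f$ is proper it is in particular a closed map, so $f(W)$ is closed in $\Cb$. The key observation is that $f(W) \cap D = \emptyset$: if some $x \in D$ admitted a preimage $y \in W$, then $y \in f^{-1}(D) \subseteq V$, contradicting $y \in W = \Cb' \setminus V$. Hence $D$ is contained in the open set $U := \Cb \setminus f(W)$, and $f^{-1}(U) = \Cb' \setminus f^{-1}(f(W)) \subseteq \Cb' \setminus W = V$, since $f^{-1}(f(W)) \supseteq W$. This gives the required $U$.

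There is essentially no obstacle here: the argument is purely topological, surjectivity of $f$ is not even needed, and the only thing requiring a moment's care is tracking the direction of the inclusions (one wants $f^{-1}(U) \subseteq V$, not the reverse), which is handled by passing to complements as above.
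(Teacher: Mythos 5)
Your argument is correct and is exactly the paper's proof: pass to the closed complement $W$ of the given neighbourhood, use that the proper map $f$ is closed so $f(W)$ is closed, and take $U = \Cb \setminus f(W)$. Your remark that surjectivity is not needed is also accurate.
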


\begin{proof} Let $U'$ be an open neighbourhood of $f^{-1}(D)$, and $Z=\Cb'-U'$. Then $f(Z)$ is closed and $f^{-1}(D)\subset f^{-1}(U)\subseteq U'$ with $U=\Cb-f(Z)$.
\end{proof}


\begin{proof}[Proof of Proposition \ref{lem:pullback}] 
By Lemma \ref{l3.2.1}, it suffices to show 
\[\sO_{U|D}^*\iso f_* \sO_{U'|D'}^*\]
for $U$ running through the open neighbourhoods of $D$ in $\Cb$ and $U'=f^{-1}(U)$. In the commutative diagram of exact sequences
\[\begin{CD}
0@>>> \sO_{U|D}^* @>>> \sO_{U}^* @>>> \sO_D^*\\
&&@VaVV @VbVV @VcVV\\
0@>>> f_*\sO_{U'|D'}^* @>>> f_*\sO_{U'}^* @>>> f_*\sO_{D'}^*
\end{CD}\]
$b$ is an isomorphism by the assumption $f_*\sO_{\Cb'}=\sO_{\Cb}$.
Hence it suffices to show that $c$ is injective. For this, it suffices to show that $c'$ is injective in the diagram 
\[\begin{CD}
0@>>> \sI_D @>>> \sO_{U} @>>> \sO_D@>>> 0\\
&&@Va'VV @Vb'V{\simeq}V @Vc'VV\\
0@>>> f_*\sI_{D'} @>>> f_*\sO_{U'} @>>> f_*\sO_{D'}
\end{CD}\]
which will follow from the surjectivity of $a'$.
To see this, note
\[ f^*\sI_D=\sI_D\otimes_{\sO_U} \sO_{U'} \iso \sI_D \sO_{U'}=\sI_{D'}\]
by the assumption that $D$ is a Cartier divisor. Thus we get
\[f_*\sO_{U'}\otimes\sI_D\simeq f_*(f^*\sI_D) \iso f_*\sI_{D'}\]
by the projection formula, 
hence the claim since 
$\sO_{U}\iso f_*\sO_{U'}$.
\end{proof}

\subsection{Functoriality of $\Phi(\Xb, Y)$}\label{secti:functoriality-Phi}

\begin{proposition}\label{prop:admissible-induce-mor}
Let $M_i = (\Xb_i, Y_i) ~(i=1, 2)$ be modulus pairs
with $X_i = \Xb_i \setminus |Y_i|$,
and let $Z \in \Cor(X_1, X_2)$ be admissible for $(M_1, M_2)$. We assume that $Y_2$ is a Cartier divisor.
Then there is a morphism
\[
Z_* : \Phi(\Xb_1, Y_1) \to \Phi(\Xb_2, Y_2)\;\;\text{in }\PST
\]
which fits into a commutative diagram
\begin{equation}\label{hXYrec.eq1}
\xymatrix{
\Phi(\Xb_1, Y_1) \ar[r]^{\tau} \ar[d]^{Z_*} 
& \Z_{\tr}(X_1) \ar[d]^{Z_*}
\\
\Phi(\Xb_2, Y_2) \ar[r]^{\tau} & \Z_\tr(X_2)
}
\end{equation}
so that $Z_*$ induces a map
$h(\Xb_1,Y_1) \to h(\Xb_2,Y_2)$.
\end{proposition}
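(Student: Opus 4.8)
The plan is to define $Z_*$ on each $S\in\Sm$ by a fibre-product construction over $\Xb_1$, formally parallel to the construction of the transfer maps on $\Phi(\Xb,Y)$ in \S\ref{sect:pf-of-prop-rep-sheaf-is-a-pst}, where the fibre product was instead taken over $S$. Since $\Phi(\Xb_i,Y_i)$ and $\Z_\tr(X_i)$ are additive in $S$ over connected components and $\Cor(S',X_2)\hookrightarrow\Cor(V,X_2)$ for $V\subseteq S'$ dense open, one reduces to $S$ connected (hence integral, with generic point $\eta$) and, by additivity, to $Z\subseteq X_1\times X_2$ integral and finite surjective over $X_1$. Write $\Zb\subseteq\Xb_1\times\Xb_2$ for the closure of $Z$, $\Zb^N$ for its normalization, and $p_i\colon\Zb^N\to\Xb_i$ for the two projections; the admissibility hypothesis reads $p_1^*Y_1\geq p_2^*Y_2$ on $\Zb^N$, and both $p_1$ and $p_2$ are proper since $\Xb_1,\Xb_2$ are proper over $k$.

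Given $(\phi\colon\Cb\to\Xb_1\times S)\in\sC_{(\Xb_1,Y_1)}(S)$, form $P:=\Cb\times_{\Xb_1}\Zb^N$ along $\gamma_\phi$ and $p_1$; it is proper over $\Cb$ (base change of $p_1$) and over $S$, as $\Cb\to S$ is proper (being $\phi$ followed by the projection $\Xb_1\times S\to S$). Let $T$ run through the irreducible components of $P$ with $\dim T_\eta=1$ whose generic point lies over $X_1\subseteq\Xb_1$, let $T^N\to T$ be the normalization, and let $h_T\colon T^N\to\Cb$ and $\bar h_T\colon T^N\to\Zb^N$ be the two projections, so that $\gamma_\phi\circ h_T=p_1\circ\bar h_T$. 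Put $\psi_T\colon T^N\to\Xb_2\times S$ with first component $\gamma_{\psi_T}=p_2\circ\bar h_T$ and second component $T^N\to\Cb\to S$. One checks $(\psi_T\colon T^N\to\Xb_2\times S)\in\sC_{(\Xb_2,Y_2)}(S)$: $T^N$ is integral normal; $\psi_T$ is proper, and its finiteness follows from Lemma \ref{lqf}, the finiteness of $\phi$, and the quasi-affineness of $X_2$, exactly as in the proof of Lemma \ref{lem:alpha}\,(1); and condition (C) holds because, $T$ lying over $X_1$, the ideal sheaf $\sI_{\gamma_\phi^*Y_1}\sO_{T^N}$ is generically trivial, hence so is the larger ideal sheaf $\sI_{\gamma_{\psi_T}^*Y_2}\sO_{T^N}=\bar h_T^*(\sI_{p_2^*Y_2})\supseteq\bar h_T^*(\sI_{p_1^*Y_1})$, which forces $|\gamma_{\psi_T}^*Y_2|\subsetneq T^N$. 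The same inclusion of ideal sheaves (together with $Y_2$ being a Cartier divisor, so that $\gamma_{\psi_T}^*Y_2$ is one) shows that pulling back functions gives a well-defined map $h_T^*\colon G(\Cb,\gamma_\phi^*Y_1)\to G(T^N,\gamma_{\psi_T}^*Y_2)$, as in \eqref{eq:def-z-t-star}. Writing $m_T$ for the multiplicity of $T$ in $P$, I define $Z_*$ componentwise by sending $g\in G(\Cb,\gamma_\phi^*Y_1)$ to $\sum_T m_T\,h_T^*(g)$, the $T$-term lying in the summand of $\Phi(\Xb_2,Y_2)(S)$ indexed by $\psi_T$; this assembles to a morphism of presheaves $Z_*\colon\Phi(\Xb_1,Y_1)\to\Phi(\Xb_2,Y_2)$.

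Two compatibilities remain. First, $Z_*$ must be a morphism in $\PST$, i.e.\ commute with the transfer maps of \S\ref{sect:pf-of-prop-rep-sheaf-is-a-pst}: for $W\in\Cor(S'',S)$ the two composites $\Phi(\Xb_1,Y_1)(S)\to\Phi(\Xb_2,Y_2)(S'')$ agree because the iterated fibre products $(\Cb\times_{\Xb_1}\Zb^N)\times_S W$ and $(\Cb\times_S W)\times_{\Xb_1}\Zb^N$ coincide, with matching multiplicities computed as lengths of Artin local rings — this is the same bookkeeping as in the proof of $(ZZ')^*=Z'^*Z^*$ in \S\ref{sect:pf-of-prop-rep-sheaf-is-a-pst}. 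Second, the square \eqref{hXYrec.eq1} must commute, i.e.\ one must verify
\[
Z\circ\bigl(\phi_*\div_{\Cb}(g)\bigr)\;=\;\sum_T m_T\,(\psi_T)_*\div_{T^N}\bigl(h_T^*(g)\bigr)\qquad\text{in }\Cor(S,X_2);
\]
this expresses the composition of the correspondence $Z$ with the relative zero-cycle $\phi_*\div_{\Cb}(g)$, and is proved by the projection-formula and pushforward-versus-intersection arguments of \S\ref{sect:const-div-g} — in particular the two displayed facts there, which use that $Y_2$ is a Cartier divisor — after reducing $Z$ to the graph of a flat morphism or of a regular codimension-one immersion as in the proof of Proposition \ref{prop:rep-sheaf-is-a-pst}; the invariance property of Proposition \ref{lem:pullback} is available to compare the relevant $G$-groups along the modifications of curves that occur in this reduction. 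Once \eqref{hXYrec.eq1} is established, taking cokernels of the two horizontal maps $\tau$ yields the induced map $h(\Xb_1,Y_1)\to h(\Xb_2,Y_2)$.

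I expect the main obstacle to be the multiplicity bookkeeping in these two compatibility checks, above all in \eqref{hXYrec.eq1}: one must identify the coefficients $m_T$ coming from $P=\Cb\times_{\Xb_1}\Zb^N$ with the intersection multiplicities occurring in the composition $Z\circ(\phi_*\div_{\Cb}(g))$ of finite correspondences, and check that $\div_{T^N}(h_T^*(g))$ is precisely the contribution of $T$ to $\div_{\Cb}(g)\cdot[P]$. A secondary delicate point is the verification that $\psi_T\in\sC_{(\Xb_2,Y_2)}(S)$ — notably the finiteness of $\psi_T$ — which is handled using the finiteness of $\phi$ and the quasi-affineness of $X_2$ via Lemmas \ref{lem:alpha} and \ref{lqf}.
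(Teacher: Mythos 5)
Your overall architecture (reduce to $Z$ integral and finite surjective over $X_1$, base-change $\Cb$ over $\Xb_1$ along the closure of $Z$, pull back $g$ componentwise using $p_1^*Y_1\geq p_2^*Y_2$, then check compatibility with transfers and with $\tau$) matches the paper's, but there is a genuine gap at the central step: you assert that $\psi_T\colon T^N\to\Xb_2\times S$ is finite, ``exactly as in the proof of Lemma \ref{lem:alpha}\,(1)'', and this is false in general. The map $\psi_T$ factors through $\Zb^N\times S\to\Xb_2\times S$, i.e.\ through $p_2\colon\Zb^N\to\Xb_2$, which is only proper: the closure $\Zb$ of $Z$ is finite over $X_1$ via $p_1$, but $p_2$ is subject to no finiteness hypothesis and can contract curves. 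Concretely, with $\Xb_1=\P^2$, $\Xb_2=\P^1$ and $Z\subset\A^2\times\A^1$ the graph of a coordinate function on $\A^2$, the map $p_2$ on $\Zb^N$ (the blow-up of $\P^2$ at a point) is a $\P^1$-bundle; taking for $\Cb$ a line of that ruling produces a component $T$ with $\dim T_\eta=1$ and generic point over $X_1$ for which $\psi_T$ is constant. The appeal to Lemma \ref{lqf} does not apply, because $T^N$ is not quasi-affine over $\Xb_2\times S$: the finiteness of $\phi$ only gives a finite map $T^N\to\Zb^N\times S$, and the remaining leg $\Zb^N\times S\to\Xb_2\times S$ is merely proper. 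Since condition (A) of \S\ref{s2.1} requires finiteness, your element $h_T^*(g)$ does not lie in any summand of $\Phi(\Xb_2,Y_2)(S)$, so $Z_*$ is not defined.

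The missing idea is the Stein factorization $T^N\to\Cb_{X_2}\to\Xb_2\times S$ of the proper map $\psi_T$: one keeps only those components for which $\dim(\Cb_{X_2}\times_S\eta)=1$ (note that this selection must be made \emph{after} contracting, which also repairs your criterion ``$\dim T_\eta=1$'': in the example above that condition holds and yet the component must be discarded), and one must then descend $h_T^*(g)$ from $T^N$ to $\Cb_{X_2}$. That descent is the isomorphism $G(\Cb_{X_2},\gamma^*Y_2)\iso G(T^N,p_2^*\gamma^*Y_2)$ of Proposition \ref{lem:pullback}, which requires $p_{2*}\sO_{T^N}=\sO_{\Cb_{X_2}}$ (supplied by the Stein factorization) and requires the pulled-back $Y_2$ to be a Cartier divisor. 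This is where the hypothesis that $Y_2$ is Cartier is actually used; your invocation of it (to make $h_T^*$ ``well defined'') is not the relevant point, since $G(-,D)$ is defined in \ref{G(C,D)} for an arbitrary closed subscheme $D$, and your only later mention of Proposition \ref{lem:pullback} is an aside in the commutativity check where it plays no essential role. Once the Stein factorization and the descent via Proposition \ref{lem:pullback} are inserted, the remaining compatibilities go through essentially as you outline.
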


\begin{proof}
We may assume $Z\subset X_1\times X_2$ integral and finite surjective over $X_1$.
Let $\Zb\subset \Xb_1\times\Xb_2$ be the closure of $Z$ and $p_i:\Zb\to \Xb_i ~(i=1, 2)$  be 
the natural maps. 
Take $S\in \Sm$ connected and 
\[
(\phi:\Cb\to \Xb_1\times S)\in \sC_{(\Xb_1, Y_1)}(S)\quad\text{ with }\;C=\phi^{-1}(X_1\times S).
\] 
Let $C_{Z,i}$ ($i\in I$) be the irreducible components of $C\times_{X_1} Z$ and
$\Cb_{Z,i}$ be the closure of $C_{Z,i}$ in $\Cb\times_{\Xb_1} \Zb$ and $\Cb_{Z,i}^N$ be its normalization. 
We have the composite map
\[
\psi_i':  \Cb_{Z,i}^N \to \Cb\times_{\Xb_1} \Zb \rmapo{\phi\times_{\Xb_1} \Zb} \Zb\times S \to \Xb_2\times S.
\]

By construction, $\psi_i'$ is proper. Let 
\begin{equation}\label{eq.Stein}
\Cb_{Z,i}^N \rmapo{p_{2,i}} \Cb_{X_2,i} \rmapo{\psi_i} \Xb_2\times S
\end{equation}
be its Stein factorization. Note that $\Cb_{X_2,i}$ is normal and $\psi_i$ is finite.
Let $J\subset I$ be the subset of those $i\in I$ such that $\dim(\Cb_{X_2,i}\times_S \eta)=1$,
where $\eta$ is the generic point of $S$.
For $i\in J$ we have
\[
(\psi_i: \Cb_{X_2,i} \to \Xb_2\times S)\in \sC_{(\Xb_2, Y_2)}(S).
\]

Indeed it suffices to check that the image of $\psi'_i$ is not contained in $Y_2\times S$.
The normalization $C_{Z,i}^N$ of $C_{Z,i}$ is dense open in $\Cb_{Z,i}^N$ and $\psi_i'$ induces a morphism
\[
C_{Z,i}^N \to C\times_{X_1} Z \rmapo{\phi\times_{\Xb_1} \Zb} Z\times S \to X_2\times S,
\]
which implies the desired assertion.
We obtain a commutative diagram
\[
\xymatrix{
\Cb  \ar[d]_{\gamma_{\phi}} 
& \ar[l]_{p_{1,i}}\Cb^N_{Z,i} \ar[r]^{p_{2,i}} 
\ar[d]^{\gamma_{Z,i}}
& \Cb_{X_2, i}  \ar[d]^{\gamma_{X_2, i}} \\
\Xb_1  & \ar[l]_{p_1} \Zb \ar[r]^{p_2} & \Xb_2.\\
}\]

From this we get a composition 
\begin{multline*}
Z^{i}_*: G(\Cb,\gamma_{\phi}^* Y_1) 
\to G(\Cb^N_{Z,i},p_{1,i}^*\gamma_{\phi}^*Y_1)=G(\Cb^N_{Z,i},\gamma_{Z,i}^*p_1^*Y_1)  \\
\inj
G(\Cb^N_{Z,i},\gamma_{Z,i}^*p_2^*Y_2) 
\simeq G(\Cb_{X_2, i},\gamma_{X_2, i}^* Y_2) 
\subset \Phi(\Xb_2,Y_2)(S),
\end{multline*}
where the first map is the pullback by $p_{1,i}$,  the middle inclusion comes from the assumption $p_1^*Y_1 \geq p_2^*Y_2$, 
and the isomorphism is the inverse of the pullback along $p_{2,i}$,
 which is an isomorphism by Proposition \ref{lem:pullback} 
(here we use the assumption that $Y_2$ is a Cartier divisor; also, $p_{2,i}$ satisfies the assumption of loc. cit. by the construction \eqref{eq.Stein}).
Letting $m_i$ be the multiplicity of $C_{Z,i}$ in $C\times_{X_1} Z$, we then define
\[
Z_*=\underset{i\in J}{\sum}\; m_i Z^{i}_* :
G(\Cb, \gamma_{\phi}^* Y_1)\to \Phi(\Xb_2,Y_2)(S)
\]
which induces $Z_* : \Phi(\Xb_1,X_1)(S) \to \Phi(\Xb_2,Y_2)(S)$.
 It is easy to check {(essentially as a special case of \eqref{eq.PhiPST})} that this induces a map $Z_* : \Phi(\Xb_1,Y_1) \to \Phi(\Xb_2,Y_2)$ in $\PST$.

To prove the commutativity of \eqref{hXYrec.eq1}, we may assume $S=\Spec K$ for a field $K$ by the same reason as for the commutation of \eqref{eq.PhiCor}.
Then $\Cb$ and $\Cb^N_{Z,i}$ are regular of dimension one so that $p_{1,i}$ is flat.
We are reduced to showing the commutativity of the diagram
\[
\xymatrix{
G(\Cb, \gamma_{\phi}^* Y_1) 
\ar[rr]^{\div_{\Cb}}
\ar[d]^{\oplus m_i p_{2, i *} p_{1, i}^*} &&
c(C/K)
\ar[r]^{\phi_*}
\ar[d]^{\oplus m_i p_{2, i *} p_{1, i}^*} &
c(X_1  \times K/K) 
\ar[d]^{Z_*}
\\
\displaystyle\bigoplus_{i \in J} G(\Cb_{X_2, i}, \gamma_{X_2, i}^* Y_2)
\ar[rr]^(0.6){\oplus \div_{\Cb_{X_2, i}}} && 
\displaystyle\bigoplus_{i \in J} c(C_{X_2, i}/K)
\ar[r]^{\oplus \psi_{i *}} & 
c(X_2 \times K/K),
} \]
where $p_{1,i}^*$ is the flat pullback of cycles.
Each square of the diagram is easily seen to be commutative.
This completes the proof of Proposition \ref{prop:admissible-induce-mor}.
\end{proof}

\subsection{Proof of Theorem \ref{thm:main-rep} (2)}\label{s2.9}

Let $(\Xb,Y)$ be as in the theorem.
Assume we are given a connected quasi-affine $V\in \Sm$ and $a\in h(\Xb,Y)(V)$.
Let $\Vb$ be a proper normal scheme over $k$ containing $V$ as a dense open subset.
Let $\tilde{a}\in \Ztr(X)(V)=\Cor(V,X)$ be a lift of $a$ under the canonical surjection
$\Ztr(X)(V)\to h(\Xb,Y)(V)$. We have a commutative diagram
\[
\xymatrix{
\Ztr(V) \ar[r]^{\tilde{a}_*} \ar[rd]_{a_*} & \Ztr(X) \ar[d]\\
&h(\Xb,Y)\\
}\]
where $a_*$ and $\tilde{a}_*$ are respectively induced by $a$ and $\tilde{a}$ via the Yoneda embedding.
By Lemma \ref{lem:admissible-induce-mor} and Proposition \ref{prop:admissible-induce-mor}, there is a closed subscheme $W\subset \Vb$ supported on $\Vb-V$ such that $a_*$ factors through $\Ztr(V) \to h(\Vb,W)$, which implies that $a$ has modulus $W$.\qed


\section{Homotopy invariance implies reciprocity}

\subsection{Introduction} In this section we prove Theorem \ref{thm:HI.intro}.
Actually, we prove the following stronger result:

\begin{thm}\label{thm:HI.intro2}
Let $F \in \PST$. 
We consider the following condition:
\begin{enumerate}
\item[$(\diamondsuit)$]
If $X\in \Sm$  and $a \in F(X)$, then $Y$ is a modulus for $a$ for any  modulus pair  $(\Xb,Y)$ with $X=\Xb-Y$. (Equivalently:  for any  modulus pair  $(\Xb,Y)$ with $Y$ reduced, $Y$ is a modulus for $a$.)
\end{enumerate}
Then we have the following.
\begin{enumerate}
\item
If $F$ is homotopy invariant,
then $(\diamondsuit)$ holds.
\item
 If $F$ is separated for the Zariski topology and satisfies $(\diamondsuit)$,
then $F$ is homotopy invariant.
\end{enumerate}
\end{thm}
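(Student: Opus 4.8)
The plan is to prove the two implications separately, with implication (1) being essentially a direct computation from the definitions and implication (2) requiring a geometric argument via the Rosenlicht-type modulus condition on affine lines.

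\textbf{Proof of (1).} Suppose $F$ is homotopy invariant; I want to show $(\diamondsuit)$. Given $X\in\Sm$, $a\in F(X)$, and a modulus pair $(\Xb,Y)$ with $X=\Xb-|Y|$, I must verify \eqref{eq.modulusdef}: for any diagram \eqref{eq:diag-modulus-cond}, namely $\phi:\Cb\to\Xb\times S$ satisfying (A),(B),(C) with $C=\Cb-|\gamma_\phi^*Y|$, and any $f\in G(\Cb,\gamma_\phi^*Y)$, one has $(\phi_*\div_{\Cb}(f))^*(a)=0$ in $F(S)$. The key point is that $f$, being in $G(\Cb,\gamma_\phi^*Y)\subset k(C)^\times$, defines a morphism (on a suitable dense open neighbourhood of the divisor's support inside $C$, but in fact on all of $C$ after using that $\Cb$ is normal and $f$ is a unit away from $|\div_{\Cb}(f)|\subset C$) to $\mathbf{A}^1$, or more precisely one uses $f$ to build a finite correspondence from $S$ to $\mathbf{A}^1$. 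Concretely, $\div_{\Cb}(f) = f^{-1}(0)-f^{-1}(\infty)$ and the correspondence $\phi_*\div_{\Cb}(f)\in\Cor(S,X)$ factors as $S \xrightarrow{\Gamma} \mathbf{A}^1\setminus\{0\}\text{-type cycle}$ composed with a correspondence to $X$; the homotopy $\{0\}\sim\{1\}$ on $\mathbf{A}^1$ then shows that after applying $F$ and using $p_X^*:F(X)\iso F(\mathbf{A}^1_X)$ the pullback of $a$ vanishes. More precisely, one writes $\div_{\Cb}(f)$ as $i_1^*\Gamma_f - i_0^*\Gamma_f$ where $\Gamma_f\subset C\times\mathbf{A}^1$ is (the relevant part of) the graph of $f$, viewed as an element of $\Cor(\mathbf{A}^1_S, X)$ via $\phi$; homotopy invariance gives $i_0^* = i_1^*: F(\mathbf{A}^1_S)\to F(S)$, and since $0$ and $1$ are not in $|\gamma_\phi^*Y|$ one checks both $i_0$ and $i_1$ may be used, yielding $(\phi_*\div_{\Cb}(f))^*(a) = (i_1^*-i_0^*)(\Gamma_f\circ\phi)^*(a) = 0$. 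This is the standard ``fundamental-class/homotopy'' argument and I expect it to go through routinely; the only care needed is checking that $\Gamma_f$ lands in $\Cor(\mathbf{A}^1_S,X)$, which follows from Lemma \ref{lem:alpha}(1) and Lemma \ref{acHI.lem1}.

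\textbf{Proof of (2).} Now suppose $F$ is Zariski-separated and satisfies $(\diamondsuit)$; I want $p_X^*:F(X)\iso F(\mathbf{A}^1_X)$ for all $X\in\Sm$. Injectivity: by Zariski-separatedness we may work locally and use that $p_X$ has a section (e.g. the zero section $i_0:X\to\mathbf{A}^1_X$), so $p_X^*$ is split injective. Surjectivity is the substance. Given $b\in F(\mathbf{A}^1_X)$, I want to show $b = p_X^*(a)$ for some $a\in F(X)$; a natural candidate is $a := i_0^*(b)$, and I must show $b = p_X^* i_0^* (b)$, i.e. $(i_1^* - i_0^*)$-type differences vanish — equivalently that $b$ is ``constant in the $\mathbf{A}^1$-direction.'' The strategy is to apply $(\diamondsuit)$ to the modulus pair $(\mathbf{P}^1_X, \{0,\infty\}_X)$ — or rather, since we need $\overline{X}$ proper, first choose a compactification $X\hookrightarrow\Xb$ and work with $(\Xb\times\mathbf{P}^1, D)$ where $D = (Y\times\mathbf{P}^1)\cup(\Xb\times\{\infty\})$ reduced, so that the open complement contains $X\times\mathbf{A}^1$. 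Condition $(\diamondsuit)$ then says $D$ is a modulus for $b$ (after restricting to $X\times\mathbf{A}^1$, which is legitimate since $\Xb\times\mathbf{P}^1 - |D| = X\times\mathbf{A}^1$). Applying the modulus relation \eqref{eq.modulusdef} to a well-chosen diagram — taking $S=X$ (or a localization), $\Cb$ a compactification of $\mathbf{A}^1_S$, and $f$ a coordinate-type function such as $f = t(t-1)^{-1}$ or $f=t-c$, which lies in $G(\Cb,\gamma_\phi^*D)$ because its divisor avoids $\infty$ and $f$ is a unit near $\infty$ (indeed $f(\infty)=1\ne 0,\infty$, so $f\in\sO^\times$ there) — produces the relation $(i_c^* - i_{c'}^*)(b) = 0$ in $F(S)$ for distinct finite values $c,c'$. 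Taking $c'=0$ and letting $c$ vary, together with Zariski-separatedness to glue, yields $b = p_X^*i_0^*(b)$.

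\textbf{Main obstacle.} The delicate point in (2) is arranging the diagram \eqref{eq:diag-modulus-cond} and the function $f\in G(\Cb,\gamma_\phi^*D)$ so that $\phi_*\div_{\Cb}(f)$ is exactly the correspondence $[\Gamma_c] - [\Gamma_0]$ (graphs of the constant sections $t=c$ and $t=0$), and then verifying $f\in G(\Cb,\gamma_\phi^*D)$ — i.e. that $f-1$ (or rather $f$ minus its value) vanishes to sufficiently high order along the reduced divisor $D$ at infinity; since $D$ is reduced it suffices that $f$ extends to a regular function that is a unit in a neighbourhood of $|\gamma_\phi^*D|$, which holds for $f=t-c$ with $c\ne 0$ since at $t=\infty$ we have $f=t-c$ having a pole, so instead one should use $f = (t-c)/(t-0) = 1 - c/t$ whose divisor is $[t=c]-[t=0]$ (both avoiding $\infty$) and which satisfies $f(\infty) = 1$, hence $f\in\sO^\times$ near $\infty$, so $f\in G(\Cb,\gamma_\phi^*D)$ as required. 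Then $(\phi_*\div_{\Cb}(f))^*(b) = (i_c^* - i_0^*)(b) = 0$, and varying $c$ over a Zariski-dense set of the base together with separatedness finishes the argument. The bookkeeping to ensure all of (A),(B),(C) hold and that the various pullbacks are defined is the only real work; the conceptual content is exactly that $(\diamondsuit)$ forces the $\mathbf{A}^1$-homotopy relation.
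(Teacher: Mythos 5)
Your reduction of (1) to an $\A^1$-homotopy is the right idea, but the homotopy you write down is wrong. Since $f\in G(\Cb,\gamma_\phi^*Y)$ means $f\equiv 1$ on $\gamma_\phi^*Y$, the fibre of the graph of $f$ over $1\in\A^1$ contains $|\gamma_\phi^*Y|$ in its closure; it is not contained in $C$ and not finite over $S$, so your $\Gamma_f$ does not lie in $\Cor(\A^1_S,X)$. Moreover $\div_{\Cb}(f)=f^{-1}(0)-f^{-1}(\infty)$, which is not $i_1^*\Gamma_f-i_0^*\Gamma_f=f^{-1}(1)-f^{-1}(0)$: the point $1$ is precisely the bad value, and the homotopy must join $0$ to $\infty$ while avoiding $1$. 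The repair is to restrict the transpose graph of $f:\Cb\to\P^1$ over $\P^1\setminus\{1\}\cong\A^1$ (finiteness then does follow from Lemmas \ref{lem:alpha} and \ref{lqf}), or equivalently to use $\div$ of $h=(1-t)+tf\in G(\Cb\times\A^1,\gamma_\phi^*Y\times\A^1)$, which interpolates between the empty cycle at $t=0$ and $\div_{\Cb}(f)$ at $t=1$. This is exactly the contracting homotopy underlying the paper's route, which instead invokes the Suslin--Voevodsky acyclicity of $G(\Xb\times\Delta^*,Y\times\Delta^*)$ packaged as Corollary \ref{cor:vanishing-of-g-in-h}; so (1) is repairable and morally the same argument.

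The gap in (2) is genuine and not repairable as written. From $(\diamondsuit)$ with $f=1-c/t$ you only extract $i_c^*b=i_0^*b$ for $k$-rational constants $c$, and equality of all constant-section pullbacks, even together with Zariski separatedness, does not imply $b=p^*i_0^*b$: for $F=\Omega^1$ (a Zariski sheaf with transfers) and $b=dt\in\Omega^1(\A^1)$ one has $i_c^*(dt)=0$ for every $c$, yet $dt\neq 0=p^*i_0^*(dt)$. Separatedness lets you verify an identity of sections on an open cover, not at a Zariski-dense set of closed points, so there is nothing to ``glue.'' The missing idea is that the constant $c$ must be replaced by the tautological coordinate on an enlarged base: the paper proves $(\id_S\times\delta)^*=(\id_{S\times\A^1}\times i)^*$ on $F((S\times\A^1)\times\A^1)$, where $\delta$ is the diagonal, by applying $(\diamondsuit)$ over the base $S\times\A^1$ to the function $f=1-x/y$ on $(S\times\A^1)\times\P^1$, whose divisor is the graph of the diagonal minus the graph of the zero section; pulling this identity back along $(p\times\id_{\A^1})^*$ gives $b=p^*i_0^*b$ at once. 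Your statement with constant $c$ is only the ``rigidity at rational points'' shadow of this relation and is strictly weaker.
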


For the proof, we need some preliminary results
on relative Picard groups and relative Suslin homology,
which will occupy \S \ref{ssect:rel-pic} and \S \ref{sect:rel-sus-hom}.
The proofs of (1) and (2) will be given in
\S \ref{sect:pf-hi-one} and \S \ref{sect:pf-hi-two} respectively.

\subsection{Relative Picard group}\label{ssect:rel-pic}
We recall [SV, \S 2-3] with some modifications.
Let $\Xb$ be an integral scheme and $Y$ a closed subscheme of $\Xb$.
We denote by $\Pic(\Xb, Y)$ the group of all isomorphism classes of pairs $(L, \sigma)$ of an invertible sheaf $L$
and an isomorphism $\sigma : L|_Y \cong \sO_Y$.
Note that $\Pic(\Xb)=\Pic(\Xb, \emptyset)$. 
We have an exact sequence
\[ \Gamma(\Xb, \sO_X^\times) \to 
   \Gamma(Y, \sO_Y^\times) \to 
 \Pic(\Xb, Y) \to \Pic(X) \to \Pic(Y)
\]
and an isomorphism
\[ \Pic(\Xb, Y) \cong H^1_\Zar(\Xb, \G_{\Xb, Y}) \]
where $\G_{\Xb, Y} := \ker(\sO_\Xb^\times \to \sO_Y^\times)$.
An element $(L, \sigma)$ of $\Pic(\Xb, Y)$ is called liftable if there exist a pair 
$(U, \tilde{\sigma})$ consisting of an open subset $U \subset \Xb$ and an isomorphism $\tilde{\sigma} : L|_U \cong \sO_U$ 
such that $Y \subset U$ and $\tilde{\sigma}|_Y = \sigma$. We define 
$\til{\Pic}(\Xb, Y)$ to be the subgroup of $\Pic(\Xb, Y)$ consisting of liftable elements.
Let $\Div(\Xb, Y)$ be the group of Cartier divisors on $\Xb$
whose support does not intersect with $Y$.


\begin{lemma}\label{lem:rel-pic-sv}
Let $\Xb$ and $Y$ be as above.
\begin{enumerate}
\item
We have an exact sequence
\[ 0 \to \Gamma(\Xb, \G_{\Xb, Y}) \to  G(\Xb, Y) \rmapo{\div_{\Xb}} \Div(\Xb, Y) \to \til{\Pic}(\Xb, Y) \to 0.
\]
\item
If $\Xb$ is normal and $Y$ is reduced, 
 the pullback by the projection $p:X\times\A^1 \to X$
induces isomorphisms
\begin{equation}\label{eq1.lem:rel-pic-sv}
p^*:\Pic(\Xb, Y)\simeq \Pic(\Xb\times\A^1, Y\times\A^1),
\end{equation}
\begin{equation}\label{eq2.lem:rel-pic-sv}
p^*: \til{\Pic}(\Xb, Y)\simeq \til{\Pic}(\Xb\times\A^1, Y\times\A^1).
\end{equation}
\item If $Y$ has an affine open neighbourhood in $\Xb$, then $\til{\Pic}(\Xb, Y)=\Pic(\Xb, Y)$.
\end{enumerate}
\end{lemma}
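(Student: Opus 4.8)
The plan is to treat the three parts largely independently, since they use rather different inputs. For part (1), I would work with the sheaf exact sequence obtained from the definitions: there is the short exact sequence of sheaves on $\Xb$
\[
0 \to \G_{\Xb,Y} \to \sO_\Xb^\times \to \iota_* \sO_Y^\times \to \text{(cokernel supported on $Y$)},
\]
but more useful is to localize near $Y$. Writing $G(\Xb,Y) = \colim_{U \supset Y} \Gamma(U, \G_{U,Y})$ where $\G_{U,Y} = \ker(\sO_U^\times \to \sO_Y^\times)$, I would take the colimit over open $U \supset Y$ of the long exact cohomology sequence associated to $0 \to \G_{U,Y} \to \sO_U^\times \to \iota_*\sO_Y^\times \to 0$ (this is exact on the nose since $\sO_U^\times \to \iota_*\sO_Y^\times$ is surjective as a map of sheaves when $Y$ is closed in $U$ — every unit on $Y$ lifts Zariski-locally). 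This gives $0 \to \Gamma(\Xb,\G_{\Xb,Y}) \to G(\Xb,Y) \to \Gamma(Y,\sO_Y^\times) \to \colim_U H^1(U, \G_{U,Y})$. The point is then to identify the image of $G(\Xb,Y) \to \Div(\Xb,Y)$ and the cokernel: a Cartier divisor $\alpha$ on $\Xb$ with $|\alpha| \cap Y = \emptyset$ is $\div_\Xb(g)$ for some $g \in G(\Xb,Y)$ precisely when the associated line bundle $\sO_\Xb(\alpha)$, which is canonically trivialized near $Y$ (since $\alpha$ avoids $Y$), is trivial as an element of $\til\Pic(\Xb,Y)$ — i.e. the trivialization near $Y$ extends to a global trivialization. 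Conversely every liftable pair $(L,\sigma)$ with $L \cong \sO_\Xb(\alpha)$ for a divisor avoiding $Y$ (possible after moving $\alpha$, or at least for $L$ in the image of $\Div(\Xb,Y) \to \til\Pic(\Xb,Y)$ by definition of the latter as divisors avoiding $Y$) arises this way, and surjectivity of $\Div(\Xb,Y) \to \til\Pic(\Xb,Y)$ is essentially the definition of liftability combined with the fact that liftable classes come from sections trivialized on a neighbourhood $U$ of $Y$, whose "defect" to extending globally is a Cartier divisor on $\Xb \setminus$ (something) — I would track this carefully using that $\til\Pic(\Xb,Y) \cong H^1_\Zar(\Xb,\G_{\Xb,Y})$ lands in the kernel of $\Pic(\Xb) \to \colim_U \Pic(U)$... actually the cleanest route is: $\til\Pic(\Xb,Y)$ is by definition the image of $\Pic(\Xb,Y) \to \Pic(\Xb)$ intersected with... no — I would instead just directly verify exactness at each spot of the four-term sequence using the colimit description, which is bookkeeping with units and Cartier divisors.

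For part (2), the isomorphism \eqref{eq1.lem:rel-pic-sv} I would deduce from the standard homotopy invariance of the Picard group of a normal scheme, $\Pic(\Xb) \iso \Pic(\Xb \times \A^1)$ (valid since $\Xb$ is normal, so $\Xb \times \A^1$ is normal and a line bundle trivial on the generic fibre extends; see [SV] or the standard reference), together with the analogous statement $\Gamma(Y, \sO_Y^\times) \iso \Gamma(Y\times\A^1, \sO_{Y\times\A^1}^\times)$ which holds because $Y$ is reduced (units on $Y \times \A^1$ with $Y$ reduced are pulled back from $Y$ — this uses reducedness crucially, via the description of units on $Z \times \A^1$ for $Z$ reduced). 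Then the five-term exact sequence for $\Pic(-,-)$ recalled just before the lemma, applied to both $(\Xb,Y)$ and $(\Xb\times\A^1, Y\times\A^1)$, together with these two isomorphisms and $\Pic(\Xb) \iso \Pic(\Xb\times\A^1)$, $\Pic(Y) \to \Pic(Y\times\A^1)$ injective (again $Y$ reduced, or just normality of components after further reduction — actually one only needs injectivity here, which follows from $\Pic(Y) \to \Pic(Y\times\A^1)$ having the zero section as a retraction), forces \eqref{eq1.lem:rel-pic-sv} by the five lemma. For \eqref{eq2.lem:rel-pic-sv} I would check that $p^*$ and its inverse preserve liftability: if $(L,\sigma)$ on $\Xb$ is liftable via $(U,\tilde\sigma)$ then $(p^*L, p^*\sigma)$ is liftable via $(U\times\A^1, p^*\tilde\sigma)$; conversely given a lift of $p^*(L,\sigma)$ over some open $\sW \subset \Xb\times\A^1$ containing $Y\times\A^1$, I would use properness of $Y \hookrightarrow$ (its closure) — or rather the fact that $Y \times \A^1 \subset \sW$ forces $\sW \supset U \times \A^1$ for some open $U \supset Y$ after shrinking, exactly by an argument like Lemma \ref{l3.2.1}/the observation that the complement projects to a closed subset of $\Xb$ avoiding $Y$ — and then descend the trivialization, using \eqref{eq1.lem:rel-pic-sv} for the pair $(U, Y)$ if needed.

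For part (3), if $Y$ has an affine open neighbourhood $U$ in $\Xb$, then $\Pic(U) \to \Pic(Y)$ can be analyzed, but the real point is simpler: given any $(L,\sigma) \in \Pic(\Xb,Y)$, the restriction $L|_U$ is a line bundle on an affine scheme... no, $U$ need not be such that $\Pic(U)=0$. Instead: the obstruction to liftability of $(L,\sigma)$ is that $\sigma: L|_Y \cong \sO_Y$ should extend to a trivialization on a neighbourhood of $Y$; equivalently the class of $L$ in $\colim_{U \supset Y}\Pic(U)$ should vanish. When $Y$ admits an affine open neighbourhood $V$, I claim $L|_V$ together with $\sigma$ already gives liftability after shrinking: $\sigma$ is a nowhere-vanishing section of $L|_Y = (L|_V)|_Y$; since $V$ is affine and $L|_V$ is a line bundle (hence its global sections are finitely generated over $\Gamma(V,\sO)$), and $Y \subset V$ closed, by prime avoidance / the fact that a section of a line bundle on an affine scheme nonvanishing on a closed subscheme is nonvanishing on an open neighbourhood of it — lift $\sigma$ to a section $s$ of $L|_V$ (possible as $V$ affine, $\Gamma(V, L) \to \Gamma(Y, L|_Y)$ surjective since $H^1(V, L \otimes \sI_Y) = 0$), and then $\{s \neq 0\}$ is an open neighbourhood of $Y$ in $V$ on which $s$ trivializes $L$. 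This shows $(L,\sigma)$ is liftable, so $\til\Pic(\Xb,Y) = \Pic(\Xb,Y)$.

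The main obstacle I anticipate is part (1): making the four-term sequence exact requires carefully matching the colimit-over-neighbourhoods definition of $G(\Xb,Y)$ with the definitions of $\Div(\Xb,Y)$ and $\til\Pic(\Xb,Y)$, in particular proving that the cokernel of $\div_\Xb: G(\Xb,Y) \to \Div(\Xb,Y)$ is exactly $\til\Pic$ and not a larger or smaller group — i.e. getting the "liftable" condition to fall out precisely. This is where I would spend the most care, likely by writing the whole sequence as the colimit over $U \supset Y$ of the evident exact sequences $0 \to \Gamma(U,\G_{U,Y}) \to \Gamma(U,\sO_U^\times) \to \Gamma(Y,\sO_Y^\times) \to \Gamma(U, \text{Cartier divisors}) / \cdots$, and matching against [SV, §2–3]. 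Everything else is a diagram chase plus standard facts about $\Pic$ of normal schemes and units on reduced schemes times $\A^1$.
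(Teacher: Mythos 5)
Your part (3) is correct and is essentially the standard argument, and your derivation of \eqref{eq1.lem:rel-pic-sv} from the five-term sequence, homotopy invariance of $\Pic$ for normal schemes, and invariance of units on (reduced scheme)$\times\A^1$ is also sound; the paper simply cites \cite[2.3, 2.5]{sus-voe2} for part (1) and for \eqref{eq1.lem:rel-pic-sv}. Note, however, that your part (1) is only a plan: you yourself flag that identifying the cokernel of $\div_{\Xb}:G(\Xb,Y)\to\Div(\Xb,Y)$ with exactly $\til{\Pic}(\Xb,Y)$ is ``the main obstacle,'' and you never close that gap. That identification is precisely the content of \cite[Lemma 2.3]{sus-voe2}, so as written your argument for exactness at $\Div(\Xb,Y)$ and for surjectivity onto $\til{\Pic}(\Xb,Y)$ is not a proof.

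The genuine error is in your argument for the surjectivity half of \eqref{eq2.lem:rel-pic-sv}. You assert that an open set $\mathcal{W}\subset\Xb\times\A^1$ containing $Y\times\A^1$ must contain $U\times\A^1$ for some open $U\supset Y$, because ``the complement projects to a closed subset of $\Xb$ avoiding $Y$.'' This is false: the projection $\Xb\times\A^1\to\Xb$ is not proper, so the image of the closed complement need not be closed, and Lemma \ref{l3.2.1} (which requires properness) does not apply. Concretely, take $\Xb=\P^1$, $Y=\{[0:1]\}$, and let $Z\subset\P^1\times\A^1$ be the graph of $t\mapsto[1:t]$; then $\mathcal{W}=(\P^1\times\A^1)\setminus Z$ is open and contains $Y\times\A^1$, but no $U\times\A^1$ with $U\supset Y$ open lies in $\mathcal{W}$, since the complement of $U$ is finite and hence $U$ contains $[1:t]$ for all but finitely many $t$. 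Your fallback (``descend the trivialization \dots if needed'') is not developed. The correct and much shorter argument is the one the paper gives: let $i^*:\Pic(\Xb\times\A^1,Y\times\A^1)\to\Pic(\Xb,Y)$ be restriction along the zero section. Then $i^*$ visibly carries liftable classes to liftable classes (restrict the lifting open set to $\Xb\times\{0\}$), $i^*p^*=\id$, and $\Ker(i^*)=0$ by \eqref{eq1.lem:rel-pic-sv}, so $i^*$ is inverse to $p^*$. Hence any $\beta\in\til{\Pic}(\Xb\times\A^1,Y\times\A^1)$ equals $p^*(i^*\beta)$ with $i^*\beta\in\til{\Pic}(\Xb,Y)$, which gives surjectivity, while injectivity of $p^*$ on $\til{\Pic}$ is inherited from \eqref{eq1.lem:rel-pic-sv}.
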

\begin{proof}
The assertions are shown in \cite[2.3, 2.5]{sus-voe2} except for \eqref{eq2.lem:rel-pic-sv}.
which we deduce from \eqref{eq1.lem:rel-pic-sv}. Let
\[
i^*: \Pic(\Xb\times\A^1, Y\times\A^1)\to \Pic(\Xb,Y)
\]
be the pullback along the zero section $i:\Spec k \to \A^1$. By definition 
\[i^*(\til{\Pic}(\Xb\times\A^1, Y\times\A^1))\subset \til{\Pic}(\Xb,Y)\]
and $i^*p^*$ is the identity. Hence \eqref{eq2.lem:rel-pic-sv} follows from the fact
$\Ker(i^*)=0$ which follows from \eqref{eq1.lem:rel-pic-sv}.
\end{proof}

\subsection{Relative Suslin homology}\label{sect:rel-sus-hom}
We take a connected $S \in \Sm$ and $\Xb \in \Sch/S$. Let $Y \subset \Xb$ be a closed subscheme.
Suppose $\Xb$ is normal and set $X :=\Xb - Y$. Let $c(X/S)$ be as \ref{c(X/S)}.
We define for each $n \in \Z_{\geq 0}$
\[ C_n(X/S) := c(X \times \Delta^n/S \times \Delta^n) \]
where $\Delta^n = \Spec k[t_0, \dots, t_n]/(\sum t_i -1)$ is the standard cosimplicial scheme over $k$.
The complex $C_*(X/S)$ of abelian groups thus obtained
is called the \emph{relative Suslin complex}.
Its  homology group is denoted by
$H^S_*(X/S)$ and called the \emph{relative Suslin homology}.

Now suppose that the generic fiber of $X \to S$ is one-dimensional and that $X$ is quasi-affine over $S$.
By definition the components of $Z \in C_n(X/S)$ are closed in $\Xb \times \Delta^n$ so that $Z$ is a Weil divisor on 
$\Xb \times \Delta^n$. Let $\til{C}_n(X/S)\subset C_n(X/S)$ be the subgroup of all
$Z \in C_n(X/S)$ which are Cartier divisors on $\Xb \times \Delta^n$.  We have
\begin{equation}\label{eq:til-c-div}
  \til{C}_n(X/S) = \Div(\Xb \times \Delta^n, Y \times \Delta^n).
\end{equation}
As a pull-back of a Cartier divisor is Cartier, $\til{C}_*(X/S)$ is a subcomplex of $C_*(X/S)$.
Its homology groups are denoted by $\til{H}^S_*(X/S)$.

\begin{thm}
Assume  $Y$ reduced.
Then we have
\[
\til{H}_n^S(X/S) \cong
\left\{
\begin{array}{ll}
\til{\Pic}(\Xb, Y) &(n=0),
\\
0 &(n>0).
\end{array}
\right.
\]
\end{thm}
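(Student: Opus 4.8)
The plan is to compute the homology of the subcomplex $\til C_*(X/S)$ using the identification \eqref{eq:til-c-div}, namely $\til C_n(X/S) = \Div(\Xb\times\Delta^n, Y\times\Delta^n)$, together with the exact sequence of Lemma \ref{lem:rel-pic-sv}(1) applied to each pair $(\Xb\times\Delta^n, Y\times\Delta^n)$. Concretely, for each $n$ we get a short exact sequence
\[
0 \to \Gamma(\Xb\times\Delta^n, \G_{\Xb\times\Delta^n, Y\times\Delta^n}) \to G(\Xb\times\Delta^n, Y\times\Delta^n) \xrightarrow{\div} \til C_n(X/S) \to \til\Pic(\Xb\times\Delta^n, Y\times\Delta^n) \to 0.
\]
Breaking this into two short exact sequences and assembling over $n$, one obtains a short exact sequence of complexes
\[
0 \to G_* \to \til C_* \to \til P_* \to 0
\]
(after quotienting by the image of the divisor map), where $G_n = G(\Xb\times\Delta^n, Y\times\Delta^n)/\Gamma(\cdots)$ maps injectively into $\til C_n$ with cokernel $\til P_n = \til\Pic(\Xb\times\Delta^n, Y\times\Delta^n)$. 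So the first task is to compute $H_*(G_*)$ and $H_*(\til P_*)$ separately, then run the long exact sequence.

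**Key steps.**
First I would show $\til P_*$ is the constant simplicial abelian group $\til\Pic(\Xb, Y)$: by the homotopy invariance \eqref{eq2.lem:rel-pic-sv} of Lemma \ref{lem:rel-pic-sv}(2), each pullback $\til\Pic(\Xb,Y)\to\til\Pic(\Xb\times\Delta^n, Y\times\Delta^n)$ is an isomorphism, and since $\Delta^*$ is a contractible cosimplicial scheme the cosimplicial structure maps are forced to be the identity; hence $H_0(\til P_*)=\til\Pic(\Xb,Y)$ and $H_n(\til P_*)=0$ for $n>0$ (the complex associated to a constant simplicial group is acyclic in positive degrees). Second, I would show $H_*(G_*)=0$ entirely: here the point is that $\Gamma(-, \G_{-})$ and $G(-,-)$, viewed on $\Xb\times\Delta^*$, are $\A^1$-invariant presheaves in the $\Delta$-direction — $G(\Xb\times\A^1, Y\times\A^1)$ should reduce to $G(\Xb,Y)$ because a unit on $\Xb\times\A^1$ congruent to $1$ along $Y\times\A^1$ is constant along $\A^1$ (using normality of $\Xb$ and reducedness of $Y$, the argument of [SV]), so the Suslin complex of a constant presheaf is acyclic; the quotient $G_*$ inherits acyclicity from a long exact sequence comparing $\Gamma(\G)_*$ and $G(-,-)_*$. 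Combining, the long exact homology sequence of $0\to G_*\to\til C_*\to\til P_*\to 0$ gives $\til H_n^S(X/S)\cong H_n(\til P_*)$, which is $\til\Pic(\Xb,Y)$ for $n=0$ and $0$ for $n>0$.

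**Main obstacle.**
The delicate point is the vanishing of the $\A^1$-homology of the "units" presheaves $\Gamma(-,\G_{-,-})$ and $G(-,-)$ along the $\Delta$-direction — i.e. establishing that $G(\Xb\times\Delta^\bullet, Y\times\Delta^\bullet)$ is the Suslin complex of a homotopy-constant object. This is where the hypotheses that $\Xb$ is normal and $Y$ is reduced genuinely enter: one needs that a global unit on $\Xb\times\A^1$ restricting to $1$ on $Y\times\A^1$ is pulled back from $\Xb$, which for normal $\Xb$ comes down to the fact that $\Gamma(\Xb\times\A^1,\sO^\times) = \Gamma(\Xb,\sO^\times)$ (units on a product with the affine line over a normal base are constant in the $\A^1$-variable) together with a compatibility of the trivialization along $Y\times\A^1$. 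This is essentially Suslin–Voevodsky's homotopy-invariance computation for the relative Picard group, and I expect to reduce to it rather than reprove it; once it is in hand, the rest is a formal spectral-sequence / long-exact-sequence bookkeeping exercise. The degenerate subtlety to watch is that $\til C_*$, unlike $C_*$, is only defined because pullbacks of Cartier divisors stay Cartier, so all maps in sight must be checked to preserve the Cartier condition — but this is routine.
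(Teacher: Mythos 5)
Your skeleton is the same as the paper's: the four-term exact sequence of Lemma \ref{lem:rel-pic-sv}(1) in each simplicial degree, homotopy invariance \eqref{eq2.lem:rel-pic-sv} for the $\til{\Pic}$-quotient, and acyclicity of the units complex for the sub-part. But the step where you dispose of the units complex contains a genuine error. You claim that $G(\Xb\times\A^1, Y\times\A^1)$ reduces to $G(\Xb,Y)$, so that $G_\bullet$ is a constant simplicial group, and that the complex of a constant presheaf is acyclic. Both halves are wrong. First, $G(\Xb\times\A^1,Y\times\A^1)$ is by \eqref{eq:def-of-gcd} a limit of unit groups over open \emph{neighbourhoods} of $Y\times\A^1$, not the group of global units on $\Xb\times\A^1$: for any $f\in G(\Xb,Y)$ the function $F=1+t(f-1)$ satisfies $F-1\in t\cdot\sI_Y$, is invertible near every point of $Y\times\A^1$, and hence lies in $G(\Xb\times\A^1,Y\times\A^1)$, but is not pulled back from $\Xb$. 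Second, even if $G_\bullet$ were constant with value $A$, its associated complex would have $H_0=A$, not $0$ (all face maps coincide, so $\partial_0-\partial_1=0$ on $G_1$). Since $H_1(\til{P}_\bullet)=0$, your long exact sequence would then give only a surjection $\til{H}_0^S(X/S)\to\til{\Pic}(\Xb,Y)$ with kernel $H_0(G_\bullet)=G(\Xb,Y)/\Gamma(\Xb,\G_{\Xb,Y})$; in other words, your strategy run with the constancy claim would contradict the theorem rather than prove it.

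What is actually true, and what the paper imports from \cite[proof of Th.~3.1]{sus-voe2}, is that $G(\Xb\times\Delta^*,Y\times\Delta^*)$ is acyclic in \emph{all} degrees including $0$, precisely because it is not constant: the assignment $f\mapsto 1+t(f-1)$ above is a contracting homotopy (well defined only because $G$ is computed on neighbourhoods of $Y\times\Delta^n$), and in degree $0$ it exhibits every $f\in G(\Xb,Y)$ as a boundary. Separately, the first term $\Gamma(\Xb\times\Delta^*,\G_{\Xb\times\Delta^*,Y\times\Delta^*})$ cannot be treated by the same homotopy (which does not preserve global units); the paper shows it vanishes outright, using that quasi-affineness of $X$ over $S$ forces $Y$ to meet every component of every geometric fibre, so a global unit on the proper scheme $\Xb\times\Delta^n$ congruent to $1$ along $Y\times\Delta^n$ equals $1$. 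This is where quasi-affineness enters; your write-up never uses it. Your treatment of $\til{P}_\bullet$ as a constant simplicial group via \eqref{eq2.lem:rel-pic-sv} is correct.
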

\begin{proof}
By Lemma \ref{lem:rel-pic-sv} (1)
and \eqref{eq:til-c-div}, we have
an exact sequence of complexes of abelian groups
\[
\begin{split}
0 &\to 
\Gamma(\Xb \times \Delta^*, \G_{\Xb \times \Delta^*, Y \times \Delta^*})
\to
G(\Xb \times \Delta^*, Y \times \Delta^*)
\\
&\to
\til{C}_n(X/S)
\to
\til\Pic(\Xb \times \Delta^*, Y \times \Delta^*)
\to 0.
\end{split}
\]
The assumption that $X$  is quasi-affine over $S$ is preserved under any base change {\cite[(5.1.10) (iii)]{ega2}}, hence remains true after passing to the geometric fibers. This shows that $Y$ meets every component of these fibers, which implies as in \cite[Proof of Th. 3.1]{sus-voe2} that the first term vanishes:
$\Gamma(\Xb \times \Delta^*, \G_{\Xb \times \Delta^*, Y \times \Delta^*})=0$. 
It is also proved in loc. cit. that
$G(\Xb \times \Delta^*, Y \times \Delta^*)$
is acyclic.
Now the theorem follows from
Lemma \ref{lem:rel-pic-sv} (2).
\end{proof}
\begin{corollary}\label{cor:vanishing-of-g-in-h}
Under the same assumption as in the previous theorem,
the following composition is zero:
\[ G(\Xb, Y) \to \til{C}_0(X/S) \to \til{H}_0^S(X/S).
\]
\end{corollary}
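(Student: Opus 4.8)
The plan is to recognise the displayed composition as the degree-zero part of a morphism of complexes that already occurs in the proof of the previous theorem, and to use that its image there is an acyclic subcomplex of $\til{C}_*(X/S)$.

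First I would make the two arrows explicit. By Lemma~\ref{lem:rel-pic-sv}(1) the divisor map carries $G(\Xb,Y)$ into $\Div(\Xb,Y)$, and $\Div(\Xb,Y)=\Div(\Xb\times\Delta^0,Y\times\Delta^0)=\til{C}_0(X/S)$ by \eqref{eq:til-c-div} (recall $\Delta^0=\Spec k$); so the first arrow of the corollary is $\div_{\Xb}\colon G(\Xb,Y)\to\til{C}_0(X/S)$, which is precisely the degree-$0$ component of the morphism of complexes $G(\Xb\times\Delta^*,Y\times\Delta^*)\by{\div}\til{C}_*(X/S)$ appearing in the proof of the previous theorem, while the second arrow is the canonical map $\til{C}_0(X/S)\to H_0(\til{C}_*(X/S))=\til{H}_0^S(X/S)$.

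Then I would invoke two facts established in that proof: the kernel of this morphism of complexes, namely $\Gamma(\Xb\times\Delta^*,\G_{\Xb\times\Delta^*,Y\times\Delta^*})$, vanishes (since $X$ is quasi-affine over $S$, the divisor $Y$ meets every component of every geometric fibre), and $G(\Xb\times\Delta^*,Y\times\Delta^*)$ is acyclic. Hence $\div$ identifies $G(\Xb\times\Delta^*,Y\times\Delta^*)$ with its image, an acyclic subcomplex $B_*\subset\til{C}_*(X/S)$ whose degree-$0$ term is $B_0=\div_{\Xb}\big(G(\Xb,Y)\big)$. From $H_0(B_*)=0$ we get $B_0=\partial B_1\subset\partial\,\til{C}_1(X/S)$, so every $\div_{\Xb}(g)$ with $g\in G(\Xb,Y)$ is a boundary in $\til{C}_*(X/S)$ and therefore has trivial class in $\til{H}_0^S(X/S)$, which is the assertion.

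I do not expect a genuine obstacle: all the input has already been assembled in the preceding proof. As an alternative route one can argue through the identification $\til{H}_0^S(X/S)\cong\til{\Pic}(\Xb,Y)$: since $n\mapsto\til{\Pic}(\Xb\times\Delta^n,Y\times\Delta^n)$ is a constant simplicial abelian group by Lemma~\ref{lem:rel-pic-sv}(2), the quasi-isomorphism $\til{C}_*(X/S)\iso\til{\Pic}(\Xb\times\Delta^*,Y\times\Delta^*)$ from that proof carries the canonical surjection $\til{C}_0(X/S)\to\til{H}_0^S(X/S)$ to the map $\Div(\Xb,Y)\to\til{\Pic}(\Xb,Y)$ of Lemma~\ref{lem:rel-pic-sv}(1), and then $G(\Xb,Y)\by{\div_{\Xb}}\Div(\Xb,Y)\to\til{\Pic}(\Xb,Y)$ vanishes simply because that four-term sequence is exact. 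The only mildly delicate point in this second approach is checking that these two quotient maps really are identified, which is why the acyclicity argument above is arguably the cleaner one.
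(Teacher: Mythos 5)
Your proposal is correct and is essentially the argument the paper intends: the corollary is stated without separate proof precisely because, as you observe, both ingredients (vanishing of the $\Gamma$-term and acyclicity of $G(\Xb\times\Delta^*,Y\times\Delta^*)$, or equivalently the exactness of the four-term sequence of Lemma \ref{lem:rel-pic-sv}(1) combined with the identification $\til{H}_0^S(X/S)\cong\til{\Pic}(\Xb,Y)$) are already in place from the proof of the theorem. Either of your two routes is fine; note only that for the boundary argument you do not even need injectivity of $\div$, just that $H_0$ of the acyclic subcomplex vanishes.
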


\subsection{Proof of Theorem \ref{thm:HI.intro2} (1)}\label{sect:pf-hi-one}

Suppose $F\in \PST$ is homotopy invariant.
Let $(\Xb,Y)$ and $a \in F(X)$ be as in $(\diamondsuit)$.
We show $Y$ is a modulus for $a$.
Let $S \in \Sm$ and $(\phi : \Cb \to \Xb_S) \in \sC_{(\Xb, Y)}(S)$.
Put $Y_{\Cb}=\phi^*(Y_S), ~C=\Cb - Y_{\Cb}$.
We need to show that the composition
\[(*) \qquad 
G(\Cb, Y_{\Cb}) \to c(C/S) \rmapo{\phi_*} c(X_S/S)=\Ztr(X)(S) \overset{a}{\to} F(S) \]
is zero. 
By 
\cite[Lemma 7.5]{mvw} 
the homotopy invariance of $F$ implies that the last map factors as
\[ c(X_S/S) \to H_0^S(X_S/S) \to F(S). \]
Therefore, the map $(*)$ factors as
\[ 
\begin{split}
G(\Cb, &Y_{\Cb}) \subset G(\Cb, (Y_{\Cb})_\red) 
\to \til{C}_0(C/S) \to \til{H}_0^S(C_S/S)
\\
&\to H_0^S(C_S/S)
\to H_0^S(X_S/S)
\to F(S),
\end{split}
\]
which is zero by Corollary \ref{cor:vanishing-of-g-in-h}.
\qed

\subsection{Proof of Theorem \ref{thm:HI.intro2} (2)}\label{sect:pf-hi-two}
This proof is adapted from \cite[Prop. 3.11]{voepre}.
 Suppose $F \in \PST$ is separated for Zariski topology and satisfies $(\diamondsuit)$,
and take $S \in \Sm$. 
Let $p : S \times \A^1 \to S$ be the projection and $i : \Spec k \inj \A^1$ be the $0$-section.
We must show $p^* : F(S) \to F(S \times \A^1)$
is an isomorphism, which will follow from the injectivity of 
$(\id_S \times i)^*: F(S \times \A^1) \to F(S)$ (see proof of Lemma \ref{lem:rel-pic-sv}).
Since $F \in \PST$ is separated for Zariski topology, we may assume $S$ is affine.
Consider the commutative diagram
\[
\xymatrix{
S \times \A^1 
\ar[r]^(0.35){\id_S \times \delta}
&
(S \times \A^1) \times \A^1
\ar[r]^(0.65){p \times \id_{\A^1}}
&
S \times \A^1
\\
& S \times \A^1
\ar[r]^{p}
\ar[u]^{\id_{S \times \A^1} \times i}
& S
\ar[u]^{\id_{S} \times i}
}
\]
Since 
$(p \times \id_{\A^1})(\id_S \times \delta)=\id_{S \times \A^1}$,
it suffices to show
\[
(*) \quad
 (\id_S \times \delta)^* = (\id_{S \times \A^1} \times i)^*
 : F((S \times \A^1) \times \A^1) \to F(S \times \A^1).
\]
Take a proper integral variety $\overline{S}$
which contains $S$ as an open dense subscheme.
Put $X:=(S \times \A^1) \times \A^1$,
$\Xb := (\overline{S} \times \P^1) \times \P^1$
and $Y := (\Xb-X)_{\red}$
so that $(\Xb, Y)$ is a modulus pair  (here we used the assumption that $S$ is affine).
Let $\Cb :=(S \times \A^1) \times \P^1$
and let $\phi : \Cb \to \Xb \times (S \times \A^1)$
be the morphism defined by the natural inclusion $\Cb \inj \Xb$
and the projection $\Cb \to S \times \A^1$.
Then 
$\phi$ is an element of $\sC_{(\Xb, Y)}(S \times \A^1)$
(cf. \ref{def:relative-g-sheaf}).
Since $Y$ is a modulus for 
any $a \in F(X)$ by $(\diamondsuit)$,
we get
$(\phi_* \div_{\Cb}(f))^*(a)=0$ for
any $f \in G(\Cb, Y|_{\Cb})$
where
$Y|_{\Cb} = 
\phi^* (Y \times (S \times \A^1)) = (S \times \A^1) \times \infty$.
If we write $x, y$ for the coordinates of $\P^1 \times \P^1$,
then the function $f=1-\frac{x}{y}$
belongs to $G(\Cb, Y|_{\Cb})$,
and
$\phi_* \div_{\Cb}(f) \in \Cor(S \times \A^1,(S \times \A^1) \times \A^1)$
agrees with the difference of
the graphs of 
$\id_S \times \delta$ and $\id_{S \times \A^1} \times i$.
This proves $(*)$ and hence completes the proof.
\qed

\begin{remark}\label{r3.6}
Let $(\Xb, Y)$ be a modulus pair with $Y$ reduced.
Put $X=\Xb-|Y|$.
Let $h_0(X)$ be the presheaf with transfers
introduced in \cite[p.207]{voetri},
which is characterized as the maximal homotopy invariant
quotient of $\Z_{\tr}(X)$.
By Theorem \ref{thm:HI.intro},
we get a (surjective) map $h(\Xb, Y) \to h_0(X)$.
It is an interesting problem
to know when this is an isomorphism: 
{it is true (after Zariski sheafification)
if $\dim X=1$ by Theorem \ref{p13.1} below.}
\end{remark}

%
%

\section{Algebraic groups have reciprocity}

\subsection{Introduction} In this section we show the following result by adapting an argument of [GACL].

\begin{thm}\label{thm:alg-gp}
Any smooth commutative group scheme $G$ locally of finite type over $k$,
regarded as a presheaf with transfers, has reciprocity.
\end{thm}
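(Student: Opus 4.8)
The plan is to reduce the statement to the representability result (Theorem~\ref{thm:main-rep}) together with the classical Rosenlicht--Serre theorem (Theorem~\ref{thm.RS}), following the strategy already used for the structural theorems: produce, for a given section, a modulus pair whose associated $h(\Xb,Y)$ factors the section, where the modulus is a Cartier divisor. By Corollary~\ref{coro:reduction-to-cartier} it suffices to show that for any quasi-affine $X\in\Sm$ and any $a\in G(X)$ there is a modulus pair $(\Xb,Y)$ with $X=\Xb-|Y|$, $Y$ a Cartier divisor, and $Y$ a modulus for $a$. So fix $X$ quasi-affine smooth and a morphism $a\colon X\to G$. Choose (Remark~\ref{rem:resol}) a normal compactification $X\hookrightarrow\Xb$ with $\Xb-X$ the support of an effective Cartier divisor. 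The task is to find a Cartier divisor $Y$ on $\Xb$ supported on $\Xb-X$ such that $Y$ is a modulus for $a$ in the sense of Definition~\ref{def.modulus}.

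The key point is to bound the ``ramification'' of $a$ along the boundary uniformly. First I would handle the case $k$ algebraically closed (the general case then follows because $\Cor(S,X)\to\Cor(S\times_k k',X\times_k k')$ is injective, as used in \S\ref{sect:const-div-g}, so moduli can be checked after base change to $\bar k$; one then needs that a modulus over $\bar k$ descends to one over $k$, which is a Galois-descent/finiteness argument on divisors). Over $\bar k$: for each $(\phi\colon\Cb\to\Xb\times S)\in\sC_{(\Xb,Y)}(S)$ and each $f\in G(\Cb,\gamma_\phi^*Y)$, the element $(\phi_*\div_{\Cb}(f))^*(a)\in G(S)$ is, unwinding definitions, obtained by pushing forward along $\phi$ the zero-cycle $\div_{\Cb}(f)$ and composing with $a\colon X\to G$; since $G$ is a group scheme the relevant expression is $\sum_{x} v_x(f)\,(a\circ\gamma_\phi)(x)$ read in $G(S)$ along the fibres. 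Thus the condition ``$Y$ is a modulus for $a$'' is exactly the statement that the composite $C\xrightarrow{\gamma_\phi} X\xrightarrow{a} G$, which is a morphism from a relative curve over $S$ to $G$, ``has modulus $\gamma_\phi^*Y$'' in the Rosenlicht--Serre sense, fibrewise over $S$. By Theorem~\ref{thm.RS} (applied on each geometric fibre of $C\to S$, i.e. on smooth curves obtained by normalizing fibres), $a\circ\gamma_\phi$ admits \emph{some} effective modulus supported on $\gamma_\phi^*Y$; the content is to choose a single $Y$ on $\Xb$ whose pullback dominates all these fibrewise moduli simultaneously, for all $\phi$ and all $S$.

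To achieve the uniform choice I would argue as follows. The ``bad locus'' of $a\colon X\to G$ along the boundary is detected at the generic points of the irreducible components of $(\Xb-X)$: at each such codimension-one point $\xi$ of $\Xb$ the local ring $\sO_{\Xb,\xi}$ is a DVR (by normality) and $a$ corresponds to a $k(\Xb)$-point of $G$; Rosenlicht's theory (adapted as in the proof of Theorem~\ref{thm.RS}, \cite[Ch.~III]{gacl}) attaches to it a least integer $n_\xi\ge 1$ measuring how much it fails to extend. Taking $Y=\sum_\xi n_\xi\,\overline{\{\xi\}}$ (an effective Cartier divisor on $\Xb$, after possibly enlarging $\Xb$ by blow-up so that this Weil divisor is Cartier, cf. Remark~\ref{rem:resol}) should be a modulus for $a$: given any $\phi$, the morphism $\gamma_\phi\colon\Cb\to\Xb$ pulls back $\sO_{\Xb,\xi}$ into the semilocal ring at the points of $\Cb$ over $\xi$, and the standard functoriality of the conductor/modulus under finite pullbacks of curves (again \cite[Ch.~III]{gacl}) shows $\gamma_\phi^*Y$ dominates a modulus for $a\circ\gamma_\phi$ on each fibre; then the defining vanishing \eqref{eq.modulusdef} holds by Theorem~\ref{thm.RS} applied fibrewise, using that an identity in $G(\bar k)$ on each fibre of $S$ together with separatedness/representability of $G$ gives the identity in $G(S)$. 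The main obstacle I expect is precisely this \emph{uniformity and base-change step}: making rigorous that one modulus on $\Xb$ controls $a$ along all relative curves $\phi$ over all bases $S$, including non-reduced or positive-dimensional $S$, and that this can be done compatibly with passage to $\bar k$ and back; this requires care with Stein factorization and normalization of fibres (as in \S\ref{s2.9}) and with the fact that $G$ need not be connected or of finite type, so one first reduces to $G^0$ of finite type and treats $\pi_0(G)$ separately.
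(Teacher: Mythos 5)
Your overall architecture (reduce to $\bar k$, reduce to a Cartier modulus via Corollary \ref{coro:reduction-to-cartier}, reduce the verification to curves over function fields, then invoke Rosenlicht--Serre) matches the paper's, but the central step of your argument has a genuine gap. You propose to define, at each codimension-one generic point $\xi$ of $\Xb-X$, a ``least integer $n_\xi$ measuring how much $a$ fails to extend'' and to take $Y=\sum_\xi n_\xi\overline{\{\xi\}}$, justified by ``the standard functoriality of the conductor under finite pullbacks of curves.'' There is no such local conductor attached to a divisorial valuation of a higher-dimensional $\Xb$ in \cite[Ch.~III]{gacl}: the Rosenlicht--Serre modulus is defined only for morphisms from curves, and what you need is that for \emph{every} relative curve $\phi:\Cb\to\Xb\times S$ the multiplicity of a Rosenlicht--Serre modulus of $a\circ\gamma_\phi$ at a point of $\Cb$ lying over $\xi$ is bounded by $n_\xi$ times the multiplicity of $\gamma_\phi^*\overline{\{\xi\}}$. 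That is a nontrivial ``cut-by-curves'' statement about conductors (delicate in positive characteristic for unipotent groups, where wild ramification enters), and it is exactly the uniformity you flagged as the main obstacle --- flagging it is not the same as closing it. Note also that Theorem \ref{thm.RS} is only an existence statement for \emph{some} modulus on each curve; applying it fibrewise cannot by itself produce a single $Y$ on $\Xb$ working for all $\phi$ and all $S$.

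The paper closes this gap by replacing the hypothetical local conductor with concrete coherent-sheaf pole bounds that are manifestly stable under pullback to curves, combined with the \emph{quantitative} statements of [GACL, III, Prop.~10 and 15] (which say that an explicit $D$ is a modulus, not merely that one exists). In characteristic zero one chooses $n$ with $a^*\omega_i\in\Gamma(\Xb,\Omega^1_{\Xb/k}(nW))$ for a basis $\{\omega_i\}$ of invariant differentials and takes $Y=nW$; pulling back along any $\gamma_\phi$ preserves this bound, and Prop.~10 applies on the curve $\Cb$ over $K=k(S)$ (the reduction to the generic point of $S$ uses global injectivity of $G$, Remark \ref{rem:reduction-to-curve}, rather than an argument over geometric fibres). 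In characteristic $p$ an additional structural input is needed which your proposal omits entirely: one factors $G\to G_1\times U$ with finite kernel ($G_1$ semi-abelian, $U$ unipotent, Lemma \ref{lem:alg-gp-decomp}), shows that a modulus for $\theta(a)$ is a modulus for $a$ when $\ker\theta$ is finite (Proposition \ref{prop:finite-kernel}), handles the semi-abelian part by homotopy invariance, and handles the unipotent part by embedding $U\subset GL_r$ and bounding the poles of the matrix entries (Prop.~15 of [GACL]). Finally, the descent from $\bar k$ to $k$ is simpler than the Galois-descent you envisage: one takes $Y$ with $\sI_Y=\sI^n_{(\Xb_{\bar k}-X_{\bar k})_\red}$, which is automatically defined over $k$, and uses injectivity of $G(S)\to G(S_{\bar k})$.
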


The proof goes as follows.
In \S \ref{sect:first-reduction},
we reduce the proof to the case
$k$ is algebraically closed and $G$ connected.
We then prove Theorem \ref{thm:alg-gp}
for characteristic zero in \S \ref{sect:ch-zero},
and 
for characteristic $p>0$ in \S \ref{sect:ch-p}.

In what follows we identify $G$ with the object of $\PST$ represented by $G$ 
(cf.  \cite[Proof of Lemma 3.2]{spsz} and \cite[Lemma 1.3.2]{bar-kahn}).
Before going to the proof of \ref{thm:alg-gp}, 
we make a simple remark which will be used frequently 
in this paper.

\begin{remark}\label{rem:reduction-to-curve}
Assume  $k$ is perfect.
Let $(\Xb, Y)$ be a modulus pair with $X=\Xb-Y$, and $F \in \PST$, 
and $a \in F(X)$.
Assume that 
$F$ has global injectivity,
which means that
for any dense open immersion $j:U\hookrightarrow X$ in $\Sm$,
$j^*:F(X)\to F(U)$ is injective.
Then, in order to show that $Y$ is a modulus for $a$, it suffices to verify the following condition
(cf. \eqref{eq.modulusdef}):

Let $K=k(S)$ be the function field of a connected $S\in \Sm$, $\Cb$ a normal integral proper curve over $K$ and 
$\phi : \Cb \to \Xb \times K$ a finite morphism such that $\phi(\Cb) \not\subset Y \times K$. 
Put $C=\phi^{-1}(X\times K)$ and let $\psi:C\to X $ be the induced map.
Since $C$ is regular and $k$ is perfect,
we have $C \in \widetilde{\Sm}$, 
whence $\psi^* : F(X) \to F(C)$ (cf. \ref{PST}).
Then
\begin{equation*}
(\div_{\Cb}(g))^*(\psi^*(a)) = 0 ~\text{in}~ F(K) \quad\text{ for any } g\in G(\Cb,D), 
\end{equation*}
where $D = \phi^*(Y\times K)$ and 
$\div_{\Cb}(g)\in c(C/K)$ is viewed as an element of $\Cor(K,C)$ by the map $C\to C\times K$
induced by the identity on $C$ and the projection $C\to \Spec K$.

\end{remark}

\subsection{Reduction to algebraically closed and connected cases}
\label{sect:first-reduction}
Let $\bar k$ be an algebraic closure of $k$.
We assume $G_{\bar k}$ has reciprocity (over $\bar k$),
and prove $G$ has reciprocity (over $k$).
Take $X\in \Sm$ connected quasi-affine, $a\in G(X)$ and $\Xb$ a compactification of $X$:  we must find a closed subset $Y\subset \Xb$ with support $\Xb-X$ which is a modulus for $a$. By hypothesis, there is such a $Y\subset \Xb_{\bar k}$ for the image $a_{\bar k}$ of $a$ in $G(X_{\bar k})$ (more accurately, this is true component by component in case $X$ is not geometrically connected). We may choose $Y$ such that $\sI_Y=\sI_{(\Xb_{\bar k}-X_{\bar k})_\red}^n$ for some $n>0$, which implies that $Y$ is defined over $k$. We claim that $Y$ is the desired modulus for $a$. Indeed,  let $S\in \Sm$ and $\Cb\in \sC_{(\Xb,Y)}(S)$:  with the notation of \ref{def.modulus}, we obviously have  for $f\in G(\Cb,\gamma_{\phi}^*Y)$
\[\left((\phi_* \div_{\Cb}(f))^*(a)\right)_{\bar k}=  ((\phi_{\bar k})_* \div_{\Cb_{\bar k}}(f_{\bar k}))^*(a_{\bar k})=0\in G(S_{\bar k})\]
hence the claim since $G(S)\to G(S_{\bar k})$ is injective.

Now assume $k$ is algebraically closed. 
We reduce the proof of Thm. \ref{thm:alg-gp} to the case $G$ connected. Indeed, $G$ then splits as a direct product $G^0\times D$, where $G^0$ is the connected component of $0$ and $D$ is discrete; in particular, the choice of such a splitting yields  a translation isomorphism $\tau_\delta:G^\delta\iso G^0$ for any $\delta\in D$. By construction of the action of finite correspondences, a finite correspondence of degree $d$ in $\Cor(S,X)$ maps $G^\delta(X)$ to $G^{d\delta}(S)$ for any $\delta\in D$; in particular, $(\phi_* \div_{\Cb}(f))^*(a)$ maps $G^\delta(X)$  to $G^0(S)$. Since the action of $(\phi_* \div_{\Cb}(f))^*(a)$ clearly commutes with $\tau_\delta$, reciprocity for $G^0$ implies reciprocity for $G$.

\emph{From now on until the end of this section, 
we assume $k$ algebraically closed and $G$ connected.}

\subsection{Proof of Thm. \ref{thm:alg-gp} in characteristic zero}\label{sect:ch-zero}
We take an integral quasi-affine $X \in \Sm$ and $a \in G(X)=\text{Mor}_k(X,G)$.
Let $\Xb$ be a proper smooth integral variety
that contains $X$ as an open dense subset
and $W:=(\Xb-X)_{\red}$ be a normal crossing divisor
(Remark \ref{rem:resol}).
In view of Corollary \ref{coro:reduction-to-cartier},
it suffices to find $n \in \Z_{>0}$
such that $(\Xb, nW)$ is a modulus for $a$.

Let $\{ \omega_1, \dots, \omega_d \}$ be a basis of
the space of invariant differential forms on $G$. Noting that 
$
\indlim n \Gamma\big(\Xb, \Omega_{\Xb/k}^1\otimes_{\sO_{\Xb}}\sO_{\Xb}(nW)\big)=
\Gamma\big(X, \Omega_{X/k}^1 \big)$, take $n>0$ such that 
\[
a^*\omega_i \in \Gamma\big(\Xb, \Omega_{\Xb/k}^1\otimes_{\sO_{\Xb}}\sO_{\Xb}(nW)\big)
\quad\text{ for all }i=1,\dots, d.
\]

We claim that $Y:=nW$ is a modulus for $a$.
For this, it suffices to verify the condition
in Remark \ref{rem:reduction-to-curve},
but it follows from the following result from [GACL].
\qed

\begin{proposition}[GACL, III, Prop. 10]
Let $G$ be a commutative algebraic group 
over a field $K$ of characteristic zero,
and let $\{ \omega_1, \dots, \omega_d \}$ be a basis of
the space of invariant differential forms on $G$.
Let $\Cb$ be a proper normal curve over $K$,
$C$ an open dense subscheme of $\Cb$,
and $a : C \to G$ a morphism.
Let $D$ be an effective divisor on $\Cb$
such that $|D|=\Cb-C$  
and that
\[
a^*\omega_i \in \Gamma\big(\Cb, \Omega_{\Cb/K}^1\otimes_{\sO_{\Cb}}\sO_{\Cb}(D)\big)
\quad\text{ for all }i=1,\dots, d.
\]
Then we have
$(\div_{\Cb}(g))^*(a) = 0\in G(K)$ for any $g\in G(\Cb,D)$, 
where $a$ is viewed as a section over $C$ of the object of $\PST$ represented by $G$.
\end{proposition}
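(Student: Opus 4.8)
The plan is to reduce the statement to a residue computation. First I would recall the classical description of commutative algebraic groups over a field of characteristic zero: by the Barsotti--Chevalley structure theorem, $G$ is an extension of an abelian variety by a linear group, and since $\car K = 0$ the unipotent part is a product of copies of $\G_a$ and the torus part a product of copies of $\G_m$. Dévissage along such an extension reduces everything to the three basic cases $G = \G_a$, $G = \G_m$, and $G$ an abelian variety, provided the invariant-differential hypothesis is preserved by passing to sub- and quotient groups — which it is, since a basis of invariant forms on $G$ restricts (resp. pulls back) to one on the subgroup (resp. quotient) after completing to a basis. (One must also check that $G(\Cb,D)$ behaves well in the dévissage, but the hypothesis $g \in G(\Cb,D)$ only enters through $\div_{\Cb}(g)$ and the target group, so the long exact sequence in $G(K)$ suffices.)

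Next, in each basic case I would make the pairing $(\div_{\Cb}(g))^*(a) = \sum_{x} v_x(g)\,a(x) \in G(K)$ explicit as a sum over the finitely many points $x$ in the support of $\div_{\Cb}(g)$, all lying in $C$. The core of the argument is Serre's residue formula: one proves $\sum_x v_x(g)\, a(x) = \sum_x \operatorname{res}_x(\text{something built from } a^*\omega \text{ and } dg/g)$, and then observes that by the residue theorem on the proper curve $\Cb$ the total sum of residues of a rational $1$-form vanishes. The contributions at $x \in C$ are the terms of the left-hand side; the contributions at $x \in |D|$ vanish precisely because of the modulus hypothesis $a^*\omega_i \in \Gamma(\Cb, \Omega^1_{\Cb/K} \otimes \sO_{\Cb}(D))$ combined with $g \equiv 1$ to high order along $D$ (this is what $g \in G(\Cb,D)$ buys us): $g - 1$ vanishes along $D$, so $dg/g$ has at worst the pole of $a^*\omega$ shifted, and the product $a^*\omega \cdot (dg/g \text{-type correction})$ is actually regular at the points of $D$. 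For $\G_a$ one takes $\omega = dt$ and the pairing is $\sum v_x(g)\,a(x)$, matched with residues of $a \cdot dg/g$; for $\G_m$ one uses $\omega = dt/t$ and Weil reciprocity $\prod_x \langle a, g\rangle_x = 1$; for the abelian variety one uses that regular $1$-forms pull back to regular forms, so $a^*\omega$ has no poles at all and the residue argument degenerates to the statement that a principal divisor maps to $0$ under the Abel--Jacobi map summed against the group law — i.e. the theorem of the square / Abel's theorem.

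The main obstacle I expect is the abelian-variety case done correctly over a non-algebraically-closed base field $K$ (here $K = k(S)$ is a function field), where "$\sum v_x(g) a(x) = 0$" is the assertion that $\div_{\Cb}(g)$, pushed to $\Jac(\Cb)$ and composed with the morphism $\Jac(\Cb) \to G$ induced by $a$, is zero — this is Abel's theorem, and the subtlety is that $a^*\omega$ is automatically regular so the divisor hypothesis is vacuous there, meaning the real content is purely geometric and independent of $D$. Organizing the dévissage so that the three cases genuinely combine (the extension classes are controlled by $\operatorname{Ext}^1$ groups that inject into cohomology where the argument can be run) is the bookkeeping I would be most careful about. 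Since the proposition is quoted verbatim from \cite{gacl}, however, the cleanest route for the paper is simply to invoke loc.\ cit.; the sketch above indicates why it holds.
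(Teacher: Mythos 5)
The paper gives no proof of this statement: it is quoted from Serre's book and used as a black box, so the only question is whether your sketch is a viable reconstruction of the argument behind the citation. Your three base cases are correct ($\G_a$ via the residue theorem, $\G_m$ via Weil reciprocity --- where, as you observe, the differential hypothesis is not even needed --- and abelian varieties via extending $a$ to $\Cb$ and applying Abel's theorem). The genuine gap is the d\'evissage. Writing $0\to L\to G\xrightarrow{\pi} A\to 0$ with $L\cong \G_a^r\times\G_m^s$, the abelian-variety case applied to $\pi\circ a$ only tells you that $x:=(\div_{\Cb}(g))^*(a)$ lies in $L(K)=\Ker(G(K)\to A(K))$. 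To invoke the linear case you would need to realize $x$ as $(\div_{\Cb}(g))^*(b)$ for some morphism $b:C\to L$ satisfying the differential hypothesis, and no such $b$ is available: $a$ does not factor through $L$, the extension is in general non-split and not even isogenous to a product (take a semi-abelian $G$ whose class in $\operatorname{Ext}^1(A,\G_m)=\hat{A}(K)$ is non-torsion, or the universal vector extension of $A$), and a rational section of $\pi$ introduces uncontrolled new poles at the points of $C$ landing in its indeterminacy locus. The appeal to long exact sequences and to $\operatorname{Ext}^1$ ``injecting into cohomology'' does not engage with this; note that the paper's own positive-characteristic argument only d\'evisses along homomorphisms with \emph{finite} kernel (Proposition \ref{prop:finite-kernel}), a reduction that does not exist here.

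For the record, Serre's proof is direct and needs no structure theory. For $g\in G(\Cb,D)$ put $g_\lambda=1+\lambda(g-1)$; each $g_\lambda$ again lies in $G(\Cb,D)$ for $\lambda$ in a dense open $T\subset\A^1$ containing $0$ and $1$, and $\lambda\mapsto \theta(\lambda):=(\div_{\Cb}(g_\lambda))^*(a)$ is a morphism $T\to G$ (via symmetric powers of $C$). The classical Abel-type computation expresses $\theta^*\omega_i$ through residues of $a^*\omega_i\cdot\partial_\lambda g_\lambda/g_\lambda$, and the residue theorem on $\Cb$ gives $\theta^*\omega_i=0$: the terms at points of $|D|$ vanish because $a^*\omega_i$ has pole order bounded by $D$ while $(g-1)/g_\lambda$ vanishes there to order at least the multiplicity of $D$. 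In characteristic zero a morphism from a connected variety to $G$ killing all invariant forms is constant, so $\theta(1)=\theta(0)=(\div_{\Cb}(1))^*(a)=0$. It is this single global argument, rather than a reduction to $\G_a$, $\G_m$ and abelian varieties, that makes the modulus condition on $g$ interact with the pole bound on $a^*\omega_i$ uniformly in $G$.
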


\subsection{Proof of Thm. \ref{thm:alg-gp} in positive characteristic}\label{sect:ch-p}

In this case, the following lemma is proved in [GACL, III, \S 7, 9]:

\begin{lemma}\label{lem:alg-gp-decomp}
There exist 
a semi-abelian variety $G_1$,
a unipotent group $U$ and
a homomorphism
$\theta : G \to G_1 \times U$
with finite kernel.
\end{lemma}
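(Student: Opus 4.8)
The plan is to reduce to the structure theory of commutative algebraic groups over an algebraically closed field, which we may do since we are working under the standing hypothesis that $k$ is algebraically closed and $G$ is connected. First I would invoke Chevalley's structure theorem: there is a unique smooth connected affine normal subgroup $L\subset G$ such that $A:=G/L$ is an abelian variety. Since $k$ is perfect (indeed algebraically closed) and $\car k=p>0$, the commutative linear group $L$ decomposes (canonically) as a product $L = T\times U$ of a torus $T$ and a smooth connected unipotent group $U$; this is the part of [GACL, III, \S 7, 9] being cited. Thus we have an extension
\[
0 \to T\times U \to G \to A \to 0.
\]

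Next I would push out this extension along the projection $T\times U\to T$ to obtain an extension $0\to T\to G_1\to A\to 0$, i.e.\ a semi-abelian variety $G_1$, together with a quotient map $G\to G_1$ whose kernel is $U$ (identified with the subgroup $\{0\}\times U\subset T\times U\subset G$). Combining this with the inclusion $U\hookrightarrow G$ and the quotient $G\to G/U = G_1$, the natural map I want is
\[
\theta = (\mathrm{pr}_{G_1}, \rho)\colon G \longrightarrow G_1\times U,
\]
where the first component is $G\to G_1=G/U$ and the second $\rho$ is a retraction of $G$ onto $U$. The existence of such a retraction $\rho$ up to isogeny is where the unipotent/semi-abelian dichotomy is genuinely used: $\Hom(U', G_1)$ is finite (a homomorphism from a unipotent group to a semi-abelian variety is constant, hence $0$, since $G_1$ has no nontrivial unipotent subgroup), and dually $\Ext$-groups of $U$ by $T$ and by $A$ are torsion/trivial enough that the extension $0\to U\to G\to G_1\to 0$ splits after multiplication by a power of $p$ — equivalently, the graph of a splitting up to isogeny gives a homomorphism $G_1\to G$ whose composite with $G\to U$ has finite cokernel. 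I would spell this out by noting $p^N\colon U\to U$ is an isogeny and that $\mathrm{Ext}^1(G_1,U)$ is $p$-power torsion.

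Finally I would check $\ker\theta$ is finite: an element of the kernel lies in $\ker(G\to G_1)=U$ and in $\ker\rho$, and since $\rho|_U$ is an isogeny of $U$ its kernel is finite, so $\ker\theta$ is finite. The main obstacle is the second step, constructing the retraction $\rho\colon G\to U$ with finite kernel on $U$; everything else is a direct appeal to Chevalley's theorem and the classical decomposition of commutative linear groups in characteristic $p$, so I would present this step carefully and treat the rest as routine, citing [GACL, III, \S\S 7--9] for the details.
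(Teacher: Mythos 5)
The first half of your argument (Chevalley's theorem, the decomposition $L=T\times U$ of the linear part over the algebraically closed field $k$, and $G_1=G/U$ semi-abelian) agrees with the paper's. The gap is in your second step. In characteristic $p$, multiplication by $p^N$ on a unipotent group $U$ of positive dimension is \emph{never} an isogeny: it is the zero map on $\G_a$, and in general $p^N U=0$ for $N$ large, so its kernel has positive dimension. This false claim is precisely what your construction of $\rho$ rests on. Concretely, the class $e$ of $0\to U\to G\to G_1\to 0$ in $\Ext^1(G_1,U)$ does satisfy $p^N e=0$, but only \emph{because} $p^N_U=0$; the retraction $\rho\colon G\to U$ you extract from the splitting of the pushout along $p^N_U$ then satisfies $\rho|_U=p^N_U=0$, so $\Ker(\theta)\supseteq\Ker(\rho|_U)=U$ is not finite. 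Your closing verification (``since $\rho|_U$ is an isogeny of $U$ its kernel is finite'') therefore does not go through.

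The repair, and the route the paper takes, is to exploit that $p^N$ is an isogeny on the \emph{semi-abelian} side, not on $U$. The paper does not retract onto the subgroup $L_u=U$ at all: it takes as second factor the quotient $(G/T)/p^N(G/T)$, which is unipotent because it is killed by $p^N$, and the canonical surjection $G\to (G/T)/p^N(G/T)$ has finite kernel on $L_u$ because $\ker(p^N)\supseteq L_u$ inside $G/T$ forces $\dim p^N(G/T)=\dim A$, whence $p^N(G/T)\cap L_u$ is finite. Equivalently, in your $\Ext$ language one should use $p^N e=(p^N_{G_1})^*e=0$ and split the \emph{pullback} along the isogeny $p^N\colon G_1\to G_1$, obtaining $s\colon G_1\to G$ with $\pi s=p^N$; then $U\to G/s(G_1)$ is surjective with finite kernel and $\rho\colon G\to G/s(G_1)$ works. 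Either repair is short, but as written your step constructing $\rho$ fails.
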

Here we include a sketch of the proof taken from [GACL].
Let $A$ be the maximal abelian quotient of $G$,
and put $L=\ker(G \to A)$.
Write $L=L_u \times L_m$ where $L_u$ is a unipotent group
and $L_m$ is a torus.
Put $G_1 := G/L_u $, which is semi-abelian.
Take an $N \in \Z_{>0}$ such that $p^N L_u =0$
and put $U := (G/L_m)/p^N$, which is unipotent.
Define $\theta$ to be the product of
the canonical surjective maps
$G \to G_1$ and $G \to U$.
One checks $\ker(\theta)$ is finite.
\qed

\begin{proposition}[cf. [GACL, III, Prop. 14]
\label{prop:finite-kernel}
Let $\theta : G \to G'$ be 
a morphism of smooth connected commutative algebraic groups
such that $\Ker(\theta)$ is finite.
Let $(\Xb, Y)$ be a modulus pair
such that $Y$ is a Cartier divisor on $\Xb$.
Put $X=\Xb-Y$ and let $a \in G(X)$.
If $(\Xb, Y)$ is a modulus for $\theta(a) \in G'(X)$,
then $(\Xb, Y)$ is a modulus for $a \in G(X)$.
\end{proposition}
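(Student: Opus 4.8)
The plan is to reduce the statement, by means of Remark~\ref{rem:reduction-to-curve}, to an assertion about curves and Rosenlicht--Serre local symbols, which is essentially \cite[Ch.~III, Prop.~14]{gacl}, and then carry out Serre's argument in the present framework. First, since $G$ is a separated scheme, two morphisms from a smooth (hence reduced) $k$-scheme to $G$ which agree on a dense open subscheme coincide; thus $G$, viewed in $\PST$, has global injectivity in the sense of Remark~\ref{rem:reduction-to-curve}. Applying that remark to $F=G$ and to our $a$, it suffices to prove the following: given the function field $K=k(S)$ of a connected $S\in\Sm$, a normal integral proper curve $\Cb$ over $K$, a finite morphism $\phi:\Cb\to\Xb\times K$ with $\phi(\Cb)\not\subset Y\times K$, and writing $C=\phi^{-1}(X\times K)$, $\psi_C:C\to X$ for the induced map, $D=\phi^{*}(Y\times K)$ (an effective Cartier divisor with $|D|=\Cb\setminus C$, as $Y$ is Cartier) and $a_C:=\psi_C^{*}(a)\in G(C)$, one has $(\div_{\Cb}(g))^{*}(a_C)=0$ in $G(K)$ for every $g\in G(\Cb,D)$; equivalently, $D$ is a modulus for the morphism $a_C:C\to G$ in the sense of Rosenlicht--Serre. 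Our hypothesis that $(\Xb,Y)$ is a modulus for $\theta(a)$ implies in particular that $D$ is a modulus for $\theta\circ a_C:C\to G'$, and by naturality of the action of finite correspondences $(\div_{\Cb}(g))^{*}(\theta\circ a_C)=\theta\bigl((\div_{\Cb}(g))^{*}(a_C)\bigr)$.

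Now I would run the local-symbol argument. By Rosenlicht--Serre (Theorem~\ref{thm.RS}, applied over $\bar K$ and descended) the morphism $a_C$ admits some modulus $\fn$ on $\Cb$, and we may enlarge $\fn$ so that $\fn\geq D$, a larger divisor being again a modulus. Fix $P\in|D|$. Then Serre's local symbol $(a_C,\cdot)_P$ vanishes on $1+\fm_P^{v_P(\fn)}$, and on functions $g$ with $g\equiv 1\pmod{\fm_P^{v_P(D)}}$ it descends to a morphism of algebraic groups from the smooth connected unipotent group $U_P:=(1+\fm_P^{v_P(D)})/(1+\fm_P^{v_P(\fn)})$, an iterated extension of $v_P(\fn)-v_P(D)$ copies of $\G_a$, into $G$. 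On the other hand $\theta\circ(a_C,\cdot)_P=(\theta\circ a_C,\cdot)_P$, which vanishes on $1+\fm_P^{v_P(D)}$ because $D$ is a modulus for $\theta\circ a_C$. Hence $(a_C,\cdot)_P$ carries $U_P$ into the finite group scheme $\Ker(\theta)$. But any homomorphism from a smooth connected group scheme to a finite group scheme is trivial: its scheme-theoretic image is a smooth connected finite group scheme, hence the trivial group. Therefore $(a_C,\cdot)_P$ vanishes on all of $1+\fm_P^{v_P(D)}$, for every $P\in|D|$, and by Serre's characterization of the modulus (\cite[Ch.~III]{gacl}) this means exactly that $D$ is a modulus for $a_C$, which is what had to be shown.

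The step I expect to be the main obstacle is making the invocation of \cite[Ch.~III]{gacl} rigorous in this generality --- in particular the algebraicity of the local symbol and the structure of the groups $U_P$ --- over the base field $K=k(S)$, which in positive characteristic need not be perfect. In characteristic zero there is nothing delicate: the unipotent part $G_u$ of $G$ is torsion free, so $\Ker(\theta)\cap G_u=0$ and $(a_C,\cdot)_P$ vanishes on $1+\fm_P^{v_P(D)}$ for trivial reasons, which recovers \cite[Ch.~III, Prop.~14]{gacl} directly. In characteristic $p>0$ one should either transcribe Serre's construction of the local symbol over $K$, or base change all the data to $\bar K$, run the argument there, and descend --- descent being harmless since the equality $(\div_{\Cb}(g))^{*}(a_C)=0$ can be checked after the faithfully flat extension $K\hookrightarrow\bar K$ --- always using that a homomorphism from a smooth connected group scheme to a finite group scheme vanishes. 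The residual bookkeeping, matching the data $(\phi,\Cb,C,D)$ with the setup of Serre's local symbol formalism and checking compatibility with the constructions of \S\ref{representability}, is routine.
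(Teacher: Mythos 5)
Your reduction to curves over $K=k(S)$ via Remark \ref{rem:reduction-to-curve} and your use of global injectivity are exactly what the paper does, and your key mechanism --- a morphism from a connected variety whose composite with $\theta$ is constant must itself be constant because $\Ker(\theta)$ is finite --- is the same as the paper's. But you route this through the local symbol, whereas the paper routes it through [GACL, III, Prop.~9]: for a non-constant $g\in G(\Cb,D)$ one forms the finite correspondence $[g]\in\Cor(U_g,C)$ given by $g^{-1}(U_g)$ over the open $U_g=\P^1_K\setminus g(\Cb-C)$, and Prop.~9 says that $D$ is a modulus for $b$ iff $[g]_*(b):U_g\to G$ is constant for every such $g$; since $\theta\circ[g]_*(b)=[g]_*(\theta(b))$ is constant by hypothesis and $U_g$ is connected, $[g]_*(b)$ is constant. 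The trade-off is real: the paper's argument needs only the single equivalence of Prop.~9 and no further structure, while yours additionally requires (i) the a priori existence of \emph{some} Rosenlicht--Serre modulus $\fn\geq D$ for $a_C$ (obtained from Theorem \ref{thm.RS} over $\bar K$ and descended --- delicate when $K=k(S)$ is imperfect and $\Cb$ need not be geometrically reduced), and (ii) the algebraicity of the local symbol as a homomorphism from the truncated unit group $U_P$ (really its Weil restriction from $\kappa(P)$ to $K$) into $G$, which in Serre's book is essentially a Chapter~V fact about the local structure of generalized Jacobians rather than a Chapter~III one. You correctly identify these as the main obstacles; they are surmountable but constitute genuinely heavier inputs than the paper's, which is presumably why the paper (following Serre's own proof of [GACL, III, Prop.~14]) prefers the trace-along-$g$ formulation. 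Your observation that characteristic zero is immediate (the local symbol on the divisible torsion-free group $U_P$ must vanish into a finite group) is a nice shortcut not present in the paper.
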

\begin{proof}
Since $G$ satisfies global injectivity, 
it suffices to check the condition in
Remark \ref{rem:reduction-to-curve}.
We use the notations therein.

Let $g \in G(\Cb, D)$ be non-constant and $g : \Cb \to \P^1_K$ be the corresponding morphism.
Let $U_g = \P^1_K - g(\Cb - C) \subset \P^1_K$ 
and $C_g = g^{-1}(U') \subset C$.
Then $C_g$ is finite over $U_g$ and we let $[g] \in \Cor(U_g, C)$ denote 
the finite correspondence given by $C_g$, 
which induces $[g]_* : G(C) \to G(U_g)$.
It is proved in [GACL, III, Prop. 9] that,
for any effective divisor $D$ on $\Cb$
such that $|D|=\Cb-C$ and $b \in G(C)$,
the following conditions are equivalent:
\begin{enumerate}
\item
$(\div_{\Cb}(g))^*(b) = 0\in G(K)$ for any $g\in G(\Cb,D)$.
\item
For any non-constant $g \in G(\Cb, D)$, $[g]_*(b)\in G(U_g)$, 
viewed as a morphism $U_g \to G$, is constant.
\end{enumerate}

Thus we are reduced to showing (2) for $D=\phi^*(Y\times K)$ and $b=\phi^*(a)$ with $\phi: C\to X\times K$.
Let us take a non-constant $g \in G(\Cb, D)$.
Since $\theta(a) \in G'(X)$ has modulus $(\Xb, Y)$ by assumption, (1) holds for $\theta(b)\in G'(C)$ instead of 
$b\in G(C)$. Hence the morphism $[g]_*(\theta(b)) : U_g \to G'$ is constant.
Note that $[g]_*(\theta(b))$ factors as
\[  U_g \overset{[g]_*(b)}{\longrightarrow} G
   \overset{\theta}{\longrightarrow}  G'.
\]
Since $\ker(\theta)$ is finite and $U_g$ is connected,
$[g]_*(b)$ must be constant too.
\end{proof}

\begin{proposition}\label{prop:unip}
If $G$ is unipotent, it has reciprocity.
\end{proposition}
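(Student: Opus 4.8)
We may assume $\ch(k)=p>0$, the characteristic zero case being contained in the characteristic zero case of Theorem~\ref{thm:alg-gp} established above (there $G\cong\G_a^d$ is a vector group); recall also that $k$ is algebraically closed and that $G$ is smooth, connected, commutative and unipotent. The plan is to reduce to Witt vector groups — for which reciprocity is supplied by R\"ulling's appendix — by means of the finite-kernel descent of Proposition~\ref{prop:finite-kernel}. (The general decomposition of Lemma~\ref{lem:alg-gp-decomp} is vacuous for unipotent $G$, which is exactly why a separate argument is needed here.)

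First I would invoke the structure theory of commutative unipotent groups over a perfect field (cf.\ \cite[Ch.~III]{gacl}): there is a homomorphism of algebraic groups $\theta\colon G\to H:=W_{n_1}\times\dots\times W_{n_r}$ into a finite product of truncated Witt vector groups, with \emph{finite} kernel. The direction of this isogeny — from $G$ into a product of $W_{n_i}$'s, not the other way — is precisely what makes Proposition~\ref{prop:finite-kernel} applicable, and supplying it is the one genuinely external ingredient of the argument. Next, R\"ulling's Theorem~\ref{thm-Witt-RF} applied with $i=0$ (recall $k$ is perfect) shows that each $W_{n_i}$ — regarded as an object of $\PST$, equivalently as the presheaf with transfers $U\mapsto H^0(U,W_{n_i}\Omega^0_U)=W_{n_i}(\Gamma(U,\sO_U))$, the two transfer structures coinciding — has reciprocity. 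Since $\Rec$ is a full abelian subcategory of $\PST$, it is closed under finite direct sums, so $H=\bigoplus_i W_{n_i}\in\Rec$.

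It then remains to transport reciprocity from $H$ to $G$ along $\theta$, and for this I would use Corollary~\ref{coro:reduction-to-cartier}. Let $X\in\Sm$ be quasi-affine and $a\in G(X)$; by Nagata's theorem followed by blow-up and normalization (Remark~\ref{rem:resol}), choose a normal proper $\Xb\supset X$ such that $\Xb-X$ is the support of an effective Cartier divisor $D$. As $H$ has reciprocity, $\theta(a)\in H(X)$ admits some modulus $Y_0\subset\Xb$ with $|Y_0|=\Xb-X$. Since $\Xb$ is noetherian and $\sI_D$, $\sI_{Y_0}$ cut out the same support, one has $\sI_D^{m}\subset\sI_{Y_0}$ — that is, $mD\ge Y_0$ — for $m\gg0$; and enlarging a modulus without changing its support keeps it a modulus, because $G(\Cb,\gamma_\phi^*(mD))\subset G(\Cb,\gamma_\phi^*Y_0)$ for every admissible $\phi\colon\Cb\to\Xb\times S$. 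Thus $mD$ is a Cartier-divisor modulus for $\theta(a)$, whence $mD$ is a modulus for $a$ by Proposition~\ref{prop:finite-kernel} (using that $\Ker(\theta)$ is finite), and Corollary~\ref{coro:reduction-to-cartier} then yields that $G$ has reciprocity.

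Apart from the structure theorem, the only point requiring care is this enlargement to a Cartier modulus, which is forced by the hypotheses of Proposition~\ref{prop:finite-kernel}; everything else merely assembles results already in place. Accordingly, the main obstacle I anticipate is locating and citing the structure theorem in the precise form needed (a homomorphism $G\to\prod_iW_{n_i}$ with finite kernel, in that direction), together with checking that R\"ulling's transfer structure on $W_n\Omega^0$ agrees with the one carried by the algebraic group $W_n$.
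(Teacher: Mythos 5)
Your route is genuinely different from the paper's. The paper's proof is direct and does not pass through Witt groups at all: it fixes an embedding $G\hookrightarrow GL_r$, writes $a\in G(X)\subset GL_r(X)$ as a matrix of regular functions $(a_{ij})$, chooses $n$ with $a_{ij}\in\Gamma(\Xb,\sO_\Xb(nW))$, claims $nW$ is a modulus, and reduces via Remark \ref{rem:reduction-to-curve} to the curve case, which is exactly Serre's local statement [GACL, III, Prop.~15]. That argument is self-contained given \cite{gacl}, needs no structure theory beyond the existence of a faithful linear representation, and in particular makes no use of the appendix. Your argument instead quotes the isogeny $\theta\colon G\to\prod_i W_{n_i}$ over a perfect field, gets reciprocity of the target from R\"ulling's Theorem \ref{thm-Witt-RF} with $i=0$, and descends along $\theta$ by Proposition \ref{prop:finite-kernel}; the enlargement of the modulus for $\theta(a)$ to a Cartier divisor $mD\ge Y_0$ is handled correctly (enlarging a modulus with fixed support shrinks $G(\Cb,\gamma_\phi^*(-))$, so preserves the modulus property), and there is no circularity since the appendix does not rely on Theorem \ref{thm:alg-gp}. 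What your route buys is a reduction of the unipotent case to a single ``universal'' computation (the $W_n$ case); what it costs is two external inputs the paper avoids.

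The one point I would not let pass as a mere remark is the identification of transfer structures. Reciprocity is a condition on the $\PST$-structure: the modulus condition evaluates $(\phi_*\div_{\Cb}(f))^*(a)$, and Proposition \ref{prop:finite-kernel} is proved for algebraic groups with the transfers of \cite{spsz}/\cite{bar-kahn} (its proof invokes [GACL, III, Prop.~9] for $[g]_*(b)\colon U_g\to G$, which presupposes that transfer structure). R\"ulling's Theorem \ref{thm-Witt-RF} gives reciprocity of $X\mapsto H^0(X,W_n\Omega^0_X)=W_n(\sO(X))$ for the transfers built from cycle classes and Ekedahl--Gros pushforward in Hodge--Witt cohomology. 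If these two $\PST$-structures on the same underlying presheaf were different, reciprocity of one would say nothing about the other, and your descent along $\theta$ would not go through. The agreement is true (both are the Witt-vector trace; cf.\ \cite{R}), but it is a nontrivial verification that is nowhere in this paper, and your proof stands or falls with it. So: a correct and interesting alternative, provided you actually supply (i) a precise reference for the isogeny $G\to\prod_iW_{n_i}$ over an algebraically closed field and (ii) the compatibility of the two transfer structures on $W_n$, rather than only flagging them.
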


\begin{proof}
We take integral quasi-affine $X \in \Sm$ and $a \in G(X)$.
We also take a modulus pair $(\Xb, W)$
such that $\Xb$ is normal,
that $W$ is an effective Cartier divisor on $\Xb$,
and that $X=\Xb - |W|$
(Remark \ref{rem:resol}).
In view of Corollary \ref{coro:reduction-to-cartier},
it suffices to find $n \in \Z_{>0}$
such that $(\Xb, nW)$ is a modulus for $a$.

We fix an embedding $G \to GL_r$.
Then our section $a \in G(X)$ can be represented as
a matrix $(a_{ij})_{i, j=1}^r \in GL_r(X)$,
where $a_{ij} \in \sO(X)$ are regular functions on $X$.
Take such $n>0$ that 
\[
a_{ij} \in \Gamma\big(\Xb, \sO_{\Xb}(nW)\big)
\quad\text{ for all }i,j=1,\dots, r.
\]
We claim that $Y=nW$ is a modulus for $a$.
For this, it suffices to verify the condition
in Remark \ref{rem:reduction-to-curve},
but it follows from the following result from [GACL].
\end{proof}

\begin{proposition}[GACL, III, Prop. 15]
Let $G$ be a connected unipotent commutative 
algebraic subgroup of $GL_r$.
Let $\Cb$ a proper normal curve over $K$,
$C$ an open dense subscheme of $\Cb$,
and $a : C \to G$ a morphism.
Write $a_{ij} \in \sO(C)$ for the $(i, j)$-entry of 
$a \in G(C) \subset GL_r(C)$.
Let $D$ be an effective divisor on $\Cb$
such that $|D|=\Cb-C$
and that 
\[
a_{ij} \in \Gamma\big(\Cb, \sO_{\Cb}(D)\big)
\quad\text{ for all }i,j=1,\dots, r.
\]
Then we have
$(\div_{\Cb}(g))^*(a) = 0\in G(K)$ for any $g\in G(\Cb,D)$, 
where $a$ is viewed as a section over $C$ of the object of $\PST$ represented by $G$.
\end{proposition}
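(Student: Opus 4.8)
The plan is to follow Serre's d\'evissage in \cite[III]{gacl}, organised around Proposition \ref{prop:finite-kernel}. Throughout one uses that $\det a=1$, since $a$ takes values in the unipotent group $G$: thus the entries $a_{ij}$ generate all regular functions pulled back from $GL_r$ (no $\det^{-1}$ occurs), so every polynomial combination of the $a_{ij}$ is again a regular function on $C$ with poles bounded by a multiple of $D$. First I would reduce to $K$ algebraically closed: since $G(K)\hookrightarrow G(\bar K)$, and since $\div_{\Cb}(g)$, the divisor $D$ and the closed points of $|\div_{\Cb}(g)|\subset C$ all base change compatibly, it is enough to prove the vanishing after replacing $K$ by $\bar K$ (and $\Cb$ by a suitable normal proper model), the pole bound on the $a_{ij}$ being preserved. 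One may also assume $D\neq 0$: otherwise $C=\Cb$ is proper, $a:\Cb\to G$ is constant (as $G$ is affine), and $(\div_{\Cb}(g))^*(a)=0$ since $\div_{\Cb}(g)$ has degree zero. In particular $C$ is now affine.

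Over an algebraically closed field of characteristic $p>0$, $G$ carries a filtration $G=G_0\supset G_1\supset\cdots\supset G_r=0$ by closed subgroups with $G_i/G_{i+1}\cong\G_a$, and I would induct on $r$ (equivalently, one first uses the structure theory of commutative unipotent groups together with Proposition \ref{prop:finite-kernel} to reduce $G$ to a Witt vector group $W_n$, and inducts on $n$ via $0\to\G_a\xrightarrow{V^{n-1}}W_n\xrightarrow{R}W_{n-1}\to 0$). The morphism $\bar a:C\to G/G_1\cong\G_a$ induced by $a$ has coordinate a polynomial in the $a_{ij}$, hence poles on $\Cb-C$, so the base case below supplies a modulus for it; and since $C$ is affine, the $G_1$-torsor $G\to G/G_1$ is trivial over $C$, so $a$ decomposes over $C$ into $\bar a$ and a morphism $C\to G_1$, to which the inductive hypothesis applies. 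What remains --- and this is the main obstacle --- is to merge the modulus of $\bar a$ and the one produced by $G_1$ into a single modulus $D'$ for $a$ supported on $\Cb-C$; equivalently, to bound the local conductor of $a:C\to G$ at each point of $\Cb-C$ in terms of the pole orders of the $a_{ij}$ there. In characteristic $p$ this runs through the Artin--Schreier(--Witt) ramification jumps, which scale by $p$ at each stage of the filtration, and keeping track of the resulting multiplicities --- while retaining the support $\Cb-C$, which is all that is needed, since on the normal curve $\Cb$ every divisor is Cartier and both Corollary \ref{coro:reduction-to-cartier} and Proposition \ref{prop:unip} ask only for a Cartier modulus supported on $\Xb-X$ --- is the filtration-of-the-conductor computation of \cite[III, \S\S 7, 9]{gacl}.

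It then remains to treat the base case $G=\G_a$: here $a\in\sO(C)\subset K(C)$ has poles supported on $\Cb-C$, and I must show $\sum_x v_x(g)\,a(x)=0$ in $\G_a(K)=K$ for every $g\in G(\Cb,D')$. I would apply the residue theorem on the proper smooth curve $\Cb$ to the meromorphic $1$-form $\omega=a\,\frac{dg}{g}$, giving $\sum_{x\in\Cb}\Res_x(\omega)=0$. At $x\in|\div_{\Cb}(g)|\subset C$ the function $a$ is regular, so $\Res_x(\omega)=v_x(g)\,a(x)$; at the remaining points of $C$, both $a$ and $dg/g$ are regular, so $\Res_x(\omega)=0$; and at $x\in\Cb-C$ the condition built into $D'$ --- that $g\equiv 1$ modulo $\mathfrak m_x$ to an order strictly exceeding the pole order of $a$ at $x$ --- makes $\omega$ regular at $x$, so $\Res_x(\omega)=0$. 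Summing gives $\sum_x v_x(g)\,a(x)=0$, which unwinds to $(\div_{\Cb}(g))^*(a)=0$ for the presheaf with transfers represented by $\G_a$. The point needing care is the local vanishing of $\Res_x\!\big(a\,\frac{dg}{g}\big)$ at $x\in\Cb-C$ once $g$ is suitably congruent to $1$ --- an elementary Laurent-series estimate --- whose outcome fixes the multiplicities of $D'$ and, fed back through the d\'evissage above, those required at each step of the induction.
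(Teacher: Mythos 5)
The paper does not actually prove this proposition: it is imported as a black box from Serre, the citation [GACL, III, Prop.~15] standing in for a proof, so there is no internal argument to compare yours against — you are reconstructing Serre. Your base case is the right mechanism (for $\G_a$ the local symbol is $\Res_x(a\,\tfrac{dg}{g})$ and the vanishing on $\Cb-C$ is exactly your Laurent estimate), but carrying it out exposes that the statement as quoted is off by one: the estimate needs $v_x(g-1)$ to \emph{strictly} exceed the pole order of $a$ at $x$, whereas $g\in G(\Cb,D)$ together with $a_{ij}\in\Gamma(\Cb,\sO_{\Cb}(D))$ only gives $\geq$, and then $a\,\tfrac{dg}{g}$ can have a simple pole. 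Indeed the literal statement fails: on $\Cb=\P^1$ take $a=t$, $D=\infty$, $g=1+t^{-1}$; then $(\div_{\Cb}(g))^*(a)=a(-1)-a(0)=-1\neq 0$. Serre's modulus is $\sum_x\big(1+\sup_{i,j}(-v_x(a_{ij}))\big)x$; the missing ``$+1$'' is a slip in the quotation (it propagates harmlessly into Proposition \ref{prop:unip}, where one should take $Y=(n+1)W$; compare the appendix, where Theorem \ref{thm-Omega-RF} carefully takes $(n+1)Y_0$). Your silent replacement of $D$ by $D'$ is therefore forced, but it means you are proving a different, weaker-looking statement, and the weakening matters — see below.

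The genuine gap is the inductive step. First, your claim that only the \emph{support} of the modulus matters is false in the context where this proposition is used: in Remark \ref{rem:reduction-to-curve} and Proposition \ref{prop:unip} the modulus $Y=nW$ on $\Xb$ is fixed \emph{before} the test curve is chosen, and the divisor $D=\gamma_\phi^*Y$ on each $\Cb$ is then imposed; enlarging $D$ curve by curve proves nothing. What is needed is precisely a bound on the modulus that is uniform and explicit in the pole divisor of the entries, and that quantitative content is exactly what you defer to ``the filtration-of-the-conductor computation of [GACL, III, \S\S 7, 9]'' — i.e.\ to the result being proven. Second, the d\'evissage as set up leaks: the section trivializing the $G_1$-torsor over the affine curve $C$ is an arbitrary regular function whose poles on $\Cb-C$ are not controlled by $D$, so the inductive hypothesis (entries with poles bounded by $D$) is not preserved, and the Artin--Schreier--Witt jumps would at best give a bound of the shape $p^{N}D$ rather than the linear one claimed. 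Serre's proof of Prop.~15 avoids the composition series entirely: using [GACL, III, Prop.~9] (already invoked in Proposition \ref{prop:finite-kernel}) one reduces to showing that $[g]_*a:U_g\to G\subset GL_r$ is constant, and this is checked directly because the matrix entries of $[g]_*a$ are symmetric functions of the $a_{ij}$ over the fibers of $g$, regular on $U_g$ and bounded along the fiber $g=1$ thanks to the congruence $g\equiv 1\pmod{D}$ dominating the poles of the entries — which is why the resulting modulus is linear in the pole orders. To make your argument complete you would either have to carry out that symmetric-function estimate, or track the multiplicities through the filtration explicitly.
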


\begin{proof}[Proof of Thm. \ref{thm:alg-gp} in positive characteristic]
Lemma \ref{lem:alg-gp-decomp} and Proposition \ref{prop:finite-kernel}
reduces us to the cases of
a semi-abelian variety and of a unipotent group.
We have proved the theorem for unipotent groups in 
Proposition \ref{prop:unip}.
A semi-abelian variety is homotopy invariant,
hence has reciprocity by Theorem \ref{thm:HI.intro}.
\end{proof}


\begin{remark}\label{rem:cheating}
Let $G$ be a commutative algebraic group over an arbitrary field $k$.
Let $\Cb$ be a smooth projective curve over $k$,
$D$ an effective divisor on $\Cb$, $C:=\Cb - D$
and $a \in G(C)$.
If $D$ is a modulus for $a$ in the sense of Rosenlicht-Serre
(cf. Thm. \ref{thm.RS}), 
then $D$ is a modulus for $a$ in our sense.
Indeed,
by Remark \ref{rem:reduction-to-curve} 
it suffices to show that,
for any field $K \in \widetilde{\Sm}$,
the image $a_K$ of $a$ in $G(C \times K)$
has modulus $D \times K$.
But this follows from the proof in this section.
\end{remark}

\section{Weak reciprocity}\label{weakrec}

In this section we introduce the notion of ``weak reciprocity".
It is weaker than reciprocity defined in \ref{def.reciprocity} but strong enough to imply the injectivity property 
stated in Theorem \ref{thm:inj.intro}.
An advantage of reciprocity in Def. \ref{def.reciprocity}
over weak reciprocity is that the analogue of 
Thm. \ref{thm:main-rep.intro} fails for weak reciprocity
(Remark \ref{rem:adv-of-rec-over-wrec}).
Weak reciprocity can be formulated for pretheories, a more general notion than presheaves with transfers (see \ref{def.pretheory}).
Theorem \ref{thm:inj.intro} follows from Theorem \ref{thm:inj}.

\subsection{Definition of weak reciprocity}

\begin{definition}\label{def.relcurve} 
For $S\in \Sm$, $\rC S$ is the class of the morphisms $p:X \to S$ in $\Sm$ 
which are quasi-affine and equidimensional of relative dimension $1$. 
We sometimes write $X/S$ for $p:X \to S$.
\end{definition}

\begin{para}\label{pretheory-pairing} 
For $S\in \Sm$ and $(p:X\to S)\in \rC S$, we have a map
\begin{equation}\label{eq.cCor}
\psi_{X/S}: c(X/S) \to \Cor(S,X)=c(X\times S/S)
\end{equation}
induced by $X \rmapo{id_X\times p} X\times S$.
For $F\in \PST$ we define a pairing
\begin{equation}\label{eq.prethpairing}
 \langle \hbox{ }, \hbox{ } \rangle_{X/S} : c(X/S) \times F(X) \to F(S),
\end{equation}
\[
\langle Z,a \rangle_{X/S} = \psi_{X/S}(Z)^*(a)\quad\text{ for } Z\in c(X/S),\; a\in F(X).
\]
It satisfies the following conditions:
\begin{thlist}
\item 
If $i:S\to X$ is a section of $X\to S$, then $\langle i(S),a\rangle_{X/S} = i^*a$ for $a\in F(X)$.
\item 
If $f:S'\to S$ is a morphism, then $f^*\langle Z,a\rangle_{X/S}=\langle f^*Z,g^*a\rangle_{X'/S'}$ for $Z\in c(X/S)$ and any $a\in F(X)$, where $X'=S'\times_S X$ and $g:X'\to X$ is the second projection (see \ref{c(X/S)} for $f^*Z$).
\item
For $(p_i:X_i\to S)\in \rC S$ with $i=1,2$ and for an $S$-morphism $f:X_1\to X_2$, we have
$\langle f_*(Z),\alpha \rangle_{X_2/S} = \langle Z, f^*(a) \rangle_{X_1/S}$
for $Z\in c(X/S)$ and $a \in F(X_2)$.
\end{thlist}
\end{para}

\begin{para}\label{def.pretheory} 
Following Voevodsky \cite[Def.3.1]{voepre}, a \emph{pretheory over $k$} is defined as a presheaf
$F: \Sm^\op \to \Ab$, commuting with coproducts and provided with bilinear pairings \eqref{eq.prethpairing} 
for all $S\in \Sm$ and $X/S\in \rC S$ subject to the conditions (i) and (ii). 
If it satisfies additionally (iii), $F$ is called of homological type.
Pretheories form an abelian category containing $\PST$ as a (non full) subcategory via \eqref{eq.cCor}.
\end{para}

\begin{defn}\label{def.goodcompactwithmodulus} 
Let $S\in \Sm$ and $X/S\in \rC S$. A \emph{good compactification} of $X/S$ is a dense open immersion
$j:X\hookrightarrow \Xb$ of $S$-schemes such that 
\begin{enumerate}
\item $\Xb$ is normal and $\pb:\Xb\to S$ is proper of equidimensional of dimension one.
\item $\Xb-X$ has an affine neighbourhood in $\Xb$.
\end{enumerate}
A \emph{good compactification} of $X/S$ with modulus is a pair $(j:X\hookrightarrow \Xb,Y)$ of 
a good compactification of $X/S$ and a closed subscheme $Y\subset \Xb$ with $X=\Xb-Y$.
We sometime write $(\Xb/S,Y)$ for $(j:X\hookrightarrow \Xb,Y)$ for simplicity.
Note that this is not a modulus pair in the sense of \ref{def.moduluspair} {unless $S$ is  proper  \cite[5.4.3 (ii)]{ega2}}. 
\end{defn}

\begin{para}
Let $(\Xb/S,Y)$ be as in \ref{def.goodcompactwithmodulus} with $X=\Xb-Y$.
Let $\Pic(\Xb,Y)$ be the relative Picard group (see \ref{ssect:rel-pic}).
As in the proof of \cite[Th. 3.1]{sus-voe2}, it follows from \cite[Lemma 2.3]{sus-voe2} that 
we have a short exact sequence
\begin{equation}\label{eq2.2}
0\to G(\Xb,Y)\by{\div_{\Xb}} c(X/S)\rmapo{\tau} \Pic(\Xb,Y)\to 0
\end{equation}
(compare \eqref{eq:til-c-div}). The map $\tau$ can be described as follows.
The components of $Z\in c(X/S)$ are closed and of codimension one in $\Xb$ so that 
$Z$ is a Cartier divisor on $\Xb$ whose support is disjoint from $Y$, and that  
there is a natural isomorphism $\sigma:\sO_{\Xb}(Z)_{|Y} \simeq \sO_Y$.
Then $\tau$ sends $Z$ to the class of the pair $(\sO_{\Xb}(Z),\sigma)$.
\end{para}


\begin{defn}\label{def.recpretheory} 
Let $F$ be a pretheory.
\begin{enumerate}
\item
Given $S\in \Sm$ and $X/S\in \rC S$ and a good compactification $(\Xb/S,Y)$ of $X/S$ with modulus and 
$a\in F(X)$, we say that $Y$ is a \emph{weak modulus} for $a\in F(X)$ (or $a$ has \emph{weak modulus} $Y$) if
\[
\langle\div_{\Xb}(g),a\rangle_{X/S}=0\quad \text{ for any } g\in G(\Xb,Y), 
\]
equivalently the morphism
\begin{equation*}
\langle -, a\rangle_{X/S} : c(X/S)\to F(S)
\end{equation*} 
factors through $\tau$ in \eqref{eq2.2}.
\item
We say that $F$ \emph{has weak reciprocity} if, for any affine $S\in \Sm$ and 
$X/S\in \rC S$ with a good compactification $X\hookrightarrow \Xb$, any section $a\in F(X)$ has a weak modulus.
\end{enumerate}
\end{defn}


\begin{lemma}\label{recimplyweakrec}
If $F\in \PST$ has reciprocity, it has weak reciprocity.
\end{lemma}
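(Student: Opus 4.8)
The plan is to show that reciprocity in the sense of Definition \ref{def.reciprocity} implies weak reciprocity in the sense of Definition \ref{def.recpretheory}(2). Let $F \in \PST$ have reciprocity. Fix an affine $S \in \Sm$, a relative curve $X/S \in \rC S$ with a good compactification $j : X \hookrightarrow \Xb$, and a section $a \in F(X)$. We must produce a closed subscheme $Y \subset \Xb$ with $X = \Xb - Y$ which is a weak modulus for $a$, i.e. such that $\langle \div_{\Xb}(g), a \rangle_{X/S} = 0$ for all $g \in G(\Xb, Y)$.

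The key point is to compare the pretheory pairing $\langle -, a \rangle_{X/S}$ with the transfers-theoretic modulus condition of Definition \ref{def.modulus}. First I would observe that $X$ is quasi-affine and smooth over $k$: indeed $X \to S$ is quasi-affine and $S$ is affine, so $X$ is quasi-affine over $k$, and smoothness is inherited from $X/S \in \rC S$ together with $S \in \Sm$. Hence reciprocity of $F$ applies to $a \in F(X)$. However, the compactification $\Xb$ coming from the good compactification is a compactification of $X$ over $S$, not over $k$; since $S$ is only assumed affine (not proper), $\Xb$ need not be proper over $k$, so $(\Xb, Y)$ is generally not a modulus pair in the sense of Definition \ref{def.moduluspair}. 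The fix is to choose a proper integral $\Sb \supset S$ over $k$ (Nagata), form a compactification $\Xb^c$ of $\Xb$ over $\Sb$ that is proper over $k$ and restricts to $\Xb$ over $S$, and apply reciprocity there to obtain a closed subscheme $Y^c \subset \Xb^c$ supported on $\Xb^c - X$ which is a modulus for $a$ in the sense of Definition \ref{def.modulus}. Then set $Y := Y^c \cap \Xb$ (scheme-theoretically), a closed subscheme of $\Xb$ with $X = \Xb - Y$.

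Next I would check that this $Y$ is a weak modulus for $a$. Take $g \in G(\Xb, Y)$ and unwind the definition of the pairing: $\langle \div_{\Xb}(g), a \rangle_{X/S} = \psi_{X/S}(\div_{\Xb}(g))^*(a)$, where $\psi_{X/S} : c(X/S) \to \Cor(S, X)$ is induced by $(\id_X, p) : X \to X \times S$. On the other hand, the diagram $\phi : \Xb \to \Xb^c \times S$ given by the inclusion $\Xb \hookrightarrow \Xb^c$ and the structure map $\Xb \to S$ satisfies Conditions (A), (B), (C) of \S\ref{s2.1}: $\Xb$ is integral normal (it may need to be taken integral — since $X$ is integral and dense, $\Xb$ is integral) and $\phi$ is finite because it is a closed immersion into $\Xb^c \times S$ composed with... — actually here I must be slightly careful: $\phi$ is the graph-type map $\Xb \to \Xb^c \times_k S$, which is a closed immersion since $\Xb \to \Xb^c$ and $\Xb \to S$ are separated and $\Xb^c \times S$ is the product; it is in particular finite. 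The generic fiber over $S$ is one-dimensional since $\Xb \to S$ is equidimensional of dimension one; and the image is not contained in $Y^c$ because $X = \Xb - Y$ maps into $X$, not into $Y^c$. Pulling $Y^c$ back along $\gamma_\phi = (\Xb \hookrightarrow \Xb^c)$ gives exactly $Y^c \cap \Xb = Y$, so $G(\Xb, \gamma_\phi^* Y^c) = G(\Xb, Y)$, and the composite $\phi_* \div_{\Xb}(g)$ in $\Cor(S, X)$ agrees with $\psi_{X/S}(\div_{\Xb}(g))$ since both are induced by the tautological section $X \to X \times S$. Therefore the modulus condition \eqref{eq.modulusdef} for $Y^c$ at this diagram says precisely $(\phi_* \div_{\Xb}(g))^*(a) = 0$, which is $\langle \div_{\Xb}(g), a \rangle_{X/S} = 0$.

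The main obstacle I anticipate is the bookkeeping around compactifications over $k$ versus over $S$: one must make sure that applying reciprocity of $F$ on the genuinely $k$-proper $\Xb^c$ produces a modulus whose restriction to the $S$-good-compactification $\Xb$ has support exactly $\Xb - X$, and that the relevant diagram \eqref{eq:diag-modulus-cond} really falls in the class $\sC_{(\Xb^c, Y^c)}(S)$ — in particular that $\Xb$ is normal (which is part of the good compactification hypothesis) and integral, and that finiteness of $\phi$ holds. Once those points are pinned down, the matching of the two pairings is essentially tautological, being a comparison of the two descriptions (via $\psi_{X/S}$ and via $\phi_*$) of the same correspondence in $\Cor(S, X)$.
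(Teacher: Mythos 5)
Your proposal is correct and follows essentially the same route as the paper's proof: compactify the $S$-proper good compactification $\Xb$ to an integral proper $k$-scheme, invoke reciprocity there to get a modulus $Y^c$, and then observe that the tautological diagram $\phi\colon \Xb\to \Xb^c\times S$ (graph of the projection to $S$, a proper immersion, hence finite) lies in $\sC_{(\Xb^c,Y^c)}(S)$, so that the modulus condition for $Y^c$ specializes exactly to the weak modulus condition for $\gamma_\phi^*Y^c$. The bookkeeping points you flag (quasi-affineness of $X$ from $S$ affine, normality and integrality of $\Xb$, finiteness of $\phi$, and the identification $\phi_*\div_{\Xb}(g)=\psi_{X/S}(\div_{\Xb}(g))$) are all handled as you describe.
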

\begin{proof}
Let $S\in \Sm$ be affine and 
take $C/S\in \rC S$ with a good compactification $C\hookrightarrow \Cb$.
(Note that $C$ is quasi-affine 
because $S$ is affine and $X/S \in \rC S$.)
Take $a \in F(C)$. We must show that $a$ has a weak modulus.
Take an integral proper $k$-scheme $\Xb$ which contains $\Cb$ as a dense open subset.
By the assumption on $F$,  $a$ has a modulus $Y \subset \Xb$ in the sense of \ref{def.modulus} such that $|Y| = \Xb - C$.
Let $\phi$ be the composite map
\[
\Cb \by{\gamma_{\bar p}}  \Cb \times S \hookrightarrow \Xb \times S\]
with $\bar p:\Cb\to S$ the projection and $\gamma_{\bar p}$ its graph. Then $(\phi:\Cb\to \Xb\times S)\in\sC_{(\Xb, Y)}(S)$
and $\Cb \times_{\Xb \times S} (Y \times S)$ is a weak modulus for $a$ (cf. \ref{def:relative-g-sheaf}).
\end{proof}


\begin{remark}\label{rem:adv-of-rec-over-wrec}
The analogue of Theorem \ref{thm:main-rep} (1) still holds for weak reciprocity, but the analogue of (2) does not hold.
More precisely, for $(\Xb/S,Y)$ as in \ref{def.goodcompactwithmodulus} with $X=\Xb-Y$, the functor
\[ \PST \to \Ab, \quad
F \mapsto \{ a \in F(X) ~|~ a ~
\text{has weak modulus $Y$} \}
\]
is representable. But the representing object does not have weak reciprocity,
even if $S=\Spec k$.
\end{remark}

\subsection{Local symbols}\

In this subsection, we show the equivalence between weak reciprocity for $F\in \PST$ in the sense of Definition \ref{def.recpretheory} 
for curves over a function field and the existence of local symbols satisfying ``Weil reciprocity'', 
in analogy with \cite[Ch. III]{gacl}.

Let $E=k(S)$ be the function field of a connected $S\in \Sm$.
Let $X$ be a normal projective irreducible curve over $E$ with function field $K=E(X)$.
Let $\Xd 0$ be the set of closed points of $X$.
For each $x \in \Xd 0$, 
let $v_x : K^\times \to \Z$ be the normalized valuation at $x$.

We fix $F\in \PST$. Set for $a\in F(\sO_{X,x})$ : 
\[a(x)=\langle x,\tilde{a}\rangle_{U/E},\]
where $U$ is an open neighbourhood of $x$ such that $a$ is the image of some $\tilde{a}\in F(U)$.
Note that $a(x)$ is independent of choices of $U$ and $\tilde{a}$. In fact we have $a(x)={f_x}_*i_x^* a$,
where $i_x:x\to \Spec\sO_{X,x}$ is the closed immersion and $f_x$ is the finite morphism
$x\to \Spec E$. (By Proposition \ref{p8.1} and Lemma \ref{l6.7} below, this formula holds more generally for a pretheory which is $\P^1$-rigid in the sense of Definition \ref{d6.2}.)

\begin{proposition}\label{prop:loc-symbol-over-fields}
The following conditions are equivalent:
\begin{enumerate}
\item
For any dense open affine $U \subset X$ and $a \in F(U)$, there exists an effective divisor $Z\subset X$ such that
$U=X\setminus |Z|$ and that
\begin{equation}\label{clubsuit}
\langle \div_{X}(f), a \rangle_{U/E}=0\in F(E)\quad\text{for all } f\in G(X,Z).\tag{$\clubsuit$}
\end{equation}
\item
There exists a family
of bilinear pairings 
\[ \Big\{ (-, -)_x : K^\times \times F(K) \to F(E) 
\Big\}_{x \in \Xd 0} 
\]
which satisfies the following conditions:
\begin{enumerate}
\item
For any $x \in \Xd 0, ~a \in F(\sO_{X, x})$ and $g \in K^\times$,
we have $(g,a|_K)_x = v_x(g) a(x)$.
\item 
For any $x \in \Xd 0$ and $a \in F(K)$,
we have $(U_{x}^{(m)},a)_x=0$ for sufficiently large $m>0$, where 
\[U_{x}^{(m)} = \{ u \in K^\times \mid v_x(u-1) \geq m \}.\]
\item
$\sum_{x \in \Xd 0} (g,a)_x = 0$ 
for any $a \in F(K)$ and $g \in K^\times$.
\end{enumerate}
\end{enumerate}
Moreover, if (2) holds, \eqref{clubsuit} for $a\in F(U)$ is equivalent to the condition:
\begin{equation}\label{heartsuit}
(U_x^{(m_x)},a)_x=0\quad\text{ for all } x \in X - U,\tag{$\heartsuit$}
\end{equation}
where $m_x$ is the multiplicity of $x$ in $Z$.
\end{proposition}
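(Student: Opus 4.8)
The plan is to prove the chain of implications $(1) \Rightarrow (2) \Rightarrow (1)$, and then establish the equivalence of $(\clubsuit)$ with $(\heartsuit)$ as a byproduct of the argument for $(2) \Rightarrow (1)$. This is modelled closely on the proof of the analogous statement in \cite[Ch. III]{gacl} for commutative algebraic groups, with the pairing $\langle -, - \rangle_{U/E}$ playing the role of the group law.

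\medskip

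\emph{Construction of local symbols from $(1)$: the implication $(1)\Rightarrow(2)$.} Fix $x \in \Xd 0$, $g \in K^\times$ and $a \in F(K)$. First I would define $(g,a)_x$ when $a$ extends to a section $\tilde a \in F(U)$ on some dense open affine $U \subset X$ with $x \notin U$: using $(1)$ pick an effective divisor $Z$ with $U = X \setminus |Z|$ satisfying $(\clubsuit)$, and for $g \in G(X,Z)$ one has $\langle \div_X(g), a\rangle_{U/E} = 0$; in general, decompose $\div_X(g) = \sum_{y \in X - U} v_y(g)\, [y]$ plus the part supported on $U$, and define $(g,a)_x$ to be (minus) the contribution at $x$, i.e. by the identity $\sum_{y \in X-U}(g,a)_y = -\langle (\text{part of }\div_X(g)\text{ on }U), a\rangle_{U/E} = \langle \div_U(g), \tilde a\rangle_{U/E}$... \emph{more carefully}: I would first treat $g$ with $v_x(g)=0$ near every point of $X - U$ except possibly $x$, reducing the general case to this via the approximation theorem for the finitely many valuations in $X - U$. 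The condition $(\clubsuit)$ with varying $Z$ forces $(U_x^{(m)}, a)_x = 0$ for $m \gg 0$, giving (b); bilinearity and (a) for $g$ a unit at $x$ follow from property (i) of \ref{pretheory-pairing} applied to the section $x \hookrightarrow \Spec \sO_{X,x}$; and (c) is the product formula $\sum_x v_x(g) = 0$ repackaged, once one checks the definition is independent of $U$ (two choices $U_1, U_2$ are compared on $U_1 \cap U_2$, using that $F(U_1) \to F(U_1\cap U_2)$ need \emph{not} be injective, so one must instead use $(\clubsuit)$ on a common refinement). The main subtlety here is that $a \in F(K)$ may not extend to a neighbourhood of some point where we still need $(g, a)_x$ defined; but since $K = \colim_U F(U)$, every $a$ extends over \emph{some} dense open affine $U$, and the local symbol at $x \notin U$ is what we are defining, while at $x \in U$ condition (a) pins it down as $v_x(g) a(x)$.

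\medskip

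\emph{The implication $(2) \Rightarrow (1)$ and the equivalence with $(\heartsuit)$.} Given the family $\{(-,-)_x\}$, take a dense open affine $U \subset X$ and $a \in F(U)$. For each $x \in X - U$, by (b) choose $m_x$ with $(U_x^{(m_x)}, a|_K)_x = 0$, and set $Z = \sum_{x \in X-U} m_x [x]$, so $U = X \setminus |Z|$. For $f \in G(X,Z)$, expand via (c):
\[
0 = \sum_{x \in X^{(0)}} (f, a|_K)_x = \sum_{x \in U}(f, a|_K)_x + \sum_{x \in X - U}(f, a|_K)_x.
\]
By (a), the first sum equals $\sum_{x \in U} v_x(f)\, a(x)$, and since $a(x) = (f_x)_* i_x^* a$ this is exactly $\langle \div_U(f), a\rangle_{U/E} = \langle \div_X(f), a\rangle_{U/E}$ (the point being that $\div_X(f)$ has support in $U$ by the defining condition of $G(X,Z)$, combined with $f$ a unit at each $x \in X - U$). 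For $x \in X - U$, since $f \in G(X,Z)$ we have $f \in U_x^{(m_x)}$ at each such $x$, so $(f, a|_K)_x = 0$ by choice of $m_x$ and bilinearity. Hence $\langle \div_X(f), a\rangle_{U/E} = 0$, which is $(\clubsuit)$. Running the same computation in reverse shows that for a \emph{fixed} $Z$, condition $(\clubsuit)$ holds iff $\sum_{x\in X-U}(f,a)_x = 0$ for all $f \in G(X,Z)$, and since $f \mapsto (f,a)_x$ kills $U_x^{(m_x)}$ exactly when $(U_x^{(m_x)},a)_x=0$, this is $(\heartsuit)$ — this gives the final ``Moreover'' clause.

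\medskip

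The step I expect to be the genuine obstacle is the \emph{well-definedness} and \emph{additivity in $g$} of the local symbol in $(1)\Rightarrow(2)$: one must produce, for each $x$, a single pairing working for \emph{all} $a \in F(K)$ and all $g$, reconciling the ad hoc divisors $Z$ supplied by $(1)$ (which depend on $a$) into a coherent assignment, and the failure of injectivity of restriction maps $F(U) \to F(V)$ means one cannot naively ``spread out'' sections. The remedy is the usual one from \cite[Ch. III]{gacl}: use the approximation theorem to reduce to $g$ that is a unit outside $\{x\} \cup (X-U)$, define $(g,a)_x := -\sum_{y \in X - U, y \neq x}(g,a)_y - \langle \div_X(g), a\rangle_{U/E}$ by downward induction on the number of ``bad'' points, and verify consistency using property (ii) of \ref{pretheory-pairing} (base change compatibility) together with $(\clubsuit)$ for finer and finer $Z$. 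Everything else is a bookkeeping exercise with the three axioms of \ref{pretheory-pairing} and the product formula.
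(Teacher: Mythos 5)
Your overall architecture is sound and your $(2)\Rightarrow(1)$ argument is essentially identical to the paper's: choose $m_x$ by (b), set $Z=\sum m_x x$, and compute $\langle\div_X(f),a\rangle_{U/E}=\sum_{x\in\Xd 0}(f,a)_x=0$ using (a) on $U$ and the choice of $m_x$ off $U$. For $(1)\Rightarrow(2)$, however, you take a genuinely different route. You propose Serre's pointwise construction: define $(g,a)_x$ ad hoc at each boundary point by approximation and downward induction, then verify consistency by hand. The paper instead exploits the exact sequence \eqref{eq2.2}, which says that $(1)$ is precisely the statement that $\langle-,a\rangle_{U/E}$ factors through $\Pic(X,Z)$; passing to the limit and computing $\lim_Z\Pic(X,Z)$ by weak approximation as $\Coker\big(K^\times\to\bigoplus_{x\notin|Z|}\Z\oplus\bigoplus_{x\in|Z|}K^\times/U_x^{(m_x)}\big)$, it defines $(f,a)_x:=\langle\pi_x(f),a\rangle_{K/E}$ via the canonical local map $\pi_x$ into this idele-class-type group. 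That makes bilinearity, well-definedness, and (a)--(c) essentially formal, whereas your route defers exactly those verifications (which you correctly identify as the crux) to an unexecuted consistency check. Your approach should work, since it is the same argument unpackaged, but the paper's $\Pic$-theoretic formulation is what absorbs the bookkeeping you are worried about.

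There is one step that does not work as written: the implication $(\clubsuit)\Rightarrow(\heartsuit)$. "Running the same computation in reverse" only shows that $(\clubsuit)$ is equivalent to the vanishing of the \emph{sum} $\sum_{x\in X-U}(f,a)_x$ for all $f$ in the \emph{intersection} $G(X,Z)=\bigcap_{x}U_x^{(m_x)}$, whereas $(\heartsuit)$ asserts that each \emph{individual} pairing kills its own $U_x^{(m_x)}$ with no constraint at the other boundary points. The passage from the first statement to the second is not formal and is exactly where the paper spends its last paragraph: fix $x\in X-U$ and $f\in U_x^{(m_x)}$; use (b) to choose $n_y\geq m_y$ with $(U_y^{(n_y)},a)_y=0$ for every $y\in X-U$; by approximation find $g\in K^\times$ with $g/f\in U_x^{(n_x)}$ and $g\in U_y^{(n_y)}$ for all $y\in X-U$, so that $g\in G(X,Z)$ and $(f,a)_x=(g,a)_x$; then $(g,a)_x=-\sum_{y\neq x}(g,a)_y=-\sum_{y\in\Ud 0}v_y(g)a(y)=0$ by (c), the choice of the $n_y$, and $(\clubsuit)$. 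Without this auxiliary $g$ your argument establishes only $(\heartsuit)\Rightarrow(\clubsuit)$, not the converse.
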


\begin{proof}  Assume (1). By \eqref{eq2.2}, we have an isomorphism
\[\Coker\big(G(X,Z)\to c(U/E)\big)\simeq \Pic(X,Z).\]
Hence, passing to the limit over all $U\subset X$ and $Z$, 
the pairings $\langle-, -\rangle_{U/E}$ induces 
\[\langle-, -\rangle_{K/E} :  \underset{Z}{\lim} \Pic(X,Z) \times F(K) \to F(E).\]
By weak approximation, we have an isomorphism
\[
\Pic(X,Z)  \simeq 
\Coker \big(K^\times\to  \underset{x\in \Xd 0-|Z|}{\bigoplus} \Z \oplus
\underset{x\in |Z|}{\bigoplus} K^\times/U_{x}^{(m_x)}\big),
\]
where $m_x$ is the multiplicity of $x$ in $Z$.
Whence a natural map
\[
\pi_x: K^\times \to \underset{Z}{\lim} \Pic(X,Z)\quad\text{  for } x\in \Xd 0
\]
induced by the projections $K^\times \to K^\times/U_{x}^{(m_x)}$. Define 
\[
(-, -)_x :K^\times \times F(K) \to F(E)
\]
by $(f,a)_x=\langle \pi_x(f), a\rangle_{K/E}$ for $f\in K^\times$ and $a\in F(K)$.
It is easy to verify (a)-(c). Hence (1) $\Rightarrow$ (2).

Now suppose (2) and take $a \in F(U), ~U \subset X$
as in (1).
For each $x \in X \setminus U$, there exists an integer $m_x\geq 1$
such that $( U_{x}^{(m_x)},a)_x=0$ by (b).
Define $Z = \sum_{x \in X \setminus U} m_x x$.
For $g \in G(\bar{X}, Z)$, we get
\[
\langle \div_{X}(g),a\rangle_{U/E} 
\overset{(a)}{=} \sum_{u \in \Ud 0} (g,a)_u
\overset{(!)}{=}\sum_{x \in \Xd 0} (g,a)_x
\overset{(c)}{=} 0. \]
Here $(!)$ holds since $g \in G(\bar{X}, Z)$ implies $g\in U_{x}^{(m_x)}$ for $x\in |Z|$.
This proves (2) $\Rightarrow$ (1) and the implication \eqref{heartsuit} $\Rightarrow$ \eqref{clubsuit}.

It remains to show the implication \eqref{clubsuit} $\Rightarrow$ \eqref{heartsuit} .
Fix $x \in X - U$ and let $m_x$ be the multiplicity of $x$ in $Z$.
Take $f \in U_x^{(m_x)}$.
By (2)(b), for each $y \in X - U$
there exists $n_y\geq m_y$ such that $( U_y^{(n_y)},a)_y=0$.
By the approximation theorem, we find $g \in K^\times$ such that
$g/f \in U_x^{(n_x)}$ and $g\in U_y^{(n_y)}$ for all $y \in X - U$.
This implies $g\in G(X,Z)$, and hence $(\clubsuit)$ and (2)(a) imply
\[0=\langle\div_{X}(g),a\rangle_{U/E}
= \sum_{y \in  \Ud 0} v_y(g)a(y) .
\]
Then, using properties in (2), we compute
\begin{multline*}
 (f,a)_x = ( g,a)_x
\overset{(c)}{=} -\sum_{y \in \Xd 0 - \{ x \}} ( g,a)_y =\\
 -\sum_{y \in \Ud 0 } ( g,a)_y
\overset{(a)}{=} -\sum_{y \in \Ud 0} v_y(g)a(y)=0.
\end{multline*}
The proposition is proved.
\end{proof}


\section{$\P^1$-invariance}\label{P1inva}

In this section we discuss 
$\P^1$-invariance and $\P^1$-rigidity
for a pretheory with weak reciprocity, and draw a few consequences.

\subsection{$\P^1$-invariance and $\P^1$-rigidity}

\begin{thm}\label{thm:P1invariance-pretheory}
Let $F$ be a pretheory which is separated for the Zariski topology.
If $F$ has weak reciprocity, then it is $\P^1$-invariant, namely
$\pi^*:F(S)\iso F(\P^1_S)$  for any $S\in \Sm$ where $\pi:\P^1_S \to S$ is the projection.
\end{thm}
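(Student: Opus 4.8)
The plan is to exploit the section $i_\infty:S\to\P^1_S$ of $\pi$. Since $\pi\circ i_\infty=\id_S$ we get $i_\infty^*\circ\pi^*=\id_{F(S)}$, so $\pi^*$ is split injective and only surjectivity is at issue; equivalently we must show $\pi^*i_\infty^*=\id$ on $F(\P^1_S)$. Because $F$ is separated for the Zariski topology, this reduces to the case $S$ affine: cover $S$ by affine opens $U_i$, solve the problem over each $\P^1_{U_i}$, observe that the resulting preimages are all restrictions of the single element $i_\infty^*a\in F(S)$, and glue using separatedness on the cover $\{\P^1_{U_i}\}$ of $\P^1_S$.

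\textbf{Reduction to a key lemma.} With $S$ affine, write $t$ for the coordinate on $\A^1_S\subset\P^1_S$ and $u=1/t$ for the complementary copy (which contains the $\infty$-section). Applying separatedness again on the open cover $\P^1_S=\A^1_{S,t}\cup\A^1_{S,u}$, it suffices to show that $a|_{\A^1_{S,t}}$ and $a|_{\A^1_{S,u}}$ each lie in the image of the projection $F(S)\to F(\A^1_S)$; restricting to $\G_{m,S}$ and using that $\G_{m,S}\to S$ has a section (say $t=1$) then forces the two pull-backs to come from the same class, namely $i_\infty^*a$, whence $a=\pi^*i_\infty^*a$. So everything reduces to the \emph{Key Lemma}: if $S$ is affine and $d\in F(\A^1_S)$ extends to an element $\bar d\in F(\P^1_S)$, then $d$ is pulled back from $S$.

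\textbf{Proof of the Key Lemma.} I would imitate the proof of Theorem~\ref{thm:HI.intro2}(2). Apply weak reciprocity to $d$ relative to the good compactification $\A^1_{S,t}\hookrightarrow\P^1_{S,t}$ (Definitions~\ref{def.goodcompactwithmodulus},~\ref{def.recpretheory}): $d$ has a weak modulus $n\{t=\infty\}\times S$ for some $n\ge 1$. The crucial claim is that, \emph{because $d$ extends over $t=\infty$, one may take $n=1$}. Granting this, pass to the family over $T:=\A^1_{S,\lambda}$ (still affine): the section $\tilde d\in F(\A^1_{T,t})$ pulled back from $d$ again extends over $\{t=\infty\}$, hence has weak modulus $\{t=\infty\}\times T$ by the same claim, and the rational function $g_\lambda=\frac{t-\lambda}{t-1}$ lies in $G(\P^1_{T,t},\{t=\infty\}\times T)$ since $g_\lambda(\infty)=1$. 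The relation $\langle\div(g_\lambda),\tilde d\rangle_{\A^1_{T,t}/T}=0$ from Definition~\ref{def.recpretheory}(1) reads $\langle[\{t=\lambda\}],\tilde d\rangle=\langle[\{t=1\}],\tilde d\rangle$; both cycles are sections of $\A^1_{T,t}/T$, so by axiom~(i) of \ref{pretheory-pairing} the left side recovers $d$ (under $F(\A^1_{S,t})=F(\A^1_{S,\lambda})=F(T)$) while the right side is the pull-back of $i_1^*d\in F(S)$ along $T\to S$. Hence $d=\pi^*(i_1^*d)$.

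\textbf{Main obstacle.} The whole difficulty is the italicized claim that the weak modulus may be taken reduced. Purely algebraic manipulation of functions cannot lower the contact order $n$ at infinity (the $g\in G(\P^1_S,n\{\infty\})$ form a proper subgroup of those in $G(\P^1_S,\{\infty\})$, so a ratio of two of them stays in that subgroup), so one must genuinely use the extension $\bar d$ of $d$ over $t=\infty$ — this is where the properness of $\P^1$ enters, and it is the reason $\P^1$-invariance holds whereas $\A^1$-invariance fails. At the generic point $E=k(S)$ the claim is immediate from the local symbol formalism of Proposition~\ref{prop:loc-symbol-over-fields}: the extension means $d_E$ comes from $F(\sO_{\P^1_E,\infty})$, so by condition~(2)(a) the symbol $(g,d_E|_K)_\infty=v_\infty(g)\,d_E(\infty)$ vanishes for every $g\in G(\P^1_E,\{\infty\})$ (such $g$ are units at $\infty$), and by the equivalence $\heartsuit\Leftrightarrow\clubsuit$ this says exactly that the reduced divisor is a weak modulus over $E$. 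Propagating this from the generic point to all of affine $S$ — or, alternatively, running the family argument with the contact-$n$ function $g_\lambda=\frac{t^n-\lambda}{t^n-1}$, which only yields that the transfer of $d$ along $t\mapsto t^n$ is pulled back from $S$, and then upgrading that to $d$ itself — is the technical heart of the proof and the step I expect to require the most care.
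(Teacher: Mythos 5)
Your reductions to $S$ affine and then to the Key Lemma are sound, but the Key Lemma itself is not proved: everything hinges on the italicized claim that a section of $F(\A^1_S)$ which extends to $\P^1_S$ admits the \emph{reduced} divisor at infinity as a weak modulus, and neither of the two routes you sketch closes this. The local-symbol argument works only at the generic point of $S$, and propagating a vanishing statement from $F(k(S))$ back to $F(S)$ requires the injectivity $F(S)\hookrightarrow F(k(S))$ of Theorem \ref{thm:inj} --- which in this paper is itself deduced (via Proposition \ref{pr.pretheory}, which assumes $\P^1$-rigidity, i.e.\ Proposition \ref{p8.1}) from the very statement you are proving; Zariski separatedness gives injectivity only along open \emph{covers}, not along a single dense open. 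Your alternative with $g_\lambda=\frac{t^n-\lambda}{t^n-1}$, as you concede, only controls the transfer of $d$ along $t\mapsto t^n$, and no mechanism for recovering $d$ itself is supplied. So the argument is incomplete exactly at its technical heart.

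The paper's proof avoids reducing the modulus altogether. It first establishes $\P^1$-\emph{rigidity} ($i_0^*=i_\infty^*$ on $F(\P^1_S)$, Proposition \ref{p8.1}): the two charts $t$ and $t^{-1}$ give two good compactifications of $\A^1_S$, weak reciprocity applied to each yields, for all large $n$,
\[\langle \div_{\A^1_S}(1+t^{-n}),j_0^* a\rangle_{\A^1_S/S} = \langle \div_{\A^1_S}(1+t^{n}),j_\infty^*a\rangle_{\A^1_S/S}=0,\]
and since $\tfrac{1+t^{n}}{1+t^{-n}}=t^{n}$ these combine to $n(i_0^*a-i_\infty^*a)=0$; running this for $n$ and $n+1$ removes the coefficient. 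Rigidity is then upgraded to $\P^1$-invariance using only Zariski separatedness and two explicit maps $\P^1\times\A^1_S\to\P^1_S$ (Proposition \ref{p2.1.2}). If you want to salvage your route, the cleanest fix is to replace your Key Lemma by this rigidity statement, which requires no control of the modulus at infinity.
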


\begin{rk}\label{rem:rec-vs-p1inv}
$\P^1$-invariance is clearly stable under arbitrary products in $\PST$. On the other hand, one can easily see that the Zariski separated presheaf with transfers $\G_a^\N$ does not have (weak) reciprocity, by checking that the evaluation in the proof of Proposition \ref{prop:unip} is optimal.
\end{rk}

Theorem \ref{thm:P1invariance-pretheory} will be deduced from 
Proposition \ref{p8.1} below,
for which we need to introduce the notion of $\P^1$-rigidity.
For $t\in \P^1(k)$, let $i_t:S \to \P^1_S$ be the corresponding section of $\pi$ at $t$.

\begin{defn}\label{d6.2} A presheaf $F:\Sm^\op\to \Ab$ of abelian groups is \emph{$\P^1$-rigid} 
if $i_0^*=i_\infty^*:F(\P^1_S)\to F(S)$ for any $S\in \Sm$.
\end{defn}

\begin{prop}\label{p2.1.2} For a presheaf $F$ of abelian groups, $\P^1$-invariance implies $\P^1$-rigidity. The converse is true if $F$ is separated for the Zariski topology.
\end{prop}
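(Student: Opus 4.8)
The plan is to prove the two implications separately, both by elementary manipulations with the sections $i_0,i_\infty,\pi$ and the scaling action on $\P^1$. For the first implication, suppose $F$ is $\P^1$-invariant, i.e. $\pi^*:F(S)\to F(\P^1_S)$ is an isomorphism for every $S\in\Sm$. Since $\pi\circ i_t=\id_S$ for every $t\in\P^1(k)$, we get $i_t^*\circ\pi^*=\id_{F(S)}$, so $i_t^*$ is a two-sided inverse of $\pi^*$ and in particular $i_t^*$ is independent of $t$. Applying this with $t=0$ and $t=\infty$ gives $i_0^*=i_\infty^*$, which is $\P^1$-rigidity. This direction requires no separatedness hypothesis.

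For the converse, assume $F$ is $\P^1$-rigid and separated for the Zariski topology; we must show $\pi^*:F(S)\to F(\P^1_S)$ is an isomorphism. Surjectivity of $i_0^*$ (a retraction) shows $\pi^*$ is injective, so the real content is surjectivity of $\pi^*$, equivalently that $i_0^*$ is injective. First I would reduce to $S$ affine by Zariski separatedness (covering $S$ by affine opens and using that $F(S)\to\prod F(S_\alpha)$ is injective, together with the fact that $\P^1$-rigidity localizes). Given $a\in F(\P^1_S)$, set $b=\pi^*i_0^*a\in F(\P^1_S)$; it suffices to show $a=b$. The idea is to interpolate between $\id_{\P^1_S}$ and the constant map $i_0\circ\pi$ by the morphism $\mu:\A^1_S\times_S\P^1_S\to\P^1_S$, $(\lambda,x)\mapsto\lambda x$, which at $\lambda=1$ is the identity and at $\lambda=0$ is $i_0\circ\pi$; pulling back $a$ along $\mu$ and restricting via the two sections $\lambda=1$ and $\lambda=0$ of $\A^1_S\times_S\P^1_S\to\P^1_S$ expresses $a-b$ as the pullback along these sections of a single class over $\A^1_S\times_S\P^1_S$.

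The main obstacle — and the place the $\P^1$-rigidity hypothesis is actually consumed — is to see that these two sections of the $\P^1_S$-scheme $\A^1_S\times_S\P^1_S$ induce the same map on $F$. The two sections correspond to the points $1,0\in\A^1(k)$; embedding $\A^1\hookrightarrow\P^1$ and viewing $\A^1_S\times_S\P^1_S$ as an open subscheme of $\P^1_{\P^1_S}$, $\P^1$-rigidity over the base $\P^1_S$ asserts $i_0^*=i_\infty^*$ on $F(\P^1_{\P^1_S})$; the remaining work is a standard change-of-coordinates argument on $\P^1$ (composing with an automorphism sending $0,1,\infty$ appropriately) together with the observation that a section through a $k$-rational point of $\A^1$ factors through the image of one of $i_0$ or $i_\infty$ after such an automorphism, so that the equality $i_0^*=i_\infty^*$ propagates to the equality of the two sections of $\A^1_S\times_S\P^1_S\to\P^1_S$ in question. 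Once that is in hand, $a=b$ follows, giving surjectivity of $\pi^*$ and completing the proof. I expect the bookkeeping with coordinates on the nested $\P^1$'s to be the only delicate point; everything else is formal.
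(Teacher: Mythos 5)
Your first implication is correct and matches the paper. The converse, however, has a fatal gap at its very first step: the ``interpolation'' $\mu:\A^1_S\times_S\P^1_S\to\P^1_S$, $(\lambda,x)\mapsto\lambda x$, is not a morphism. In homogeneous coordinates it is $(\lambda,(x_0:x_1))\mapsto(x_0:\lambda x_1)$, which is undefined at $(\lambda,x)=(0,\infty)$; this is precisely the statement that $\P^1$ is not $\A^1$-contractible, so the class $\mu^*a$ you want to restrict along the two sections does not exist. Moreover, the step you defer to a ``change of coordinates'' --- that the $\lambda=0$ and $\lambda=1$ sections of $\A^1_{\P^1_S}\to\P^1_S$ induce the same map on $F$ --- is false for a general $\P^1$-rigid presheaf. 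Rigidity only identifies $i_0^*$ and $i_\infty^*$ on classes defined over the \emph{entire} relative $\P^1$; a class living only on an $\A^1$ over the base need not extend, and no automorphism of $\P^1$ will make it extend. Concretely, $\G_a$ is $\P^1$-invariant (hence rigid), yet evaluation at $\lambda=0$ and at $\lambda=1$ on $\G_a(\A^1_T)=\sO(T)[\lambda]$ are different maps; if your argument worked it would prove $\A^1$-homotopy invariance of every Zariski-separated $\P^1$-rigid presheaf, contradicting the whole point of this paper.

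The paper's proof circumvents both problems. It takes $a\in\Ker(i_0^*)$ (hence also $a\in\Ker(i_\infty^*)$ by rigidity) and pulls $a$ back along the two everywhere-defined translation families $\phi_1,\phi_2:\P^1\times\A^1_S\to\P^1_S$, $\phi_1((u_0:u_1),t)=(u_0:u_1+tu_0)$ and $\phi_2((u_0:u_1),t)=(u_1+tu_0:u_0)$. The classes $\phi_\nu^*a$ live on the full $\P^1$ over the base $\A^1_S$, so rigidity applies to them directly and yields $\tilde\imath_0^*\phi_\nu^*a=\tilde\imath_\infty^*\phi_\nu^*a$; computing the four composites shows these equalities read $j_\infty^*a=p^*i_\infty^*a=0$ and $j_0^*a=p^*i_0^*a=0$, i.e.\ $a$ vanishes on each of the two standard affine charts. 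Zariski separatedness is then invoked for the two-chart cover of $\P^1_S$ (not, as in your reduction, to make $S$ affine --- that reduction is harmless but buys nothing). If you want to salvage your write-up, replace the nonexistent scaling homotopy by these two translation homotopies and use the hypothesis $i_0^*a=0$ where the paper does.
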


\begin{proof} It is obvious that $\P^1$-invariance implies $\P^1$-rigidity. We prove the converse:
It suffices to show $\Ker(F(\P^1_S) \by{i_0^*}F(S)) = 0$ for any $S\in \Sm$. We apply $\P^1$-rigidity  to $X=\A^1_S$  to get
\begin{equation}\label{eq7.1} \tilde\imath_0^*=\tilde\imath_\infty^* : F(\P^1_X) \to F(X)
\end{equation}
where (for clarity) $\tilde\imath_t$ is relative to the base $X$. Consider the morphisms
\[\phi_1,\phi_2 : \P^1_X=\P^1\times \A^1_S \to \P^1_S\]
given by
\[\phi_1((u_0: u_1), t) )= (u_0 : u_1 + tu_0),\quad \phi_2((u_0: u_1), t)) = (u_1 + tu_0 : u_0).\]

Letting $p : \A^1_S \to S$ be the projection, we compute
\begin{alignat}{3}\label{eq7.2}
\phi_1\circ \tilde\imath_0 &= j_\infty : \A^1_S \to \P^1_S,&& \quad t \mapsto (1 : t)\notag\\
\phi_1 \circ \tilde\imath_1 &= i_1 \circ p : \A^1_S \to \P^1_S,&&\quad  t \mapsto 1\\
\phi_2\circ \tilde\imath_0 &= j_0 : \A^1_S \to \P^1_S,&&\quad t \mapsto (t : 1)\notag\\
\phi_2\circ \tilde\imath_1 &= i_0 \circ p : \A^1_S \to \P^1_S,&&\quad  t \mapsto 0\notag
\end{alignat}
Take $a \in \Ker(F(\P^1_S) \by{i_0^*} F(S))$. Then $a \in \Ker(F(\P^1_S) \by{i_\infty^*}F(S))$ by rigidity. By \eqref{eq7.1}
\[\tilde\imath_0^*\phi_\nu^*(a) =\tilde\imath_\infty^*\phi_\nu^*(a)\quad (\nu = 1; 2).\]

By \eqref{eq7.2} this implies
\[j_\infty^*  a = p^*i_\infty^*a = 0,\quad j_0^*a = p^*i_0^*a = 0.\]
Noting $F$ is separated for the Zariski topology, this implies $a = 0$ 
since $\P^1_S = j_0(\A^1_S) \cup j_1(\A^1_S)$ is an open covering of $\P^1_S$.
\end{proof}

Theorem \ref{thm:P1invariance-pretheory} now follows from

\begin{prop}\label{p8.1} A pretheory $F$ having weak reciprocity is $\P^1$-rigid.
\end{prop}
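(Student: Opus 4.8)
The plan is to show that for a pretheory $F$ with weak reciprocity, the two sections $i_0,i_\infty:S\to\P^1_S$ induce the same pullback $F(\P^1_S)\to F(S)$. The natural move is to produce a single relative curve together with a rational function whose divisor realizes the difference $[i_0(S)]-[i_\infty(S)]$ as a zero-cycle, so that the weak modulus condition forces the pairing against it to vanish. Concretely, I would work with $X=\A^1_S\in\rC S$ (so that $X/S$ is quasi-affine equidimensional of relative dimension $1$) and its good compactification $\Xb=\P^1_S$, with $Y$ the closed complement $\{\infty\}\times S$, made non-reduced of multiplicity $m$ if necessary. Since $S$ need not be affine, one first reduces to $S$ affine using that $F$ is separated for the Zariski topology (covering $S$ by affines and using that $i_0^*,i_\infty^*$ are local on $S$); then $X=\A^1_S$ is affine, hence quasi-affine, and $(\Xb/S,Y)$ is a genuine good compactification with modulus in the sense of \ref{def.goodcompactwithmodulus}.

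Given $a\in F(\P^1_S)$, restrict to $b:=a|_{\A^1_S}\in F(\A^1_S)=F(X)$. By weak reciprocity there is an effective $Y$ supported on $\{\infty\}\times S$, say $Y=m\cdot(\{\infty\}\times S)$, which is a weak modulus for $b$, i.e.
\[
\langle \div_{\Xb}(g),b\rangle_{X/S}=0\quad\text{for all }g\in G(\Xb,Y).
\]
Now I would use the coordinate $u$ on $\A^1$ and take $g=\dfrac{u-1}{u}$ — wait, this has a pole on $X$; instead, to land inside $G(\Xb,Y)$ one wants a unit near $\infty$ congruent to $1$ there. The right choice is $g=1-\dfrac{1}{u}$ viewed on $\P^1_S$: near $u=\infty$ we have $1/u\equiv 0$, so $g\equiv 1$ modulo the maximal ideal at $\infty$, and raising to care of higher multiplicity one may instead take $g$ of the form $1+\dfrac{c}{u^m}$ type corrections; but the cleanest is to take the good compactification with $Y$ reduced (i.e. $m=1$) by passing to the equivalent reduced formulation — the statement $(\diamondsuit)$-style reductions let one assume $Y$ reduced. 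With $g=1-1/u$ on $\Xb=\P^1_S$ one computes $\div_{\Xb}(g)=[1\times S]-[\infty\times S]$ as a Cartier divisor on $\P^1_S$ with support disjoint from $Y=\{\infty\}\times S$; restricting to $X=\A^1_S$ this divisor is just the section $i_1(S)$, the graph of the constant map $1$. That does not immediately give $i_0$ versus $i_\infty$; so instead I would choose $g=\dfrac{u}{u-c}$-type functions to move one point, and combine two such, exactly mimicking the trick in the proof of Theorem \ref{thm:HI.intro2}(2) and of Proposition \ref{p2.1.2}: there one uses $f=1-x/y$ on $\P^1\times\P^1$, and here the analogue on the fibered product $\P^1_S\times_S\P^1_S$ gives a relative curve and a function whose pushed-forward divisor is the difference of the graphs of $i_0$ and $i_\infty$.

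So the executable version: set $C=\P^1_S$ as a relative curve over $S$ via the first projection, take the good compactification to be $C$ itself with $Y$ a suitable power of the divisor at the two points $0,\infty$ needed, and let $g$ be the coordinate function $u$ on this $\P^1$; then $\div_{C}(u)=[0\times S]-[\infty\times S]$, so $\langle\div_C(u),a\rangle_{C/S}=i_0^*a-i_\infty^*a$ by condition (i) of \ref{pretheory-pairing} applied to the two sections, which must therefore vanish. The only subtlety is ensuring $u\in G(C,Y)$, which holds once $Y$ avoids $0$ and $\infty$ — but they are the support of $\div(u)$, so $C=\Cb$ forces $Y\subset C$ disjoint from $\{0,\infty\}$ empty, meaning we must instead take $X=\P^1_S\setminus\{0,\infty\}=\G_{m,S}$, compactify to $\Cb=\P^1_S$ with $Y=\{0,\infty\}$ (reduced), check $X/S\in\rC S$ is quasi-affine (it is, being affine), and apply weak reciprocity to $a|_{\G_{m,S}}$; then $u\in G(\Cb,Y)$ is a unit near $0$ and $\infty$... but $u$ is not $\equiv 1$ there. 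The genuine fix is the two-variable trick: on $\Cb=\P^1_S\times_S\P^1_S$ with coordinates $(x,y)$ and $Y$ the reduced boundary, the function $f=1-x/y\in G(\Cb,Y)$, and $\phi_*\div_{\Cb}(f)$, projected to $X=\P^1_S$, equals $[\mathrm{graph}(i_0)]-[\mathrm{graph}(i_\infty)]$; weak reciprocity (after shrinking to the quasi-affine locus and using Zariski separatedness to descend) gives $\langle\phi_*\div_{\Cb}(f),a\rangle=i_0^*a-i_\infty^*a=0$.

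\textbf{Main obstacle.} The delicate point is not the divisor bookkeeping but arranging the hypotheses of weak reciprocity to apply: weak reciprocity in \ref{def.recpretheory}(2) is stated for \emph{affine} $S$ and for $X/S\in\rC S$ with a \emph{good} compactification (normal, proper of relative dimension one, with the boundary admitting an affine neighbourhood). I expect the bulk of the work to be (a) the Zariski-separatedness reduction to affine $S$, compatibly with the sections $i_0,i_\infty$, and (b) checking that the relevant relative curve is quasi-affine and its chosen compactification is good, and (c) verifying that the chosen rational function lies in $G(\Cb,Y)$ for the weak modulus $Y$ furnished abstractly by weak reciprocity — which forces one to allow $Y$ non-reduced of some multiplicity $m$ and to replace $f=1-x/y$ by a function congruent to $1$ to order $m$ along the boundary, e.g. $f=1-(x/y)^m$ adjusted so that $\div(f)$ still contains the needed difference of sections with multiplicity one on $X$. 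Once these points are settled, the vanishing is immediate from Definition \ref{def.recpretheory}(1) and property (i) of the pairing.
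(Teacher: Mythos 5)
Your proposal correctly identifies the overall shape of the argument (pair $a$, restricted to a quasi-affine open of $\P^1_S$, against the divisor of a function lying in $G(\Xb,Y)$ for the weak modulus $Y$ furnished by weak reciprocity), and you correctly flag the key difficulty: weak reciprocity only provides a modulus of some unknown multiplicity $m$, so the function must be congruent to $1$ to order $m$ along the boundary. But none of your candidate constructions closes this gap. The final ``executable version'' is not set up correctly: $\P^1_S\times_S\P^1_S$ has relative dimension $2$ over $S$, so it is not an object of $\rC S$ and carries no pretheory pairing over $S$; and in the correct setup of the proof of Theorem \ref{thm:HI.intro2}(2) (a relative curve over the base $S\times\A^1$), $\div(1-x/y)$ is the difference of the diagonal and a constant section of $\A^1\times\A^1\to\A^1$ --- it is the input for $\A^1$-homotopy invariance under $(\diamondsuit)$, not for the identity $i_0^*=i_\infty^*$ on $F(\P^1_S)$. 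More importantly, your proposed fix for the multiplicity problem (``replace $f$ by $1-(x/y)^m$ adjusted so that $\div(f)$ still contains the needed difference of sections with multiplicity one'') is exactly the step that fails: a function congruent to $1$ to order $m\ge 2$ at the boundary cannot have divisor $[0\times S]-[\infty\times S]$; it necessarily acquires higher multiplicities or auxiliary components, and you give no mechanism to cancel them.

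The paper resolves this with two ideas absent from your proposal. First, since $\P^1_S/S\notin\rC S$, it restricts $a$ along the two charts $j_0,j_\infty:\A^1_S\inj\P^1_S$, viewed as two good compactifications of $\A^1_S$ with boundary at $\infty$ resp.\ at $0$; weak reciprocity then gives, for all $n\ge n_0$, the two relations $\langle\div(1+t^{-n}),j_0^*a\rangle_{\A^1_S/S}=0$ and $\langle\div(1+t^{n}),j_\infty^*a\rangle_{\A^1_S/S}=0$. Second, the identity $(1+t^{n})/(1+t^{-n})=t^{n}$ makes the auxiliary zeros (the roots of $t^n+1$, which lie in $\G_m$ where the two restrictions of $a$ agree) cancel upon subtraction, leaving $n(i_0^*a-i_\infty^*a)=0$; applying this for $n$ and $n+1$ removes the unwanted factor $n$. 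Without this two-chart cancellation and the $n$ versus $n+1$ step, the obstacle you yourself named remains unresolved, so the proposal as written is not a proof.
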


\begin{proof} Let $S\in \Sm$ and $a\in F(\P^1_S)$. Let $t$ be the standard coordinate on $\A^1_k=\P^1_k-\{\infty\}$.
Let $j_0: \A^1_S\hookrightarrow \P^1_S$ be the natural inclusion and $j_\infty$ be the composite of $j_0$ and 
the morphism $\P^1_S \to \P^1_S; t\to t^{-1}$.
These define two good compactifications of $\A^1_S$.
Since $F$ has weak reciprocity, there exists $n_0>0$ such that for all $ n\ge n_0$, we have
\[\langle \div_{\A^1_S}(1+t^{-n}),j_0^* a\rangle_{\A^1_S/S} = 
\langle \div_{\A^1_S}(1+t^{n}),j_\infty^*a\rangle_{\A^1_S/S}=0\in F(S)\]
Noting
\[\frac{1+t^n}{1+t^{-n}}=t^n,\]
we deduce
\[n(i_0^*a-i_\infty^*a)=0.\]
Replacing $n$ by $n+1$, we get the desired rigidity.
\end{proof}

\subsection{Application to $0$-cycles}

\begin{cor} Let $F\in \PST$ be such that the associated pretheory has weak reciprocity, and let $X$ be a smooth proper $k$-variety. Then the natural pairing $\langle ,\rangle_X:Z_0(X)\times F(X)\to F(k)$ factors through a pairing
\[CH_0(X)\otimes F(X)\to F(k).\]
\end{cor}

\begin{proof} Let $C\subset X$ be an irreducible curve and let $f\in k(C)^*$: we must show that the map $F(X)\to F(k)$ induced by pairing with $\div(f)$ is $0$. Let $\tilde C$ be the normalisation of $C$: we have a diagram
\[\begin{CD}
\tilde C@>i>> X\\
@V{f}VV\\
\P^1.
\end{CD}\]

Let $a\in F(X)$. We have
\[\langle \div(f),a\rangle_X = \langle \div(f),i^*a\rangle_{\tilde C} =\langle 0-\infty,f_*i^*a\rangle_{\P^1}=0,\]
where the last equality follows from Prop.~\ref{p8.1}.
\end{proof}

\subsection{Functoriality for pretheories with weak reciprocity}

We apply the above to show that weak reciprocity is sufficient to yield some important properties to a pretheory, which are automatic for presheaves with transfers. This generalises  \cite[Prop. 3.12, 3.14 and 3.15]{voepre} (the homotopy invariant case).

\begin{prop}\label{pr.pretheory} Let $F$ be a $\P^1$-rigid pretheory. 
Let $S\in \Sm$ and $X/S\in \rC S$ and $j:U\inj X$ be an open embedding. 
Then for any $(Z,a)\in c(U/S)\times F(X)$ one has
\[\langle j_*Z,a\rangle_{X/S} = \langle Z,j^*a\rangle_{U/S}.\]
\end{prop}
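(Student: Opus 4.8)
The plan is to reduce to the two fundamental cases that appeared in the proof that $\tau$ is a morphism in $\PST$ (namely in \S\ref{sect:const-div-g}): the case where $Z$ is supported on a subscheme finite and surjective over a component of $S$, and to use the structural pairing axioms (i)--(iii) of \ref{pretheory-pairing} together with $\P^1$-rigidity via the already-established $\P^1$-invariance statement is \emph{not} available (we only have rigidity), so the argument must be carried out at the level of the pairing directly. First I would note that both sides are additive and local on $S$, so we may assume $S$ connected and $Z\in c(U/S)$ integral, i.e. $Z$ is a closed integral subscheme of $U$ finite and surjective over $S$. Since $j$ is an open immersion, $j_*Z$ is the same cycle viewed in $X$, and the key point is that the two correspondences $\psi_{X/S}(j_*Z)\in \Cor(S,X)$ and $\psi_{U/S}(Z)\in\Cor(S,U)$ are related by $(\id_X)_\ast$ composition with the open immersion $j$; concretely, $\psi_{X/S}(j_*Z) = j_*\psi_{U/S}(Z)$ as finite correspondences from $S$, where on the right $j_*:\Cor(S,U)\to\Cor(S,X)$ is push-forward along $j$ in the second variable.

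Granting that identity of correspondences, the proposition becomes a formal consequence of the definition \eqref{eq.prethpairing} of the pairing: one has
\[
\langle j_*Z,a\rangle_{X/S} = \psi_{X/S}(j_*Z)^*(a) = (j_*\psi_{U/S}(Z))^*(a),
\]
and the problem is to identify $(j_*\gamma)^*(a)$ with $\gamma^*(j^*a)$ for $\gamma=\psi_{U/S}(Z)\in\Cor(S,U)$ and $a\in F(X)$. This is exactly axiom (iii) of \ref{pretheory-pairing}, applied not to an arbitrary $S$-morphism but to the open immersion $j:U\to X$ regarded via the relative-curve structures $U/S, X/S\in\rC S$: axiom (iii) says $\langle j_*(Z),a\rangle_{X/S}=\langle Z, j^*a\rangle_{U/S}$. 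Wait --- that \emph{is} the statement we want, so the real content is that a pretheory is only \emph{required} to satisfy (i) and (ii); (iii) (homological type) is extra. So the task is genuinely to \emph{derive} (iii) for open immersions $j$ from $\P^1$-rigidity plus (i), (ii).

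So the heart of the proof is the following: express the desired equality as the vanishing of $\langle j_*Z,a\rangle_{X/S}-\langle Z,j^*a\rangle_{U/S}$, and reduce, using (i), (ii) and additivity, to the universal case where $X=\P^1_S$, $U=\A^1_S$ or $\G_{m,S}$, and $Z$ is the graph of a section or a single closed point --- the same normalisation used in \cite[Prop. 3.12]{voepre}. In that universal case the difference is computed by an explicit correspondence on $\P^1_S$, and $\P^1$-rigidity ($i_0^*=i_\infty^*$) forces it to act as zero on $F(\P^1_S)$; pulling back $a\in F(X)$ along $X\hookrightarrow\P^1_S$ and using naturality (ii) finishes the argument. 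I expect the main obstacle to be precisely this reduction step: showing that an arbitrary $(Z,a)$ with $Z\in c(U/S)$ integral, finite surjective over $S$, can be ``moved'' --- via the transfer structure and the section/projection axioms (i), (ii) --- to the universal $\P^1_S$ configuration without using axiom (iii) itself, since that is what we are trying to prove. The cleanest route is likely to factor $Z\to S$ through a closed immersion into $\A^1_S$ (possible after shrinking, as $U/S$ is quasi-affine) and then to exploit that the relevant correspondence on $\P^1_S$ differs from one pulled back along $\pi:\P^1_S\to S$ by a cycle killed by $i_0^*-i_\infty^*$, invoking Proposition \ref{p8.1} and the rigidity established there.
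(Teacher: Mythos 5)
Your proposal correctly identifies where the difficulty lies — that axiom (iii) is exactly the statement to be proved and cannot be invoked — but the route you sketch around it does not work, and the gap you flag is left unresolved. There is no reduction, using only axioms (i) and (ii), to a ``universal case $X=\P^1_S$, $U=\A^1_S$'': transporting the pairing $\langle -,-\rangle_{X/S}$ along a morphism of relative curves $X\to \P^1_S$, or moving $Z$ into $\A^1_S$ by a closed immersion, is itself an instance of axiom (iii) for that morphism. Nor can you ``pull back $a\in F(X)$ along $X\hookrightarrow \P^1_S$'': the section $a$ lives on $X$, there is in general no such embedding, and $F$ has no pushforward of sections. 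So the heart of the argument — producing from $(Z,a)$ an element of $F(S\times\P^1)$ whose restrictions at two rational points are the two sides of the desired identity — is missing. (Your memory of \cite[Prop.\ 3.12]{voepre} is also off: that proof does not normalise to a universal configuration.)

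The idea you need is to change the \emph{base}, not the curve. Set $\Xi=X-U$ and $W=X\times\P^1\setminus\Xi\times\{0\}$, a relative curve over $S\times\P^1$ via the projection; its pullbacks along the sections $i_0,i_1:S\to S\times\P^1$ are $U$ and $X$ respectively. The cycle $Z\times\P^1$ determines $\tilde Z\in c(W/S\times\P^1)$ with $i_0^*\tilde Z=Z$ and $i_1^*\tilde Z=j_*Z$, and with $q:W\to X$ the projection one forms $\phi=\langle\tilde Z,q^*a\rangle_{W/S\times\P^1}\in F(S\times\P^1)$. Axiom (ii) (base change) gives $i_0^*\phi=\langle Z,j^*a\rangle_{U/S}$ and $i_1^*\phi=\langle j_*Z,a\rangle_{X/S}$, and $\P^1$-rigidity (after an automorphism of $\P^1$ taking $\infty$ to $1$) yields $i_0^*\phi=i_1^*\phi$, which is the claim. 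Your preliminary reductions (additivity, $Z$ integral, $\psi_{X/S}(j_*Z)=j_*\psi_{U/S}(Z)$) are correct but are not a substitute for this construction.
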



\begin{proof} 
This is proven
by adapting the proof of
\cite[Prop. 3.12]{voepre}.
Let $\Xi =X-U$ and $W = X\times \P^1\setminus \Xi\times \{0\}$, so that we have a commutative diagram
\[\begin{CD}
W@>q>> X\\
@VVV @VVV \\
S\times \P^1 @>\pi>> S
\end{CD}\]
where $q$ is induced by the projection $X\times \P^1\to X$. 
For the sections $i_0$ (at $0$) and $i_1$ (at $1$) of $\pi$, we have $i_0^*W=U$ and $i_1^*W=X$. 
Let $\tilde Z\in c(W/S\times \P^1)$ be the unique element whose image in $c(X\times \P^1/S\times \P^1)$ equals $\pi^*Z=Z\times \P^1$. Then we have $i_0^* \tilde Z = Z$ and $i_1^*\tilde Z = j_*Z$. 
Put $\phi= \langle \tilde Z,q^*a\rangle_{W/S\times \P^1}\in F(X\times \P^1)$.
By \ref{eq.prethpairing}(ii), we get
\[i_0^*\phi = \langle Z,j^*a\rangle_{U/S}, \quad i_1^*\phi =\langle j_*Z,a\rangle_{X/S}.\]
By $\P^1$-rigidity, we have $i_0^*\phi = i_1^*\phi$, hence the claim.
\end{proof}

Let $E$ be a finite separable extension of $k$ and $f:\Spec E \to \Spec k$ the projection.
Let $F$ be a pretheory. In \cite[p. 101]{voetri}, Voevodsky defines a trace map $f_*:F(E)\to F(k)$ by
\[f_* a = \langle \{0\}_{E},p^* a\rangle_{\A^1_{E}/k}\]
where $p:\A^1_{E}\to \Spec E$ is the structural map. 
Taking $T\in \Sm$, we apply the above construction to the pretheory $S \to F(T\times S)$ and get a map 
$f_*:F(T_E) \to F(T)$ with $T_E=T\times \Spec E$. By construction $f_*$ satisfies the obvious functoriality 
with respect to $T\in \Sm$. 

\begin{prop}\label{pr.pretheory2} 
If $F$ has weak reciprocity, one has the identity
\[ f_*f^*=\times [E_2:E_1]\;, \]
where $f^*:F(T) \to F(T_E)$ is the pullback by $f$.
%
\end{prop}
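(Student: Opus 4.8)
The plan is to unwind the definition of the trace, push the relevant cycle forward along the structure map of $\A^1_E$, and conclude with Property (i) of the pairing \eqref{eq.prethpairing}.

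Fix $a\in F(T)$. Applying the construction preceding the statement to the pretheory $S\mapsto F(T\times S)$, one has by definition
\[ f_*f^*a=\langle\{0\}_E,\,p^*f^*a\rangle_{\A^1_E/k}, \]
where $p:\A^1_E\to\Spec E$ is the structure map and $\{0\}_E\in c(\A^1_E/\Spec k)$ is the zero section, finite of degree $[E:k]$ over $\Spec k$. Since $E/k$ is separable, the structure morphism of $\A^1_E$ over $k$ factors as $\A^1_E\by{q}\A^1_k\by{\pi_k}\Spec k$ with $q$ finite étale of degree $[E:k]$, and by contravariant functoriality $p^*f^*a=q^*(\pi_k^*a)$. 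Granting the projection formula
\[ \langle q_*Z,\,b\rangle_{\A^1_k/k}=\langle Z,\,q^*b\rangle_{\A^1_E/k}\qquad(Z\in c(\A^1_E/\Spec k),\ b\in F(T\times\A^1_k)) \]
for the pretheory $S\mapsto F(T\times S)$, and noting $q_*\{0\}_E=[E:k]\cdot\{0\}_k$ (push-forward of cycles, \S\ref{c(X/S)}), we get
\[ f_*f^*a=\langle[E:k]\cdot\{0\}_k,\,\pi_k^*a\rangle_{\A^1_k/k}=[E:k]\cdot\langle\{0\}_k,\,\pi_k^*a\rangle_{\A^1_k/k}. \]
Finally $\{0\}_k$ is the image of the zero section $i_0$ of $\A^1_k/\Spec k$, so Property (i) of \eqref{eq.prethpairing} gives $\langle\{0\}_k,\pi_k^*a\rangle_{\A^1_k/k}=i_0^*\pi_k^*a=(\pi_k i_0)^*a=a$, hence $f_*f^*a=[E:k]\cdot a$.

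The one nontrivial ingredient is the projection formula for the finite étale map $q:\A^1_E\to\A^1_k$, which is Property (iii) of \S\ref{pretheory-pairing} for this particular morphism. When $F$ is of homological type it holds by definition, and the argument above is then already complete. For a general pretheory with weak reciprocity I would establish this instance of (iii) along the lines of Proposition \ref{pr.pretheory}: extend $q$ to $\bar q:\P^1_E\to\P^1_k$, observe that $\A^1_E\hookrightarrow\P^1_E$ and $\A^1_k\hookrightarrow\P^1_k$ are good compactifications over $\Spec k$ in the sense of \ref{def.goodcompactwithmodulus}, invoke the weak modulus supplied by weak reciprocity (available here since the relevant pretheory inherits at least the $\P^1$-rigidity of Proposition \ref{p8.1}) together with the exact sequence \eqref{eq2.2} to factor both pairings through the relative Picard groups $\Pic(\P^1_E,Y_E)$ and $\Pic(\P^1_k,Y_k)$ for suitable effective $Y_\bullet$ supported at infinity, and then check the compatibility of $\bar q$ with these descriptions (push-forward of Cartier divisors and of line bundles). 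This is the step I expect to require the most care; once it is in place, the displayed computation is purely formal. The relative version replacing $k$ by a subfield over which $E$ remains separable is obtained by the same argument.
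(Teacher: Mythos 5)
Your formal computation is fine as far as it goes, but the load-bearing step is exactly where the content of the proposition sits, and your plan for it does not close. The instance of Property (iii) you need, namely
\[
\langle q_*\{0\}_E,\,\pi_k^*a\rangle_{\A^1_k/k}=\langle \{0\}_E,\,q^*\pi_k^*a\rangle_{\A^1_E/k},
\]
has left-hand side $[E:k]\cdot a$ (by $q_*\{0\}_E=[E:k]\{0\}_k$ and Property (i)) and right-hand side $f_*f^*a$ (by the very definition of $f_*$). So this instance of the projection formula \emph{is} the proposition, verbatim; you have reduced the statement to itself. Recall that (iii) is not an axiom of pretheories (only pretheories ``of homological type'' satisfy it), so it must be proved, and your sketch does not do that: factoring $\langle -, q^*b\rangle_{\A^1_E/k}$ through $\Pic(\P^1_E,Y_E)$ and $\langle -,b\rangle_{\A^1_k/k}$ through $\Pic(\P^1_k,Y_k)$, together with the compatibility of $q_*$ on cycles with $\bar q_*$ on relative Picard groups (Lemma \ref{lem:pic-push2}), only tells you that both sides are well defined on Picard classes; it says nothing about whether the two resulting homomorphisms $\Pic(\P^1_E,Y_E)\to F(k)$ (pair with $q^*b$, versus push forward by $\bar q_*$ and pair with $b$) coincide — and that coincidence is again the projection formula. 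Checking it on generators (closed points of $\A^1_E$) leads straight back, via Lemma \ref{l6.7}, to the identity $(z\to w)_*(z\to w)^*=\deg(z/w)$, i.e.\ to the proposition for the residue extension $k(z)/k(w)$. Nor does the deformation technique of Proposition \ref{pr.pretheory} adapt: it interpolates inside $X\times\P^1$ between a cycle and its image under an \emph{open immersion}, and there is no analogous interpolation for the finite cover $q:\A^1_E\to\A^1_k$.

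The paper's proof sidesteps any projection formula for finite morphisms by never leaving a single curve over $k$. A primitive element realizes $\Spec E$ as a closed point $x$ of $\P^1_k$ with minimal polynomial $P$ of degree $d=[E:k]$; the function $t^{-dn}P^n$ lies in $G(\P^1, nd\,x_\infty)$ and has divisor $n(x-dx_0)$, so weak reciprocity gives $n\langle x-dx_0,\pi^*a\rangle=0$ for all large $n$, and taking two consecutive values of $n$ yields $\langle x,\pi^*a\rangle=d\,\langle x_0,\pi^*a\rangle=d\,a$. The only ``change of model'' needed is Lemma \ref{l6.7}, which identifies $\langle x,\pi^*a\rangle$ with $f_*i_x^*\pi^*a=f_*f^*a$ and is proved by a deformation argument (adapting \cite[Prop.\ 3.13]{voepre}), not by a projection formula. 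If you want to keep your outline, you would have to first prove Lemma \ref{l6.7} and then you are essentially forced into the paper's argument anyway.
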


\

\begin{proof}
%
We adapt that of \cite[Cor. 3.15]{voetri}. We give details, especially as one line of that proof has been omitted from the published edition. We may assume $T=\Spec k$.
Let $x_0, ~x_\infty \in \P^1$ be the points at zero and infinity.
Since $E/k$ is separable, the choice of a primitive element $\alpha$ yields a point $x\in \P^1$ with residue field $E$. 
Let $P$ be the minimal polynomial of $\alpha$ 
and put $d:=[E:k]$.
For $n \in \Z_{>0}$
the function $t^{-dn}P^n$ belongs to $G(\P^1, nd x_{\infty})$
and has divisor $nx - nd x_0$.
Thus, by weak reciprocity, for any $a\in F(\P^1_k)$
there exists $n_0 \in \Z_{>0}$ such that 
$\langle nx - nd x_0, a\rangle_{\P^1/k}=0$
for all $n \geq n_0$.
Applying this to $n=n_0$ and $n_0+1$,
we get
\[\langle x - d x_0 ,a\rangle_{\P^1/k}=0. \]
Let $\pi:\P^1\to \Spec k$ be the structure map
and $i_y : y \to \P^1$ the natural embedding for $y =x, x_0$.
Taking $a=\pi^*a_0$ for $a_0\in F(k)$,
we get from the lemma below:
\[0=  f_*i_x^*\pi^* a_0-d i_{x_0}^* \pi^*a_0  = f_*f^* a_0-da_0
\]
as requested.
\end{proof}

\begin{lemma}\label{l6.7}
Let $C$ be a smooth curve over $k$ and let $x$ be a closed point of $C$ 
with separable residue field. Then for any $\P^1$-rigid pretheory $F$ and any $a\in F(C)$ one has
\[f_{x*} i_x^*a = \langle x, a\rangle_{C/k}  \]
where $f_x:x\to \Spec k$ is the structural map and $i_x:x\inj C$ is the closed immersion.
\end{lemma}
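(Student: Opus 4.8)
The plan is to read off the asserted identity from property (iii) of the pretheory pairing of \ref{pretheory-pairing}, after first reducing to the case where $C$ is quasi-affine.

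First I would deal with the reduction. Write $E=\kappa(x)$; by the separability hypothesis $x=\Spec E$ is an object of $\Sm$, and $\A^1_E:=\A^1_k\times_k\Spec E$ is an object of $\rC k$. Both sides of the claimed equality depend only on the restriction of $a$ to an affine open neighbourhood $j:U\hookrightarrow C$ of $x$: for $i_x^*a$ this is obvious, and for $\langle x,a\rangle_{C/k}$ it is Proposition \ref{pr.pretheory} applied to the cycle $x\in c(U/k)$ --- this is the only place where the $\P^1$-rigidity of $F$ is used, and it is also what gives a meaning to $\langle x,a\rangle_{C/k}$ when $C$ itself is not quasi-affine, as in the application to Proposition \ref{pr.pretheory2}. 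Hence I may assume $C/k\in\rC k$.

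The heart of the argument is then short. Let $p:\A^1_E\to\Spec E=x$ be the projection and set $\mu:=i_x\circ p:\A^1_E\to C$, a $k$-morphism between objects of $\rC k$. The zero section of $p$ is a closed subscheme of $\A^1_E$ isomorphic to $\Spec E$, finite and surjective over $\Spec k$, so it defines a cycle $\{0\}_E\in c(\A^1_E/k)$; since $p$, hence also $\mu$, restricts to an isomorphism on it with image the point $x$, one has $\mu_*\{0\}_E=x$ in $c(C/k)$, while $\mu^*a=p^*i_x^*a$. Applying property (iii) of \ref{pretheory-pairing} to the morphism $\mu$, the cycle $\{0\}_E$ and the section $a\in F(C)$ then gives
\[
\langle x,a\rangle_{C/k}=\langle\mu_*\{0\}_E,a\rangle_{C/k}=\langle\{0\}_E,\mu^*a\rangle_{\A^1_E/k}=\langle\{0\}_E,p^*i_x^*a\rangle_{\A^1_E/k},
\]
and by the very definition of the trace (recalled just before Proposition \ref{pr.pretheory2}) the last term equals $f_{x*}(i_x^*a)$, which is the claim.

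I do not anticipate a real obstacle: the only things requiring care are the checks that property (iii) applies verbatim --- that $\A^1_E$ and $C$ lie in $\rC k$ (whence the separability hypothesis on $E/k$ and the shrinking of $C$), that $\{0\}_E\in c(\A^1_E/k)$, and that $\mu_*\{0\}_E=x$ --- all of which are immediate. It is worth noting that, once $C$ is quasi-affine, the identity holds for an arbitrary pretheory; $\P^1$-rigidity intervenes only in the preliminary reduction.
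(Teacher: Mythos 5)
Your preliminary reduction (shrinking $C$ to an affine open neighbourhood of $x$ via Proposition \ref{pr.pretheory}, which is also what gives $\langle x,a\rangle_{C/k}$ a meaning when $C$ is not quasi-affine) is correct. The gap is in the main step: you apply property (iii) of \ref{pretheory-pairing} to the morphism $\mu=i_x\circ p:\A^1_E\to C$. But (iii) is \emph{not} an axiom of pretheories. By Definition \ref{def.pretheory} a pretheory is only required to satisfy (i) and (ii); a pretheory satisfying (iii) as well is called ``of homological type'', and the lemma is asserted for an arbitrary $\P^1$-rigid pretheory. Worse, since $f_{x*}i_x^*a=\langle\{0\}_E,p^*i_x^*a\rangle_{\A^1_E/k}=\langle\{0\}_E,\mu^*a\rangle_{\A^1_E/k}$ by the very definition of the trace, the identity to be proved \emph{is} exactly the instance of (iii) for $f=\mu$ and $Z=\{0\}_E$; invoking (iii) is therefore circular. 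The whole point of this subsection (see the sentence preceding Proposition \ref{pr.pretheory}) is to recover such (iii)-type compatibilities --- automatic for presheaves with transfers --- from $\P^1$-rigidity alone, and your closing claim that ``once $C$ is quasi-affine, the identity holds for an arbitrary pretheory'' is precisely what is false (or at least unproven) in general.

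The missing idea is the homotopy argument of \cite[Prop. 3.13]{voepre}, carried out as in the proof of Proposition \ref{pr.pretheory}: using the separability of $\kappa(x)/k$ one constructs a relative curve $W$ over $\Spec k\times\P^1$ together with a cycle $\tilde Z\in c(W/\Spec k\times\P^1)$ and a section $b\in F(W)$ whose restrictions along the two sections $i_0,i_1$ of $\Spec k\times\P^1\to\Spec k$ compute, via axiom (ii), the two quantities $\langle\{0\}_E,p^*i_x^*a\rangle_{\A^1_E/k}$ and $\langle x,a\rangle_{C/k}$ respectively (the relevant family is essentially the deformation of $C$ to the normal cone of $x$, whose fibre at one point is $C$ with the cycle $x$ and at the other is $\A^1_E$ with the cycle $\{0\}_E$); $\P^1$-rigidity then identifies $i_0^*$ and $i_1^*$ and yields the lemma. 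Your argument as written is valid when $F\in\PST$, where (iii) does hold by \ref{pretheory-pairing}, but the lemma is needed for genuine pretheories, e.g.\ in Proposition \ref{pr.pretheory2}.
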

\begin{proof}

This is proven
by adapting the argument of \cite[Prop. 3.13]{voepre}
in the same manner as Proposition \ref{pr.pretheory}.
The details are left to the reader.
\end{proof}

\section{Injectivity}\label{inj}

\subsection{Statement of the results}
The purpose of this section is to prove Theorem \ref{thm:inj.intro} of the introduction. It  follows from the stronger

\begin{thm}\label{thm:inj} 
Theorem \ref{thm:inj.intro} is valid for any pretheory  $F$ which has weak reciprocity.
\end{thm}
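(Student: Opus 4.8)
The plan is to follow the classical Voevodsky strategy for homotopy invariant presheaves (as in \cite[Prop. 11.1]{mvw}, which in turn goes back to \cite{voepre}), replacing the role of homotopy invariance by the combination of $\P^1$-invariance (available via Theorem \ref{thm:P1invariance-pretheory}, since weak reciprocity implies $\P^1$-rigidity by Proposition \ref{p8.1} and hence, on the Zariski-separated replacement, $\P^1$-invariance) and the factorization through the relative Picard group furnished by weak reciprocity and the exact sequence \eqref{eq2.2}. First I would reduce statements (2) and (3) to statement (1) exactly as in \cite[Lemma 22.8, Cor. 11.2]{mvw}: (3) follows by taking $X$ to be a local scheme at a point and using that a Zariski sheaf vanishes iff all its stalks do, while (2) follows by a standard colimit/localization argument from injectivity on semi-local schemes. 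So the heart of the matter is (1): for $X$ smooth semi-local and $V\subset X$ open dense, $F(X)\to F(V)$ is injective.

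For (1), the key geometric input is the presentation of a smooth semi-local scheme as (étale-locally, then using the semi-local hypothesis to split the étale map) a relative curve $X/S \in \rC{S}$ admitting a good compactification $\Xb/S$ with a section, after possibly shrinking; this is the ``geometric presentation'' lemma of Quillen--Gabber type used in \cite[\S 4--5]{voepre} and \cite[Lecture 11]{mvw}. Concretely, given $a\in F(X)$ dying in $F(V)$, one shrinks so that the closed complement $X\setminus V$ is finite over $S$, realizes it as (the support of) a divisor, and exhibits the difference between ``restrict to $X$'' and ``restrict to $V$'' as pairing against a cycle in $c(X/S)$ that becomes trivial in the relative Picard group $\Pic(\Xb,Y)$ — here one uses that $X\setminus V$ can be made the divisor of a function in $G(\Xb,Y)$ for a suitable modulus $Y$, using the semi-local hypothesis (Picard group of a semi-local curve is trivial). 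Then Definition \ref{def.recpretheory}(1) (weak modulus) together with the exactness \eqref{eq2.2} forces the relevant pairing to factor through $\tau$, killing the obstruction; the section property and $\P^1$-rigidity (Proposition \ref{pr.pretheory}, Lemma \ref{l6.7}) are what let us identify the two restriction maps with such pairings. One also needs the trace/transfer compatibility of Proposition \ref{pr.pretheory2} to handle the passage through the finite (possibly inseparable-adjusted) maps in the presentation, and to reduce to $S$ affine.

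The main obstacle I expect is making the geometric presentation work at the level needed for a \emph{pretheory} rather than a full presheaf with transfers, and checking that weak reciprocity — which only controls pairings against $\div_{\Xb}(g)$ for $g\in G(\Xb,Y)$ along good compactifications — suffices to kill the obstruction cycle. In the homotopy invariant case one has the stronger vanishing of relative Suslin homology (\cite[Th. 3.1]{sus-voe2}), but here the substitute is precisely the degree-zero statement that the pairing factors through $\Pic(\Xb,Y)$, so I would need to arrange the geometry so that the obstruction cycle lies in the kernel of $c(X/S)\to\Pic(\Xb,Y)$, i.e. is of the form $\div_{\Xb}(g)$ for an admissible $g$; this is where the semi-local hypothesis on $X$ (triviality of Picard groups of the relative curve over the local base, after shrinking) does the essential work, and the care lies in choosing the compactification $\Xb$ and modulus $Y$ simultaneously compatible with $a$ having a weak modulus and with the complement being principal. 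The remaining verifications (independence of choices, functoriality of the pairings via \ref{pretheory-pairing}(i)--(iii), and the reduction of (2),(3) to (1)) are routine and I would treat them briefly.
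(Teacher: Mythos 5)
Your plan is correct and follows essentially the same route as the paper: reduce (2),(3) to (1), present the semi-local scheme via standard triples split over a neighbourhood of the given points (Mark Walker's proposition and \cite[Lemma 11.14]{mvw}), and use weak reciprocity so that the pairing $\langle -, \tilde a\rangle$ factors through $\Pic(\Xb,Y)$ for a weak modulus $Y$ depending on $a$, with Proposition \ref{pr.pretheory2} handling finite base fields. The only cosmetic difference is that the paper works directly with $\P^1$-rigidity (via Propositions \ref{p8.1} and \ref{pr.pretheory}) rather than full $\P^1$-invariance, and packages the argument as the statement that for each $a$ there is a map $\phi_a:F(X\setminus Z)\to F(U)$ with $\phi_a(a|_{X\setminus Z})=a|_U$ — exactly the $a$-dependence of the modulus you flag as the main point of care.
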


Theorem \ref{thm:inj} is in turn a direct consequence of 
the following, whose proof will be given in \S \ref{sect:proof-of-inj}.
 
\begin{thm}\label{thm:key-factor}
Let $F$ be a pretheory with weak reciprocity. 
Let $X\in \Sm$, $V \subset X$ an open dense subset and
$x_1, \dots, x_n \in X$ a finite collection of points.
Then there exists an open neighbourhood $U$ of $\{x_1, \dots, x_n\}$ such that 
we have a injection
\[ \Ker(F(X) \to F(V)) \subset \Ker(F(X) \to F(U)). \]
\end{thm}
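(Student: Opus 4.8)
The plan is to prove Theorem~\ref{thm:key-factor} by a fairly standard ``geometric presentation plus weak reciprocity'' argument, in the spirit of Voevodsky's proof of \cite[Prop.~11.1]{mvw} but replacing the use of homotopy invariance by the factorization through the relative Picard group provided by weak reciprocity. Since Theorem~\ref{thm:inj} (and hence Theorem~\ref{thm:inj.intro}) is an immediate consequence of Theorem~\ref{thm:key-factor}---one takes $\{x_1,\dots,x_n\}$ to be the closed points of a semi-local scheme for part (1), and deduces (2) and (3) formally as in \cite[Lemma~22.8, Cor.~11.2]{mvw}---the whole burden is on the local statement.

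First I would reduce to the case where $X$ is affine of dimension $d$, irreducible, and $\{x_1,\dots,x_n\}$ consists of a single point by a shrinking argument (replace $X$ by an affine neighbourhood of the $x_i$ on which the closed complement $Z:=X-V$ is defined, pass to the localization at the $x_i$); actually it is cleaner to keep the semi-local point and invoke a presentation lemma directly. The key geometric input I would use is the Ofer Gabber / Quillen-type presentation: after shrinking $X$ around $\{x_1,\dots,x_n\}$, there is an \'etale map $p:X\to \A^{d-1}_k$, with $S:=\A^{d-1}_k$ (or rather a suitable affine open), such that $X/S\in \rC S$ (equidimensional of relative dimension $1$, quasi-affine), such that $p|_Z:Z\to S$ is finite, and such that $X$ admits a good compactification $X\hookrightarrow \Xb$ over $S$ with $\Xb-X$ finite over $S$ and having an affine neighbourhood; moreover $Z=X-V$ and the ``missing'' part $\Xb-X$ can be taken disjoint in fibres. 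This is precisely the setup in which Definition~\ref{def.recpretheory} applies. I would be careful to arrange that $S$ is affine so that weak reciprocity (Definition~\ref{def.recpretheory}(2)) is available.

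Next, take $a\in \Ker(F(X)\to F(V))$. Choosing the section $i_V$ is not quite the point; rather, consider the finite correspondence in $c(V/S)$ given by a suitable multisection of $V/S$ through $\{x_1,\dots,x_n\}$, and compare $\langle -,a\rangle_{X/S}$ evaluated on the graph of $X/S$ (which recovers, via \ref{pretheory-pairing}(i), the identity up to restriction) with its value on a correspondence supported in $V$. Weak reciprocity gives a closed subscheme $Y\subset \Xb$ with $|Y|=\Xb-X$ such that $\langle \div_{\Xb}(g),a\rangle_{X/S}=0$ for all $g\in G(\Xb,Y)$, equivalently the pairing $\langle-,a\rangle_{X/S}:c(X/S)\to F(S)$ factors through $\tau:c(X/S)\to \Pic(\Xb,Y)$ of \eqref{eq2.2}. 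The strategy is then: produce, using the weak approximation / moving arguments of \cite[\S22]{mvw}, a function on $\Xb$ whose divisor on $X$ is the difference between the chosen multisection through the $x_i$ and a cycle supported in $V$, and which moreover lies in $G(\Xb,Y)$; feeding this into the factorization shows that the restriction of $a$ to a neighbourhood $U$ of $\{x_1,\dots,x_n\}$ is pinned down by its restriction to $V$. More precisely one shows $\Ker(F(X)\to F(V))\subset \Ker(F(X)\to F(U))$ by exhibiting, for the given $a$, such a $U$ and checking that the image of $a$ in $F(U)$ vanishes once it vanishes in $F(V)$; here one uses \ref{pretheory-pairing}(i),(ii) to convert ``$\langle Z,a\rangle_{X/S}=0$ for an appropriate $Z$'' into ``$a|_U=0$''.

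The main obstacle, as usual in this circle of ideas, is the \textbf{geometric presentation step}: arranging simultaneously an \'etale projection $p:X\to S$ with $S$ affine, with $Z=X-V$ and the compactification boundary both finite over $S$ and suitably separated, so that $X/S$ admits a \emph{good} compactification with modulus in the precise sense of Definition~\ref{def.goodcompactwithmodulus} (in particular the condition that $\Xb-X$ has an affine neighbourhood). This is where one must invoke Gabber's presentation lemma or the Nisnevich-local normal form; all subsequent steps---the factorization through $\Pic(\Xb,Y)$ and the manipulation of divisors via weak approximation---are then formal and parallel to \cite[\S22]{mvw}. A secondary technical point to watch is that $G(\Xb,Y)$, being cut out by vanishing conditions along $Y$, is large enough to contain the moving functions one needs; this is guaranteed by weak approximation on the curve fibres together with the finiteness of $|Y|$ over $S$.
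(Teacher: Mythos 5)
Your outline is the same as the paper's (presentation of $X$ as a relative curve over an affine base, factorization of $\langle-,a\rangle_{X/S}$ through $\Pic(\Xb,Y)$ via weak reciprocity, then moving the graph class into a cycle avoiding $Z=X\setminus V$ and comparing via \ref{pretheory-pairing}), but two real gaps remain in the execution. The main one is that you misplace the key obstruction and hence never explain how $U$ is chosen. To conclude $a|_U=0$ (rather than the vanishing of $a$ at finitely many points) one must work over the base $U$: the graph $\gamma_j(U)\in c(X_U/U)$ of $j:U\hookrightarrow X$ has class $(L,\sigma)\in\Pic(\Xb_U,Y)$, and representing this class by a cycle supported in $(X\setminus Z)_U$ requires extending the trivialization over $Z_U$; by the exact sequence of Lemma \ref{lem:pic-push} the obstruction is the class of $L|_{Z_U}$ in $\Pic(Z_U)$ --- a global condition on $Z$, not something local along $Y$ that weak approximation on fibres could fix. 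Its vanishing is precisely the requirement that the standard triple be \emph{split} over $U$ (Definition \ref{def:mv-eff} sense of Definition \ref{def.standardtriple}(2)), and it is Lemma \ref{lem:exist-s-tri} that produces the neighbourhood $U$ of the theorem as one over which the triple splits. You relegate exactly this point to ``a secondary technical point \dots guaranteed by weak approximation,'' which is where the argument would actually fail. Relatedly, to convert the value of the pairing on a cycle supported in $(X\setminus Z)_U$ into a functional of $a|_{X\setminus Z}$ you need $\langle j'_*Z',\tilde a\rangle_{X_U/U}=\langle Z',{j'}^*\tilde a\rangle$; for a bare pretheory this is not an axiom but Proposition \ref{pr.pretheory}, which rests on $\P^1$-rigidity, itself deduced from weak reciprocity (Proposition \ref{p8.1}). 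Finally, the weak modulus must be taken for the image of $a$ in $F(X_U)$ over the base $U$ (as in Proposition \ref{prop:factor}); a weak modulus over $S$ does not formally base-change to one over $U$, since $G(\Xb_U,Y_U)$ contains functions not pulled back from $\Xb$.

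The second gap is the finite base field. Walker's/Gabber's presentation requires $k$ infinite, and your proposal gives no reduction. For a presheaf with transfers the norm argument is routine, but for a pretheory one first needs $f_*f^*=[E:k]$ for finite separable $E/k$, which is Proposition \ref{pr.pretheory2} and again uses weak reciprocity; the paper treats this as a separate step.
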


This strengthens \cite[Th. 11.3]{mvw} where the same assertion is proven assuming $F$ homotopy invariant. By the argument of \cite[Lemma 22.8]{mvw} and \cite[Cor. 11.2]{mvw},
the above theorem implies the following.

\begin{cor}\label{cor:inj} 
Let $F$ be as in \ref{thm:inj}.
Let $F_{\Zar}$ be the Zariski sheafification of $F$ as a presheaf.
\begin{enumerate}
\item For an open dense immersion $U \hookrightarrow X$ in $\Sm$, $F_{\Zar}(X) \to F_{\Zar}(U)$ is injective.
\item If $F(E)=0$ for any field $E$, then $F_{\Zar}=0$.\qed
\end{enumerate}
\end{cor}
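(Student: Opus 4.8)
The plan is to deduce both parts from Theorem~\ref{thm:key-factor} by the standard passage to Zariski stalks, exactly as in \cite[Lemma~22.8, Cor.~11.2]{mvw}. First I would isolate the following \emph{local injectivity} statement: for every smooth connected $X\in\Sm$ with generic point $\eta$ and every $x\in X$, the restriction map $F(\sO_{X,x})\to F(k(X))$ is injective, where (as in \ref{PST}) the value of a presheaf on the pro-scheme $\Spec\sO_{X,x}$ (resp. $\Spec k(X)$) is taken to be the filtered colimit of its values over the open neighbourhoods of $x$ in $X$ (resp. over the nonempty opens of $X$). To prove this, I would argue: an element killed in $F(k(X))$ is represented by some $a_W\in F(W)$ with $x\in W$ whose restriction to some nonempty — hence dense — open $V\subseteq W$ vanishes; applying Theorem~\ref{thm:key-factor} to the smooth variety $W$, the dense open $V$, and the single point $x$ produces an open $U$ with $x\in U\subseteq W$ on which $a_W$ already vanishes, so the class was $0$ in $F(\sO_{X,x})$. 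Note that only the one-point case of Theorem~\ref{thm:key-factor} is needed.

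Next I would invoke that the canonical map $F\to F_{\Zar}$ is an isomorphism on Zariski stalks, so that $(F_{\Zar})_x\cong F(\sO_{X,x})$ for every $x$, together with the elementary facts that a Zariski sheaf vanishes if and only if all its stalks vanish, and a section of a Zariski sheaf vanishes if and only if all its stalks vanish. For part~(1), given a dense open immersion $j:U\hookrightarrow X$ in $\Sm$ and $s\in F_{\Zar}(X)$ with $j^*s=0$, I would first reduce to $X$ connected: $X$ has finitely many connected components, $F$ and $F_{\Zar}$ are additive, and density of $U$ forces $U$ to meet — hence be dense in — each component. For $X$ connected with generic point $\eta$ one has $\eta\in U$; for each $x\in X$ the natural map $(F_{\Zar})_x\to(F_{\Zar})_\eta$ is, by the local injectivity statement above, the injection $F(\sO_{X,x})\hookrightarrow F(k(X))$, and it carries $s_x$ to $s_\eta=(j^*s)_\eta=0$ (using $\eta\in U$). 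Hence $s_x=0$ for all $x$, so $s=0$. For part~(2), the hypothesis $F(E)=0$ for all fields $E$ gives $F(k(X))=0$ for every smooth connected $X$, so the local injectivity statement yields $F(\sO_{X,x})=0$, i.e. $(F_{\Zar})_x=0$ for all $x$; thus $F_{\Zar}(X)=0$ for connected $X$, and by additivity $F_{\Zar}=0$.

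I do not expect a genuine obstacle here: the substantive content is entirely concentrated in Theorem~\ref{thm:key-factor}, and what remains is the formal stalk-wise bookkeeping. The points that will require a little care are: the identification of $(F_{\Zar})_x$ with $F(\sO_{X,x})$ (legitimate because the open neighbourhoods of $x$ form a filtered system and presheaves are evaluated on such limits by the convention of \ref{PST}); the reduction to connected components in~(1), which is what makes ``dense'' actually place the generic point inside $U$; and the fact that Theorem~\ref{thm:key-factor} is applied to the honest smooth variety $W$ before taking the colimit over neighbourhoods of $x$.
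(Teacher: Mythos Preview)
Your proposal is correct and follows exactly the approach the paper indicates: the paper simply writes ``By the argument of \cite[Lemma~22.8]{mvw} and \cite[Cor.~11.2]{mvw}, the above theorem implies the following'' and marks the corollary with a \qed, so you have just spelled out the standard stalkwise argument from \cite{mvw} that the paper defers to. There is nothing to add.
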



\begin{remark}
We prove Theorem \ref{thm:key-factor}
following Voeveodsky's proof of \cite[Th. 11.3]{mvw}.
Under the additional assumption that $F$ is a Nisnevich sheaf,
one could alternatively deduce  Theorem \ref{thm:key-factor}
from $\P^1$-invariance (Theorem \ref{thm:P1invariance-pretheory})
by the method of \cite{cthk}.
\end{remark}

\subsection{Proof of Theorem \ref{thm:key-factor}}\label{sect:proof-of-inj}
We will use the notion of standard triples as in 
 \cite[Def. 4.1]{voepre}, \cite[Def. 11.5 and 11.11]{mvw}
and two results about them 
(Proposition \ref{prop:walker} and Lemma \ref{lem:exist-s-tri}).

\begin{definition}\label{def.standardtriple}
Let $S\in \Sm$ be connected.
\begin{enumerate}
\item
A triple $(\bar{p}: \bar{X} \to S, X_{\infty}, Z)$ with $X:=\bar{X} \setminus X_{\infty}$,
is called a {\it standard triple} if the following conditions are satisfied:
\begin{enumerate}
\item $Z,X_{\infty}\subset \Xb$ are closed reduced and $Z \cap X_{\infty} = \emptyset$,
\item $X/S\in \rC S$ with $X \to S$ smooth, 
and $\pb:\Xb\to S$ is its good compactification, 
\item $Z \cup X_{\infty}$ has an affine open neighborhood in $\Xb$.
\end{enumerate}
\item
A standard triple 
$(\bar{p}: \bar{X} \to S, X_{\infty}, Z)$ 
is called {\it split} over 
an open subset $U \subset \bar{X} \setminus X_{\infty}$
if $L|_{U \times_S Z}$ is trivial,
where $L$ is the line bundle on $U \times_S \bar{X}$
corresponding to the graph of the diagonal map.
\end{enumerate}
\end{definition}

\begin{remark}\label{rem:affine}
By \cite[Rk. 11.6]{mvw}, \ref{def.standardtriple} (1) implies the following :
\begin{enumerate}
\item
$S$ is affine, and both $Z$ and $X_{\infty}$ are finite over $S$,
\item
$\Xb$ is a good compactification of both $X$ and $X\setminus Z$.
\end{enumerate}
\end{remark}


\begin{proposition}[Mark Walker, see \protect{\cite[Thm. 11.17]{mvw}}]
\label{prop:walker}
Assume $k$ is infinite. 
Let $W\in \Sm$ be connected and quasi-projective over $k$ and 
$Y \subsetneq W$ a closed subset with points $y_1, \dots, y_n \in Y$.
Then there exist an affine open neighbourhood
$X \subset W$ of $y_1, \dots, y_n$ and
a standard triple $(\bar{X} \to S, X_{\infty}, Z)$
such that $(X, X \cap Y) \simeq (\bar{X} \setminus X_{\infty}, Z)$.
\end{proposition}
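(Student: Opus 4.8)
The plan is to follow the standard argument for producing standard triples, as in \cite[Thm. 11.17]{mvw}, reducing everything to a Noether-normalization statement over an infinite field. Since this is quoted as a result of Mark Walker and attributed explicitly to \cite[Thm. 11.17]{mvw}, the honest approach is simply to invoke that reference; but let me sketch how the proof goes so that the structure is visible. First I would shrink $W$: replacing $W$ by an affine open neighbourhood of $\{y_1,\dots,y_n\}$ (possible since $W$ is quasi-projective, hence has affine opens, and a finite set of points of a scheme over a field always lies in an affine open once $W$ is quasi-projective), we may assume $W$ is affine and smooth over $k$, say of dimension $d$, and $Y\subsetneq W$ is closed. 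We may also assume $Y$ has pure codimension $1$ by enlarging it (shrinking $W$ around the finitely many points so that the complement of $Y$ is the complement of a hypersurface containing $Y$; this only makes the kernel condition we are ultimately after \emph{weaker}, which is the direction we want downstream in Theorem \ref{thm:key-factor}).

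Next I would choose a locally closed immersion $W\hookrightarrow \A^N$ and, using that $k$ is infinite, apply a generic linear projection argument (Noether normalization in its geometric form): for a sufficiently general linear projection $\A^N\dashrightarrow \A^{d-1}$, the induced map $p:W\to \A^{d-1}=:S$ is, after further shrinking $W$ around $\{y_1,\dots,y_n\}$, smooth of relative dimension $1$, quasi-finite on $Y$, and such that $p|_Y:Y\to S$ is finite. One then takes a relative compactification: embed $W$ fibrewise into $\P^1_S$-bundle-type compactification, or more simply take the closure $\bar X$ of $W$ in $\bar S\times \P^{N}$ (with $\bar S$ a projective closure of $S$) and normalize; generic position of the projection guarantees that the boundary $X_\infty:=\bar X\setminus W$ can be arranged disjoint from $Z:=Y$, that $\bar p:\bar X\to S$ is proper of relative dimension $1$, and that $Z\cup X_\infty$ is contained in an affine open of $\bar X$ (this last point uses that $Z\cup X_\infty$ is finite over the affine base $S$ together with quasi-projectivity of $\bar X$). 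These are exactly conditions (a), (b), (c) of Definition \ref{def.standardtriple}, so $(\bar p:\bar X\to S, X_\infty, Z)$ is a standard triple, and by construction $(X, X\cap Y)=(W, Y)\simeq(\bar X\setminus X_\infty, Z)$ after the shrinkings.

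The main obstacle — and the reason the result is credited to Walker and stated with a precise reference — is the \emph{simultaneous} control of all the requirements by a single generic projection: one must choose the linear projection general enough that (i) $W\to S$ is smooth of relative dimension $1$ near the chosen points, (ii) $Y\to S$ stays finite, and (iii) the points at infinity of the fibres of the compactification avoid $Y$, all while keeping $\{y_1,\dots,y_n\}$ in the shrunk-down $X$. Each condition individually is a dense-open condition on the Grassmannian of projections, and infiniteness of $k$ lets us intersect them; making this precise is the technical heart, and it is carried out in \cite[proof of Thm. 11.17]{mvw}. I would therefore present the proof as: reduce to the affine, pure-codimension-one, $W\hookrightarrow\A^N$ situation as above, and then cite \cite[Thm. 11.17]{mvw} for the existence of the projection and the resulting standard triple, indicating the three genericity conditions that it arranges.
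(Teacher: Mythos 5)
The paper gives no proof of this proposition at all: it is stated as an external result of Mark Walker and simply referred to \cite[Thm. 11.17]{mvw}, which is exactly what you ultimately do. Your accompanying sketch of the generic-linear-projection argument (reduce to $W$ affine with $Y$ a divisor, project generically to $\A^{\dim W-1}$ using that $k$ is infinite, compactify fibrewise and check conditions (a)--(c) of Definition \ref{def.standardtriple}) is a faithful outline of the argument in loc.\ cit., so your proposal matches the paper's treatment.
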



\begin{lemma}[\protect{\cite[Lemma 11.14]{mvw}}]
\label{lem:exist-s-tri}
Let $T=(\bar{X} \to S, X_{\infty}, Z)$ be a standard triple,
and $x_1, \dots, x_n \in X := \bar{X} \setminus X_{\infty}$.
Then there exists an open neighbourhood $U \subset X$
of $x_1, \dots, x_n$ such that $T$ is splits over $U$.
\end{lemma}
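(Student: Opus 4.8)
The plan is to translate the splitting condition into the vanishing of a Picard group over a semilocal base and then to spread out. Throughout, write $\mathcal{L}$ for the line bundle obtained by restricting $\sO_{\Xb\times_S\Xb}(\Delta)$, with $\Delta$ the graph of the diagonal $\Xb\to\Xb\times_S\Xb$, to $X\times_S Z$, and let $\pi\colon X\times_S Z\to X$ be the projection. Since $Z$ is finite over $S$ by Remark \ref{rem:affine}, $\pi$ is finite, and for any open $U\subseteq X$ one has $\pi^{-1}(U)=U\times_S Z$ and $\mathcal{L}|_{\pi^{-1}(U)}=L|_{U\times_S Z}$, where $L$ is the bundle appearing in Definition \ref{def.standardtriple}(2). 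Thus it suffices to produce an open $U\ni x_1,\dots,x_n$ in $X$ over which $\mathcal{L}|_{\pi^{-1}(U)}$ is trivial.

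First I would reduce to the case $X=\Spec A$ affine. By Remark \ref{rem:affine} the base $S$ is affine, so $X$ is quasi-affine over $k$; hence $x_1,\dots,x_n$ admit a common affine open neighbourhood $X_0\subseteq X$ (prime avoidance inside an affine scheme of which $X$ is an open subscheme), and since for $U\subseteq X_0$ the splitting condition only involves the restriction of $\mathcal{L}$, we may replace $X$ by $X_0$. Then $X\times_S Z=\Spec B$ with $B$ a module-finite $A$-algebra and $A$ Noetherian, and $\mathcal{L}$ corresponds to a finitely presented rank-one projective $B$-module $M$.

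Next I would localize and invoke two standard commutative-algebra facts. Let $A'$ be the localization of $A$ at the multiplicative set $A\setminus(\fp_{x_1}\cup\dots\cup\fp_{x_n})$; it is semilocal with maximal ideals $\fp_{x_1}A',\dots,\fp_{x_n}A'$, and $B':=B\otimes_A A'$, being module-finite over the semilocal ring $A'$, is again semilocal (its maximal ideals correspond, by going-up, to those of the Artinian ring $B'/\mathrm{rad}(A')B'$, which is finite over $A'/\mathrm{rad}(A')$). A semilocal ring has trivial Picard group: an invertible module becomes a free module of rank one modulo the Jacobson radical, which is a finite product of fields, hence has a single generator, which lifts to a generator by Nakayama. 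Therefore $M':=M\otimes_A A'$ is free of rank one over $B'$. Because $M$ is finitely presented, a $B'$-basis of $M'$ together with the finitely many relations witnessing that $M\otimes_A A_g$ is free descend to $A_g$ for some $g\in A$ with $g\notin\fp_{x_i}$ for all $i$; setting $U:=D(g)\subseteq X$, we get that $x_1,\dots,x_n\in U$ and $\mathcal{L}|_{\pi^{-1}(U)}$ is free, i.e.\ $T$ splits over $U$.

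I do not expect a genuine obstacle: the argument is a routine combination of the two cited facts — a module-finite algebra over a semilocal ring is semilocal, and the Picard group of a semilocal ring vanishes — with the standard spreading-out of a free module along a localization. The only points needing a little care are the bookkeeping identification of $L|_{U\times_S Z}$ with the restriction of the fixed bundle $\mathcal{L}$, and the harmless passage to an affine $X$; both are immediate from the definitions and from $Z$ being finite over $S$.
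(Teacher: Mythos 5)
Your argument is correct and coincides with the proof in the cited reference [MVW, Lemma 11.14] --- the paper itself only quotes the result without proof --- namely: the semilocal ring of $X\times_S Z$ at the fibres over $x_1,\dots,x_n$ has trivial Picard group, and the resulting trivialization spreads out to a principal affine open $U\ni x_1,\dots,x_n$ because $X\times_S Z\to X$ is finite. The one imprecision is cosmetic: the diagonal need not be a Cartier divisor in $\Xb\times_S\Xb$ (as $\Xb$ is merely normal), so $\mathcal{L}$ should be defined as $\sO_{X\times_S X}(\Delta_X)|_{X\times_S Z}$, which is legitimate since $Z\subset X$ and $\Delta_X$ is an effective Cartier divisor in the smooth relative curve $X\times_S X\to X$; this is the object your argument actually uses.
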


We need a lemma.
Its second assertion will be used in \S \ref{sec:pf-keytheorem}.

\begin{lemma}\label{lem:pic-push}
Let $(\Xb/S, X_{\infty}, Z)$ be a standard triple
and
$Y \subset \Xb$ a closed subscheme with $|Y|=X_{\infty}$.
We have a commutative diagram
\[
\xymatrix{
c(X-Z/S) \ar[r]^{\tau} \ar[d]_{j_*} 
& \Pic(\Xb, Y \sqcup Z) \ar[d]^{f}
\\
c(X /S) \ar[r]_{\tau}
& \Pic(\Xb, Y),
}
\]
where 
the left vertical map is the push-forward
along immersion $j : X-Z \inj X$
and the right vertical map
is defined by
$f(\sL, \sigma)=(\sL, \sigma|_{Y})$.
Moreover,
we have an exact sequence
\[
\sO^{\times}(Z) \to \Pic(\Xb, Y \sqcup Z) \overset{j_*}{\to} \Pic(\Xb). 
\]
\end{lemma}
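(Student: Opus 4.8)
The plan is to verify the two assertions separately, both by unwinding the definition of the relative Picard group from \S\ref{ssect:rel-pic} together with the description of the map $\tau$ in \eqref{eq2.2}. First I would handle the commutative square. Recall that an element of $c(X-Z/S)$ has all of its components closed and of codimension one in $\Xb$ (since $\Xb$ is a good compactification of $X-Z$ by Remark \ref{rem:affine}(2)), with support disjoint from both $Y$ and $Z$; thus such a cycle $W$ is a Cartier divisor on $\Xb$ avoiding $|Y|\sqcup Z$, and $\tau(W)$ is the class of $(\sO_{\Xb}(W),\sigma)$ where $\sigma:\sO_{\Xb}(W)|_{Y\sqcup Z}\cong \sO_{Y\sqcup Z}$ is the canonical trivialization coming from the fact that $W$ does not meet $Y\sqcup Z$. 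Under $j_*$, the cycle $W$ is sent to the same cycle viewed on $X$ (the push-forward along the open immersion $j$ is just ``the same divisor''), whose $\tau$-image is $(\sO_{\Xb}(W),\sigma|_Y)$. On the other side, $f$ applied to $(\sO_{\Xb}(W),\sigma)$ restricts the trivialization from $Y\sqcup Z$ to $Y$, giving exactly $(\sO_{\Xb}(W),\sigma|_Y)$. So both composites agree, and the square commutes. The only thing to be slightly careful about is that the canonical isomorphism $\sigma$ used in defining $\tau$ is functorial in the closed subscheme, which is immediate from its construction.

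Next I would treat the exact sequence $\sO^\times(Z)\to \Pic(\Xb, Y\sqcup Z)\xrightarrow{j_*}\Pic(\Xb)$. Here $j_*$ is the forgetful map $(\sL,\sigma)\mapsto \sL$ (composing the forgetful maps $\Pic(\Xb, Y\sqcup Z)\to\Pic(\Xb, \emptyset)=\Pic(\Xb)$; the notation $j_*$ reflects compatibility with the push-forward of divisors via the first assertion). Exactness at $\Pic(\Xb, Y\sqcup Z)$ amounts to: a pair $(\sL,\sigma)$ maps to $0$ in $\Pic(\Xb)$ iff $\sL\cong\sO_{\Xb}$, in which case, fixing such an isomorphism $u:\sL\cong\sO_{\Xb}$, the datum $(\sL,\sigma)$ is isomorphic to $(\sO_{\Xb}, \sigma\circ (u|_{Y\sqcup Z})^{-1})$, i.e.\ to a pair of the form $(\sO_{\Xb},\tau)$ with $\tau\in\Gamma(Y\sqcup Z,\sO^\times)$. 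Now $\Gamma(Y\sqcup Z,\sO^\times)=\Gamma(Y,\sO^\times)\times\Gamma(Z,\sO^\times)$ since $Y$ and $Z$ are disjoint, and the $Y$-component can be killed by choosing the isomorphism $u$ appropriately restricted on a neighbourhood of $Y$ — more precisely, by the exact sequence $\Gamma(\Xb,\sO_{\Xb}^\times)\to\Gamma(Y\sqcup Z,\sO^\times)\to\Pic(\Xb,Y\sqcup Z)\to\Pic(\Xb)$ of \S\ref{ssect:rel-pic}, combined with the analogous sequence for $\Pic(\Xb,Y)$, and using that $Y$ has an affine open neighbourhood in $\Xb$ (Remark \ref{rem:affine}(2) and Definition \ref{def.standardtriple}), so that global units on that neighbourhood surject onto $\Gamma(Y,\sO^\times)$ — one reduces the $Y$-component to be trivial. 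I would organize this cleanly by just invoking the two four-term exact sequences from \S\ref{ssect:rel-pic} for the pairs $(\Xb, Y\sqcup Z)$ and $(\Xb, Y)$ and chasing the resulting diagram, which makes the claimed sequence $\sO^\times(Z)\to\Pic(\Xb,Y\sqcup Z)\to\Pic(\Xb)$ a formal consequence; the map $\sO^\times(Z)\to \Pic(\Xb, Y\sqcup Z)$ is $\tau\mapsto (\sO_{\Xb},(1,\tau))$ up to the image of global units.

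The main obstacle is the bookkeeping in the second part: making sure that the surjection of global units onto $\Gamma(Y,\sO^\times)$ (needed to show that only the $Z$-component of the trivialization survives) really is available, which is where the standard-triple hypothesis — in the form of Remark \ref{rem:affine}, guaranteeing $Z\cup X_\infty$, hence $|Y|$, lies in an affine open — is used. Everything else is a diagram chase with the exact sequences already recorded in \S\ref{ssect:rel-pic}, and the commutativity of the square is essentially a tautology once the functoriality of the canonical trivialization $\sigma$ in the defining map $\tau$ of \eqref{eq2.2} is spelled out.
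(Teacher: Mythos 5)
Your handling of the commutative square is fine: the paper simply cites D\'eglise \cite[Lem.\ 3.1.5 (2)]{deglise} for it, and your direct unwinding of $\tau$ (the canonical trivialization of $\sO_{\Xb}(W)$ on a closed subscheme disjoint from $|W|$ restricts compatibly from $Y\sqcup Z$ to $Y$) is exactly the content of that reference. For the ``Moreover'' part, your ``clean'' organization --- write down the two four-term exact sequences of \S\ref{ssect:rel-pic} for $(\Xb,Y\sqcup Z)$ and $(\Xb,Y)$, chase the resulting diagram, and observe that $\ker\bigl(\sO^\times(Y\sqcup Z)\to\sO^\times(Y)\bigr)=\sO^\times(Z)$ because $Y$ and $Z$ are disjoint --- is precisely the paper's proof.

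The problem is where you aim that chase. What the diagram chase yields is exactness of
\[
\sO^\times(Z)\to\Pic(\Xb,Y\sqcup Z)\xrightarrow{\ f\ }\Pic(\Xb,Y),
\]
where the second map is the right-hand vertical map $f$ of the square (which the paper denotes $j_*$ by abuse); the target $\Pic(\Xb)$ in the displayed statement is a misprint, as one sees from how the lemma is applied in the proof of Proposition \ref{prop:key-tech}. You instead read the second map as the forgetful map to $\Pic(\Xb)$ and then try to kill the $Y$-component of the trivialization. That step fails. The kernel of $\Pic(\Xb,Y\sqcup Z)\to\Pic(\Xb)$ is the image of all of $\sO^\times(Y)\times\sO^\times(Z)$, and a class $(\sO_{\Xb},(v,1))$ with $v\in\sO^\times(Y)$ lies in the image of $\sO^\times(Z)$ only if $v$ extends to a \emph{global} unit of $\Xb$. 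Units on an affine open neighbourhood of $Y$ do not help: they are not global units of $\Xb$, so they do not act trivially on $\Pic(\Xb,Y\sqcup Z)$ and cannot be used to renormalize the isomorphism $u:\sL\cong\sO_{\Xb}$. Indeed the sequence with target $\Pic(\Xb)$ is false in general: for $\Xb=\P^1_S$ with $S$ affine and $Y$ a first-order thickening of the section at infinity, $\Gamma(Y,\sO_Y^\times)$ is strictly larger than the image of $\Gamma(\Xb,\sO_{\Xb}^\times)=\sO^\times(S)$. The affine-neighbourhood hypothesis is not what is needed here (it is used elsewhere, to identify $\til{\Pic}$ with $\Pic$). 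So keep your diagram chase, but state and prove the sequence with target $\Pic(\Xb,Y)$ and drop the attempted reduction of the $Y$-component.
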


\begin{proof}
The first part is proven in \cite[Lem. 3.1.5 (2)]{deglise}.
To show the second part,
we consider a commutative diagram
with exact rows
(see \S \ref{ssect:rel-pic})
\[ 
\begin{CD}
\sO^\times(\bar X) @>>> 
\sO^\times(Y \sqcup Z) @>>>
\Pic(\bar X, Y \sqcup Z) @>>>
\Pic(\bar X) 
\\
@|  @V{a}VV @VV{j_*}V @| 
\\
\sO^\times(\bar X) @>>> 
\sO^\times(Y) @>>>
\Pic(\bar X, Y) @>>>
\Pic(\bar X).
\end{CD}
\]
As the Chinese reminder theorem shows
$\ker(a) = \sO^{\times}(Z)$,
the lemma follows from the diagram.
\end{proof}



The following proposition is the key to the proof of Theorem \ref{thm:key-factor}. 

\begin{proposition}
\label{prop:factor}
Let $F$ be as in Theorem \ref{thm:key-factor}. 
Let $T=(\bar{X} \overset{p}{\to} S, X_{\infty}, Z)$ be a standard triple.
Let $U \subset X:=\bar{X} \setminus X_{\infty}$ be an affine open subset such that $T$ is split over $U$.
Then, for any $a \in F(X)$, there is a homomorphism $\phi_a : F(X \setminus Z) \to F(U)$
such that $\phi_a(a|_{X\setminus Z}) = a|_U$.
\end{proposition}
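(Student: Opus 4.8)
The statement is the final ingredient in the proof of the injectivity theorem, and it is modelled on Voevodsky's treatment of the homotopy-invariant case (\cite[Th.~11.17ff.]{mvw}). The idea is to produce $\phi_a$ from a suitable finite correspondence. Since $T$ is split over $U$, the line bundle $L$ on $U \times_S \Xb$ coming from the diagonal is trivial on $U \times_S Z$; equivalently, the graph $\Delta$ of the diagonal, viewed as a divisor on $U \times_S \Xb$ disjoint from $U \times_S X_\infty$, comes with a trivialisation of $\sO(\Delta)$ along $U \times_S Z$. The plan is to exhibit the ``difference'' $\Delta - (U\times_S Z')$ for an auxiliary effective divisor $Z'$ as the divisor of a function $g \in G(\Xb_U, Y)$ for an appropriate modulus $Y$ supported on $X_\infty \cup Z$ --- and then apply weak reciprocity to the pretheory $S' \mapsto F(U \times S')$ (or rather to the good compactification of $(X\setminus Z)/U$ it induces) to conclude that the cycle-class actions of $\Delta$ and of $U\times_S Z'$ on $a$ agree.

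\textbf{Key steps.} First I would set up the geometry: form $\Xb_U := U \times_S \Xb$ over $U$, with open part $X_U := U\times_S X$, and note $\Xb_U \to U$ is a good compactification of $X_U/U$ (Remark \ref{rem:affine}); the closed set $X_{\infty, U} := U \times_S X_\infty$ is the boundary and $Z_U := U\times_S Z$ is finite over $U$. The diagonal section $U \to X_U$ gives a closed subscheme $\Delta \subset X_U$, finite over $U$, i.e.\ an element of $c(X_U/U) \subset \Cor(U, X)$ whose cycle-theoretic action on $F(X)$ recovers restriction to $U$ by \ref{pretheory-pairing}(i). Second, using the splitting hypothesis together with the affineness of $U$ (so the boundary $X_\infty \cup Z$ has an affine neighbourhood, giving $\widetilde\Pic = \Pic$ via Lemma \ref{lem:rel-pic-sv}(3)), I would find an effective divisor $E$ on $\Xb_U$ supported on $X_{\infty,U}$ and a function $g$ with $\div(g) = \Delta - Z'_U$ on $X_U \setminus Z_U$, where $Z'_U$ is an effective cycle finite over $U$ supported over $Z$ --- concretely, by weak approximation / the exact sequence \eqref{eq2.2} applied to the good compactification of $(X\setminus Z)/U$, the class of $\Delta$ in the relative Picard group vanishes after allowing enough vanishing along $Z$, precisely because $L$ is trivial on $U\times_S Z$. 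Third, I would invoke weak reciprocity for $F$ (restated for the pretheory $S'\mapsto F(U\times S')$, so that its value at $\Spec k$ is $F(U)$) together with Definition \ref{def.recpretheory}: for $a' := a|_{X\setminus Z} \in F(X\setminus Z)$, there is a weak modulus $Y$ supported on the boundary, and after enlarging $E$ so that $g \in G(\Xb_U, Y)$, the relation $\langle \div(g), a'\rangle = 0$ gives $\langle \Delta, a'\rangle = \langle Z'_U, a'\rangle$ in $F(U)$. Finally I set $\phi_a(b) := \langle Z'_U, b\rangle_{(X\setminus Z)_U / U}$ for $b \in F(X\setminus Z)$ --- this is visibly a homomorphism in $b$ --- and the displayed identity reads $\phi_a(a|_{X\setminus Z}) = \langle \Delta, a'\rangle = a|_U$, using that $\Delta$ is the graph of the diagonal.

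\textbf{Main obstacle.} The technical heart is the second step: producing the function $g$ and controlling its divisor, i.e.\ showing that the splitting hypothesis exactly forces the class of the diagonal graph to become trivial in $\varprojlim_Y \Pic(\Xb_U, Y)$ once one allows prescribed vanishing along $Z_U$ and along the boundary. This requires care with the relative Picard group of the \emph{non-proper} base $U$ (Definition \ref{def.goodcompactwithmodulus} and the surrounding discussion), and with checking that $Z'_U := \div(g) + \Delta$ is genuinely effective, finite over $U$, and lies in the open piece $X_U\setminus Z_U$ so that its action on $F(X\setminus Z)$ is defined; all of this mirrors \cite[\S11]{mvw} but must be done with weak reciprocity in place of homotopy invariance. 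One also has to handle the standing reduction to $k$ infinite (Proposition \ref{prop:walker} is only invoked there; for finite $k$ the usual transfer argument via Proposition \ref{pr.pretheory2} applies), but that is routine. Once $g$ is in hand, the rest is a formal manipulation of the pairing using \ref{pretheory-pairing}(i)--(iii) and Proposition \ref{pr.pretheory}.
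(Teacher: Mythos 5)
Your overall architecture is the right one and matches the paper's: define $\phi_a$ as the pairing with a cycle supported in $(X\setminus Z)_U$ that the splitting hypothesis produces, and compare it with the graph of $j:U\to X$ via the relative Picard group and Proposition \ref{pr.pretheory}. But there is a genuine gap in your third step, namely in \emph{which} section you feed to weak reciprocity. You apply weak reciprocity to $a':=a|_{X\setminus Z}$ relative to the good compactification $\Xb_U$ of $(X\setminus Z)_U/U$, so your weak modulus $Y$ is necessarily supported on $(X_\infty)_U\sqcup Z_U$. This breaks the argument in two places. First, any $g\in G(\Xb_U,Y)$ then has divisor disjoint from $Z_U$, whereas the graph $\Delta=\gamma_j(U)$ meets $Z_U$ exactly along $U\cap Z$ --- and $U\cap Z\neq\emptyset$ in the intended application, since $U$ is a neighbourhood of points lying in $Z=X\setminus V$. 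So an identity $\div(g)=\Delta-Z'_U$ with $g\in G(\Xb_U,Y)$ cannot hold (and the class of $\Delta$ is not even defined in $\Pic(\Xb_U,Y)$, as $\Div(\Xb_U,Y)$ consists of divisors avoiding $Y$). Second, the concluding pairing $\langle\Delta,a'\rangle$ is undefined: $\Delta\notin c((X\setminus Z)_U/U)$ and $a'$ does not extend across $Z_U$, where part of $\Delta$ lives. Your description of $Z'_U$ as ``supported over $Z$'' is a symptom of the same confusion, since for $\phi_a(b)=\langle Z'_U,b\rangle_{(X\setminus Z)_U/U}$ to make sense the cycle must be \emph{disjoint} from $Z_U$.

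The repair is precisely the paper's arrangement. Apply weak reciprocity to $pr_X^*a\in F(X_U)$ with respect to the good compactification $\Xb_U$ of $X_U/U$, so that the weak modulus $Y$ is supported on $(X_\infty)_U$ \emph{only} and the pairing $\langle -,pr_X^*a\rangle_{X_U/U}$ factors through $\Pic(\Xb_U,Y)$. The splitting gives a trivialisation of $L$ along $Z_U$; since $Z_U$ is disjoint from $Y$, this lifts the class $(L,\sigma)$ of $\Delta$ to $\Pic(\Xb_U,Y\sqcup Z_U)$, and the surjectivity of $\tau$ in \eqref{eq2.2} for $(X\setminus Z)_U/U$ yields $\tilde\delta\in c((X\setminus Z)_U/U)$ with $[j'_{U*}\tilde\delta]=[\Delta]$ in $\Pic(\Xb_U,Y)$. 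Setting $\phi_a(b):=\langle\tilde\delta,pr_{X-Z}^*b\rangle_{(X\setminus Z)_U/U}$, one computes $\phi_a(a|_{X\setminus Z})=\langle j'_{U*}\tilde\delta,pr_X^*a\rangle_{X_U/U}=\langle\Delta,pr_X^*a\rangle_{X_U/U}=a|_U$, using Proposition \ref{pr.pretheory} for the first equality and the weak modulus $Y$ for the second. The essential point you are missing is that $\Delta$ may only ever be paired against a section defined on all of $X_U$; the passage to $X\setminus Z$ happens on the side of $\tilde\delta$, not on the side of the diagonal.
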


\begin{proof}
(Compare \cite[Prop. 11.15]{mvw}.)
For any $S$-scheme $W$, 
we write $W_U=W\times_S U$ and denote by  $pr_W$ the projection $W_U  \to W$.
We write $j : U \to X$ and $j':X \setminus Z \to X$
for the inclusion maps. Put $j'_U=j' \times_S 1_U:(X \setminus Z)_U\to X_U$.
We then obtain a commutative diagram
\[
\begin{CD}
F(X \setminus Z) @<{j'}^*<<  F(X)  @>j^*>> F(U) \\
@V{pr_{X-Z}^*}VV @V{pr_X^*}VV \Vert \\
F((X \setminus Z)_U) @<{j' _U}^*<< F(X_U) @>\gamma_j^*>> F(U). 
\end{CD}
\]
where $\gamma_j:U\to X_U$ is the graph of $j$.

Let $a \in F(X)$.
Since $X_U/U\in \rC U$ admits the good compactification $X_U\hookrightarrow \Xb_U$,
there exists a weak modulus $Y\subset \Xb_U$ for 
$pr_X^*(a) \in F(X_U)$ such that $|Y|=(X_{\infty})_U$.
We shall construct a homomorphism
\[
\tilde{\phi}_Y : F((X \setminus Z)_U) \to F(U)
\]
such that $\gamma_j^*(\tilde{a}) = \tilde{\phi}_Y{(j'_U}^* (\tilde{a}))$ for any $\tilde{a} \in F(X_U)$ having weak modulus $Y$.
The proposition will then follow by
setting $\phi_a := \tilde{\phi}_Y \circ pr_{X-Z}^*$.

Consider the standard triple 
$T_U=(\bar{X}_U \to U, (X_{\infty})_U, Z_U)$.
Let $\gamma_j(U) \in c(X_U/U)$ be the image of $U\in c(U/U)$ under ${\gamma_j}_*: c(U/U)\to c(X_U/U)$ and
$(L, \sigma) \in \Pic(\bar{X}_U, Y)$ be the image of $\gamma_j(U)$ under $\tau$ in \eqref{eq2.2}, 
where $L \in \Pic(\bar{X}_U)$ and $\sigma : L|_Y \simeq \sO_Y$ is a trivialisation. 
To say that $T$ is split over $U$ means that
there exists a  trivialization
$\tau : L|_{Z_U} \simeq \sO_{Z_U}.$
Since $Z_U$ is disjoint from $Y$,
$\sigma$ and $\tau$ define
a trivialization 
$\sigma \oplus \tau : L|_{Y \sqcup Z_U} \simeq \sO_{Y \sqcup Z_U}$.

By construction, the image of $(L, \sigma \oplus \tau) $ 
via the canonical map
\[
\Pic(\bar{X}_U, Y \sqcup Z_U) \to \Pic(\bar{X}_U, Y)\]
(see Lemma \ref{lem:pic-push}) agrees with $(L, \sigma)$.
Choose a lift  
$\tilde{\delta} $ of $(L, \sigma \oplus \tau)$ 
in $c((X\setminus Z)_U/U)$ via $\tau$ in \eqref{eq2.2}. Now we define
\[ \tilde{\phi}_Y := \langle \tilde{\delta},-\rangle_{(X \setminus Z)_U/U}
  : F((X \setminus Z)_U) \to F(U).
\]
If $ \tilde a\in F(X_U)$, by Proposition \ref{pr.pretheory}, we have
\[\tilde \phi_Y({j'_U}^*\tilde a) = \langle \tilde{\delta},{j'_U}^*\tilde a\rangle_{(X \setminus Z)_U/U} =  \langle {j'_U}_*\tilde{\delta},\tilde a\rangle_{X_U/U}
.\]
If, moreover, $\tilde a$ has weak modulus $Y$, then $\langle {j'_U}_*\tilde{\delta},\tilde a\rangle_{X_U/U}$ only depends on the image of ${j'_U}_*\tilde{\delta}$ in $\Pic(\bar{X}_U, Y)$, hence
\[\langle {j'_U}_*\tilde{\delta},\tilde a\rangle_{X_U/U}=\langle \gamma_j(U),\tilde a\rangle_{X_U/U}=\gamma_j^* \tilde a\]
by \ref{pretheory-pairing}(i). This completes the proof of the proposition.
\end{proof}

\begin{remark}[Comparison with \protect{\cite[Thm. 11.3]{mvw}}]
If $F$ is homotopy invariant,
then $Y_\red$ will be a weak modulus for all $\tilde{a} \in F(X_U)$.
Consequently, one can choose $\phi_a$ independently of $a$.
Hence there is a homomorphism
$\phi : F(X - Z) \to F(U)$ such that $\phi(a|_{X-Z})=a|_U$
for any $a \in F(X)$.
\end{remark}

We are now ready to prove Theorem \ref{thm:key-factor}.

\begin{proof}[Proof of Theorem \ref{thm:key-factor}]
First we assume $k$ infinite. 
In this case Theorem \ref{thm:key-factor} follows from

\begin{claim}
Assume $k$ is infinite, and keep the notation and assumptions of Theorem \ref{thm:key-factor}. 
Then there exists an open neighbourhood $U$ of $x_1, \dots, x_n$ satisfying the following property:
for any $a \in F(X)$, there is a homomorphism $\phi_a : F(V) \to F(U)$
such that $\phi_a(a|_V)=a|_U$.
\end{claim}

\begin{proof}
By replacing $V$ by $V \setminus \{ x_1, \dots, x_n \}$,
we may assume $x_1, \dots, x_n \in X \setminus V$.
By Proposition \ref{prop:walker},
we can replace $X$ by an open neighbourhood of 
$x_1, \dots, x_n$ in such a way that
there exists a standard triple
$T=(\bar{X} \to S, X_{\infty}, Z)$ with
$(X, X \setminus V) \simeq (\bar{X} \setminus X_{\infty}, Z)$.
By Lemma \ref{lem:exist-s-tri}, 
$T$ splits over some
open neighbourhood $U \subset X$ of $x_1, \dots, x_n$.
Now apply Proposition \ref{prop:factor}.
\end{proof}

Next we treat the case where $k$ is finite. 
Let $E$ be a finite separable extension of $k$ and $f:\Spec E \to \Spec k$ the projection.
For $T\in \Sm$ the transfer structure on $F$ induces a map $f_*:F(T_E) \to F(T)$ with $T_E=T\times \Spec E$.
It satisfies the standard functoriality with respect to $T\in \Sm$ together with the identity
$f_*f^*=\times [E:k]$, where $f^*:F(T) \to F(T_E)$ is the pullback by $f$.
Now a standard norm argument using
Proposition \ref{pr.pretheory2} reduces us to the case where $k$ is infinite.
\end{proof}


\section{MV-effaceablity}\label{sec:pf-keytheorem}
In this section, 
we define (Definition \ref{def:mv-eff})
a condition for a pretheory to be \emph{MV-effaceable}.
We then prove that
a pretheory having weak reciprocity is MV-effaceable
(Theorem \ref{thm:key-tech}).
This is a key technical result
for the proof (to be given in the next section)
of Theorem \ref{thm:ZarNis.intro} in the introduction.

\subsection{MV-effaceable pretheories}\label{mveff}

\begin{definition}
An {\it upper distinguished square} 
is a Cartesian diagram 
\begin{equation}\label{eq:upper-dist}
\begin{CD}
B @>i>> Y \\
@VfVV @VVfV \\
A @>i>> X
\end{CD}
\end{equation}
of objects in $\Sm$ such that
(i) $i$ is an open immersion, 
(ii) $f$ is etale,
and (iii) $f$ induces an isomorphism $Y \setminus B \simeq X \setminus A$.

We denote the square \eqref{eq:upper-dist}
by $Q(X, Y, A)$.
Note that this induces for a pretheory $F$ a complex
\[0 \to  
F(X) \overset{\left(\begin{smallmatrix}i^*\\ f^*\end{smallmatrix}\right)}{\longrightarrow}
F(A) \oplus F(Y) \overset{(-f^*, i^*)}{\longrightarrow}
F(B) \to 0. 
\]
\end{definition}

\begin{definition}
Let 
$T_X = (\bar{X} \to S, X_{\infty}, Z_X)$
and 
$T_Y = (\bar{Y} \to S, Y_{\infty}, Z_Y)$
be standard triples, with $X=\bar X-X_\infty$ and $Y=\bar Y-Y_\infty$.
A {\it covering morphism} $f : T_Y \to T_X$
is 
a finite morphism $f : \bar{Y} \to \bar{X}$ such that
(i) $f^{-1}(X_{\infty}) \subset Y_{\infty}$,
(ii) $f|_Y : Y \to X$ is etale, 
(iii) $Z_Y = f^{-1}(Z_X) \cap Y$, and
(iv) $f$ induces an isomorphism $Z_Y \simeq Z_X$.
\end{definition}

\begin{remark}
If $f: T_Y \to T_X$ is a covering morphism,
then the square $Q=Q(X, Y, X \setminus Z_X)$ is 
upper distinguished with
$Y-Z_Y = (X-Z_X) \times_X Y$.
\end{remark}

The following lemma is proved in \cite[Lemma 21.3]{mvw}.

\begin{lemma}
\label{lem:split-cov}
Let $f: T_Y \to T_X$ be a covering morphism.
If $T_X$ is split over $U \subset X := \bar{X} \setminus X_{\infty}$,
then $T_Y$ is split over $f^{-1}(U) \cap Y$.
\end{lemma}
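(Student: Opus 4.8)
The plan is to compare the two line bundles that govern splitness by pulling back along the covering morphism. Set $V:=f^{-1}(U)\cap Y$. Since $f|_Y\colon Y\to X$ is étale, $V$ is open in $Y$ and $f(V)\subset U$, the restriction $f_V:=f|_V\colon V\to U$ is again étale. Let $\Gamma_U\subset U\times_S\bar X$ be the graph of the inclusion $U\hookrightarrow\bar X$ and $\Gamma_V\subset V\times_S\bar Y$ the graph of $V\hookrightarrow\bar Y$. Both are effective Cartier divisors: they are closed (graphs of morphisms into the separated schemes $\bar X,\bar Y$) and are sections of the smooth relative curves $U\times_S X\to U$ and $V\times_S Y\to V$, hence regular immersions of codimension one, extended by $\sO$ over the infinite parts. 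By definition $T_X$ is split over $U$ exactly when $\sO(\Gamma_U)|_{U\times_S Z_X}$ is trivial, and $T_Y$ is split over $V$ exactly when $\sO(\Gamma_V)|_{V\times_S Z_Y}$ is trivial. Consider the quasi-finite morphism $\tilde f:=f_V\times_S f\colon V\times_S\bar Y\to U\times_S\bar X$, which factors as the finite map $\id_V\times_S f$ followed by the étale map $f_V\times_S\id_{\bar X}$.

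First I would check that $\tilde f^{*}\Gamma_U$ is a well-defined effective Cartier divisor. By Lemma \ref{oesterle} every irreducible component of $V\times_S\bar Y$ has dimension $\ge\dim S+2$, hence strictly larger than $\dim\Gamma_U=\dim U$; since $\tilde f$ is quasi-finite, no component of the source maps into $|\Gamma_U|$, so $\tilde f^{*}\Gamma_U$ makes sense. Next, set-theoretically $\tilde f^{-1}(|\Gamma_U|)=\{(v,y)\mid f(v)=f(y)\text{ in }\bar X\}$, which contains $\Gamma_V$; and near any point of $\Gamma_V$ both factors $f_V$ and $f$ are étale (the relevant points of $\bar Y$ lie in $Y$), so $\tilde f$ is étale there and $\tilde f^{*}\Gamma_U$ coincides with the reduced divisor $\Gamma_V$ in a neighbourhood of $\Gamma_V$. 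Consequently $\Gamma':=\tilde f^{*}\Gamma_U-\Gamma_V$ is an effective Cartier divisor and $|\Gamma'|\cap\Gamma_V=\emptyset$.

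The key point is that $|\Gamma'|$ is disjoint from $V\times_S Z_Y$. Suppose $(v,z)\in|\Gamma'|\cap(V\times_S Z_Y)$. Then $(v,z)\notin\Gamma_V$, so $v\ne z$ in $\bar Y$, while $f(v)=f(z)$ in $\bar X$. Since $z\in Z_Y=f^{-1}(Z_X)\cap Y$ we get $f(v)=f(z)\in Z_X$, hence $v\in f^{-1}(Z_X)\cap Y=Z_Y$; but $f|_{Z_Y}\colon Z_Y\iso Z_X$ is injective and $f(v)=f(z)$, forcing $v=z$, a contradiction. Therefore $\sO(\Gamma')|_{V\times_S Z_Y}$ is canonically trivial, and because $\tilde f$ carries $V\times_S Z_Y$ into $U\times_S Z_X$ (as $f(Z_Y)=Z_X$) we obtain
\[
\sO(\Gamma_V)|_{V\times_S Z_Y}=\bigl(\tilde f^{*}\sO(\Gamma_U)\bigr)|_{V\times_S Z_Y}\otimes\sO(-\Gamma')|_{V\times_S Z_Y}=\bigl(\tilde f|_{V\times_S Z_Y}\bigr)^{*}\bigl(\sO(\Gamma_U)|_{U\times_S Z_X}\bigr).
\]
Since $T_X$ is split over $U$, the line bundle on the right is trivial, hence so is $\sO(\Gamma_V)|_{V\times_S Z_Y}$; that is, $T_Y$ is split over $V=f^{-1}(U)\cap Y$, as claimed.

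I expect the main obstacle to be the local analysis underlying the decomposition $\tilde f^{*}\Gamma_U=\Gamma_V+\Gamma'$: the fact that $\Gamma_V$ occurs with multiplicity one rests on the étaleness of $f|_Y$ near the diagonal, and the disjointness $|\Gamma'|\cap(V\times_S Z_Y)=\emptyset$ uses precisely the covering-morphism axioms $Z_Y=f^{-1}(Z_X)\cap Y$ and $f\colon Z_Y\iso Z_X$. Once these two facts are established, the conclusion is a formal manipulation of line bundles as above.
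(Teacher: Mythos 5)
The paper gives no proof of this lemma, citing \cite[Lemma 21.3]{mvw} instead; your argument is correct and is essentially that proof: pull back the graph divisor along $\tilde f$, split off $\Gamma_V$ (which occurs with multiplicity one and as an open-and-closed piece of $\tilde f^{-1}(\Gamma_U)$ precisely because $f|_Y$ is \'etale, hence unramified, near the diagonal), and use $Z_Y=f^{-1}(Z_X)\cap Y$ together with $f\colon Z_Y\iso Z_X$ to see that the residual divisor misses $V\times_S Z_Y$. No gaps worth flagging.
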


\begin{definition}\label{def:mv-eff}
A pretheory $F$ 
is said to be {\it MV-effaceable} if
the following condition is satisfied:
let 
$T_X = (\bar{X} \to S, X_{\infty}, Z_X)$
and 
$T_Y = (\bar{Y} \to S, Y_{\infty}, Z_Y)$
be standard triples,
and $f: T_Y \to T_X$ a covering morphism
so that 
$Q(X, Y, X \setminus Z_X)$ is upper distinguished.
Put $A=X \setminus Z_X, ~B=Y \setminus Z_Y$.
Let $Q'=Q'(X', Y', A')$ be another upper distinguished square
such that $X'$ and $Y'$ are affine.
Put $B' = A' \times_{X'} Y'$.
Let 
\[
j=\begin{pmatrix}
j_B & j_Y \\
j_A & j_X
\end{pmatrix}
 : 
Q' =
\begin{pmatrix}
B' & \overset{i'}{\to} & Y' \\
{}_{f'}\downarrow &  & \downarrow_{f'} \\
A' & \overset{i'}{\to} & X'
\end{pmatrix}
\to
Q =
\begin{pmatrix}
B & \overset{i}{\to} & Y \\
{}_{f}\downarrow &  & \downarrow_{f} \\
A & \overset{i}{\to} & X
\end{pmatrix}
\]
be a morphism of squares.
We then get a morphism of complexes
\begin{equation}\label{eq:key-lem}
\begin{CD}
0 &\to&  
F(X) @>\text{$\left(\begin{smallmatrix}i^*\\ f^*\end{smallmatrix}\right)$}>>
F(A) \oplus F(Y) @>(-f^*, i^*)>>
F(B) &\to& 0 
\\
@. @Vj_X^*VV  @V\text{$\begin{pmatrix}j_A^*&0\\0&  j_Y^*\end{pmatrix}$}VV @Vj_B^*VV
\\
0 &\to& 
F(X') @>\text{$\left(\begin{smallmatrix}{i'}^*\\ {f'}^*\end{smallmatrix}\right)$}>>
F(A') \oplus F(Y') @>(-{f'}^*, {i'}^*)>>
F(B') &\to& 0.
\end{CD}
\end{equation}
The condition for $F$ to be MV-effaceable
is that \eqref{eq:key-lem} 
induces the zero-map on all cohomology groups
if $j_X : X' \to X$ is an open immersion,
and the triple $T_X$ is split over $X'$.
\end{definition}

In \cite[Th. 21.6]{mvw}, 
a homotopy invariant presheaf with transfers
is shown to be MV-effaceable.
The following generalizes this result.

\begin{thm}\label{thm:key-tech}
A pretheory having weak reciprocity is MV-effaceable.
\end{thm}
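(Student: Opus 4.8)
The strategy is to mimic Voevodsky's proof of \cite[Thm.~21.6]{mvw}, replacing every invocation of homotopy invariance by the weak-reciprocity substitute that has already been built up in the previous section, most notably Proposition \ref{prop:factor} and Lemma \ref{lem:pic-push}. First I would recall precisely the definition of MV-effaceability (Definition \ref{def:mv-eff}, to be introduced in \S\ref{mveff}): for a pretheory $F$ and a local scheme, a conservative (Mayer--Vietoris) covering datum must be effaceable, i.e.\ the relevant restriction map kills the class after passing to a suitable open neighbourhood. Unwinding this, the task reduces to a statement of exactly the same shape as Theorem \ref{thm:key-factor}/Proposition \ref{prop:factor}: given a standard triple $T=(\Xb\overset{p}{\to}S,X_\infty,Z)$, an open $U\subset X=\Xb\setminus X_\infty$ over which $T$ splits, and a section $a\in F(X)$, one has a retraction $\phi_a\colon F(X\setminus Z)\to F(U)$ with $\phi_a(a|_{X\setminus Z})=a|_U$; the extra input needed for the MV-version is that this retraction behaves well with respect to two nested closed subsets $Z_1\subset Z_2$, so that the two effacements can be performed simultaneously. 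This is why Lemma \ref{lem:pic-push} was stated for a pair $Y\sqcup Z$ with a separate exact sequence controlling $\Pic(\Xb,Y\sqcup Z)\to\Pic(\Xb)$.

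Concretely, I would argue as follows. By Proposition \ref{prop:walker} (after the standard reduction to $k$ infinite via the trace argument of Proposition \ref{pr.pretheory2}, exactly as in the proof of Theorem \ref{thm:key-factor}) one puts the local situation into the form of a standard triple; by Lemma \ref{lem:exist-s-tri} one arranges that the triple splits over an open neighbourhood $U$ of the finitely many points in question. Then one applies Proposition \ref{prop:factor}: passing to $\Xb_U\to U$, one chooses a weak modulus $Y\subset\Xb_U$ with $|Y|=(X_\infty)_U$ for $pr_X^*(a)\in F(X_U)$ (possible since $F$ has weak reciprocity and $X_U/U\in\rC U$ admits the good compactification $\Xb_U$), uses the splitting to extend the trivialization from $Y$ to $Y\sqcup Z_U$, lifts the resulting class in $\Pic(\Xb_U,Y\sqcup Z_U)$ to an element $\tilde\delta\in c((X\setminus Z)_U/U)$ via \eqref{eq2.2}, and sets $\tilde\phi_Y=\langle\tilde\delta,-\rangle_{(X\setminus Z)_U/U}$. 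The identity $\gamma_j^*\tilde a=\tilde\phi_Y({j'_U}^*\tilde a)$ for any $\tilde a$ with weak modulus $Y$ follows, as in Proposition \ref{prop:factor}, from Proposition \ref{pr.pretheory} ($\P^1$-rigidity, itself a consequence of weak reciprocity by Proposition \ref{p8.1}) together with \ref{pretheory-pairing}(i). Translating this back through the definition of MV-effaceability gives the theorem.

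\textbf{Main obstacle.} The technical heart is bookkeeping the \emph{Mayer--Vietoris} refinement: Theorem \ref{thm:key-factor} effaces a single closed subset $Z$, whereas MV-effaceability requires handling a covering, i.e.\ effectively two closed pieces at once, and checking that the constructed homomorphisms $\phi_a$ are compatible on overlaps (so that they glue to the map demanded by Definition \ref{def:mv-eff}). This is precisely the point where one needs the second (exact-sequence) assertion of Lemma \ref{lem:pic-push} — it guarantees that the ambiguity in the lift $\tilde\delta$ is controlled by $\sO^\times(Z)$ and hence does not obstruct the comparison of the two retractions. Beyond that, the reduction from finite $k$ to infinite $k$ via the norm argument (Proposition \ref{pr.pretheory2}) and the verification that the splitting hypothesis of Proposition \ref{prop:factor} can be met on a common open neighbourhood for the covering (Lemma \ref{lem:exist-s-tri}) are routine once the framework of Proposition \ref{prop:walker} is in place; I expect no serious difficulty there.
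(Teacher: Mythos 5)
There is a genuine gap. Your reduction misidentifies what has to be proved. First, the appeal to Proposition \ref{prop:walker}, Lemma \ref{lem:exist-s-tri} and the finite-field norm reduction is misplaced: those steps belong to the proof of Theorem \ref{thm:key-factor}, whereas in Definition \ref{def:mv-eff} the standard triples, the covering morphism $f:T_Y\to T_X$, the open immersion $j_X$ and the splitting of $T_X$ over $X'$ are all \emph{given} as hypotheses. More seriously, MV-effaceability does not reduce to ``a statement of exactly the same shape as Proposition \ref{prop:factor}.'' One must show that the whole morphism of Mayer--Vietoris complexes \eqref{eq:key-lem} is zero on cohomology, and the paper does this by producing a \emph{conditional chain homotopy} $s_1,s_2$ built from three correspondences $\lambda_A\in c(A_{X'}/X')$, $\lambda_B\in c(B_{Y'}/Y')$ and $\psi\in c(B_{A'}/A')$ whose images in various relative Picard groups satisfy the six identities of Proposition \ref{prop:key-tech}. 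Proposition \ref{prop:factor} essentially yields only $\lambda_A$ (conditions (1) and the $\lambda_A$-part of (4)); the existence of $\lambda_B$ compatible with $\lambda_A$ uses the surjectivity of $\sO^*(W_{0Y'})\to\sO^*(V_{0Y'})$ coming from $f^*V_0\simeq W_0$, and the construction of $\psi$ is an entirely separate argument that your proposal omits: one compares the trivialization $\tau_0$ of $\sL|_{V_{0X'}}$ with the canonical section $\sigma$ to extract $r\in\sO(V_{0X'})$, pulls it back to $\tilde r$ on $W_{0X'}$, uses the affine neighbourhood of $Y_\infty\sqcup Z_Y$ and the Chinese remainder theorem to find a rational function $h$ with $h\equiv 1$ on $W_{\infty X'}$ and $h\equiv\tilde r$ on $W_{0X'}$, and sets $\psi=-\div(h)$. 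Verifying (4)--(6) then needs the norm identity $N(h)|_{V_{0A'}}=r|_{V_{0A'}}$ of \cite[21.10]{mvw} and the pushforward $f_*$ on relative Picard groups with possibly non-reduced moduli (Lemma \ref{lem:pic-push2}, which in turn rests on the norm Lemma \ref{l3.2.2}). The second assertion of Lemma \ref{lem:pic-push}, which you cite as the key tool, only controls the kernel of $\Pic(\Xb,Y\sqcup Z)\to\Pic(\Xb,Y)$ and does not by itself give any of this.

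A second missing point is how weak reciprocity actually enters. Because the moduli $V=V_\infty\sqcup V_0$ on $\Xb$ and $W=W_\infty\sqcup W_0$ on $\bar Y$ must be chosen with $f^*V\le W$ and adapted to the particular sections occurring in a given cocycle, the maps $s_1,s_2$ do \emph{not} form a globally defined chain homotopy (Remark \ref{r10.1}); one only gets the conditional identities of the Corollary following it, valid for sections admitting the chosen moduli, and one must then treat each cohomological degree separately, invoking weak reciprocity to supply suitable $V_\infty$, $V$, $W_\infty$, $W$ for the finitely many sections at hand. Your proposal gestures at choosing a weak modulus for $pr_X^*(a)$ but does not address the simultaneous, $f$-compatible choice on both $\Xb$ and $\bar Y$, nor the degree-by-degree verification. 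As written, the argument establishes at best the analogue of Theorem \ref{thm:key-factor}, not MV-effaceability.
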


The proof of this theorem will be completed in 
\S \ref{ssect:pf-of-keytech-th}.
Before that,
we need to prepare a few lemmas in
\S \ref{ssect:funct-rel-pic-gp}.

\subsection{Functoriality of the relative Picard group}\label{ssect:funct-rel-pic-gp}

\begin{lemma}
\label{lem:pic-pull}
Let $S \in \Sm$ and $X/S \in \rC S$.
Let $(\Xb/S, Y)$ be its good compactification with modulus.
For any morphism $f: S' \to S$ in $\Sm$,
we have a commutative diagram
\[
\xymatrix{
c(X/S) \ar[r]^{\tau} \ar[d]_{f^*} 
& \Pic(\Xb, Y) \ar[d]^{f^*}
\\
c(X \times_S S'/S') \ar[r]^(0.4){\tau}
& \Pic(\Xb \times_S S', Y\times_S S'),
}
\]
where the right vertical map
is defined by
$f^*(\sL, \sigma)=(f^* \sL, f^* \sigma)$.
\end{lemma}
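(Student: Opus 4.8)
The plan is to reduce everything to the description of $\tau$ given in \eqref{eq2.2} and its surrounding discussion, where $\tau$ sends a cycle $Z\in c(X/S)$ — which is automatically a Cartier divisor on $\Xb$ with support disjoint from $Y$ — to the pair $(\sO_{\Xb}(Z),\sigma_Z)$, with $\sigma_Z$ the canonical trivialization of $\sO_{\Xb}(Z)|_Y$ coming from the fact that $Z$ misses $Y$. Thus the claimed commutativity amounts to the assertion that pulling back the cycle $Z$ along $f$ (in the sense of \S\ref{c(X/S)}, i.e. $cycl(f)$) and pulling back the associated line bundle with trivialization along $f\times_S\mathrm{id}$ agree. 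First I would reduce to the case $Z$ integral, hence a prime cycle finite and surjective over (a component of) $S$, by linearity of both $\tau$ and $f^*$.

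The heart of the matter is then the compatibility of the cycle-theoretic pullback $cycl(f)$ with the line-bundle pullback on Cartier divisors. Write $\Xb':=\Xb\times_S S'$, $X':=X\times_S S'$, $Y':=Y\times_S S'$ and let $\pi:\Xb'\to\Xb$ be the projection. For $Z\in c(X/S)$ viewed as a Cartier divisor on $\Xb$ disjoint from $Y$, the scheme-theoretic preimage $\pi^*Z$ (well-defined since $Z$ is Cartier) is a Cartier divisor on $\Xb'$ whose support is disjoint from $Y'$, and $\sO_{\Xb'}(\pi^*Z)=\pi^*\sO_{\Xb}(Z)$ canonically, the canonical section pulling back to the canonical section, so that $\sigma_{\pi^*Z}=\pi^*\sigma_Z$; this gives $f^*\tau(Z)=\tau'$ of the \emph{scheme-theoretic} preimage. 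What remains is the identification of the \emph{cycle} $[\pi^*Z]$ on $\Xb'$ with $cycl(f)(Z)\in c(X'/S')$ as elements of $c(X/S')$ mapping to the same class in $\Pic(\Xb',Y')$. Here I would invoke the defining property of the relative cycle pullback of Suslin–Voevodsky (\cite[\S3.5]{SV1}, \cite[p.~90]{voepre}): for $Z$ finite and surjective over $S$ and for $S$ smooth (so $\Xb'\to\Xb$ is flat over the generic points of the components of $S$, or more robustly one uses that $Z$ is flat over $S$ after shrinking and the platification/specialization compatibility of $cycl(f)$), $cycl(f)(Z)$ is precisely the cycle associated to the pullback of the \emph{cycle} $Z$, which by Cartier-ness equals the cycle of the Cartier-divisor pullback up to multiplicities that are exactly the local lengths appearing in the definition of $cycl(f)$. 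This is a known compatibility; the cleanest route is to note that both sides are additive, compatible with flat base change, and with closed immersions, and that by \cite[Ex.~1A.12]{mvw} (and the references cited in \S\ref{c(X/S)}) $cycl(f)$ of a Cartier divisor relative to $S$ is computed as its Cartier-divisor pullback.

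I expect the main obstacle to be exactly this last bookkeeping point: matching the \emph{multiplicities} in the relative cycle pullback $cycl(f)$ with those in the Cartier-divisor pullback $[\pi^*Z]$ when $f$ is not flat. The safe way around it, which I would actually write, is to reduce to the two elementary cases as in the proof of \eqref{eq.PhiCor}: by factoring $f$ (after shrinking $S'$, which is harmless since $c(X/S')\to c(X/V)$ and $\Pic(\Xb',-)\to\Pic(\Xb'_V,-)$ are injective for $V\subset S'$ dense open — the latter by the exact sequence \eqref{eq2.2} together with the injectivity for $c$) into a composite of flat morphisms and regular immersions of codimension one, and then treating $f$ flat (where $cycl(f)=[\pi^*(-)]$ on the nose and the right vertical map is ordinary flat pullback of line bundles) and $f$ a regular closed immersion of codimension one (where $cycl(f)$ is intersection with the Cartier divisor $f$, matching $\pi^*$ on line bundles via the projection formula, exactly as in facts (1)–(2) used in \S\ref{sect:const-div-g}). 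In each case the commutativity of the square is immediate from the compatibility of $\sO_{\Xb}(-)$ with pullback and from the functoriality $cycl(gh)=cycl(h)cycl(g)$, so the composite case follows. This completes the proof.
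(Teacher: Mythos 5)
Your proposal is correct in substance, but it takes a genuinely different route from the paper: the paper disposes of this lemma with a one-line citation to D\'eglise \cite[Lem.~3.1.5~(1)]{deglise}, whereas you reconstruct the argument from the explicit description of $\tau$ following \eqref{eq2.2} and from the same flat/regular-immersion factorization of $f$ that the paper itself uses to prove the commutativity of \eqref{eq.PhiCor} in \S\ref{sect:const-div-g}. That is a perfectly legitimate self-contained alternative, and your identification of the real content --- matching the multiplicities of $cycl(f)(Z)$ with those of the Cartier-divisor pullback $[\pi^*Z]$ --- is exactly what the cited reference encapsulates. What the citation buys the authors is not having to redo this bookkeeping; what your version buys is transparency and consistency with the techniques already deployed in the paper.

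One point needs repair. You justify shrinking $S'$ by asserting that $\Pic(\Xb\times_S S',Y\times_S S')\to\Pic(\Xb\times_S V,Y\times_S V)$ is injective for $V\subset S'$ dense open, ``by the exact sequence \eqref{eq2.2} together with the injectivity for $c$.'' That does not follow: the exact sequence gives surjectivity of $\tau$, and if a cycle $Z'$ restricts over $V$ to $\div(g)$ for some $g\in G(\Xb\times_S V,Y\times_S V)$, there is no a priori reason for $g$ to extend to an element of $G(\Xb\times_S S',Y\times_S S')$. The fix is already implicit in your own organization: prove the unconditional identity $f^*\tau(Z)=\tau([\pi^*Z])$ first (pure compatibility of $\sO_{\Xb}(-)$ and of the canonical trivialization with pullback of Cartier divisors), and then reduce the remaining assertion to the equality of \emph{cycles} $[\pi^*Z]=cycl(f)(Z)$ in $c(X\times_S S'/S')$, which may be checked after shrinking $S'$ because restriction \emph{is} injective on $c(-/S')$. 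With that rearrangement the flat and regular-immersion cases go through as you describe, and the proof is complete.
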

\begin{proof}
See \cite[Lem. 3.1.5 (1)]{deglise}.
\end{proof}

\begin{lemma}\label{lem:pic-push2}
Let $S \in \Sm$.
We take $X/S,~ Y/S \in \rC S$
and let $(\Xb/S, V), (\overline{Y}/S, W)$ 
be their good compactifications with moduli.
Let $f : \Xb \to \overline{Y}$ be a finite {surjective} $S$-morphism.
Suppose that $f^*W \leq V$.
Then there is a unique homomorphism
\[ f_* : \Pic(\Xb, V) \to \Pic(\overline{Y}, W) \]
which fits into a commutative diagram
\[
\xymatrix{
c(X/S) \ar[r]^{\tau} \ar[d]_{f_*} 
& \Pic(\Xb, V) \ar[d]^{f_*}
\\
c(Y/S) \ar[r]^{\tau}
& \Pic(\overline{Y}, W).
}
\]
\end{lemma}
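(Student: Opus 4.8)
The plan is to reduce everything to the exact sequence \eqref{eq2.2}. That sequence says $\tau\colon c(X/S)\to\Pic(\Xb,V)$ is surjective with kernel $\div_{\Xb}\bigl(G(\Xb,V)\bigr)$, and likewise for $(\overline{Y}/S,W)$; hence there is at most one $f_*$ making the square commute, and to construct it it suffices to exhibit a pushforward of cycles $f_*\colon c(X/S)\to c(Y/S)$ and to check that it carries $\div_{\Xb}\bigl(G(\Xb,V)\bigr)$ into $\div_{\overline{Y}}\bigl(G(\overline{Y},W)\bigr)$. First I would observe that the hypothesis $f^*W\le V$ gives $f^{-1}(|W|)=|f^*W|\subseteq|V|$, so $f$ restricts to a finite $S$-morphism $f|_X\colon X\to Y$ and the usual pushforward of cycles $f_*\colon c(X/S)\to c(Y/S)$ of \ref{c(X/S)} is defined; moreover, since $g\in G(\Xb,V)$ is a unit near $|V|\supseteq f^{-1}(|W|)$, the cycle $f_*\div_{\Xb}(g)$ has support disjoint from $|W|$, so it indeed lies in $c(Y/S)$.

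Next I would reduce to $S$, hence $\Xb$ and $\overline{Y}$, connected; as both are normal they are then integral, and $f$, being finite and surjective, induces a finite extension of function fields $k(\overline{Y})\hookrightarrow k(\Xb)$ with norm $N=N_{k(\Xb)/k(\overline{Y})}$. For $g\in G(\Xb,V)$, the formula for the proper pushforward of a principal Weil divisor \cite[Prop.~1.4]{fulton} gives
\[
 f_*\div_{\Xb}(g)=\div_{\overline{Y}}\bigl(N(g)\bigr),
\]
so it remains only to check $N(g)\in G(\overline{Y},W)$. Choosing an affine open $U\subseteq\Xb$ with $|V|\subseteq U$ (possible since $|V|=\Xb-X$ has an affine neighbourhood in $\Xb$ by \ref{def.goodcompactwithmodulus}) on which $g$ is a unit with $g\equiv1$ modulo $\sI_V$, and setting $U':=\overline{Y}\setminus f(\Xb\setminus U)$, one obtains an open neighbourhood of $|W|$ with $f^{-1}(U')\subseteq U$, over which $f$ is finite; since $\overline{Y}$ is normal, $N$ carries the units of $\Gamma(f^{-1}(U'),\sO)$ into $\Gamma(U',\sO^\times)$, so at least $N(g)$ is a unit in a neighbourhood of $|W|$.

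The remaining, and main, point is that $N(g)$ is congruent to $1$ along $W$. From $\sI_V\subseteq\sI_{f^*W}=\sI_W\cdot\sO_{\Xb}$ one gets $g\equiv1$ modulo $\sI_W\sO_{\Xb}$, i.e.\ $g$ restricts to $1$ on $f^*W$; and the finite morphism $f^*W\to W$ then forces $N(g)|_W=1$ \emph{provided} the norm is compatible with this base change. I expect this compatibility to be the crux. It holds once $f$ is finite \emph{locally free} over a neighbourhood of $|W|$, for then $N(g)=\det(g\cdot-\colon f_*\sO\to f_*\sO)$ and $g-1\in\sI_W\cdot f_*\sO$ forces $\det(g\cdot-)-1\in\sI_W$ by expanding the determinant. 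To secure local freeness near $|W|$ I would exploit the good-compactification structure: $|W|=\overline{Y}\setminus Y$ is closed in the proper $\overline{Y}$ and has an affine neighbourhood, which precludes $|W|$ from containing a complete curve in any fibre of $\overline{Y}\to S$, so $|W|\to S$ is quasi-finite and proper, hence finite, hence $|W|$ is of codimension one in the normal scheme $\overline{Y}$, which places $f$ over the relevant points in the range where torsion-freeness of $f_*\sO_{\Xb}$ forces flatness; alternatively one may invoke the functoriality of relative Picard groups in the spirit of \cite[Lem.~3.1.5]{deglise} already used for Lemmas \ref{lem:pic-pull} and \ref{lem:pic-push}. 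Once $N(g)\in G(\overline{Y},W)$ is established, $\tau_{\overline{Y}}\bigl(f_*\div_{\Xb}(g)\bigr)=0$, so $f_*$ descends to the asserted homomorphism $\Pic(\Xb,V)\to\Pic(\overline{Y},W)$, the square commutes by construction, and uniqueness was noted at the outset.
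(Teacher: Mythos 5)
Your overall strategy coincides with the paper's: reduce via the exact sequence \eqref{eq2.2} to the single assertion $N_{k(\Xb)/k(\overline{Y})}\bigl(G(\Xb,V)\bigr)\subset G(\overline{Y},W)$, using $f_*\div_{\Xb}(g)=\div_{\overline{Y}}(N(g))$ and uniqueness from surjectivity of $\tau$. Up to that point your argument matches Lemma \ref{lem:pic-push2} as proved in the paper.

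However, your proof of the key norm statement has a genuine gap. You correctly identify the crux — that $N(g)\equiv 1$ along $W$ — but your determinant argument requires $f$ to be finite \emph{locally free} in a neighbourhood of every point of $|W|$, and this is not available. The schemes $\Xb$ and $\overline{Y}$ are only assumed normal, not regular, and a finite surjective morphism of normal schemes need not be flat; your codimension argument only yields flatness at the points of $\overline{Y}$ of codimension $\le 1$ (where the local ring is a DVR and torsion-freeness of $f_*\sO_{\Xb}$ gives freeness). But $G(\overline{Y},W)=\bigcap_{y\in W}\Ker(\sO_{\overline{Y},y}^\times\to\sO_{W,y}^\times)$ imposes the congruence $N(g)-1\in\sI_{W,y}$ at \emph{every} point $y$ of $|W|$, including closed points of higher codimension in $\overline{Y}$; and since $W$ is an arbitrary closed subscheme (not assumed Cartier, so $\sO_W$ may have embedded or higher-codimension associated points), checking the congruence only at the generic points of $|W|$ does not suffice. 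The paper closes exactly this gap with Lemma \ref{l3.2.2}, which needs no flatness at all: for $R\subset R'$ a finite extension of domains with $R$ normal and $a\in IR'$, the coefficients $\sigma_i(a)$ of the characteristic polynomial of $a$ over $K=\operatorname{Frac}(R)$ lie in $R$ by integrality and are homogeneous of degree $i$ in $a$, hence lie in $I^i\subseteq I$, so that $N_{K'/K}(1+a)=1+\sigma_1(a)+\dots+\sigma_n(a)\in 1+I$. Replacing your local-freeness step by this symmetric-function argument (applied locally on an affine neighbourhood of $|W|$ with $I=\sI_W$, using $f^*W\le V$ to get $g-1\in\sI_W\sO_{\Xb}$) completes the proof; as written, your argument does not.
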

\begin{proof}
We may suppose $X$ and $Y$ are irreducible.
Uniqueness is obvious from the surjectivity of $\tau$.
Thus it suffices to show 
$N_{k(X)/k(Y)}(G(\Xb, V))\allowbreak \subset G(\overline{Y}, W)$,
which follows from Lemma \ref{l3.2.2} below.
\end{proof}
\begin{lemma}\label{l3.2.2} 
Let $R \subset R'$ be an extension of domains, where $R$ is normal and $R'$ is finite over $R$,
and let $I \subset R$ be an ideal. 
Let $K$ and $K'$ be the fraction fields of $R$ and $R'$ respectively.
Write $(1+I)^*=(1+I)\cap R^*$ and $(1+IR')^*=(1+IR)\cap {R'}^*$. Then $N_{K'/K}(1+IR')^* \subset (1+I)^*$.
\end{lemma}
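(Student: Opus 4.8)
The plan is to show, for an arbitrary $u = 1 + a$ with $a \in IR'$ and $u \in {R'}^*$, the two statements $N_{K'/K}(u) \in R^*$ and $N_{K'/K}(u) \equiv 1 \pmod I$; since $N_{K'/K}$ is multiplicative, these give $N_{K'/K}((1+IR')^*) \subseteq (1+I) \cap R^* = (1+I)^*$. (Note $K'/K$ is a finite field extension because $R'$ is a domain module-finite over $R$.) It is useful to record at the outset that $u^{-1}$ again lies in $1 + IR'$: from $u u^{-1} = 1$ one gets $u^{-1} = 1 - a u^{-1} \in 1 + IR'$.

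The first statement is standard: every element of $R'$ is integral over $R$, so the $K$-conjugates of $u$ are integral over $R$, hence $N_{K'/K}(u)$ — the product of these conjugates with multiplicity, i.e. $(-1)^{[K':K]}$ times the constant term of the characteristic polynomial of $u$ over $K$ — is integral over $R$ and lies in $K$, thus in $R$ since $R$ is normal. Applying this to $u^{-1}$ and using $N_{K'/K}(u) N_{K'/K}(u^{-1}) = 1$ shows $N_{K'/K}(u) \in R^*$.

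The second statement is the heart of the matter, and I would prove it by the determinant trick. Pick $R$-module generators $\omega_1, \dots, \omega_n$ of $R'$, giving a surjection $R^n \to R'$, $e_j \mapsto \omega_j$, with kernel $\mathrm{Rel}$. Since $a\omega_j \in IR' = \sum_i I\omega_i$, write $a\omega_j = \sum_i b_{ij}\omega_i$ with $b_{ij} \in I$; with $B = (b_{ij})$, the matrix $M = \mathrm{Id}_n + B \in M_n(R)$ then satisfies $u\omega_j = \sum_i M_{ij}\omega_i$, i.e. it lifts multiplication-by-$u$ along $R^n \to R'$, and in particular $M$ preserves $\mathrm{Rel}$ and $L := \mathrm{Rel}\otimes_R K = \ker(K^n \to K')$. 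When $R'$ is free over $R$ one takes the $\omega_i$ to be a basis, so that $N_{K'/K}(u) = \det(\mathrm{Id}_n + B)$; expanding the determinant, $\det(\mathrm{Id}_n + B) - 1$ is a sum of monomials each containing at least one entry of $B$, hence lies in $I$, which is exactly what we want. In general, multiplicativity of the determinant along $0 \to L \to K^n \to K' \to 0$ gives $\det(\mathrm{Id}_n + B) = \det(M|_L)\cdot N_{K'/K}(u)$, and since $\det(\mathrm{Id}_n + B) \in 1+I$ by the same expansion it remains to check $\det(M|_L) \in (1+I)\cap R^*$.

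I expect the non-free case to be the main obstacle: membership in $I$ cannot be tested at height-one primes alone, so one argues by descending induction on $\operatorname{rank}_R L$, at each stage exhibiting a matrix congruent to the identity modulo $I$ with respect to a generating set of a suitable lattice in $L$ and invoking additivity of the trace and multiplicativity of the determinant along the arising short exact sequences of $K$-vector spaces — or, more simply, one reduces at once to the case where $R$ is a discrete valuation ring, which already suffices for the intended applications (there, after base change to the relevant generic fibre, $R'$ is a finite torsion-free module over a principal ideal domain, hence free). The remaining bookkeeping — the determinant expansion, the behaviour of $\det$ and $N_{K'/K}$ along the exact sequence, the verification that $M$ really preserves $\mathrm{Rel}$ — is routine; the one delicate point throughout is keeping track of the ideal $I$ itself rather than its integral closure.
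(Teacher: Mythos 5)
Your proof of $N_{K'/K}(u)\in R^*$ is fine, and your free-case computation ($N_{K'/K}(u)=\det(\mathrm{Id}+B)$ with $B$ having entries in $I$, so the expansion gives $\det(\mathrm{Id}+B)\in 1+I$) is essentially the same calculation the paper performs. The gap is the non-free case, and neither of your two proposed patches closes it. For the induction: writing $\mathrm{Rel}=\ker(R^n\to R')$, what you actually know is $B(\mathrm{Rel})\subseteq IR^n\cap\mathrm{Rel}$, which in general is strictly larger than $I\cdot\mathrm{Rel}$; so the hypothesis ``congruent to the identity modulo $I$ with respect to a generating set of the lattice'' is not inherited by $\mathrm{Rel}$, and moreover the number of generators of $\mathrm{Rel}$ (hence the rank of the next kernel) need not be smaller than $\dim_K L$, so the descending induction has no reason to terminate. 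For the fallback: the lemma is applied in Lemma \ref{lem:pic-push2} with $R$ a local ring of $\overline{Y}$ at a point of $W$, and $\overline{Y}$ has dimension $\dim S+1\ge 2$, so $R$ is not a discrete valuation ring; since, as you note yourself, membership in a general ideal $I$ cannot be tested at height-one primes, there is no reduction of the general normal case to the DVR case, and the DVR case does not suffice for the intended application.

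The idea you are missing is to compute with the characteristic polynomial of $a$ acting on the $K$-vector space $K'$, where freeness is automatic, instead of with a matrix over $R$. One has $N_{K'/K}(1+a)=1\pm\sigma_1(a)+\dots\pm\sigma_n(a)$, where $\sigma_i$ is the $i$-th coefficient of the characteristic polynomial; $\sigma_i(R')\subseteq R$ by integrality together with normality of $R$, and $\sigma_i$ is a homogeneous polynomial function of degree $i$ on $K'$. Hence, writing $a=\sum_\alpha\lambda_\alpha\mu_\alpha$ with $\lambda_\alpha\in I$ and $\mu_\alpha\in R'$, the element $\sigma_i(a)$ is a homogeneous polynomial of degree $i$ in the $\lambda_\alpha$ with coefficients in $R$ (apply the preceding sentence to $\sum_\alpha t_\alpha\mu_\alpha$ over the normal ring $R[t]$), so $\sigma_i(a)\in I^i\subseteq I$ for $i\ge 1$ and $N_{K'/K}(1+a)\in 1+I$. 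This is exactly your determinant expansion, transplanted to a setting where no $R$-basis is needed.
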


\begin{proof} The hypotheses imply that $N_{K'/K}({R'}^*)\subseteq R^*$ and  $\Tr_{K'/K}(R')\subseteq R$, hence clearly  $\Tr_{K'/K}(IR')\subseteq I$. So it suffices to show that $N_{K'/K}(1+a) \equiv 1 + \Tr_{K'/K}(a) \mod I^2$
for any $a \in IR'$. We have
\[N_{K'/K}(1+a) = P_a(1)\]
where $P_a(T)= T^n-\sigma_1(a)T^{n-1}+\dots \pm \sigma_n(a)$
is the characteristic polynomial of $a$, with $n=[K':K]$. But $\sigma_i:K'\to K$ is given by a homogeneous polynomial of degree $i$ with coefficients in $K$, and $\sigma_i(R')\subseteq R$ by integrality. In particular, if $a =\sum \lambda_\alpha\mu_\alpha$ with $\lambda_\alpha\in I$, $\mu_\alpha\in R'$, then $\sigma_i(a)$ is a homogeneous polynomial of degree $i$ in the $\lambda_\alpha$'s, with coefficients in $R$.
\end{proof}

\subsection{Proof of  Theorem \ref{thm:key-tech}}
\label{ssect:pf-of-keytech-th}
We follow the method of \cite[Th. 21.6]{mvw}.
We give ourselves all the data in Definition \ref{def:mv-eff},
and prove \eqref{eq:key-lem} induces the zero-map
on all cohomology groups.
We take a closed subscheme 
$V=V_{\infty} \sqcup V_0$ on $\bar{X}$
(resp. 
$W=W_{\infty} \sqcup W_0$ on $\bar{Y}$)
such that $|V_{\infty}|=X_{\infty}, ~|V_0|=Z_X$
(resp. $|W_{\infty}|=Y_{\infty}, ~|W_0|=Z_Y$).
We suppose $f^* V \leq W$
and $f$ induces $W_0 \simeq V_0$.

In the sequel, we use the notation
\[(-)_{X'} = -\times_S X', \text{ etc.}\]
throughout. The main part of the proof is in 
the following proposition:


\begin{proposition}\label{prop:key-tech}
There exist $\lambda_A \in c(A_{X'}/X'),
\lambda_B \in c(B_{Y'}/Y')$
and $\psi \in c(B_{A'}/A')$
satisfying the following conditions:
\begin{align*}
 i_*\lambda_A  - \tilde \jmath_X&\in\Ker(c(X_{X'}/X') \Surj 
\Pic(\bar{X}_{X'}, V_{\infty X'})). \tag{1}\\
i_*\lambda_B  - \tilde\jmath_Y&\in\Ker c(Y_{Y'}/Y') \Surj 
\Pic(\bar{Y}_{Y'}, W_{\infty Y'})).\tag{2}\\
{f'}^*\lambda_A-f_*\lambda_B&\in\Ker(c(A_{Y'}/Y') \Surj 
\Pic(\bar{X}_{Y'}, V_{Y'})).\tag{3}\\
f_*\psi  - {i'}^*\lambda_A + \tilde\jmath_A&\in\Ker( c(A_{A'}/A') \Surj
\Pic(\bar{X}_{A'}, V_{A'})).\tag{4}\\
{f'}^*\psi - {i'}^*\lambda_B + \tilde\jmath_B&\in\Ker(c(B_{B'}/B') \Surj
\Pic(\bar{Y}_{B'}, W_{B'})).\tag{5}\\
i_*\psi&\in \Ker(c(Y_{A'}/A') \Surj
\Pic(\bar{Y}_{A'}, W_{\infty A'})).\tag{6} 
\end{align*}
Here $\tilde\jmath_X:X'\to X_{X'}$ is the graph of $j_X$, and similarly for $\tilde\jmath_Y,\tilde\jmath_A,\tilde\jmath_B$.
\end{proposition}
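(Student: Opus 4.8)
The plan is to follow the method of \cite[Proof of Th.~21.6]{mvw}, replacing the Suslin-complex computations used there by computations in the relative Picard groups, via the exact sequence \eqref{eq2.2} and the functoriality recorded in Lemmas \ref{lem:pic-pull}, \ref{lem:pic-push} and \ref{lem:pic-push2}. No pretheory enters at this stage: everything takes place with cycles considered modulo the subgroups $G(-,-)=\Ker(\tau)$, and the six conditions are nothing but the ``chain-homotopy identities'' one needs, read in the relevant relative Picard groups. We use Remark \ref{rem:affine} freely: $\bar X$ (resp. $\bar Y$) is a good compactification of both $X$ and $A=X-Z_X$ (resp. of $Y$ and $B=Y-Z_Y$), so $\tau$ is surjective on all the cycle groups $c(A_?/?)$, $c(B_?/?)$ below, and this persists after the base changes $(-)_{X'},(-)_{Y'},(-)_{A'},(-)_{B'}$.

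First I would construct $\lambda_A$. Let $\tilde\jmath_X\in c(X_{X'}/X')$ be the graph of the open immersion $j_X:X'\hookrightarrow X$. Since $T_X$ is split over $X'$ (one of the standing hypotheses of Definition \ref{def:mv-eff} in the case at issue in Theorem \ref{thm:key-tech}), the line bundle attached to $\tilde\jmath_X$ is trivial on $(Z_X)_{X'}$; combining this trivialization with the tautological one over $V_{\infty X'}$, the class $\tau(\tilde\jmath_X)\in\Pic(\bar X_{X'},V_{\infty X'})$ lifts along the forgetful map $\Pic(\bar X_{X'},V_{X'})\to\Pic(\bar X_{X'},V_{\infty X'})$ by the exact sequence of Lemma \ref{lem:pic-push} (second part), exactly as in the proof of Proposition \ref{prop:factor}. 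I would pick any $\lambda_A\in c(A_{X'}/X')$ with $\tau(\lambda_A)$ a chosen such lift; condition~(1) then holds because $\tau$ is compatible with $i_*$ and with that forgetful map (Lemma \ref{lem:pic-push}).

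Next, $\lambda_B$ and conditions~(2) and (3). By Lemma \ref{lem:split-cov}, $T_Y$ is split over $f^{-1}(X')\cap Y$, and $f\circ j_Y=j_X\circ f'$ forces $j_Y(Y')\subset f^{-1}(X')\cap Y$; pulling the splitting trivialization back along $j_Y$ trivializes on $(Z_Y)_{Y'}$ the line bundle of the graph $\tilde\jmath_Y$, so that $\tau(\tilde\jmath_Y)$ lifts to $\Pic(\bar Y_{Y'},W_{Y'})$ as above. Here I would not choose $\lambda_B$ freely: rather, because $f$ induces $W_0\simeq V_0$ (i.e. $Z_Y\simeq Z_X$) and $f^*V\le W$, the trivialization used to lift $\tau(\tilde\jmath_Y)$ can be chosen compatibly over $f$ with the one used for $\lambda_A$, and then condition~(3) --- which says that $\tau({f'}^*\lambda_A)$ and $\tau(f_*\lambda_B)$ coincide in $\Pic(\bar X_{Y'},V_{Y'})$ --- follows from the compatibility of $\tau$ with pullback and with the pushforward of Lemma \ref{lem:pic-push2} (Lemmas \ref{lem:pic-pull}, \ref{lem:pic-push2}). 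Finally I would build $\psi\in c(B_{A'}/A')$, the ``secondary'' homotopy: over $A'$ the cycle ${i'}^*\lambda_A-\tilde\jmath_A$ has, by the construction of $\lambda_A$, vanishing class in $\Pic(\bar X_{A'},V_{\infty A'})$, hence (Lemma \ref{lem:pic-push}, base changed along $A'\to S$) its class in $\Pic(\bar X_{A'},V_{A'})$ comes from a unit on $(Z_X)_{A'}$; transporting this unit through $(Z_Y)_{A'}\simeq(Z_X)_{A'}$ and back along $f$ produces a $\psi$ realizing $f_*\psi\equiv{i'}^*\lambda_A-\tilde\jmath_A$ modulo $\Ker(\tau)$ on $\bar X_{A'}$ (condition~(4)), with $i_*\psi$ trivial in $\Pic(\bar Y_{A'},W_{\infty A'})$ by construction (condition~(6)). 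Condition~(5) then follows from (3) and (4) by base change along $B'\to A'$ and $B'\to B$, using $B'=A'\times_{X'}Y'$ and Lemma \ref{lem:pic-pull}.

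The hard part will be the coherence of all these choices: each of $\lambda_A,\lambda_B,\psi$ is a non-canonical lift under a surjection $\tau$ with non-trivial kernel $G(-,-)$, and the six conditions must hold at once. The plan addresses this by fixing $\lambda_A$ first and then forcing the classes of $\lambda_B$ and $\psi$ through the functoriality of the relative Picard group rather than choosing them independently; carrying out the bookkeeping that these forced classes really lift the relevant graph classes and are mutually compatible --- where the hypotheses $f^*V\le W$ and $Z_Y\simeq Z_X$ together with Lemmas \ref{lem:pic-pull}, \ref{lem:pic-push}, \ref{lem:pic-push2} are all used in combination --- is the technically delicate core of the argument, just as in \cite[Th.~21.6]{mvw}.
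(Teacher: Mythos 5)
Your overall strategy is the same as the paper's (both follow \cite[Th.~21.6]{mvw}): construct $\lambda_A$ from the splitting of $T_X$ over $X'$ and the exact sequence of Lemma \ref{lem:pic-push}, force $\lambda_B$ to be compatible using the surjectivity of $\sO^\times(W_{0Y'})\to\sO^\times(V_{0Y'})$ coming from $f\colon Z_Y\simeq Z_X$, and then build $\psi$ out of the ``defect unit'' measuring ${i'}^*\lambda_A-\tilde\jmath_A$ in $\Pic(\bar X_{A'},V_{A'})$. Steps (1)--(3) and (6) are handled correctly and as in the paper.

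There is, however, a genuine gap at condition (4). You transport the unit $r|_{V_{0A'}}$ through $(Z_Y)_{A'}\simeq(Z_X)_{A'}$ to get a class $(\sO_{\bar Y_{A'}},1\sqcup\tilde r^{-1})\in\Pic(\bar Y_{A'},W_{A'})$, lift it to $\psi$, and assert that $f_*\psi$ recovers the class of ${i'}^*\lambda_A-\tilde\jmath_A$. But the pushforward $f_*$ of Lemma \ref{lem:pic-push2} acts on boundary classes of units on $W_0$ by a \emph{norm}, not by the inverse of the transport along $W_0\simeq V_0$: for a general finite $f$ of degree $d>1$ the two differ. The identification of the two is precisely the content of \cite[21.10]{mvw}, and it is not formal: one must use the affine neighbourhood $U\supset Y_\infty\sqcup Z_Y$ from the definition of a standard triple and the Chinese remainder theorem to produce a rational function $h$ on $\bar Y_{X'}$ with $h\equiv 1$ on $W_{\infty X'}$ and $h\equiv\tilde r$ on $W_{0X'}$, set $\psi=-\div(h)$, and then check $N(h)|_{V_{0A'}}=r|_{V_{0A'}}$ — which works only because $f^{-1}(Z_X)\subset Z_Y\cup Y_\infty$ and $h\equiv 1$ on $W_\infty$, so that the fibre over a point of $Z_X$ contributes only through the single point of $Z_Y$. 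Without this explicit representative the identity $f_*(\sO,1\sqcup\tilde r^{-1})=(\sO,1\sqcup r^{-1})$ is unjustified. Relatedly, (5) is not obtained ``by base change from (3) and (4)'': in the paper it reduces to the identity ${f'}^*\tilde r|_{W_{0B'}}=r'|_{W_{0B'}}$ between the explicit unit representatives $r$ and $r'$ of the canonical sections $\sigma,\sigma'$ (the paper's ``preliminary computation''), which is then deduced from (3) via $W_0\simeq V_0$. So the missing ingredient is the explicit CRT/norm construction of $\psi$; the rest of your outline is sound.
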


Given $\lambda_A \in c(A_{X'}/X'),
\lambda_B \in c(B_{Y'}/Y')$
and $\psi \in c(B_{A'}/A')$,
we define maps
\begin{equation}\label{eq:homotopy}
s_1 : F(A) \oplus F(Y) \to F(X'),
\quad
s_2: F(B) \to F(A') \oplus F(Y')
\end{equation}
by
$s_1(a, b)=\langle \lambda_A,a_{X'}\rangle_{A_{X'}/X'},
s_2(a)=(\langle\psi,a_{A'}\rangle_{B_{A'}/A'},\langle\lambda_B,a_{Y'}\rangle_{B_{Y'}/Y'})$.

\begin{remark}\label{r10.1}
When $F$ is homotopy invariant,
Theorem \ref{thm:key-tech} is proved
in \cite[Th. 21.6]{mvw} as follows:
Let us take $V$ and $W$ to be reduced.
By applying Proposition \ref{prop:key-tech},
one obtains $\lambda_A, \lambda_B$ and $\psi$.
Then the maps $s_1, s_2$ given by \eqref{eq:homotopy}
define a chain homotopy from 
\eqref{eq:key-lem} to zero.
(By homotopy invariance, 
the map $\lambda_A^* : F(A) \to F(X')$
depends only on the class of $\Pic(\bar{X}_{X'}, V_{X'})$,
for instance. Hence the conditions in Proposition \ref{prop:key-tech}
contains sufficient information to prove this assertion.)

When $F$ is not homotopy invariant (but has reciprocity),
we need to choose $V$ and $W$ depending on
elements of $F(X), F(A), F(Y)$ and $F(B)$.
Thus we cannot construct a globally defined chain homotopy.
However, the following corollary can be deduced:
\end{remark}

\begin{cor} 
Let $s_1, s_2$ be as in \eqref{eq:homotopy}.
\begin{enumerate}
\item[(a)]
Let $a \in F(X)$.
If the image of $a$ in $F(X_{X'})$
has modulus $V_{\infty X'}$,
then we have 
\[j_X^*a = s_1\left(\begin{smallmatrix}i^*\\ f^*\end{smallmatrix}\right) a.\]
\item[(b)]
Let $(a,b) \in F(A)\times F(Y)$.
Suppose the image of $a$ in $F(A_{X'})$ (resp.  in $F(A_{Y'})$) has modulus $V_{A'}$ 
(resp. $V_{Y'}$), and suppose the image of $b$ in $F(Y_{A'})$ (resp. in $F(Y_{Y'})$)  has modulus $W_{\infty A'}$ (resp.   $W_{\infty Y'}$). 
Then we have 
\[(j^*_Aa, j_Y^*b)=\left(\begin{smallmatrix}{i'}^*\\ {f'}^*\end{smallmatrix}\right)s_1(a,b) +  s_2(-f^*,i^*)(a,b) .\]
\item[(c)]
Let $a \in F(B)$.
If the image $a$ in $F(B_{B'})$
has modulus $W_{B'}$,
then we have 
\[j_B^*a = (-{f'}^*, {i'}^*) s_2 a.\]
\end{enumerate}
\end{cor}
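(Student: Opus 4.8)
The plan is to verify the three identities (a), (b), (c) by unwinding the definitions of $s_1$ and $s_2$ in \eqref{eq:homotopy} and combining them with the conditions (1)--(6) of Proposition~\ref{prop:key-tech}, using at the crucial moment that $\langle Z, c\rangle_{W/T}$ depends only on the class of $\tau(Z)$ in the relative Picard group \emph{provided} $c$ has the appropriate weak modulus. Concretely, for part (a): apply $\langle-, a_{X'}\rangle_{A_{X'}/X'}$-type pairings and use property \ref{pretheory-pairing}(iii) to move push-forwards across the pairing, writing $s_1\left(\begin{smallmatrix}i^*\\ f^*\end{smallmatrix}\right)a = \langle \lambda_A, (i^*a)_{X'}\rangle_{A_{X'}/X'} = \langle i_*\lambda_A, a_{X'}\rangle_{X_{X'}/X'}$; then condition (1) says $i_*\lambda_A$ and $\tilde\jmath_X$ differ by an element of $G(\bar X_{X'}, V_{\infty X'})$, and since by hypothesis the image of $a$ in $F(X_{X'})$ has (weak) modulus $V_{\infty X'}$, the pairing kills that difference, leaving $\langle \tilde\jmath_X, a_{X'}\rangle_{X_{X'}/X'} = j_X^*a$ by \ref{pretheory-pairing}(i). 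Parts (b) and (c) follow the identical template, using conditions (2)--(3) and the moduli on $a,b$ for (b), and conditions (5)--(6) together with the modulus on $a$ for (c).

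First I would fix notation and record the bookkeeping lemma: for a pretheory $F$ with weak reciprocity, $S\in\Sm$, $X/S\in\rC S$ with good compactification $(\Xb/S,Y)$, and $c\in F(X)$ having weak modulus $Y$, the map $\langle-,c\rangle_{X/S}$ factors through $\tau:c(X/S)\to\Pic(\Xb,Y)$ (this is just Definition~\ref{def.recpretheory}(1)). Then I would treat (a), (b), (c) in turn. For each, the recipe is: (i) expand $s_1,s_2$ and use \ref{pretheory-pairing}(ii),(iii) and the functoriality lemmas (Lemmas~\ref{lem:pic-pull}, \ref{lem:pic-push2}) to rewrite both sides as pairings of cycles on a common scheme against the relevant restriction of the input section(s); (ii) invoke the corresponding condition(s) among (1)--(6) to see that the cycles agree modulo the kernel of the surjection onto the appropriate relative Picard group; (iii) use the weak-modulus hypothesis in the statement of the corollary to conclude the pairing is insensitive to that difference; (iv) identify the surviving term via \ref{pretheory-pairing}(i) applied to the graph $\tilde\jmath_{(-)}$.

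The part of (b) that requires care is that the equation involves \emph{four} separate base changes ($A_{X'}, A_{Y'}, Y_{A'}, Y_{Y'}$) and two of the six conditions, and the bilinear pairing $\langle-,-\rangle$ must be matched over the correct base in each summand; I would organise this by expanding $\left(\begin{smallmatrix}{i'}^*\\ {f'}^*\end{smallmatrix}\right)s_1(a,b)$ and $s_2(-f^*a, i^*b)$ into their $F(A')$- and $F(Y')$-components separately and checking each component against one of the conditions (3), (4), (5). The components landing in $F(A')$ use conditions (4) and (6) plus the moduli $V_{A'}$ and $W_{\infty A'}$; those landing in $F(Y')$ use conditions (2), (3) plus the moduli $V_{Y'}$ and $W_{\infty Y'}$.

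The main obstacle I anticipate is not conceptual but combinatorial: keeping the many base-change decorations consistent and ensuring that in each instance the hypothesis "$a$ (resp.\ $b$) has modulus $\,\cdot\,$" is exactly what is needed to discard the ambiguity coming from the kernel of $\tau$. In the homotopy-invariant case of \cite[Th.~21.6]{mvw} (see Remark~\ref{r10.1}) this subtlety is absent because $\lambda_A^*$ factors through $\Pic$ unconditionally; here the factorization is conditional, so the bookkeeping of \emph{which} section carries \emph{which} modulus over \emph{which} base is the real content, and I would present it as a table before carrying out the three near-identical computations. Once the framework is in place, each of (a), (b), (c) is a short chain of equalities, and I would spell out (a) in full and indicate that (b), (c) are entirely analogous.
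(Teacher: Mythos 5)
Your approach is the same as the paper's: the proof there consists precisely of the attribution ``(1) implies (a); (2), (3), (4) and (6) imply (b); (5) implies (c)'', carried out exactly by the mechanism you describe (rewrite both sides as pairings of cycles over a common base, use the conditions of Proposition \ref{prop:key-tech} to see the cycles agree modulo $\Ker(\tau)$, use the stated weak-modulus hypotheses to kill that ambiguity, and finish with \ref{pretheory-pairing}(i) applied to the graphs $\tilde\jmath_{(-)}$). One justification needs repair, and it is exactly the point the paper flags in its parenthetical: to move the push-forward along the open immersions $i\colon A\to X$, $i\colon B\to Y$ (and their base changes) across the pairing you invoke \ref{pretheory-pairing}(iii), but (iii) is not an axiom of a pretheory --- it is the extra condition defining pretheories ``of homological type'', and $F$ here is only assumed to be a pretheory with weak reciprocity. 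The correct tool is Proposition \ref{pr.pretheory}, which establishes $\langle j_*Z,a\rangle_{X/S}=\langle Z,j^*a\rangle_{U/S}$ for open embeddings using $\P^1$-rigidity (available by Proposition \ref{p8.1}); this is why the paper writes ``we used all the axioms of a pretheory, plus Lemma \ref{pr.pretheory}''. A last cosmetic point: in part (c) only condition (5) is needed (condition (6) is consumed in the $F(A')$-component of part (b), where it guarantees $\langle i_*\psi, b_{A'}\rangle=0$), and your passing mention of condition (5) in the discussion of part (b) is superseded by your correct final attribution.
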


\begin{proof}
Prop. \ref{prop:key-tech} (1)
implies (a).
(2), (3), (4) and (6) imply (b).
(5) implies (c). (We used all the axioms of a pretheory, plus Proposition \ref{pr.pretheory}.)
\end{proof}

Theorem \ref{thm:key-tech} follows from this corollary, 
because, for instance, 
for any $a \in F(X)$ by weak reciprocity
one can always find
a closed subscheme $V_\infty$ on $\Xb/S$ with $|V_\infty|=X_\infty$
such that $V_{\infty X'}$ is a modulus for $a$.

It remains to prove Proposition \ref{prop:key-tech}.
We divide it into three steps.

\subsubsection{Construction of $\lambda_A$ and $\lambda_B$}
We consider the commutative diagram
\[
\begin{CD}
c(B_{Y'}/Y') @>i_*>>c(Y_{Y'}/Y') \\
@Vf_*VV @Vf_*VV\\
c(A_{Y'}/Y') @>i_*>>c(X_{Y'}/Y') \\
@A{f'}^*AA @A{f'}^*AA\\
 c(A_{X'}/X')@>i_*>> c(X_{X'}/X').
\end{CD}
\]

Observe that  
$\tilde\jmath_X\in c(X_{X'}/X')$
and  $\tilde\jmath_Y\in c(Y_{Y'}/ Y')$
have the same image in $c(X_{Y'}, Y')$.
By passing to the quotient,
we obtain the right half of the following commutative diagram
(see Lemmas \ref{lem:pic-push}, \ref{lem:pic-pull} and \ref{lem:pic-push2}):
\[
\begin{CD}
\sO^*(W_{0 Y'}) @>>>
\Pic(\bar{Y}_{Y'}, W_{Y'}) @>>>
\Pic(\bar{Y}_{Y'}, W_{\infty Y'}) 
\\
@V(*)VV 
@VVV
@VVV
\\
\sO^*(V_{0 Y'}) @>>>
\Pic(\bar{X}_{Y'}, V_{Y'}) @>>>
\Pic(\bar{X}_{Y'}, V_{\infty Y'})
\\
@.
@AAA
@AAA
\\
@.
\Pic(\bar{X}_{X'}, V_{X'}) @>>>
\Pic(\bar{X}_{X'}, V_{\infty X'}).
\end{CD}
\]
The upper two rows are exact
by Lemma \ref{lem:pic-push}.
Note also that $(*)$ is surjective since 
$f|_{Z_Y} : Z_Y = |W_0| \iso Z_X= |V_0|$ 
and $f^* V_0 \simeq W_0$.

Since $T_X$ is split over $X'$, as in the proof of Proposition \ref{prop:factor} we get $\lambda_A \in c(A_{X'}/X')$ such that $i_*\lambda_A$ and $\tilde\jmath_X$ agree 
in $\Pic(\bar{X}_{X'}, V_{\infty X'})$.
Similarly, in view of Lemma \ref{lem:split-cov},
there exists $\lambda_B \in c(B_{Y'}/Y')$
such that  $i_*\lambda_B$ and $\tilde\jmath_Y$
agree in $\Pic(\bar{Y}_{Y'}, W_{\infty Y'})$.
 Moreover, using the surjectivity of $(*)$,
$\lambda_B$ can be chosen so that
its image in $\Pic(\bar{X}_{Y'}, V_{Y'})$
agrees with that of $\lambda_A$.
We have proven (1)-(3).

\subsubsection{Preliminary computation}
Before we construct $\psi$, 
we do some computations on $\lambda_A$ and $\lambda_B$.
Let $\sL$ be the invertible sheaf on $\bar{X}_{X'}$ 
corresponding to (the graph of) $j_X: X' \hookrightarrow X$.
The image of $\lambda_A$ in $\Pic(\bar{X}_{X'}, V_{X'})$
defines a trivialization
$\tau=\tau_{\infty} \sqcup \tau_0$,
where 
$\tau_{\infty} : \sL|_{V_{\infty X'}} \simeq \sO_{V_{\infty X'}}, ~
\tau_{0} : \sL|_{V_{0 X'}} \simeq \sO_{V_{0 X'}}$.
Since $\sL$ is defined by an effective divisor,
it has a canonical global section $\sigma \in \sL(\bar{X}_{X'})$
(given by the image of $1 \in \sO(\bar{X}_{X'})$).
The image of $\tilde\jmath_X $
in $\Pic(\bar{X}_{X'}, V_{\infty X'})$ is
given by the class of $(\sL, \sigma|_{V_{\infty X'}})$,
which equals the image of $i_*\lambda_A$.
This proves $\tau_{\infty} = \sigma|_{V_{\infty X'}}$.
As $\tau_0$ is a trivialization,
there exists
$r \in \sO(V_{0 X'})$
such that
$\sigma|_{V_{0 X'}} = r \tau_0$.
By pulling back along $i': A' \hookrightarrow X'$,
we find that 
\begin{equation}
\label{eq:lam-ja}
\text{
the class of $-\lambda_{A'}+j_A$
is represented by
$(\sO_{\bar{X}_{A'}}, 1 \sqcup (r|_{V_{0 A'}}))$}
\end{equation}
in $\Pic(\bar{X}_{A'}, V_{A'})$.
(Note that $\sigma|_{V_{0 A'}}$ defines a trivialization
$\sL|_{V_{0 A'}} \simeq \sO_{V_{0 A'}}$
because $A' \subset X' \setminus Z_X$.
Thus $r|_{V_{0 A'}} \in \sO(V_{0 A'})$ is invertible.)

Similarly, let
$\sL'$ be the invertible sheaf on $\bar{Y}_{Y'}$ 
corresponding to $j_Y : Y' \to Y$.
The image of $i_*\lambda_B$ in $\Pic(\bar{Y}_{Y'}, W_{Y'})$
defines a trivialization
$\tau'=\tau_{\infty}' \sqcup \tau_0'$,
where 
$\tau_{\infty}' : \sL'|_{W_{\infty Y'}} \simeq \sO_{W_{\infty Y'}}, ~
\tau_{0}' : \sL'|_{W_{0 Y'}} \simeq \sO_{W_{0 Y'}}$.
We also have a canonical global section
$\sigma' \in \sL'(\bar{Y}_{Y'})$.
We have $\tau_{\infty}' = \sigma'|_{W_{\infty Y'}}$.
As $\tau_0'$ is a trivialization,
there exists
$r' \in \sO(W_{0 Y'})$
such that
$\sigma'|_{W_{0 Y'}} = r' \tau_0'$.
We find that 
\begin{equation}
\label{eq:lam-jb}
\text{
the class of $-\lambda_{B'}+j_B$
is represented by
$(\sO_{\bar{Y}_{B'}}, 1 \sqcup (r'|_{W_{0 B'}}))$}
\end{equation}
in $\Pic(\bar{Y}_{B'}, W_{B'})$.
(Note that
$\sigma'|_{W_{0 B'}}$ defines a trivialization
$\sL'|_{W_{0 N'}} \simeq \sO_{W_{0 B'}}$
because $B' \subset Y' \setminus Z_Y$.
Thus we have $r'|_{W_{0 B'}} \in \sO^*(W_{0 B'})$.)

\subsubsection{Construction of $\psi$}
(Compare \cite[21.9]{mvw}.)
We consider the commutative diagram
\begin{equation}
\label{eq:cov-mor2}
\begin{CD}
W_{0 Y'}  @>\subset>> 
\bar{Y}_{Y'} @>f'>>
\bar{Y}_{X'} @<\supset<<
W_{0 X'}
\\
@V\wr VV 
@VfVV 
@VVfV 
@VV\wr V 
\\
V_{0 Y'}  @>\subset>> 
\bar{X}_{Y'} @>f'>>
\bar{X}_{X'} @<\supset<<
V_{0 X'}.
\end{CD}
\end{equation}

Let $\tilde{r} \in \sO(W_{0 X'})$
be the pull-back of $r\in \sO(V_{0 X'})$ along $f$.
By the definition of standard triple,
there is an affine open neighbourhood $U \subset \bar{Y}$
of $Y_{\infty} \sqcup Z_Y$.
Thus $U_{X'}$ is an affine open neighbourhood of $W_{X'}$.
By Chinese reminder theorem,
we find $h \in \sO(U_{X'})$ which is mapped to
$1$ in $\sO(W_{\infty X'})$ 
and to $\tilde{r}$ in $\sO(W_{0 X'})$.
(Thus $h$ is a rational function on $\bar{Y}_{X'}$.)
Now \emph{we define $\psi$ to be $-\div(h)$
considered as an element of $c(B_{A'}/A')$}.
Note that the support of $\psi$ is contained in
$(\bar{Y} \setminus U)_{A'} \subset B_{A'}$,
since $r|_{V_{0 A'}} \in \sO(V_{0 A'})^*$.

By definition,
the image of $\psi$ in 
$\Pic(\bar{Y}_{A'}, W_{A'})$ is represented by
$\alpha := (\sO_{\bar{Y}_{A'}}, 1 \sqcup \tilde{r}^{-1})$.
We consider the images of $\alpha$
by three maps.
First, 
\[
\Pic(\bar{Y}_{A'}, W_{A'}) \to 
\Pic(\bar{Y}_{A'}, W_{\infty A'}) 
\]
(see Lemma \ref{lem:pic-push})
sends $\alpha$ to the class of
$(\sO_{\bar{Y}_{A'}}, 1)$.
This proves (6).
Second, 
\[ {f}_* : \Pic(\bar{Y}_{A'}, W_{A'}) \to
\Pic(\bar{X}_{A'}, V_{A'})
\]
(see Lemma \ref{lem:pic-push2})
sends $\alpha$ to the class of 
$(\sO_{\bar{X}_{A'}}, 1 \sqcup N(h)|_{V_{0 A'}}^{-1})$.
On the other hand,
it is proved in [MVW, 21.10] that
$N(h)|_{V_{0 A'}} = r|_{V_{0 A'}} 
~\text{in} ~\sO(V_{0 A'}).$
(In loc. cit., $V$ is assumed to be reduced,
but this assumption is not used in the proof.)
(4) follows from this and \eqref{eq:lam-ja}.
Finally, 
\[{f'}^* : \Pic(\bar{Y}_{A'}, W_{A'}) \to 
\Pic(\bar{Y}_{B'}, W_{B'}) 
\]
(see Lemma \ref{lem:pic-pull})
sends $\alpha$ to the class of 
$(\sO_{\bar{Y}_{B'}}, 1 \sqcup ({f'}^* \tilde{r}^{-1}))$.
In view of \eqref{eq:lam-jb},
(5) is reduced to 
${f'}^* \tilde{r}|_{W_{0 B'}} = r'|_{W_{0 B'}}$ in 
$\sO(W_{0 B'})$.
By diagram \eqref{eq:cov-mor2} and the definition of $\tilde{r}$,
we have
${f'}^* \tilde{r}|_{W_{0 B'}}=f^* {f'}^* r|_{W_{0 B'}}$.
Since $f$ induces an isomorphism $W_0 \simeq V_0$,
it suffices to show
${f'}^*r|_{V_{0 B'}}=f_*r'|_{V_{0 B'}}$,
but this follows from (3).
This completes the proof of Proposition \ref{prop:key-tech}.

\section{Sheafification preserves reciprocity}\label{sheafification}

We prove
Theorem \ref{thm:ZarNis.intro} in the introduction.
(1), (2) and (3) are respectively shown
in \S \ref{sect:preserve-zar},
\ref{ssect:zar-sheafification-rec} and
\ref{sect:comparison-zar-nis}.
In the last subsection \S \ref{ssect:gen-jac},
we make a brief discussion on generalized Jacobian of a curve.

\subsection{Zariski sheafification preserves transfers}\
\label{sect:preserve-zar}

In \cite[Th. 22.15]{mvw}, Zariski sheafification of a homotopy invariant presheaf
with transfers is shown to have transfers.
The proof actually shows the following:

\begin{thm}[\protect{\cite[Th. 22.15]{mvw}}]
If $F \in \PST$ is MV-effaceable, then $F_{\Zar}$ has a unique structure of presheaf with transfers
such that $F \to F_{\Zar}$ is a morphism in $\PST$.
\end{thm}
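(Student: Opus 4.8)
The plan is to follow the strategy of \cite[Th.~22.15]{mvw}, using MV-effaceability as a black box exactly where homotopy invariance is used there, so the key content reduces to: (a) constructing candidate transfer maps on sections of $F_\Zar$ over Henselian local schemes (equivalently on stalks), (b) checking these are well-defined, and (c) checking they assemble into a presheaf-with-transfers structure and are uniquely determined by compatibility with $F\to F_\Zar$.

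\medskip

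First I would recall the shape of Voevodsky's argument. A finite correspondence $\alpha\in\Cor(W,X)$ with $W$ the spectrum of a Henselian (or semi-local) ring can, after localizing, be moved into a form amenable to the \emph{standard triple} machinery: one covers $X$ by affine opens over which the relevant graph line bundle is trivial, produces an upper distinguished square $Q(X,Y,A)$ and a covering morphism $f\colon T_Y\to T_X$ of standard triples as in Definition~\ref{def:mv-eff}, and uses the MV-effaceability of $F$ to show that the obstruction to descending $\alpha^*$ from $F(X)$ to the stalk $(F_\Zar)_w$ vanishes. Concretely, MV-effaceability says the morphism of complexes \eqref{eq:key-lem} is zero on cohomology when $T_X$ is split over the affine open $X'$ containing the points of interest; this is precisely what guarantees that the two a priori different ways of evaluating a transfer (via $A=X\setminus Z_X$ and via the \'etale cover $Y$) agree after passing to the stalk, which is the only place homotopy invariance entered in \cite[\S21--22]{mvw}. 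I would therefore transcribe the construction of \cite[Lem.~22.11, 22.13]{mvw}, replacing every invocation of "$F$ homotopy invariant $\Rightarrow$ $F$ is MV-effaceable" (their Th.~21.6) by our Theorem~\ref{thm:key-tech} / the hypothesis of the present theorem.

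\medskip

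The steps, in order: (1) reduce to defining, for each $w\in\Sm$ and each Henselization $X_w^h$ at a point, a pairing $c(X_w^h \times Y/X_w^h)\times (F_\Zar)(X_w^h)\to (F_\Zar)(X_w^h)$ — here one uses that $F_\Zar$ on a Henselian local scheme is the colimit of $F$ over affine Nisnevich/Zariski neighbourhoods, and that finite correspondences out of a Henselian local scheme are controlled by Walker's presentation via standard triples; (2) show the pairing is independent of the chosen standard triple, covering morphism, and splitting open set — this is exactly the "zero on cohomology" content of MV-effaceability applied to a morphism of upper distinguished squares comparing two choices; (3) verify the axioms of $\PST$ (bi-additivity, compatibility with composition of correspondences, identity) for the resulting operation — these are formal once well-definedness is known, since the stalk-level pairings are built from the $F$-level pairing which already satisfies them; (4) sheafify: check the stalk-level transfers glue to endomorphisms of $F_\Zar$ as a presheaf on all of $\Sm$, using that $F_\Zar$ is a Zariski sheaf; (5) uniqueness: any $\PST$-structure on $F_\Zar$ compatible with $F\to F_\Zar$ must agree on stalks with the one constructed, because on a Henselian local scheme the map $F\to F_\Zar$ is surjective on the relevant neighbourhoods and transfers commute with it by naturality.

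\medskip

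The main obstacle I expect is step (2): organizing the comparison so that \emph{any} two standard-triple presentations of a given correspondence (and their splitting data) are linked by a chain of morphisms of upper distinguished squares to which Definition~\ref{def:mv-eff} literally applies — in particular arranging that the comparison square $Q'$ has $X',Y'$ affine and $T_X$ split over $X'$, which is the precise hypothesis under which MV-effaceability forces the cohomology maps to vanish. In \cite{mvw} this bookkeeping occupies most of \S22, and it is purely formal (no homotopy invariance), so I would cite it and indicate the one substitution; but it is the step where all the definitions must be matched up carefully. Everything else — additivity, functoriality in $S$, the identity axiom, and uniqueness — is routine diagram-chasing given that the underlying $F$-level structure already has these properties and that $F\to F_\Zar$ is a morphism of presheaves.
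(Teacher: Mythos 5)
Your proposal is correct and is essentially the paper's own argument: the paper simply observes that Voevodsky's proof of \cite[Th.~22.15]{mvw} uses homotopy invariance only through MV-effaceability (their Th.~21.6), so the whole of \cite[\S 22]{mvw} carries over verbatim once that hypothesis is substituted. Your more detailed walk-through of the stalk-level construction, the well-definedness via splittings of standard triples, and the uniqueness argument matches the structure of that proof.
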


Combined with Theorem \ref{thm:key-tech}, we obtain the following theorem:

\begin{thm}
\label{thm:zar-trans}
If $F \in \PST$ has weak reciprocity, $F_{\Zar}$ has a unique structure of presheaf with transfers
such that $F \to F_{\Zar}$ is a morphism in $\PST$.
\end{thm}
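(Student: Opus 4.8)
The plan is to deduce Theorem \ref{thm:zar-trans} by combining two results already available in the excerpt. First I would observe that, by Lemma \ref{recimplyweakrec}, a presheaf with transfers $F$ having reciprocity has weak reciprocity; but in fact the statement is phrased for $F \in \PST$ having weak reciprocity directly, so no such reduction is needed. The genuine content is the implication ``weak reciprocity $\Rightarrow$ MV-effaceable'', which is precisely Theorem \ref{thm:key-tech}, whose proof is deferred to \S\ref{sec:pf-keytheorem}. Granting that, the theorem is immediate.

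\begin{proof}
Let $F \in \PST$ have weak reciprocity. By Theorem \ref{thm:key-tech}, $F$ is MV-effaceable. Since in particular $F$ is a pretheory underlying an object of $\PST$, the criterion of \cite[Th. 22.15]{mvw}, as recalled above, applies: an MV-effaceable $F \in \PST$ has the property that its Zariski sheafification $F_{\Zar}$ carries a unique structure of presheaf with transfers making $F \to F_{\Zar}$ a morphism in $\PST$. This is exactly the assertion.
\end{proof}

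The only non-formal ingredient is Theorem \ref{thm:key-tech}, and that is where all the difficulty lies; its proof (postponed to \S\ref{sec:pf-keytheorem}) is the technical heart, involving the machinery of standard triples, covering morphisms, the relative Picard computation of Lemma \ref{lem:pic-push}, and the weak-modulus factorisation of Proposition \ref{prop:factor}. Once that input is in hand, the present theorem is a one-line consequence of the cited form of \cite[Th. 22.15]{mvw}: the uniqueness of the transfer structure on $F_{\Zar}$ follows because $F \to F_{\Zar}$ has dense image after evaluation on local schemes (so transfers are determined on a generating family), and its existence is the content of the effaceability argument in loc. cit. I would flag that the subtlety of Voevodsky's original proof — that one must pass through the étale-local behaviour of $F$ and control the comparison of Zariski and Nisnevich sheafifications — is entirely absorbed into the MV-effaceability formalism, which was designed precisely so that this deduction becomes formal.
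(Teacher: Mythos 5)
Your proposal is correct and matches the paper's own argument exactly: Theorem \ref{thm:zar-trans} is obtained there by combining Theorem \ref{thm:key-tech} (weak reciprocity implies MV-effaceability) with the restatement of \cite[Th. 22.15]{mvw} for MV-effaceable presheaves with transfers. Nothing further is needed.
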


\begin{remark}
\'Etale and Nisnevich analogue of the above theorem
hold for any $F \in \PST$
(without assuming reciprocity).
See \cite[6.17, 14.1]{mvw}.
\end{remark}

\subsection{Zariski sheafification preserves reciprocity.}
\label{ssect:zar-sheafification-rec}

In \cite[Th. 22.2]{mvw}, it is proved that Zariski sheafification of a
homotopy invariant presheaf with transfers is homotopy invariant.
The same argument does not work for reciprocity sheaves.
The proofs of following results are based on a different idea.

\begin{lemma}\label{lem:zar-rec}
If $F \in \PST$ has weak reciprocity, so does $F_{\Zar}$ 
(Note that the statement makes sense by Theorem \ref{thm:zar-trans}).
\end{lemma}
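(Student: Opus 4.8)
The plan is to transfer weak reciprocity across the natural map $\iota\colon F\to F_\Zar$ by exploiting two structural facts already available: first, that $F_\Zar$ has transfers (hence is a pretheory) by Theorem \ref{thm:zar-trans}, so the pairings $\langle-,-\rangle_{X/S}$ make sense for $F_\Zar$; second, that for any $S\in\Sm$ and $X/S\in\rC S$ with good compactification $(\Xb/S,Y)$, the datum of a weak modulus is governed by the exact sequence \eqref{eq2.2}, i.e.\ it is the assertion that $\langle-,a\rangle_{X/S}\colon c(X/S)\to F_\Zar(S)$ kills the image of $\div_{\Xb}\colon G(\Xb,Y)\to c(X/S)$. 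So I must show: given affine $S$, $X/S\in\rC S$ with good compactification $X\hookrightarrow\Xb$, and $a\in F_\Zar(X)$, there exists $Y\subset\Xb$ with $X=\Xb-Y$ such that $\langle\div_{\Xb}(g),a\rangle_{X/S}=0$ in $F_\Zar(S)$ for all $g\in G(\Xb,Y)$.

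The key step is a \emph{local-to-global} argument. Since $F_\Zar$ is a Zariski sheaf, $F_\Zar(S)$ injects into $\prod_{s} F_\Zar(\sO_{S,s}^h)$ — or better, it suffices to check vanishing of $\langle\div_{\Xb}(g),a\rangle_{X/S}$ after restriction to a Zariski cover of $S$, and then to a neighbourhood of each point. So fix a point $s\in S$ and work over the semi-local (or local) base $S_s:=\Spec\sO_{S,s}$; by Theorem \ref{thm:inj} (injectivity, which applies to any pretheory with weak reciprocity — note $F$ itself has weak reciprocity by hypothesis, so $F_\Zar$ has weak reciprocity \emph{once we know it}, which is circular; instead I use the injectivity of Theorem \ref{thm:inj.intro}(1) for $F$ directly, or rather Corollary \ref{cor:inj}) one reduces questions about $F_\Zar$ on local schemes to questions about $F$ on their generic points, or more carefully to $F$ on a dense open. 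The cleanest route: $a\in F_\Zar(X)$ is, locally on $X$, represented by a genuine section of $F$; shrinking $X$ (which is legitimate because weak modulus is compatible with restriction to dense opens of the source curve — and one can enlarge the ``missing'' part of $\Xb$ accordingly using Lemma \ref{lem:admissible-induce-mor}-style arguments, or rather directly by passing to a smaller $\rC S$-curve), one finds a dense open $X'\subset X$ over which $a$ comes from $\til a\in F(X')$. Then apply weak reciprocity of $F$ to $\til a$ over the good compactification $\Xb$ of $X'/S$ to get a weak modulus $Y'$ for $\til a$; pushing forward along $F\to F_\Zar$ shows $Y'$ is a weak modulus for $a|_{X'}$ in $F_\Zar$. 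Finally one must promote a weak modulus for $a|_{X'}$ to a weak modulus for $a$ on all of $X$: here one uses that $\langle\div_{\Xb}(g),a\rangle_{X/S}$ for $g\in G(\Xb,Y)$ with $Y$ large enough (so that $|\div_{\Xb}(g)|\subset X'$) is computed entirely inside $X'$ via Proposition \ref{pr.pretheory} (which needs $\P^1$-rigidity, available from Proposition \ref{p8.1} once weak reciprocity is known for $F$ — not yet for $F_\Zar$, so one applies $\P^1$-rigidity of $F$ and sheafifies).

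I expect the main obstacle to be precisely the bookkeeping in this last reduction: making rigorous that enlarging the modulus $Y$ forces the supports $|\div_{\Xb}(g)|$ into a prescribed dense open $X'$, and that the pairing then only depends on data over $X'$, so that a weak modulus computed from $\til a\in F(X')$ genuinely serves $a\in F_\Zar(X)$. This requires care because $c(X/S)\to c(X'/S)$ is not injective, so one cannot simply restrict cycles; one must instead use Proposition \ref{pr.pretheory} applied to the open immersion $X'\hookrightarrow X$ to rewrite $\langle j_*Z,a\rangle_{X/S}=\langle Z,j^*a\rangle_{X'/S}$, together with the observation that for $g\in G(\Xb,Y)$ with $Y$ supported sufficiently deeply, $\div_{\Xb}(g)$ is supported in $X'$ hence lies in the image of $j_*\colon c(X'/S)\to c(X/S)$. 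Everything else — sheaf-theoretic descent of the vanishing, compatibility of $\iota$ with the pairings, and the fact that $F_\Zar$ is a pretheory — is either immediate or quoted from Theorem \ref{thm:zar-trans} and \ref{pretheory-pairing}.
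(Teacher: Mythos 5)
Your reduction to a dense open $X'\subset X$ on which $a$ lifts to $\til a\in F(X')$ is fine, but the step you yourself identify as the main obstacle does not go through, and it is not mere bookkeeping: it is false that for $g\in G(\Xb,Y)$ with $Y$ ``supported sufficiently deeply'' one has $|\div_{\Xb}(g)|\subset X'$. The modulus $Y$ is supported on $\Xb-X$, so enlarging $Y$ only constrains the behaviour of $g$ along $\Xb-X$; it imposes no restriction whatsoever on where inside $X$ the zeros and poles of $g$ lie. Concretely, for $\Xb=\P^1$, $Y=n\{\infty\}$, $X=\A^1$, $X'=\A^1\setminus\{0\}$, there are functions $g=\prod_i(t-a_i)^{e_i}\in G(\P^1,n\{\infty\})$ (arrange $\sum_i e_i=0$ and the first $n-1$ Newton sums of the $a_i$'s to vanish) with a zero or pole at $t=0$, for every $n$. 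So $\div_{\Xb}(g)$ need not lie in the image of $j_*:c(X'/S)\to c(X/S)$, and your appeal to Proposition \ref{pr.pretheory} cannot get started. (A secondary, unaddressed point: Proposition \ref{pr.pretheory} for $F_{\Zar}$ requires $\P^1$-rigidity of $F_{\Zar}$, which is a statement about $F_{\Zar}(\P^1_S)$ and does not follow formally by ``sheafifying'' the $\P^1$-rigidity of $F$; compare the paper's remark that the homotopy-invariant argument of \cite[Th.\ 22.2]{mvw} does not carry over.)

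The paper's route avoids shrinking the curve $X$ altogether. Using Corollary \ref{cor:inj} (which applies because $F$ has weak reciprocity), $F_{\Zar}(S)\to F_{\Zar}(V)$ is injective for dense open $V\subset S$, so one may base-change to the generic point $\eta=\Spec k(S)$ of the \emph{base}. Over a function field, weak reciprocity is equivalent, by Proposition \ref{prop:loc-symbol-over-fields}, to the existence of local symbols (condition (2) of that proposition), and that condition only involves the groups $F(K)$, $F(\sO_{X,x})$, $F(E)$ — all filtered colimits over dense opens, i.e.\ stalks — which are unchanged by Zariski sheafification. Hence the local symbols for $F$ literally are local symbols for $F_{\Zar}$, giving weak reciprocity of $F_{\Zar}$ over $\eta$, and injectivity descends the vanishing to $S$. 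If you want to keep a lift-to-$F$ strategy, you must replace your support argument by this local-symbol mechanism (it is also what powers the covering lemma used in the proof of Theorem \ref{thm:zar-rec}).
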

\begin{proof}
Let $S\in \Sm$ and $X/S\in \rC S$ with a good compactification $X\hookrightarrow \Xb$,
and take $a \in F_{\Zar}(X)$.
We need to prove that $a$ has a weak modulus $Y\subset \Xb$ such that $X=\Xb \setminus |Y|$.
We may assume $X$ and $S$ connected.
By Corollary \ref{cor:inj},  $F_{\Zar}(S) \to F_{\Zar}(V)$ is injective for any dense open subset $V\subset S$.
Hence we may prove the assertion after the base change to the generic point $\eta$ of $S$. 
Now the theorem follows from Proposition \ref{prop:loc-symbol-over-fields}
since the condition (2) of the proposition is insensible to Zariski sheafification. 
\end{proof}

\begin{thm}\label{thm:zar-rec}
Assume $k$ is perfect.
If $F \in \PST$ has reciprocity, then $F_{\Zar}$ has reciprocity.
\end{thm}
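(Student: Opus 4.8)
The plan is to deduce Theorem \ref{thm:zar-rec} from Lemma \ref{lem:zar-rec} together with the $\P^1$-invariance machinery and a reduction to the case of curves over function fields. First I would observe that by Theorem \ref{thm:zar-trans}, $F_\Zar$ is a presheaf with transfers, and by Lemma \ref{lem:zar-rec} it has weak reciprocity. The issue is that weak reciprocity is a priori strictly weaker than reciprocity (the representability Theorem \ref{thm:main-rep.intro} fails for it, cf. Remark \ref{rem:adv-of-rec-over-wrec}), so we must upgrade. The key point is that the only gap between the two notions concerns the shape of the modulus pairs and base schemes allowed: reciprocity in Definition \ref{def.reciprocity} quantifies over arbitrary $S \in \Sm$ and arbitrary relative curves $\Cb$ inside $\Xb \times S$, whereas weak reciprocity in Definition \ref{def.recpretheory} is formulated for good compactifications over affine $S$. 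So I would use Remark \ref{rem:reduction-to-curve}: since $k$ is perfect and $F_\Zar$ has global injectivity by Corollary \ref{cor:inj}(1), to check that $Y$ is a modulus for a section $a$ it suffices to verify the condition over function fields $K = k(S)$, i.e. for normal integral proper curves $\Cb$ over $K$.

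The second step is to reduce, via Corollary \ref{coro:reduction-to-cartier}, to producing for each quasi-affine $X \in \Sm$ and each $a \in F_\Zar(X)$ a modulus pair $(\Xb, Y)$ with $X = \Xb - |Y|$, $Y$ a Cartier divisor, and $Y$ a modulus for $a$. Using Remark \ref{rem:resol} (which needs $k$ perfect only for the smooth/NCD refinement — normality plus Cartier works over any field), pick such $(\Xb, Y)$. By Remark \ref{rem:reduction-to-curve} we are reduced to: given $\phi : \Cb \to \Xb \times K$ finite with $\phi(\Cb) \not\subset Y \times K$, and $g \in G(\Cb, D)$ with $D = \phi^*(Y \times K)$, show $(\div_{\Cb}(g))^*(\psi^*(a)) = 0$ in $F_\Zar(K)$, where $\psi : C \to X$ is induced by $\phi$. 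Since $C$ is regular and $k$ perfect, $C \in \widetilde{\Sm}$ and $\psi^*$ makes sense. Now $\psi^*(a) \in F_\Zar(C) = F_\Zar(\Cbeta)$ restricted, and the point is that over a function field the curve setting is exactly the setting of good compactifications with $S = \Spec K$: the pairing $\langle \div_{\Cb}(g), \psi^*(a)\rangle_{C/K}$ is what weak reciprocity (applied to $F_\Zar$ over the affine base $\Spec K$) controls via Proposition \ref{prop:loc-symbol-over-fields}.

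So the third step is the heart: invoke Proposition \ref{prop:loc-symbol-over-fields} for $F_\Zar$ and the curve $\bar X := \Cbeta$ over $E := K$. Condition (2) of that proposition — the existence of a compatible family of local symbols satisfying Weil reciprocity — is insensitive to Zariski sheafification (this is precisely the observation used in the proof of Lemma \ref{lem:zar-rec}: the symbols $(-,-)_x$ are defined using the pairing $\langle -, -\rangle$ and the local terms $a(x) = f_{x*} i_x^* a$, which are unchanged under sheafification since they only involve sections over semilocal, in fact local, schemes where $F = F_\Zar$... more precisely, $F(E) = F_\Zar(E)$ for fields $E$ and the symbols extend by the same formulas). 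Since $F$ has reciprocity hence weak reciprocity by Lemma \ref{recimplyweakrec}, condition (1) of Proposition \ref{prop:loc-symbol-over-fields} holds for $F$, hence (2) holds for $F$, hence (2) holds for $F_\Zar$, hence (1) holds for $F_\Zar$: for every dense open affine $U \subset \bar X$ and $a \in F_\Zar(U)$ there is an effective divisor $Z$ with $U = \bar X - |Z|$ such that $\langle \div_{\bar X}(f), a\rangle_{U/E} = 0$ for all $f \in G(\bar X, Z)$. Applying this with $U = C = \Cbeta - D'$ (shrinking if necessary and using that $C$ is affine since $X$ is quasi-affine, as in the proof of Lemma \ref{recimplyweakrec}) gives exactly the vanishing $(\div_{\Cb}(g))^*(\psi^*(a)) = 0$ we need, for $g$ in $G$ of a possibly larger modulus — but by the "moreover" clause of Proposition \ref{prop:loc-symbol-over-fields} and the approximation argument there, one can take the modulus $Z$ to be the pullback $D = \phi^*(Y \times K)$ provided $Y$ is chosen large enough, which we may do by replacing $Y$ by a suitable multiple (a Cartier divisor supported on $\Xb - X$) — this is legitimate because the condition of being a modulus only gets easier for larger $Y$, and we are free to enlarge $Y$ at the start. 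This closes the argument.

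The main obstacle I anticipate is the bookkeeping in step three: matching the divisor $D = \phi^*(Y \times K)$ on $\Cbeta$ coming from a globally chosen Cartier $Y$ on $\Xb$ with the divisor $Z$ produced locally by Proposition \ref{prop:loc-symbol-over-fields}, uniformly over all $\phi$. The resolution is to not demand uniformity in $\phi$: we only need \emph{some} modulus $Y$ for $a$, so we should run the local-symbol argument first to extract the family of symbols $\{(-,-)_x\}$ for $F_\Zar$ on each relevant curve, conclude that $a|_C$ has \emph{some} weak modulus, and then feed this back through Lemma \ref{lem:admissible-induce-mor} and Proposition \ref{prop:admissible-induce-mor} (exactly as in the proof of Theorem \ref{thm:main-rep}(2) in \S\ref{s2.9}) to descend to a genuine modulus $Y$ on $\Xb$ — in other words, mimic the proof that $h(\Xb,Y)$ has reciprocity, with $F_\Zar$ in place of $h(\Xb,Y)$, using that $F_\Zar$ has weak reciprocity (Lemma \ref{lem:zar-rec}) as the only input. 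A cleaner route, which I would adopt if it goes through, is simply: $F_\Zar \in \PST$ has weak reciprocity; over any function field the local symbols exist; therefore by the representability/functoriality package of \S\ref{representability} applied to $F_\Zar$ exactly as in Corollary \ref{coro:reduction-to-cartier} and \S\ref{s2.9}, $F_\Zar$ has reciprocity. The perfectness of $k$ enters only through Remark \ref{rem:reduction-to-curve} (to know $C \in \widetilde{\Sm}$) and Corollary \ref{cor:inj}/global injectivity.
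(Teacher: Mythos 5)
Your preliminary reductions (global injectivity, Remark \ref{rem:reduction-to-curve}, Corollary \ref{coro:reduction-to-cartier}, the local symbols of Proposition \ref{prop:loc-symbol-over-fields}) all occur in the paper's proof, but the central step you propose --- upgrading weak reciprocity of $F_{\Zar}$ to reciprocity of $F_{\Zar}$ --- does not close. Note that your argument only ever invokes weak reciprocity of $F$ (via Lemma \ref{recimplyweakrec}), so if it worked it would show that weak reciprocity implies reciprocity for Zariski sheaves with transfers, which is exactly the distinction the paper is careful to maintain. Concretely: reciprocity demands a single $Y\subset\Xb$ working simultaneously for every $S\in\Sm$ and every $\phi:\Cb\to\Xb\times S$, whereas the local-symbol argument applied to $F_{\Zar}$ produces, for each curve separately, some effective divisor on that curve (the integers $m_x$ in condition (2)(b) of Proposition \ref{prop:loc-symbol-over-fields}) with no bound uniform over all $\phi$ in terms of $\phi^*(Y\times K)$. ``Enlarging $Y$ by a suitable multiple at the start'' therefore does not help: the required multiple depends on the curve. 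Your fallback of mimicking \S\ref{s2.9} with $F_{\Zar}$ in place of $h(\Xb,Y)$ is circular: that argument works because the universal section of $h(\Xb,Y)$ has modulus $Y$ in the full sense by construction, so that $a_*:\Ztr(V)\to h(\Xb,Y)$ can be fed into Proposition \ref{prop:admissible-induce-mor}; to run it for $a\in F_{\Zar}(X)$ you would first need $a:\Ztr(X)\to F_{\Zar}$ to factor through some $h(\Xb,Y)$, which is precisely the statement to be proved.

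The missing idea is a Zariski local-to-global gluing that uses the full reciprocity of $F$, not of $F_{\Zar}$. Write $X=\bigcup_{i=1}^r U_i$ with $a|_{U_i}=\rho(a_i)$ for sections $a_i\in F(U_i)$, where $\rho:F\to F_{\Zar}$ is the canonical map. Reciprocity of $F$ yields a genuine modulus $Y_i\subset\Xb$ for each $a_i$ with $|Y_i|=\Xb-U_i$, uniform over all curves; hence $Y_i$ is also a modulus for $\rho(a_i)=a|_{U_i}\in F_{\Zar}(U_i)$. Set $Y:=Y_1\times_{\Xb}\dots\times_{\Xb}Y_r$, so that $|Y|=\Xb-X$. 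To check that $Y$ is a modulus for $a$, one reduces by global injectivity and Remark \ref{rem:reduction-to-curve} to a curve $\Cb$ over $K=k(S)$, covers $C$ by the pieces $C_i=\phi^{-1}(U_i\times K)$, and has the vanishing $\langle G(\Cb,\phi^*Y_i),\psi^*(a)|_{C_i}\rangle_{C_i/K}=0$ on each piece. Proposition \ref{prop:loc-symbol-over-fields} then patches these into $\langle G(\Cb,\phi^*Y),\psi^*(a)\rangle_{C/K}=0$. So the local symbols enter as a gluing device on a fixed curve, not as a substitute for the globally chosen modulus.
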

\begin{proof}
Let $\rho : F \to F_\Zar$ be the canonical morphism.
Let $X \subset \Xb$ be an open immersion such that
$\Xb$ is an integral proper variety over $k$
and $X \in \Sm$ is quasi-affine.
Let $a \in F_\Zar(X)$.
We need to show $a$ has a modulus $Y \subset \Xb$
such that $|Y|=\Xb-X$.

There exists a Zariski open covering
$X=\cup_{i=1}^r U_i$ and $a_i \in F(U_i)$
such that $\rho(a_i)=a|_{U_i}$ for each $i$.
By assumption, $a_i$ has a modulus $Y_i \subset \Xb$ such that $|Y_i|=\Xb-U_i$.
Then $Y_i$ is a modulus for $\rho(a_i) \in F_\Zar(U_i)$.
Put $Y := Y_1 \times_{\Xb} \dots \times_{\Xb} Y_r$.
Since $X=\cup_{i=1}^r U_i$, we have $|Y|=\Xb-X$.
We shall prove $Y$ is a modulus for $a$.
By Corollary \ref{cor:inj} and Lemma \ref{recimplyweakrec},
$F_{\Zar}(S) \to F_{\Zar}(V)$ is injective for any dense open subset $V\subset S$.
Hence it suffices to verify the condition in Remark \ref{rem:reduction-to-curve}.

Put $K=k(S)$ and let $\phi:\Cb\to \Xb\times K$ and $\psi:C\to X$ be as in \ref{rem:reduction-to-curve}.
We need show 
$\langle g, \psi^*(a)\rangle_{C/K}=0\in F(K)$ for all $g\in G(\Cb,\phi^*Y))$, where $\phi^*Y$ is the pullback of 
$Y\times K$ by $\phi$.
For each $i$, we put $C_i := \phi^{-1}(U_i \times K) \subset \Cb$
and $I = \{ i ~|~ C_i \not= \emptyset \}$.
Note that $i \in I$ if and only if
$\phi(\Cb) \not\subset (\Xb - U_i) \times K$.
Since $X=\cup_{i=1}^r U_i$, we have $C = \cup_{i \in I} C_i$.
Let $i \in I$.
We write $\psi_i$ for the composition
$C_i \to U_i \times K \overset{pr}{\to}  U_i$.
We have $\psi_i^*( \rho(a_i))={\psi}^*(a)|_{C_i}$.
As $\rho(a_i)$ has modulus $Y_i$, we have
$\langle G(\Cb, \phi^* Y_i), {\psi}^*(a)|_{C_i} \rangle_{C_i/K} = 0\in F_\Zar(K)$.
Hence the claim is a consequence of the following lemma.
\end{proof}

\begin{lemma}
Let $K$ be as above and $\Cb$ be a normal integral proper curve over $K$,
$C$ an open dense subscheme of $\Cb$,
and $C=\cup_{i=1}^r C_i$ a Zariski covering.
Let $F \in \PST$ and $a \in F(C)$.
Suppose that for each $i$ we are given
an effective divisor $Y_i \subset \Cb$
satisfying $|Y_i|=\Cb-C_i$ 
and $\langle G(\Cb, Y_i), a|_{C_i} \rangle_{C_i/K}=0\in F(K)$.
Then we have
$\langle G(\Cb, Y), a \rangle_{C/K}=0$ in $F(K)$,
where $Y := Y_1 \times_{\Cb} \dots \times_{\Cb} Y_r \subset \Cb$.
\end{lemma}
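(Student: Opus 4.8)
The plan is to reduce a statement about an arbitrary $g\in G(\Cb,Y)$ to statements about the individual $g\in G(\Cb,Y_i)$, exploiting that $Y\le Y_i$ for every $i$ (since $Y$ is the fiber product of the $Y_i$ over $\Cb$) so that $G(\Cb,Y)\subset G(\Cb,Y_i)$ for all $i$. First I would fix $g\in G(\Cb,Y)$ and set $D=\div_{\Cb}(g)$, a divisor supported away from $|Y|=\Cb-C$, hence $|D|\subset C$. The point is that $\langle D,a\rangle_{C/K}$ is a well-defined element of $F(K)$ obtained from the class of $D$ in the relative Picard group $\Pic(\Cb,Y)$ via the morphism induced by the pairing $\langle-,-\rangle_{C/K}$ factoring through \eqref{eq2.2}; indeed, weak reciprocity is exactly the statement that $\langle-,a\rangle$ factors through $\tau$.

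The key step is a Mayer–Vietoris / partition-of-unity style argument on the curve $\Cb$. Since $C=\bigcup_i C_i$ with each $C_i$ open dense in $\Cb$, every closed point $x\in C$ lies in some $C_i$. Using the approximation theorem on the curve $\Cb$ (exactly as in the proof of Proposition \ref{prop:loc-symbol-over-fields}), I would write $D$ as a sum $D=\sum_i D_i$ where each $D_i$ is a divisor with support in $C_i$ and, crucially, $D_i=\div_{\Cb}(g_i)$ for some $g_i\in G(\Cb,Y_i)$; here one uses that $G(\Cb,Y_i)$ surjects onto the group of Cartier divisors supported in $C_i$ modulo liftable Picard classes, again via \eqref{eq2.2}. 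The freedom to do this comes precisely from weak approximation at the finitely many points of $\Cb-C_i$: one can realise any prescribed local behaviour of a rational function near those points, in particular making it a unit congruent to $1$ to high order along $Y_i$. One should be slightly careful that $D$ itself need not be principal relative to each single $C_i$, but after passing to the colimit over moduli $Z\supset Y$ (as in Proposition \ref{prop:loc-symbol-over-fields}) the relation $\div_{\Cb}(g)=\sum_i\div_{\Cb}(g_i)$ holds in $\lim_Z\Pic(\Cb,Z)$, which suffices since $\langle-,a\rangle_{C/K}$ is defined on that group.

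Granting the decomposition, I would then compute, using bilinearity of the pairing and Proposition \ref{pr.pretheory} (the projection formula for $\P^1$-rigid pretheories, applied to the open immersions $C_i\hookrightarrow C$), that
\[
\langle \div_{\Cb}(g),a\rangle_{C/K}=\sum_i \langle \div_{\Cb}(g_i),a\rangle_{C/K}=\sum_i \langle \div_{\Cb}(g_i),a|_{C_i}\rangle_{C_i/K}.
\]
Each term on the right vanishes because $g_i\in G(\Cb,Y_i)$ and $\langle G(\Cb,Y_i),a|_{C_i}\rangle_{C_i/K}=0$ by hypothesis. Hence $\langle\div_{\Cb}(g),a\rangle_{C/K}=0$, as desired.

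The main obstacle I anticipate is making the decomposition $g\leftrightarrow\sum_i g_i$ precise and compatible with the transfer-theoretic pairing: one must check that $\langle\div_{\Cb}(g_i),a\rangle_{C/K}$ really equals $\langle\div_{\Cb}(g_i),a|_{C_i}\rangle_{C_i/K}$, i.e. that the cycle $\div_{\Cb}(g_i)$, which a priori is a cycle on $C$, pushes forward correctly from $c(C_i/K)$ — this is Proposition \ref{pr.pretheory} applied to $j_*$ for the open immersion $j:C_i\hookrightarrow C$, valid because $F$ is $\P^1$-rigid by Proposition \ref{p8.1}. A secondary subtlety is that $\div_{\Cb}(g_i)$ may have components outside $C_i$ unless $g_i$ is chosen to be a unit on all of $C\setminus C_i$ near the relevant points; this is where weak approximation at the finite set $\Cb-C_i$ is used, and one must verify it can be arranged simultaneously for all $i$ so that the $g_i$ multiply to $g$ up to an element of $G(\Cb,Y)$ with trivial divisor (a global unit), which contributes nothing to the pairing.
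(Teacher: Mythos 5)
Your overall strategy --- split the contribution of $g\in G(\Cb,Y)$ into pieces coming from the $G(\Cb,Y_i)$, then use transfer functoriality to move each piece to $C_i$ and apply the hypothesis --- is sound, and it is genuinely different from the paper's proof, which is a one-line appeal to Proposition \ref{prop:loc-symbol-over-fields}: translate the hypothesis into the local conditions $(\heartsuit)$, i.e.\ $(U_x^{(m_{i,x})},a)_x=0$ for $x\in\Cb-C_i$, observe that the multiplicity of $Y$ at $x\in\Cb-C$ is $\min_i m_{i,x}$, and translate back. However, your execution of the key step is not correct as written. A first, minor point: the inclusion goes the other way, $G(\Cb,Y_i)\subset G(\Cb,Y)$ (since $\sI_{Y_i}\subset\sI_Y$ and $|Y|\subset|Y_i|$), not $G(\Cb,Y)\subset G(\Cb,Y_i)$. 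The serious problem is the order of your decomposition: you first split the cycle $D=\div_{\Cb}(g)$ as $\sum_i D_i$ with $|D_i|\subset C_i$ and then try to realize each $D_i$ as $\div_{\Cb}(g_i)$ with $g_i\in G(\Cb,Y_i)$. By \eqref{eq2.2} the obstruction to doing so is the class of $D_i$ in $\Pic(\Cb,Y_i)$, which is nonzero in general --- the $D_i$ need not even have degree zero (take $g$ with a zero in $C_1\setminus C_2$ and a pole in $C_2\setminus C_1$). Weak approximation lets you prescribe a rational function at finitely many points, but not its divisor elsewhere, so it cannot produce a $g_i$ with a prescribed divisor. Your proposed repair, working ``in $\lim_Z\Pic(\Cb,Z)$,'' is circular: the pairing $\langle-,a\rangle_{C/K}$ is a priori defined only on $c(C/K)$, and its factorization through a relative Picard group of $(\Cb,Y)$ is exactly the conclusion of the lemma; the hypotheses only give factorization of the restricted pairings $\langle-,a|_{C_i}\rangle_{C_i/K}$ through $\Pic(\Cb,Y_i)$.

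The fix is precisely the point you relegate to the end as an unverified ``secondary subtlety'': factor the function, not the divisor. One proves $G(\Cb,Y)=G(\Cb,Y_1)\cdots G(\Cb,Y_r)$ by weak approximation at the finitely many points of $\bigcup_i(\Cb-C_i)$. For $r=2$: choose $h$ with $v_x(h-g)\gg 0$ at those $x\in\Cb-C$ where $Y$ has multiplicity $m_{1,x}$ and at the points of $(\Cb-C_2)\cap C$, and $v_x(h-1)\gg 0$ at the remaining points of $(\Cb-C_1)\cup(\Cb-C_2)$; one checks $h\in G(\Cb,Y_1)$ and $g/h\in G(\Cb,Y_2)$, and one induct on $r$. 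Given $g=g_1\cdots g_r$ one has $\div_{\Cb}(g)=\sum_i\div_{\Cb}(g_i)$ with $|\div_{\Cb}(g_i)|\subset C_i$, and $\langle\div_{\Cb}(g_i),a\rangle_{C/K}=\langle\div_{\Cb}(g_i),a|_{C_i}\rangle_{C_i/K}=0$; note that this last identity is just \ref{pretheory-pairing}(iii), which holds for any $F\in\PST$, so the appeal to $\P^1$-rigidity and Proposition \ref{pr.pretheory} is unnecessary. Until the factorization $g=g_1\cdots g_r$ is actually established, your argument has a genuine gap at its central step.
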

\begin{proof}
This is an easy consequence of Proposition \ref{prop:loc-symbol-over-fields}.
\end{proof}


\subsection{\bf Comparison of Zariski/Nisnevich sheafification.}
\label{sect:comparison-zar-nis}

In \cite[Th. 22.2]{mvw},
it is proved that $F_{\Zar}=F_{\Nis}$ 
if $F$ is a homotopy invariant presheaf with transfers.
The proof actually shows the following:

\begin{thm}[\protect{\cite[Th. 22.2]{mvw}}]
Let $F$ be a Zariski sheaf with transfers.
If $F$ is MV-effaceable,
then one has $F=F_{\Nis}$.
\end{thm}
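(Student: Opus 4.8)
This is contained in the proof of \cite[Th. 22.2]{mvw}: that proof uses nothing about $F$ beyond the fact that it is a Zariski sheaf with transfers and is MV-effaceable (Definition \ref{def:mv-eff}). We recall its structure.

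Since $F$ is a Zariski sheaf and the Nisnevich topology is generated by Zariski covers together with upper distinguished squares, we have $F=F_{\Nis}$ as soon as, for every upper distinguished square $Q=Q(X,Y,A)$ with $B=A\times_X Y$, the complex
\[
0\to F(X)\to F(A)\oplus F(Y)\to F(B)\to 0
\]
(with $F(X)$ in degree $0$) is exact at $F(X)$ and at $F(A)\oplus F(Y)$. Fix such a $Q$; for a Zariski open $U\subseteq X$ write $Q|_U$ for the pulled-back square and $H^i(Q|_U)$ ($i=0,1$) for its cohomology in degrees $0,1$. As $F$ is a Zariski sheaf, the presheaves $U\mapsto F(U)$, $U\mapsto F(A\cap U)\oplus F(Y\times_X U)$ and $U\mapsto F(B\times_X U)$ are Zariski sheaves on $X$; a straightforward diagram chase with the resulting complex of sheaves shows that the desired exactness holds provided that, for every $x\in X$, the stalk at $x$ of the presheaf $U\mapsto H^i(Q|_U)$ vanishes for $i=0,1$ --- equivalently, provided every class in every $H^i(Q|_{U_0})$ (with $x\in U_0\subseteq X$) dies after restriction to a smaller neighbourhood of $x$.

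We may assume $k$ infinite: the case of finite $k$ follows by the norm argument in the proof of Theorem \ref{thm:key-factor}, using Proposition \ref{pr.pretheory2}. Fix $x\in X$, an open $U_0\ni x$ and a class in $H^i(Q|_{U_0})$. By the proof of \cite[Th. 22.2]{mvw} --- which combines Walker's presentation (Proposition \ref{prop:walker}) with the formalism of standard triples --- we may choose an affine open $U_1$ with $x\in U_1\subseteq U_0$ such that $Q|_{U_1}$ arises from a covering morphism $T_{Y_1}\to T_{X_1}$ of standard triples, with $X_1=U_1=\bar X_1-(X_1)_\infty$. By Lemma \ref{lem:exist-s-tri} there is an affine open $U'$ with $x\in U'\subseteq U_1$ over which $T_{X_1}$ splits. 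The restriction $Q|_{U'}\to Q|_{U_1}$ is then a morphism of upper distinguished squares with affine source whose $j_X$-component $U'\hookrightarrow U_1$ is an open immersion and for which $T_{X_1}$ splits over $U'$; hence MV-effaceability applies, and the induced map on all cohomology groups of the complexes in \eqref{eq:key-lem} is zero. In particular the restriction $H^i(Q|_{U_1})\to H^i(Q|_{U'})$ is zero, so our class dies after restriction to $U'$. This gives the required stalk vanishing, and with it the theorem. The only delicate ingredient is the presentation of $Q|_{U_1}$ through a covering morphism of standard triples, carried out in \cite[\S22]{mvw}; everything else is formal and uses no homotopy invariance.
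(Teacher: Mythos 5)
Your proposal is correct and follows the same route as the paper, which for this statement simply observes that the proof of \cite[Th.~22.2]{mvw} uses nothing about $F$ beyond its being a Zariski sheaf with transfers that is MV-effaceable; you have unpacked that proof (reduction to Zariski-local exactness of the Mayer--Vietoris complex of an upper distinguished square, local presentation via a covering morphism of standard triples after Walker's theorem, splitting over a smaller affine, then MV-effaceability to kill the cohomology classes) in exactly that way. The only cosmetic point is that for the finite-field reduction the identity $f_*f^*=[E:k]$ already follows from the transfer structure on $F$, so the appeal to Proposition \ref{pr.pretheory2} (which concerns pretheories with weak reciprocity) is unnecessary here.
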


Theorem \ref{thm:key-tech} and \ref{thm:zar-rec} and Lemma \ref{lem:zar-rec} imply the following:

\begin{thm}\label{thm:zar-nis-rec}
Let $F \in \PST$.
\begin{enumerate}
\item
If $F$ has weak reciprocity, we have $F_{\Zar}=F_{\Nis}$.
\item
Assume $k$ is perfect.
If $F$ has reciprocity, so does $F_{\Nis}$.
\end{enumerate}
\end{thm}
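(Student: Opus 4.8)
\textbf{Proof plan for Theorem \ref{thm:zar-nis-rec}.}

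The plan is to deduce both statements directly from results already available in the excerpt, with essentially no new work. For (1), suppose $F \in \PST$ has weak reciprocity. By Theorem \ref{thm:key-tech}, $F$ is MV-effaceable. However, to invoke the cited theorem of \cite[Th. 22.2]{mvw} (``a Zariski sheaf with transfers that is MV-effaceable satisfies $F = F_{\Nis}$''), we first need to pass to a Zariski sheaf. By Theorem \ref{thm:zar-trans}, $F_{\Zar}$ carries a unique $\PST$-structure compatible with $F \to F_{\Zar}$, and by Lemma \ref{lem:zar-rec}, $F_{\Zar}$ again has weak reciprocity; applying Theorem \ref{thm:key-tech} once more, $F_{\Zar}$ is MV-effaceable. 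Now $F_{\Zar}$ is a Zariski sheaf with transfers which is MV-effaceable, so the cited theorem gives $(F_{\Zar})_{\Nis} = F_{\Zar}$. Since Nisnevich sheafification factors through Zariski sheafification ($F_{\Nis} = (F_{\Zar})_{\Nis}$, as the Nisnevich topology is finer), we conclude $F_{\Nis} = F_{\Zar}$, proving (1).

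For (2), assume $k$ is perfect and $F \in \PST$ has reciprocity. Then $F$ has weak reciprocity by Lemma \ref{recimplyweakrec}, so part (1) applies and gives $F_{\Nis} = F_{\Zar}$. On the other hand, Theorem \ref{thm:zar-rec} (which requires $k$ perfect) asserts that $F_{\Zar}$ has reciprocity. Combining these two facts, $F_{\Nis} = F_{\Zar}$ has reciprocity, which is precisely the assertion of (2).

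The proof is therefore purely a matter of assembling the pieces in the right order; the only point requiring care is the logical dependency: one must apply Theorem \ref{thm:zar-trans} and Lemma \ref{lem:zar-rec} \emph{before} invoking \cite[Th. 22.2]{mvw}, because that cited statement has ``Zariski sheaf with transfers'' as a hypothesis, whereas the input $F$ is only a presheaf with transfers. The genuinely hard work has been discharged in Theorem \ref{thm:key-tech} (postponed to \S\ref{sec:pf-keytheorem}) and in Theorem \ref{thm:zar-rec}; no further obstacle arises here.
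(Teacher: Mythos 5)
Your proof is correct and is exactly the argument the paper intends: the paper gives no written proof beyond the remark that Theorem \ref{thm:key-tech}, Theorem \ref{thm:zar-rec} and Lemma \ref{lem:zar-rec} imply the statement, and your assembly (pass to $F_{\Zar}$ via Theorem \ref{thm:zar-trans} and Lemma \ref{lem:zar-rec}, apply MV-effaceability and the cited \cite[Th. 22.2]{mvw}, then use $F_{\Nis}=(F_{\Zar})_{\Nis}$) is precisely that chain of implications. Your observation about the order of application --- sheafifying and transferring the weak reciprocity before invoking the MVW-style theorem --- is the right point of care.
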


\subsection{Generalized Jacobian}\label{ssect:gen-jac}

Let $\Cb$ be a smooth projective geometrically connected curve over $k$,
$D$ an effective divisor on $\Cb$ and $C:=\Cb - D$.
The map $\Z_{\tr}(C) \to \Z$ induced by
the structure map $\Cb \to \Spec k$
factors through $\deg : h(\Cb, D) \to \Z$
(see Thm. \ref{thm:main-rep}).
We write $h(\Cb, D)^0$ for its kernel.

\begin{proposition}\label{p13.1}
Suppose that $k$ is perfect and
that $C$ has a $k$-rational point.
Then Rosenlicht's generalized Jacobian $J := \Jac(\Cb, D)$
is isomorphic to the Zariski sheafification of $h(\Cb, D)^0$.
\end{proposition}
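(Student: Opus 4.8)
The plan is to construct a natural comparison morphism $\theta\colon h(\Cb,D)\to J$ in $\PST$, split off the degree, sheafify, and prove the resulting map is an isomorphism by reducing to the value at function fields via Corollary \ref{cor:inj}(2); the input from classical algebraic geometry enters only at that last reduction.

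First I would set up the comparison map. By \cite{gacl}, $J=\Jac(\Cb,D)$ is a smooth connected commutative $k$-group equipped with an Albanese morphism $\iota\colon C\to J$, normalised so that $\iota(x_0)=0$ for the given $x_0\in C(k)$, and $\iota$ has modulus $D$ in the sense of Rosenlicht-Serre (Theorem \ref{thm.RS}). Viewing $J$ as an object of $\PST$ (it has transfers by \cite{spsz,bar-kahn} and reciprocity by Theorem \ref{thm:alg-gp}), Remark \ref{rem:cheating} shows that $\iota\in J(C)$ has modulus $D$ in the sense of Definition \ref{def.modulus}. Hence, by Theorem \ref{thm:main-rep}(1) and Remark \ref{rem:universality-hxy}, the morphism $\Z_\tr(C)\to J$ determined by $\iota$ factors through $\pi\colon\Z_\tr(C)\to h(\Cb,D)$, producing $\theta\colon h(\Cb,D)\to J$ in $\PST$. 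The class $[x_0]\in h(\Cb,D)(\Spec k)$ gives a section of $\deg\colon h(\Cb,D)\to\Z$, hence a decomposition $h(\Cb,D)=h(\Cb,D)^0\oplus\Z$ in $\PST$; since $\iota(x_0)=0$, $\theta$ kills the $\Z$-summand and restricts to $\theta^0\colon h(\Cb,D)^0\to J$.

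Next I would sheafify and reduce to fields. Because $k$ is perfect, Theorems \ref{thm:zar-trans}, \ref{thm:zar-rec} and \ref{thm:zar-nis-rec} show that $F:=h(\Cb,D)^0_{\Zar}=h(\Cb,D)^0_{\Nis}$ is a presheaf with transfers having reciprocity, and $\theta^0$ induces $\bar\theta\colon F\to J$ in $\PST$ ($J$ being already a Nisnevich, a fortiori Zariski, sheaf). Let $K=\Ker(\bar\theta)$ in $\PST$ and let $Q$ be the Zariski (equivalently Nisnevich) sheafification of the presheaf cokernel of $\bar\theta$. Then $K$, a subobject of the reciprocity sheaf $F$, has reciprocity, and $Q$, the sheafification of a quotient of $J$, has reciprocity by Theorem \ref{thm:zar-rec}; both are Zariski sheaves. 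So by Corollary \ref{cor:inj}(2) it suffices to show $K(\Spec E)=Q(\Spec E)=0$ for every finitely generated field extension $E/k$. Directly from the construction of Zariski sheafification, $F(\Spec E)=h(\Cb,D)^0(\Spec E)$ (a point has no nontrivial Zariski covering), whence $K(\Spec E)=\Ker(\theta^0_E)$ and $Q(\Spec E)=\Coker(\theta^0_E)$; thus everything comes down to proving that $\theta^0_E\colon h(\Cb,D)^0(\Spec E)\to J(E)$ is bijective for all such $E$.

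Finally, for the field-level statement I would evaluate the presheaf cokernel defining $h(\Cb,D)$ (Proposition \ref{prop:rep-sheaf-is-a-pst}, Definition \ref{def:relative-g-sheaf}) at $\Spec E$ and unwind conditions (A)--(C): this gives $h(\Cb,D)(\Spec E)=\Coker\big(\bigoplus_{\phi}G(\Cb',\phi^*D_E)\to Z_0(C_E)\big)$, the sum over finite morphisms $\phi\colon\Cb'\to\Cb_E$ with $\Cb'$ a normal integral proper $E$-curve not mapped into $D_E$, which is exactly the relative Chow group $CH_0(\Cb_E,D_E)$ of \cite{KS1} for the modulus pair $(\Cb_E,D_E)$ over $E$; hence $h(\Cb,D)^0(\Spec E)=CH_0(\Cb_E,D_E)^0$. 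By Lemma \ref{l3.2.2} (norm compatibility), the covers $\phi$ contribute no relations beyond those from $G(\Cb_E,D_E)$, so $CH_0(\Cb_E,D_E)^0$ is the classical group of degree-zero divisor classes of $\Cb_E$ prime to $D_E$ modulo the modulus-$D_E$ equivalence, which the theory of the generalized Jacobian \cite{gacl} identifies with $J(E)$ compatibly with $\theta^0_E$ (the $k$-point $x_0$ serving to define the Albanese morphism $C_E\to J_E$ over each $E$ and to run the descent from $\overline E$); injectivity of $\theta^0_E$ is the defining modulus property of $J$ and surjectivity holds because the image of $C_E$ generates $J_E$. I expect this last paragraph to be the main obstacle: matching $h(\Cb,D)$ over a non-closed field with the classical divisor-theoretic description and running the Rosenlicht-Serre isomorphism $CH_0(\Cb_E,D_E)^0\cong J(E)$ over every such $E$, including the descent made possible by the rational point; the sheafification step is formal given the results already established.
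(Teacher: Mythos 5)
Your proposal is correct and follows essentially the same route as the paper's proof: the comparison map is built from the Albanese morphism via Remark \ref{rem:cheating} and the universal property of $h(\Cb,D)$, reciprocity of $J$ and of $h(\Cb,D)^0_{\Zar}$ reduces the claim to fields by Theorem \ref{thm:inj.intro}(3), and at the level of a field $E$ both sides are identified with the group of degree-zero divisor classes on $C_E$ modulo the modulus-$D_E$ equivalence. Your extra remarks (the splitting $h(\Cb,D)=h(\Cb,D)^0\oplus\Z$ via $x_0$, and the use of Lemma \ref{l3.2.2} to see that covers $\phi:\Cb'\to\Cb_E$ impose no relations beyond those from $G(\Cb_E,D_E)$) only make explicit what the paper leaves implicit.
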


\begin{proof}
Let $s : \Z_{\tr}(J) \to J$ be the canonical map
constructed in \cite[Proof of Lemma 3.2]{spsz}.
(For $S \in \Sm$ and $Z \in \Z_{\tr}(J)(S)=\Cor(S, J)$ integral,
$s(Z)$ is given by the composition 
$S \overset{(i)}{\to} \Sym^d(Z) \overset{(ii)}{\to} \Sym^d(J) 
\overset{(iii)}{\to} J$,
where $d$ is the degree of $Z$ over $S$,
(i) is given by \cite[p.81]{sus-voe2},
(ii) is induced by $Z \to J$,
and (iii) is given by the addition map of $J$.)
Let $\tilde{a} : \Z_{\tr}(C) \to \Z_{\tr}(J)$
be the map induced by the universal map $a : C \to J$
with respect to a $k$-rational point.
Since $a$ has modulus $D$ in the sense of Rosenlicht-Serre,
Remark \ref{rem:cheating}
shows that $s \tilde{a}$ has modulus $D$ in our sense.
Thus we get an induced map $u : h(\Cb, D)^0 \to J$
(see Remark \ref{rem:universality-hxy}).
For any field $K \in \widetilde{\Sm}$,
$u(K): h(\Cb, D)^0(K) \to J(K)$ is an isomorphism
as both groups are isomorphic
to $\Div(C \times K)/G(\Cb \times K, D \times K)$.
Note that 
$J=J_{\Zar}$ and
$h(\Cb, D)^0_{\Zar}$ have reciprocity by 
Thm. \ref{thm:Algr.intro} and  \ref {thm:ZarNis.intro} (2).
Now the proposition follows from
Thm. \ref{thm:inj.intro} (3)
applied to the kernel and cokernel of $u$.
\end{proof}

%
%
%
%

\theoremstyle{plain}
\newtheorem{introthm}{Theorem}
\newtheorem{introcor}{Corollary}
\newtheorem{lem}[subsection]{Lemma}

\theoremstyle{definition}
\newtheorem{notation}[subsubsection]{Notation}

\theoremstyle{remark}
\newtheorem{rmk}[subsection]{Remark}

\numberwithin{equation}{section}

\newcommand{\eq}[2]{\begin{equation}\label{#1}#2 \end{equation}}
\newcommand{\ml}[2]{\begin{multline}\label{#1}#2 \end{multline}}
\newcommand{\mlnl}[1]{\begin{multline*}#1 \end{multline*}}
\newcommand{\ga}[2]{\begin{gather}\label{#1}#2 \end{gather}}

\newcommand{\arir}{\ar@{^{(}->}}
\newcommand{\aril}{\ar@{_{(}->}}
\newcommand{\are}{\ar@{>>}}

\newcommand{\xr}[1] {\xrightarrow{#1}}
\newcommand{\xl}[1] {\xleftarrow{#1}}
\newcommand{\lra}{\longrightarrow}

\newcommand{\ul}{\underline}
\newcommand{\ol}{\overline}
\newcommand{\bb}{\mathbb}
\newcommand{\mf}[1]{\mathfrak{#1}}
\newcommand{\mc}[1]{\mathcal{#1}}

\newcommand{\rank}{{\rm rank}}
\newcommand{\sgn}{{\rm sgn}}
\newcommand{\sHom}{{\rm \mathcal{H}om}}
\newcommand{\sExt}{{\rm \mathcal{E}xt}}
\newcommand{\Ext}{\mathrm{Ext}}
\newcommand{\im}{{\rm Im}}
\newcommand{\Sing}{{\rm Sing}}
\newcommand{\Char}{{\rm char}}
\newcommand{\Trp}{{\rm Trp}}
\newcommand{\Trf}{{\rm Trf}}
\newcommand{\Nm}{{\rm Nm }}
\newcommand{\Gal}{{\rm Gal}}
\newcommand{\ord}{{\rm ord}}
\newcommand{\va}{{\rm v}}
\newcommand{\0}{\emptyset}
\newcommand{\Ch}{\mathrm{CH}}
\newcommand{\dlog}{\mathrm{\,dlog\,}}
\newcommand{\gr}[2]{\mathrm{gr}^{#1}_{#2}}
\newcommand{\fil}{{\rm Fil }}
\newcommand{\reg}{\mathrm{reg}}
\newcommand{\trdeg}{\mathrm{trdeg}}
\newcommand{\XP}{{(X,\Phi)}}
\newcommand{\YP}{{(Y,\Psi)}}
\newcommand{\ZX}{{(Z, \Xi)}}

\newcommand{\sD}{{\mathcal D}}
\newcommand{\sE}{{\mathcal E}}
\newcommand{\sK}{{\mathcal K}}
\newcommand{\sM}{{\mathcal M}}
\newcommand{\sN}{{\mathcal N}}
\newcommand{\sQ}{{\mathcal Q}}
\newcommand{\sR}{{\mathcal R}}
\newcommand{\sS}{{\mathcal S}}
\newcommand{\sT}{{\mathcal T}}
\newcommand{\sU}{{\mathcal U}}
\newcommand{\sV}{{\mathcal V}}
\newcommand{\sW}{{\mathcal W}}
\newcommand{\sY}{{\mathcal Y}}
\newcommand{\sZ}{{\mathcal Z}}
\renewcommand{\A}{{\mathbb A}}
\newcommand{\B}{{\mathbb B}}
\newcommand{\C}{{\mathbb C}}
\newcommand{\D}{{\mathbb D}}
\newcommand{\E}{{\mathbb E}}
\renewcommand{\G}{{\mathbb G}}
\renewcommand{\H}{{\mathbb H}}
\newcommand{\J}{{\mathbb J}}
\newcommand{\M}{{\mathbb M}}
\renewcommand{\N}{{\mathbb N}}
\renewcommand{\P}{{\mathbb P}}
\renewcommand{\Q}{{\mathbb Q}}
\newcommand{\T}{{\mathbb T}}
\newcommand{\U}{{\mathbb U}}
\newcommand{\Y}{{\mathbb Y}}
\renewcommand{\Z}{\mathbb{Z}}

\newcommand{\fA}{\mathfrak{A}}
\newcommand{\fB}{\mathfrak{B}}
\newcommand{\fC}{\mathfrak{C}}
\newcommand{\fD}{\mathfrak{D}}
\newcommand{\fE}{\mathfrak{E}}
\newcommand{\fF}{\mathfrak{F}}
\newcommand{\fG}{\mathfrak{G}}
\newcommand{\fH}{\mathfrak{H}}
\newcommand{\fI}{\mathfrak{I}}
\newcommand{\fJ}{\mathfrak{J}}
\newcommand{\fK}{\mathfrak{K}}
\newcommand{\fL}{\mathfrak{L}}
\newcommand{\fN}{\mathfrak{N}}
\newcommand{\fO}{\mathfrak{O}}
\newcommand{\fP}{\mathfrak{P}}
\newcommand{\fQ}{\mathfrak{Q}}
\newcommand{\fR}{\mathfrak{R}}
\newcommand{\fS}{\mathfrak{S}}
\newcommand{\fT}{\mathfrak{T}}
\newcommand{\fU}{\mathfrak{U}}
\newcommand{\fV}{\mathfrak{V}}
\newcommand{\fW}{\mathfrak{W}}
\newcommand{\fX}{\mathfrak{X}}
\newcommand{\fY}{\mathfrak{Y}}
\newcommand{\fZ}{\mathfrak{Z}}

\newcommand{\fa}{\mathfrak{a}}
\newcommand{\fb}{\mathfrak{b}}
\newcommand{\fc}{\mathfrak{c}}
\newcommand{\fd}{\mathfrak{d}}
\newcommand{\fe}{\mathfrak{e}}
\newcommand{\ff}{\mathfrak{f}}
\newcommand{\fg}{\mathfrak{g}}
\newcommand{\fh}{\mathfrak{h}}
\newcommand{\fraki}{\mathfrak{i}}
\newcommand{\fj}{\mathfrak{j}}
\newcommand{\fk}{\mathfrak{k}}
\newcommand{\fl}{\mathfrak{l}}
\newcommand{\fo}{\mathfrak{o}}
\newcommand{\fs}{\mathfrak{s}}
\newcommand{\ft}{\mathfrak{t}}
\newcommand{\fu}{\mathfrak{u}}
\newcommand{\fv}{\mathfrak{v}}
\newcommand{\fw}{\mathfrak{w}}
\newcommand{\fx}{\mathfrak{x}}
\newcommand{\fy}{\mathfrak{y}}
\newcommand{\fz}{\mathfrak{z}}

\newpage

\appendix

\begin{center}\bf  APPENDIX: K\"AHLER DIFFERENTIALS AND DE RHAM-WITT DIFFERENTIALS HAVE RECIPROCITY\\
\end{center}
\bigskip

\begin{center} \footnotesize KAY R\"ULLING\footnote{The author is supported by the ERC Advanced Grant 226257.}
\end{center}
\bigskip

In this appendix we prove that the absolute K\"ahler differentials of any degree are reciprocity presheaves.
If the ground field $k$ is perfect ditto for the K\"ahler differentials relative to $k$. If $k$ has positive characteristic we show that the de Rham-Witt complex on a finite level of Bloch-Deligne-Illusie is a complex of reciprocity presheaves. 

\section{K\"ahler differentials }
We use the notation from Section \ref{s1}.
For a $k$-scheme $X$ we denote by $\Omega^j_{X/k}$, $j\ge 0$, the sheaf of K\"ahler differentials of degree $j$ 
relative to $k$ and by $\Omega^j_{X}=\Omega^j_{X/\Z}$ the sheaf of absolute K\"ahler differentials of degree $j$.

We recall some constructions from \cite{CR1}. 

\subsection{Pullback}\label{pb}
Let $f:X\to Y$ be a morphism of $k$-schemes then there is a natural pullback map
\[f^*: H^i(Y,\Omega^j_{Y/k}) \to H^i(X,\Omega^j_{X/k}),\quad \text{for all } i,j\ge 0, \]
which is functorial in the obvious sense.

\subsection{Pushforward}\label{pf}
Let $f:X\to Y$ be a morphism in {\bf Sm} of pure relative dimension $r$, 
$Z\subset X$ a closed subset such that $f_{|Z}:Z\to Y$ is proper and $Z'\subset Y$ a closed subset
with $Z\subset f^{-1}(Z')$. Then there is a pushforward morphism (see \cite[2.3]{CR1})
\[f_* : H^{i+r}_Z(X,\Omega^{j+r}_{X/k})\to H^i_{Z'}(Y,\Omega^j_{X/k}),\quad \text{for all } i,j\ge 0,\]
which is functorial in the obvious sense. 

\subsection{Cycle class}\label{cc}
Assume $k$ is a perfect field. 
Let $X\in {\bf Sm}$ be integral and $V\subset X$ a closed integral subscheme of codimension $c$.
Then (see e.g. \cite[Prop. 3.1.1]{CR1}) there is an element 
\[cl(V)\in H^c_V(X, \Omega^c_{X/k}),\]
which is unique with the following property: For any (or some) open subset $U\subset X$ such that the
closed immersion $i:V\cap U\inj U$ is a regular embedding of codimension $c$ the restriction of $cl(V)$ to $U$
equals the image of $1$ under the pushforward 
$i_*: H^0(V\cap U, \sO_V)\to H^c_{V\cap U}(U, \Omega^c_{U/k})$.

\subsection{Correspondence action}\label{corr}
Assume $k$ is a perfect field. Let $X, Y\in {\bf Sm}$ be equidimensional and $Z\subset X\times Y$ a 
closed integral subscheme which is {\em proper} over $X$. Set $r:=\dim Z-\dim X$.
We define the action of $Z$ on Hodge cohomology 
\[Z^*: H^{i+r}(Y,\Omega^{j+r}_{Y/k})\to H^i(X,\Omega^j_{X/k}),\quad \text{for all }i,j\ge 0,\]
as the composition
\mlnl{
H^{i+r}(Y,\Omega^{j+r}_{Y/k})  \xr{p_Y^*}H^{i+r}(X\times Y, \Omega^{j+r}_{X\times Y/k})\\
 \xr{\cup cl(Z)} H^{i+\dim Y}_{Z}(X\times Y, \Omega^{j+\dim Y}_{X\times Y/k})
\xr{p_{X*}} H^i(X,\Omega^j_{X/k}),}
where we denote by $p_X, p_Y: X\times Y \to X, Y$ the projection maps.
If $f: X\to Y$ is a morphism and $Z:=\Gamma_f\subset X\times Y$ is its graph, then 
$Z^*= f^*$, if $f$ is also proper and $Z^t\subset Y\times X$ denotes the transpose of $Z$, then
$(Z^t)^*=f_*$, see \cite[3.2.1]{CR1}.\\
\\
The following theorem is a particular case of \cite[Th. 3.1.8]{CR1} together with \cite[1.3.18, Lem. 1.3.19]{CR1}.

\begin{thm}\label{thm-PST}
Assume $k$ is a perfect field. Then
the correspondence action from \ref{corr} above induces the structure of a presheaf with transfers on the 
presheaf
\[{\bf Sm}\ni X\mapsto H^i(X,\Omega^j_{X/k})\in (k\text{-\bf vector spaces}),\]
for all $i,j\ge 0$. 
\end{thm}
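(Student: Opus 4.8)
The plan is to prove Theorem~\ref{thm-PST} by reducing everything to the cited results \cite[Th. 3.1.8, 1.3.18, Lem. 1.3.19]{CR1}, and to spell out why the correspondence action of \S\ref{corr} packages into a presheaf on $\Cor$. First I would record what has to be checked: for $X,Y\in\Sm$ equidimensional and an integral $Z\subset X\times Y$ finite and surjective over a component of $X$ (so that in particular $\dim Z=\dim X$, i.e.\ the integer $r$ of \S\ref{corr} is $0$), the map $Z^*\colon H^i(Y,\Omega^j_{Y/k})\to H^i(X,\Omega^j_{X/k})$ is additive in $Z$, is compatible with composition of finite correspondences, and sends the graph $\Gamma_f$ of a morphism $f$ to $f^*$; the last point guarantees that the resulting functor $\Cor\to(k\text{-vector spaces})$ restricts on $\Sm\hookrightarrow\Cor$ to the usual pullback presheaf $X\mapsto H^i(X,\Omega^j_{X/k})$.

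Additivity is immediate from the definition in \S\ref{corr} because the cycle class $cl(Z)$ of \S\ref{cc} extends additively to cycles and cup product and the pushforward $p_{X*}$ are additive. The identity $\Gamma_f^*=f^*$ is exactly the first assertion quoted at the end of \S\ref{corr} from \cite[3.2.1]{CR1}. The substantive point is the compatibility with composition: given finite correspondences $Z\in\Cor(X,Y)$ and $W\in\Cor(Y,T)$ one must show $(W\circ Z)^*=Z^*\circ W^*$. Here I would invoke \cite[Th. 3.1.8]{CR1}, which is precisely a projection-formula / base-change package for the operations $f^*$, $f_*$ and $\cup\, cl(-)$ on Hodge cohomology with supports: writing $W\circ Z$ as the pushforward along $X\times T$ of the intersection product $(Z\times T)\cdot(X\times W)$ inside $X\times Y\times T$, the associativity of cup product with cycle classes together with the base-change and projection formulas reduce $(W\circ Z)^*$ to the composite $Z^*W^*$. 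The inputs \cite[1.3.18, Lem. 1.3.19]{CR1} supply the needed compatibility of the cycle class with flat pullback and with intersection (so that $cl(Z)\cup cl(W)$ computes $cl$ of the refined intersection), and that the whole formalism is functorial for proper pushforward, which is what lets one contract the triple product down to $X\times T$.

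With these three facts in hand the theorem is formal: the assignment $X\mapsto H^i(X,\Omega^j_{X/k})$ together with $Z\mapsto Z^*$ is a contravariant additive functor $\Cor\to\Ab$, i.e.\ an object of $\PST$ in the sense of \S\ref{PST}, and by $\Gamma_f^*=f^*$ its underlying presheaf on $\Sm$ is the one of \S\ref{pb}. The main obstacle is entirely bookkeeping inside \cite{CR1}: one must set up the triple-product description of composition of correspondences and match the signs/supports in the sequence of base-change and projection-formula isomorphisms so that the two ways of going around the relevant diagram agree. Since the paper explicitly states that Theorem~\ref{thm-PST} ``is a particular case'' of the cited results, I would not reprove those results but only indicate how the statement specializes, pausing just long enough to note that in our situation $r=0$ because the components of a finite correspondence have the same dimension as the source, so that $Z^*$ preserves the bidegree $(i,j)$ as stated.
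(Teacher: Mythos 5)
Your proposal is correct and follows essentially the same route as the paper, which itself gives no argument beyond observing that the statement is a particular case of \cite[Th.\ 3.1.8]{CR1} combined with \cite[1.3.18, Lem.\ 1.3.19]{CR1} (the latter turning a weak cohomology theory in the sense of loc.\ cit.\ into a graded presheaf with transfers). Your elaboration of what must be checked — additivity, $\Gamma_f^*=f^*$, compatibility with composition via the projection-formula/base-change package, and the observation that $r=0$ for finite correspondences so the bidegree is preserved — is exactly the bookkeeping the cited results are meant to supply.
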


\begin{rmk}
It follows from \cite[Th. 1.2.3]{CR1} and \cite[1.3.18, Lem. 1.3.19]{CR1} that any weak cohomology theory 
in the sense of \cite[1.1.9]{CR1} which satisfies the conditions of \cite[Th. 1.2.3]{CR1} 
defines a graded presheaf with transfers.
In \cite[Th. 3.1.8]{CR1} it is proven that Hodge cohomology defines such a weak cohomology theory.
\end{rmk}

\subsection{The absolute case}\label{absolute} Let $k$ be a field and $k_0\subset k$ its prime field. Let $\sI$ be the set of
smooth $k_0$-subalgebras of $k$. The ordering by inclusion makes $\sI$ a filtered partially ordered set.   
For each $X\in {\bf Sm}$  we find a $k_0$-algebra $A\in \sI$ and a smooth separated $A$-scheme $X_A$,
with $X_A\otimes_A k=X$. Fix such an $A$ for each $X$ and set  $X_B:=X_A\otimes_A B\in{\bf Sm}_{k_0}$, 
for each $B\in \sI$ containing $A$.
For $X\in {\bf Sm}$ we have
\[H^i(X,\Omega^j_X)= \varinjlim_{B\in\sI}H^i(X_B, \Omega^j_{X_B/k_0}).\]

For $Z\subset X\times Y$ as in \ref{corr} we find a $k_0$-algebra $A\in \sI$, such that there exists a closed integral
subscheme $Z_A\subset X_A\times_A Y_A$ which is proper over $X_A$, flat over $A$
and satisfies $Z_A\otimes_A k=Z$. For $B\in \sI$ with $B\supset A$ we set $Z_B:=Z_A\otimes_A B$.
Let $f: \Spec B'\to \Spec B$ be the map induced by an inclusion $B\subset B'$ in $\sI$, then
\[Z_{B'}^*\circ (\id_Y\times f)^*= (\id_X\times f)^*\circ Z_B^*:
H^{i+r}(Y_B,\Omega^{j+r}_{Y_B/k_0})\to H^i(X_{B'},\Omega^j_{X_{B'}/k_0}),\]
for all $i,j\ge 0$. (Indeed by \cite[Th. 3.1.8]{CR1} the two compositions are given by
the correspondences $(\Gamma_{\id_Y\times f}\circ Z_{B'})$ and $(Z_B\circ \Gamma_{\id_X\times f})$,
which are both equal to $Z_{B'}$ viewed as a correspondences from $X_{B'}$ to $Y_B$
via the closed immersion $X_{B'}\times_{B'} Y_{B'}=X_{B'}\times_B Y_{B}\subset X_{B'}\times_{k_0} Y_{B}$.
Here we view correspondences as elements in the Chow groups with supports, see \cite[1]{CR1}.)
Therefore we can define the action of $Z$ on absolute Hodge cohomology
\[Z^*: H^{i+r}(Y,\Omega^{j+r}_Y)\to H^i(X,\Omega^j_X)\]
by the formula
\[\varinjlim_{B\in\sI} Z_B^*: \varinjlim_{B\in \sI} H^{i+r}(Y_B,\Omega^{j+r}_{Y_B/k_0}) 
                        \to \varinjlim_{B\in \sI} H^i(X_B,\Omega^j_{X_B/k_0}).\]

We obtain:

\begin{cor}\label{cor-absolute-PST}
Let $k$ be an arbitrary field.  Then the correspondence action from \ref{absolute} above 
induces the structure of a presheaf with transfers on the 
presheaf
\[{\bf Sm}\ni X\mapsto H^i(X,\Omega^j_X)\in (k\text{-\bf vector spaces}),\]
for all $i,j\ge 0$.
\end{cor}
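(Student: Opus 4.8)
The plan is to observe that the corollary is a completely formal consequence of Theorem \ref{thm-PST} together with the explicit compatibility of the correspondence action with base change along inclusions in $\sI$ that was verified in \S\ref{absolute}. So the real content is already in place, and what remains is a colimit argument. First I would fix, for each $X\in\Sm$, the data $(A_X, X_{A_X})$ of a smooth $k_0$-algebra and a smooth model, as in \S\ref{absolute}; these choices are part of the setup, and the identification $H^i(X,\Omega^j_X)=\varinjlim_{B\in\sI} H^i(X_B,\Omega^j_{X_B/k_0})$ follows from the fact that K\"ahler differentials commute with filtered colimits of rings and that cohomology of coherent sheaves on (quasi-compact, quasi-separated) schemes commutes with such colimits. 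Then for a finite correspondence $Z\in\Cor(X,Y)$, written as a $\Z$-linear combination of integral closed subschemes each proper over $X$, one descends each component to some level $A\in\sI$ as explained in \S\ref{absolute}, and defines $Z^*$ as the colimit $\varinjlim_B Z_B^*$; the displayed compatibility $Z_{B'}^*\circ(\id_Y\times f)^* = (\id_X\times f)^*\circ Z_B^*$ proved there is exactly what makes this colimit well-defined (independent of the chosen descent level) and a morphism of the ind-systems.

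Next I would check the two axioms needed for a presheaf with transfers. Functoriality $(Z\circ W)^* = W^*\circ Z^*$ for composable correspondences $W\in\Cor(X',X)$, $Z\in\Cor(X,Y)$: pick a common level $A\in\sI$ at which both $Z$ and $W$ (hence also their composite, computed via the cycle-theoretic composition law) descend, apply Theorem \ref{thm-PST} over the smooth $k_0$-scheme side at that level to get $(Z_B\circ W_B)^* = W_B^*\circ Z_B^*$ for all $B\supset A$, and pass to the colimit over $B$. Similarly, the identity correspondence $\Gamma_{\id_X}$ descends to $\Gamma_{\id_{X_A}}$ at every level and acts as the identity by Theorem \ref{thm-PST}, so $\id_X^*=\id$ in the colimit. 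Additivity in $Z$ is immediate since $Z\mapsto Z_B^*$ is additive at each level and colimits are exact. Finally, one notes that on graphs of morphisms $f:X\to Y$ the action $\Gamma_f^*$ agrees with the ordinary pullback $f^*$ of \S\ref{pb} (again descend $f$ to some $f_A$ and invoke the corresponding statement in \cite{CR1}), so the resulting $\PST$-structure genuinely enhances the presheaf $X\mapsto H^i(X,\Omega^j_X)$.

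I do not expect a serious obstacle here; the only mild care needed is bookkeeping about descent levels — ensuring that finitely many correspondences, and the finitely many integral components appearing in a composition, can all be descended to a single $A\in\sI$ (possible since $\sI$ is filtered), and that the cycle-theoretic composition of correspondences is compatible with the base change $-\otimes_A B$ (this is standard compatibility of proper pushforward, flat pullback and intersection with the base change along a flat, even smooth, morphism of bases). Granting these routine facts, the corollary follows formally from Theorem \ref{thm-PST} by taking the filtered colimit over $\sI$.
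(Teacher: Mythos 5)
Your proposal is correct and follows essentially the same route as the paper: the corollary is stated there as an immediate consequence of the construction in \ref{absolute}, whose content is exactly the colimit definition of $Z^*$ together with the displayed compatibility $Z_{B'}^*\circ(\id_Y\times f)^*=(\id_X\times f)^*\circ Z_B^*$, with the transfer axioms inherited from Theorem \ref{thm-PST} applied over the perfect prime field $k_0$. Your additional bookkeeping (descending finitely many correspondences to a common level of the filtered set $\sI$, compatibility of composition of correspondences with the smooth base change $-\otimes_A B$) is exactly the routine verification the paper leaves implicit.
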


\begin{thm}\label{thm-Omega-RF}
The presheaf with transfers
\[{\bf Sm}\ni X\mapsto H^0(X,\Omega^i_X),\quad i\ge 0,\]
has reciprocity in the sense of Definition \ref{def.reciprocity}.
If $k$ is perfect the same is true with $\Omega^i_X$ replaced by $\Omega^i_{X/k}$.
\end{thm}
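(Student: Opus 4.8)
The plan is to verify Definition \ref{def.reciprocity} by exhibiting, for each section, an explicit modulus and then checking the defining vanishing \eqref{eq.modulusdef} on curves via the residue theorem for differential forms. By Corollary \ref{coro:reduction-to-cartier} it is enough, for every quasi-affine $X\in\Sm$ and $a\in H^0(X,\Omega^i_X)$, to produce a single modulus pair $(\Xb,Y)$ with $X=\Xb-|Y|$, $Y$ a Cartier divisor, and $Y$ a modulus for $a$. I would first reduce the absolute statement to the relative one over a perfect field: using $\Omega^i_X=\colim_{B\in\sI}\Omega^i_{X_B/k_0}$ over the smooth subalgebras $B\subset k$ of the perfect prime field $k_0$ (as in \S\ref{absolute}), one spreads $X$, $a$, and a chosen normal compactification with Cartier boundary out over a suitable such $B$ (so that $X_B$ is quasi-affine and smooth over $k_0$), applies the relative statement over $k_0$, and base changes the resulting modulus pair back to $k$; the identity \eqref{eq.modulusdef} is preserved because it consists of equalities in groups $H^0(S',\Omega^i_{S'})$ that are compatible with the colimit over $\sI$. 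So assume henceforth that $k$ is perfect and $F=\Omega^i_{-/k}$, with the $\PST$-structure of Theorem \ref{thm-PST}. Since $\Omega^i_{-/k}$ is locally free on smooth $k$-schemes, $F$ is torsion free, hence has global injectivity, so by Remark \ref{rem:reduction-to-curve} the modulus condition may be tested on curves over function fields.

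Given $X$ quasi-affine and $a\in H^0(X,\Omega^i_{X/k})$, choose by Remark \ref{rem:resol} a normal proper $\Xb\supset X$ — smooth, with $\Xb-X$ a normal crossing divisor, when $\car k=0$ — such that $W:=(\Xb-X)_\red$ is a Cartier divisor, and let $N\ge 1$ be an integer such that $a$, viewed as a rational section of $\Omega^i_{\Xb/k}$ (which is free at the codimension-one points of $\Xb$, these local rings being regular since $\Xb$ is normal and $k$ is perfect; in positive characteristic replace $\Omega^i_{\Xb/k}$ by its reflexive hull), has pole order $\le N-1$ along every component of $W$. I claim that $Y:=NW$ is a modulus for $a$. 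By Remark \ref{rem:reduction-to-curve} we fix $S\in\Sm$ connected with $K=k(S)$, a normal proper curve $\Cb$ over $K$, and a finite $\phi:\Cb\to\Xb\times K$ with $\gamma_\phi(\Cb)\not\subset W\times K$; put $C=\gamma_\phi^{-1}(X)$, $\psi=\gamma_\phi|_C:C\to X$, $D=\gamma_\phi^*Y=N\,\gamma_\phi^*W$, and $\omega:=\psi^*a\in H^0(C,\Omega^i_{C/k})$. Because twisting by $\sO(NW)$ pulls back along $\gamma_\phi$ to twisting by $\sO(N\,\gamma_\phi^*W)$, the form $\omega$ extends to $\Cb$ with $v_x(\omega)\ge -(N-1)e_x$ at each $x\in|D|$, where $e_x:=\mathrm{ord}_x(\gamma_\phi^*W)\ge 1$ and $\mathrm{ord}_x(D)=Ne_x$.

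Now let $g\in G(\Cb,D)$; for constant $g$ one has $\div_{\Cb}(g)=0$, so assume $g$ non-constant. I would apply the residue theorem on the proper curve $\Cb\to\Spec K$: $\sum_{x\in\Cb}\Tr_{k(x)/K}\,\mathrm{res}_x(\omega\wedge d\log g)=0$ in $\Omega^i_{K/k}$, where $\mathrm{res}_x:\Omega^{i+1}_{k(\Cb)/k}\to\Omega^i_{k(x)/k}$ is the residue along $x$. For $x\notin|D|$ the form $\omega$ is regular at $x$ and a local computation gives $\mathrm{res}_x(\omega\wedge d\log g)=v_x(g)\,i_x^*\omega$ (up to a sign depending only on $i$), so $\sum_{x\notin|D|}\Tr_{k(x)/K}\mathrm{res}_x(\omega\wedge d\log g)=(\div_{\Cb}(g))^*(\omega)$ once one knows that the $\PST$-action of the zero-cycle $\div_{\Cb}(g)$ on $\Omega^i$, built in \cite{CR1} from cycle classes and pushforward (\S\ref{corr}, \S\ref{absolute}), is given by $\sum v_x(g)\,\Tr_{k(x)/K}\,i_x^*$. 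For $x\in|D|$, since $g\in G(\Cb,D)$ we have $g\equiv 1$ modulo $\fm_x^{Ne_x}$, hence $v_x(d\log g)=v_x(dg)\ge Ne_x-1$, while $v_x(\omega)\ge -(N-1)e_x\ge -(Ne_x-1)$; therefore $\omega\wedge d\log g$ is regular at $x$ and its residue vanishes. Combining, $(\div_{\Cb}(g))^*(\omega)=0$, which is exactly the required modulus identity, so the theorem follows. The main obstacles I anticipate are the two technical inputs — the residue theorem relative to $K$ for $\Omega^{\bullet}$-valued rational forms on a proper regular curve, and the identification of the correspondence action of a closed point with the trace of its cycle-class restriction — both of which are supplied by the residue and cycle-class formalism of \cite{CR1}; note that the pole-order bound $N-1$ is \emph{pulled back}, not pushed forward, along $\gamma_\phi$, so a single $N$ (hence a single $Y$) works uniformly in $\phi$, $S$, and $\Cb$, which is what makes the argument give reciprocity in the strong sense rather than only weak reciprocity.
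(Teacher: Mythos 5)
Your argument is correct and is essentially the paper's proof: the same choice of modulus $Y=NW$ coming from a pole bound of $N-1$ on the compactification, the same identification of the action of a point of $\div_{\Cb}(g)$ with $\Tr\circ i_x^*$ via the cycle-class and projection formula of \cite{CR1}, and the same key estimate showing that $\omega\wedge d\log g$ is regular along $|D|$ because $e_x\ge 1$; the paper merely phrases your residue theorem cohomologically (as the vanishing of $\delta(\tfrac{df}{f}\wedge b)$ under $H^1_D(C,\Omega^{i+1})\to H^1(\Cb,\Omega^{i+1})$ followed by $p_{\varphi*}$, using \cite[Prop.~2.2.19]{CR1} for the local identification) and shrinks $S$ instead of passing to $k(S)$. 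One small correction: to get the bound $v_x(\omega)\ge -(N-1)e_x$ at \emph{every} closed point of $\Cb$, including those mapping into the non-smooth locus of $\Xb$, you should choose $N$ so that $a$ extends to a global section of the coherent sheaf $\Omega^i_{\Xb/k}((N-1)W)$ --- possible since $W$ is Cartier, and this is what your ``twisting pulls back to twisting'' step actually uses --- rather than only imposing a codimension-one pole bound on a reflexive hull, which would not control the pullback along curves through the bad locus.
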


\begin{proof}
First assume that $k$ is perfect and we show that $\Omega^i_{-/k}$ has reciprocity.
Let $X\in {\bf Sm}$ be quasi-affine and take $a\in H^0(X,\Omega^i_{X/k})$.
Choose an open  immersion $X\inj \bar{X}$ of $X$ into an integral and proper $k$-scheme $\bar{X}$
such that $\bar{X}\setminus X$ is the support of a Cartier divisor $Y_0$. 
Then for some large enough integer $n$ the form $a$ is the restriction of a section  
in $H^0(\bar{X}, \Omega^i_{\bar{X}/k}(nY_0))$,
where we write $\Omega^i_{\bar{X}/k}(nY_0):=\Omega^i_{\bar{X}/k}\otimes_{\sO_{\bar{X}}}\sO_{\bar{X}}(nY_0)$.
By Corollary \ref{coro:reduction-to-cartier} it suffices to prove
\eq{thm-Omega-RF1}{ Y:= (n+1)Y_0 \text{ is a modulus for }a.}
To this end take $S\in {\bf Sm}$,  consider a diagram  $(\varphi:\bar{C}\to \bar{X}\times S)$ as in \eqref{eq:diag-modulus-cond}
(we will use the notation from \eqref{eq:diag-modulus-cond} freely) and 
a function $f\in G(\bar{C}, \gamma_{\varphi}^*Y)$. We have to show 
\eq{thm-Omega-RF2}{(\varphi_*{\rm div}_{\bar{C}}(f))^*(a)=0 \text{ in } H^0(S,\Omega^i_{S/k}).}
Clearly we can assume that $S$ is connected. Further $\Omega^i_{S/k}$ is locally free and hence restriction to 
open subsets is injective. Therefore (cf. Remark \ref{rem:reduction-to-curve}) 
we can replace $S$ by a non-empty open subset. Using the perfectness of $k$ we can thus assume that
$\bar{C}$ is smooth and connected over $k$, the map $p_\varphi: \bar{C}\to S$ is proper and 
flat of pure relative dimension 1 and the support of ${\rm div}_{\bar{C}}(f)$  
is a disjoint union of smooth prime divisors $|{\rm div}_{\bar{C}}(f)|=\sqcup_i Z_i$.  
 Set $C:=\varphi^{-1}(X\times S)$ and denote by $b\in H^0(C, \Omega^i_{C/k})$ the pullback of $a$ to
$C$. By assumption $b$ extends to a section $H^0(\bar{C}, \Omega^i_{\bar{C}/k}(n\gamma_\varphi^*Y_0))$.
We have to show
\eq{thm-Omega-RF3}{ {\rm div}_{\bar{C}}(f)^*(b)=0 \text{ in } H^0(S,\Omega^i_{S/k}),}
where we consider ${\rm div}_{\bar{C}}(f)$ as an element in ${\bf \rm Cor}(S,C)$ via the transpose of the graph map 
$C\to S\times C$ of $p_\varphi$. For $Z$ a prime divisor in the support of ${\rm div}_{\bar{C}}(f)$
denote by $i:Z\inj S\times C$ the induced closed immersion and by $p_S, p_C: S\times C\to S, C$ the projections. 
We get
\eq{thm-Omega-RF4}{Z^*(b)=p_{S*}(p_C^*(b)\cup cl(Z))= p_{S*}(p_C^*(b)\cup i_*(1))= \Tr_{Z/S}(b_{|Z}), }
where we denote by $\Tr_{Z/S}$ the pushforward along the finite morphism $Z\to S$
and the first equality holds by definition, the second by the characterization of the cycle class in \ref{cc} and 
the last equality follows from the projection formula \cite[Prop. 1.1.16]{CR1}.
Writing ${\rm div}_{\bar{C}}(f)=\sum_j n_j Z_j$ we therefore have to show
\eq{thm-Omega-RF5}{ \sum_j n_j \Tr_{Z_j/S}( b_{|Z_j})=0 \text{ in } H^0(S, \Omega^i_{S/k}).}
By the functoriality of the pushforward the map $\Tr_{Z_j/S}$ equals the composition 
 \[ H^0(Z_j,\Omega^i_{Z_j/k})\xr{i_{j*}} H^1_{D}(C, \Omega^{i+1}_{C/k})
\to H^1(\bar{C}, \Omega^{i+1}_{\bar{C}/k})\xr{p_{\varphi*}} H^0(S, \Omega^i_{S/k}),\]
here  $D:=|{\rm div}_{\bar{C}}(f)|$ and $i_j:Z_j\inj C $  is the closed immersion. We claim
\[\sum_j n_j\,i_{j*}(b_{|Z_j}) = -\delta(\tfrac{df}{f}\wedge b) \text{ in } H^1_D(C, \Omega^{i+1}_{C/k}),\]
where $\delta: H^0(C\setminus D, \Omega^{i+1}_{C/k})\to H^1_D(C, \Omega^{i+1}_{C/k})$
is the connecting homomorphism.
Indeed it suffices to check this equality after restricting to an open subset of $C$ which contains all
the generic points of $D$, in particular we can assume that the $Z_j$'s are the zero loci of sections in 
$H^0(C,\sO_C)$ and then the claim follows from \cite[Prop. 2.2.19]{CR1}.
 Since $b\in H^0(\bar{C}, \Omega^i_{\bar{C}/k}(n\gamma_\varphi^*Y_0))$ and
$f\in G(\bar{C}, (n+1)\gamma_{\varphi}^*Y_0)$ the section $\frac{df}{f}\wedge b$
extends to a section in $H^0(\bar{C}\setminus D, \Omega^{i+1}_{\bar{C}/k})$.
Therefore the image of $\delta(\tfrac{df}{f}\wedge b)$ in $H^1(\bar{C}, \Omega^{i+1}_{\bar{C}/k})$
is zero, which implies the vanishing \eqref{thm-Omega-RF5}. 

Now let $k$ be an arbitrary field with prime field $k_0$. We want to show that $\Omega^i_{-/\Z}$ has reciprocity. 
Let $X\in {\bf Sm}$ be quasi-affine and take $a\in H^0(X,\Omega^i_X)$. With the notation from
\ref{absolute} we find a smooth $k_0$-algebra $A\in \sI$ such that $a$ comes via pullback from an element
$a_A\in H^0(X_A, \Omega^i_{X_A/k_0})$. If $B\in\sI$ contains $A$ we denote by $a_B$ the pullback of $a_A$ to $X_B$.
Choose an open immersion $X_A\inj \bar{X}_A$ with $\bar{X}_A$ an integral and proper $A$-scheme such that
$\bar{X}_A\setminus X_A$ is the support of a Cartier divisor $Y_{0,A}$. There exists an integer $n\ge 0$ such that
$a_A$ is the restriction of a section in  $H^0(\bar{X}_A,\Omega^i_{\bar{X}_A/k_0}(nY_{0,A}))$.
Set $\bar{X}:= \bar{X}_A\otimes_A k$ and $Y:= (n+1) Y_{0,A}\otimes_A k$ and we claim
that $Y$ is a modulus for $a$. Take $S\in {\bf Sm}$, $(\varphi: \bar{C}\to \bar{X}\times S)$  and 
$f\in G(\bar{C},\gamma_\varphi^*Y)$ as above. Then there exists a $B\in \sI$ containing $A$ 
and $S_B\in {\bf Sm}_B$, $(\varphi_B: \bar{C}_B\to \bar{X}_B\times_B S_B)$ as in \eqref{eq:diag-modulus-cond} (only that the cartesian product is over $B$) and $f_B\in G(\bar{C}_B,\gamma_{\varphi_B}^*Y_B)$,
which give $S, \varphi, f$ when pulled back over $k$.  It suffices to show 
\[(\varphi_{B*}{\rm div}_{\bar{C}_B}(f_B))^* (a_B)=0 \text{ in } H^0(S_B,\Omega^{i}_{S_B/k_0}).\]
Notice  that this is not exactly the same situation as in the first case since $\bar{X}_B$ is not proper over $k_0$.
Nevertheless the same argument as above reduces us to prove the vanishing $\eqref{thm-Omega-RF3}$
with $S, \bar{C}, f, k$ replaced by $S_B,\bar{C}_B, f_B, k_0$, 
which follows from the first case.
\end{proof}

\section{De Rham-Witt differentials}

In this section $k$ is a perfect field of characteristic $p>0$ and we denote by ${\bf qpSm}$ the category of smooth
and quasi-projective $k$-schemes.

\subsection{De Rham-Witt complex}\label{DRW}
For a $k$-scheme $X$ we denote by $W_n\Omega^\cdot_X$ the de Rham-Witt complex of Bloch-Deligne-Illusie of length $n$,
see \cite{Il}. We denote by $W_n\Omega^j_X$ the degree $j$ part. Recall that $W_n\Omega^0_X=W_n\sO_X$ is the sheaf 
of Witt vectors of length $n$ on $X$ and $W_1\Omega^j_X=\Omega^j_X$. Also recall that the de Rham-Witt complex
comes with morphisms of sheaves of abelian groups
$R: W_{n+1}\Omega^j_X\to W_n\Omega^j_X$ (the restriction), $F: W_{n+1}\Omega^j_X\to W_n\Omega^j_X$ 
(the Frobenius), $V:W_n\Omega^j_X\to W_{n+1}\Omega^j_X$ (the Verschiebung) and 
$d: W_n\Omega^j_X\to W_n\Omega^{j+1}_X$ (the differential) satisfying various relations
(see \cite[I, Prop. 2.18]{Il}).

\subsection{Correspondence action}\label{Witt-corr}
If we restrict to the category ${\bf qpSm}$, then we have analogs of the pullback map $\ref{pb}$, the pushforward
\ref{pf} and the cycle map \ref{cc} for Hodge-Witt cohomology (i.e. in \ref{pb}-\ref{cc} replace $\Omega_{-/k}^\cdot$
by $W_n\Omega^\cdot_{-}$ and in \ref{pf} and \ref{cc} restrict to ${\bf qpSm}$). 
The construction of the cycle map and the pushforward uses essentially the results from \cite{Ek}.
For the cycle map and the pushforward for proper maps this is carried out in \cite[II]{Gros}, for the pushforward with  projective supports and the compatibilities see \cite[\S 2, 3]{CR2}.

Let $X, Y\in {\bf qpSm}$  be connected and $Z\subset X\times Y$ a closed integral subscheme which is 
{\em projective} over $X$. Set $r:=\dim Z-\dim X$. 
We define the action of $Z$ on Hodge-Witt cohomology 
\[Z^*: H^{i+r}(Y, W_n\Omega^{j+r}_Y)\to H^i(X,W_n\Omega^j_X),\quad \text{for all }i,j\ge 0,\]
as in \ref{corr} to be the composition
\mlnl{
H^{i+r}(Y,W_n\Omega^{j+r}_Y)  \xr{p_Y^*}H^{i+r}(X\times Y, W_n\Omega^{j+r}_{X\times Y})\\
 \xr{\cup cl(Z)} H^{i+d_Y}_{Z}(X\times Y, W_n\Omega^{j+d_Y}_{X\times Y})
\xr{p_{X*}} H^i(X,W_n\Omega^j_X).}
If $f: X\to Y$ is a morphism and $Z:=\Gamma_f\subset X\times Y$ is its graph, then 
$Z^*= f^*$, if $f$ is also projective and $Z^t\subset Y\times X$ denotes the transpose of $Z$, then
$(Z^t)^*=f_*$, see \cite[Prop. 3.4.7]{CR2}. 

\begin{thm}\label{thm-Witt-corr}
The correspondence action from \ref{Witt-corr} above induces the structure of a presheaf with transfers on the 
presheaf
\[{\bf qpSm}\ni X\mapsto H^i(X,W_n\Omega^j_X)\in (W_n(k)\text{-\bf modules}),\]
for all $i,j\ge 0$. Furthermore the correspondence action is compatible with the maps $R,F,V,d$ from \ref{DRW}
in the obvious sense.
\end{thm}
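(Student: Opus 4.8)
The plan is to argue exactly as for Theorem~\ref{thm-PST}, with Hodge cohomology replaced by Hodge--Witt cohomology on $\mathbf{qpSm}$. By the abstract machinery of \cite[1.2.3, 1.3.18, Lem.~1.3.19]{CR1}, adapted to the setting of projective supports as in \cite[\S 3]{CR2}, it suffices to verify that the assignment $X\mapsto H^\ast(X,W_n\Omega^\ast_X)$, together with the pullback of \ref{Witt-corr}, the pushforward with projective supports, and the cycle classes, forms a weak cohomology theory in the sense of \cite[1.1.9]{CR1} satisfying the relevant axioms; the correspondence action then automatically makes it a graded presheaf with transfers, just as in \cite[Th.~3.1.8]{CR1}. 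One is forced to restrict to quasi-projective schemes because the de Rham--Witt pushforward of \cite{Ek} and \cite[II]{Gros} is constructed only for projective morphisms; this costs nothing, since every component of a finite correspondence $Z\in\Cor(X,Y)$ with $X,Y\in\mathbf{qpSm}$ is finite over $X$, hence --- $X$ carrying an ample line bundle --- projective over $X$, so the action of \ref{Witt-corr} extends $\Z$-linearly to all of $\Cor(X,Y)$, and it is visibly $W_n(k)$-linear.

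The ingredients needed are the Hodge--Witt analogues of \ref{pb}--\ref{cc}: the functorial pullback $f^\ast$, the pushforward $f_\ast$ with projective supports together with its functoriality, and the cycle class $cl(V)\in H^c_V(X,W_n\Omega^c_X)$ characterized through its restriction to a locus where $V$ is regularly embedded. These, and the compatibilities among them required to run the formal computation of \cite[Th.~3.1.8]{CR1} --- the projection formula, flat and proper base change, and the excess-intersection formula --- are all established in \cite[\S 2, 3]{CR2}, building on \cite{Ek} and \cite[II]{Gros}; in particular \cite[Prop.~3.4.7]{CR2} gives $\Gamma_f^\ast=f^\ast$ and $(\Gamma_f^t)^\ast=f_\ast$. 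Granting this package, the identity $(Z_1\circ Z_2)^\ast=Z_2^\ast\circ Z_1^\ast$ for composable correspondences follows by the same diagram chase as in the Hodge case, and additivity in $Z$ is immediate from the $\Z$-linearity of $f_\ast$ and of the cycle class. This proves the first assertion.

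For the compatibility with the operators $R$, $F$, $V$ and $d$ of \ref{DRW}, one notes that each of them commutes, in the evident sense, with restriction to opens, with flat pullback, with Gysin and pushforward maps, and with cup products: these compatibilities are built into the construction of the Hodge--Witt cycle class and pushforward in \cite[II]{Gros} and \cite[\S 2]{CR2} (for $d$ one uses in addition that $cl(Z)$ is, locally, a product of divisor classes killed by $d$). Since the correspondence action of \ref{Witt-corr} is the composite of a pullback, a cup product with a cycle class, and a pushforward, it therefore commutes with $R$, $F$, $V$ and $d$, which is the asserted compatibility ``in the obvious sense''. The work here is bookkeeping rather than conceptual: the only genuine task is to match the foundational statements of \cite{Ek}, \cite[II]{Gros} and \cite[\S 2, 3]{CR2} against the precise list of axioms demanded by \cite[1.2.3]{CR1}; I expect the mild subtlety about quasi-projectivity, and the need to track $R$, $F$, $V$ through the base-change and projection formulae, to be the only points requiring real care.
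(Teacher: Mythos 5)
Your overall strategy --- reducing to the weak-cohomology-theory machinery of \cite{CR1} as implemented for Hodge--Witt cohomology in \cite[\S 2, 3]{CR2} --- is the same as the paper's, which simply invokes \cite[Th. 3.4.6]{CR2}. The gap is in your assertion that the needed package is ``all established'' in \cite{CR2} at finite level: the theorem there is proved only in the limit, i.e.\ for $X\mapsto H^i(X,W\Omega^j_X)$, and one step of its proof (inside the proof of \cite[Th. 3.4.3]{CR2}) is a \emph{pro-argument} that does not descend to a fixed level $n$. Concretely, for closed immersions of smooth schemes $D'\inj D\by{i} Y$ with $c=\codim(D',D)$ and $\codim(D,Y)=1$, that proof requires the pushforward $i_*\colon H^c_{D'}(D,W\Omega^c_D)\to H^{c+1}_{D'}(Y,W\Omega^{c+1}_Y)$ to be injective on the Frobenius-invariant submodule, and the argument given there only works after passing to the inverse limit over $n$. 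So ``the proof on the finite level goes through'' is not automatic, and this is precisely the point your outline dismisses as bookkeeping.

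The paper supplies the missing finite-level input: from the exact sequence of \cite[1.4, Lem. 2]{CTSS} relating $W_n\Omega^c_{D,\log}$ to the fixed points of $F$, together with the Cohen--Macaulay statements of \cite[I, Cor. 3.9]{Il} (which kill the relevant local cohomology in degree $c-1$), one identifies $H^c_{D'}(D,W_n\Omega^c_D)^F$ with $H^c_{D'}(D,W_n\Omega^c_{D,\log})\cong \Z/p^n\Z$, and likewise $H^{c+1}_{D'}(Y,W_n\Omega^{c+1}_Y)^F\cong\Z/p^n\Z$, via \cite[II, Th. 3.5.8 and (3.5.19)]{Gros}; under these identifications $i_*$ sends $1$ to $1$ and is therefore injective on the Frobenius-invariant part. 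Without this (or an equivalent substitute) your argument does not close. The rest of your outline --- projectivity over $X$ of the components of a finite correspondence when $X$ is quasi-projective, $W_n(k)$-linearity, and the compatibility with $R,F,V,d$ via Ekedahl's Witt-dualizing systems --- matches what the paper does.
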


\begin{proof}
This is actually a special case of \cite[Th. 3.4.6]{CR2} only that there the statement is proved in the limit, i.e.
for the presheaf $X\mapsto H^i(X, W\Omega^i_X)$. The proof on the finite level goes through except for one place
in the proof of \cite[Th. 3.4.3]{CR2} (which is used in the proof of \cite[Th. 3.4.6]{CR2}), where a {\it pro-argument} 
is used. There the situation is the following: We are given closed immersions between smooth schemes 
$D'\inj D \xr{i}  Y$ with $c:=\codim(D',D)$ and $1=\codim(D,Y)$.
 Then it is shown that the pushforward $i_* :H^c_{D'}(D, W\Omega^c_D)\to 
H^{c+1}_{D'}(Y, W\Omega^{c+1}_Y)$ is injective on its Frobenius invariant submodule $H^c_{D'}(D, W\Omega^c_D)^F$.
Replace this with the following argument: By \cite[1.4, Lem. 2]{CTSS} we have a short exact sequence on $D_{\text{\'{e}t}}$
\[0\to W_n\Omega^c_{D,\log}\to W_n\Omega^c_D\xr{1-F} W_n\Omega^c_D/dV^{n-1}\Omega^{c-1}_D\to 0.\]
We get a short exact sequence
\ml{thm-Witt-corr1}{H^{c-1}_{D'}(D, W_n\Omega^c_D/dV^{n-1}\Omega^{c-1}_D)
           \to H^c_{D'}(D, W_n\Omega^c_{D,\log})\\
\to H^c_{D'}(D, W_n\Omega^c_D)^F\to 0,}
where the group on the right is defined as the kernel of $1-F$ on $H^c_{D'}$. By
\cite[I, Cor. 3.9]{Il} (and with the notations from there) we have a short exact sequence of sheaves 
of abelian groups
\[0\to \frac{\Omega^c_D}{B_n\Omega^c_D}\xr{V^{n-1}} 
\frac{W_n\Omega^c_D}{dV^{n-1}\Omega^{c-1}_D}\xr{R} W_{n-1}\Omega^c_D\to 0. \]
The two outer sheaves are Cohen-Macaulay by \cite[I, Cor. 3.9]{Il} hence so is the sheaf in the middle.
In particular the cohomology group on the left of \eqref{thm-Witt-corr1} vanishes. Thus 
\[H^c_{D'}(D, W_n\Omega^c_D)^F = H^c_{D'}(D, W_n\Omega^c_{D,\log})\cong \Z/p^n\Z,\]
where the second isomorphism is the composition of \cite[II, Th. 3.5.8]{Gros} and \cite[(3.5.19)]{Gros}, and similarly
\[H^{c+1}_{D'}(Y, W_n\Omega^{c+1}_Y)^F= H^{c+1}_{D'}(Y, W_n\Omega^{c+1}_{Y,\log})\cong \Z/p^n\Z.\]
Via this identifications $i_*$ sends $1\in \Z/p^n\Z$ to itself and hence 
\[i_*: H^c_{D'}(D, W_n\Omega^j_D)^F\inj H^{c+1}_{D'}(Y, W_n\Omega^{c+1}_D)\]
 is injective.
Now the rest of the proof of \cite[Thm 3.4.3]{CR2} and of \cite[Thm 3.4.6]{CR2} goes through.
Notice that the compatibility of the correspondence action with $R,F, V,d$ requires Ekedahl's 
notion of a Witt-dualizing system, see \cite{Ek}, \cite{CR2}.
\end{proof}

\begin{thm}\label{thm-Witt-RF}
The presheaf 
\[{\bf Sm}\ni X\mapsto H^0(X,W_n \Omega^i_X)\in (W_n(k)-\bf modules )\]
has the structure of a presheaf with transfers and has reciprocity, for all $i\ge 0$ and $n\ge 1$.
\end{thm}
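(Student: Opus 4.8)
The plan is to reduce the de Rham-Witt case to the K\"ahler differential case (Theorem \ref{thm-Omega-RF}) by an induction on the level $n$, exploiting the compatibility of the correspondence action with the structure maps $R,F,V,d$ established in Theorem \ref{thm-Witt-corr}. The first step is to handle the transfer structure: for $X\in\Sm$ we only need $H^0(X,W_n\Omega^i_X)$, a global section, and since $\Sm$ is covered by quasi-projective objects and $H^0$ is a sheaf, the correspondence action from \ref{Witt-corr} on $\mathbf{qpSm}$ glues to give a presheaf with transfers structure on all of $\Sm$ (a smooth affine, or any quasi-affine, scheme is quasi-projective, so in fact the only schemes that matter for checking the modulus condition are already in $\mathbf{qpSm}$). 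This is the analogue of the passage done in Corollary \ref{cor-absolute-PST}, only easier since no limit over subfields is needed here as $k$ is already perfect.

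Next, for reciprocity: fix $i\ge 0$ and induct on $n$. For $n=1$ we have $W_1\Omega^i_X=\Omega^i_X$, and since $k$ is perfect, $\Omega^i_{X}=\Omega^i_{X/k}$ for $X\in\Sm$; this case is exactly Theorem \ref{thm-Omega-RF}. For the inductive step, use the short exact sequence of presheaves with transfers on $\Sm$
\[
0\to \Omega^i_{X}/B_{n-1}\Omega^i_X \xrightarrow{V^{n-1}} W_n\Omega^i_X \xrightarrow{R} W_{n-1}\Omega^i_X\to 0
\]
coming from \cite[I, Cor. 3.9]{Il} (taking $H^0$; this is left exact, which suffices). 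Here $B_{n-1}\Omega^i_X$ is the image of the iterated Cartier operator, a sub-presheaf-with-transfers of $\Omega^i_X$ of the same ``type''. One reduces the reciprocity of $W_n\Omega^i_X$ to that of the sub/quotient $\Omega^i_X/B_{n-1}\Omega^i_X$ and of $W_{n-1}\Omega^i_X$; the latter is the induction hypothesis, and the former follows because $\Rec$ is closed under subobjects and quotients in $\PST$ (the Remark after Definition \ref{def.reciprocity}), so any subquotient of $\Omega^i_X$ has reciprocity by Theorem \ref{thm-Omega-RF}. The key point making this work is that all the maps $V^{n-1}$, $R$ (and the inclusions of $B_\bullet\Omega$) are morphisms in $\PST$, which is the ``furthermore'' clause of Theorem \ref{thm-Witt-corr}.

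Alternatively, and perhaps more cleanly, one can argue directly as in the proof of Theorem \ref{thm-Omega-RF}, replacing $\Omega^\bullet$ by $W_n\Omega^\bullet$ throughout: given quasi-affine $X\in\Sm$, a section $a\in H^0(X,W_n\Omega^i_X)$, and a compactification $X\hookrightarrow\Xb$ with $\Xb-X$ the support of a Cartier divisor $Y_0$, one shows $(n_0+1)Y_0$ is a modulus for $a$ for suitable $n_0$; the crucial input is the analogue of the residue computation $\sum_j n_j\,i_{j*}(b|_{Z_j}) = -\delta(\frac{df}{f}\wedge b)$, where $\frac{df}{f}$ now denotes the de Rham-Witt dlog form $\mathrm{dlog}[f]=d[f]/[f]$ and the identity is the de Rham-Witt version of \cite[Prop. 2.2.19]{CR1} (available via \cite{CR2}). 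I expect the main obstacle to be precisely this last point: making sure that the explicit local description of the cycle class and of the boundary map in Hodge-Witt local cohomology — and the fact that $\mathrm{dlog}[f]\wedge b$ extends over the poles when $f\in G(\Cb,(n_0+1)\gamma_\varphi^*Y_0)$ and $b\in H^0(\Cb,W_n\Omega^i_{\Cb}(n_0\gamma_\varphi^*Y_0))$ — all go through at finite level $n$, which is exactly where \cite{CR2} has to be invoked in place of \cite{CR1}. For that reason the induction via the exact sequence above is the safer route to write up, and that is the one I would present in detail.
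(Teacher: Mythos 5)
Your first (and preferred) route has a genuine gap: the induction on $n$ via the sequence $0\to \ker R\to W_n\Omega^i\to W_{n-1}\Omega^i\to 0$ asks you to deduce reciprocity of the \emph{middle} term from reciprocity of the two outer terms. That is an extension problem, not a sub/quotient problem: the Remark after Definition \ref{def.reciprocity} only says $\Rec$ is closed under subobjects and quotients, and whether $\Rec$ is closed under extensions in $\PST$ is stated explicitly as an open question in the introduction of this very paper. So the "safer route" you propose to write up in detail does not close. (Two smaller issues with it: the kernel of $R:W_n\Omega^i\to W_{n-1}\Omega^i$ is $V^{n-1}\Omega^i+dV^{n-1}\Omega^{i-1}$, not $V^{n-1}(\Omega^i/B_{n-1}\Omega^i)$ --- the sequence from \cite[I, Cor.~3.9]{Il} has $W_n\Omega^i/dV^{n-1}\Omega^{i-1}$ in the middle; and you would still have to check that the maps are morphisms in $\PST$. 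But these are repairable, whereas the extension step is not, given the current state of knowledge.)

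Your alternative sketch is in fact the paper's proof, and the points you flag as "the main obstacle" are exactly where the work is. The residue identity $\sum_j n_j\,i_{j*}(b|_{Z_j})=-\delta\bigl(\tfrac{d[f]}{[f]}\,b\bigr)$ does hold at finite level by \cite[Prop.~2.4.1]{CR2}, and the transfer structure glues from $\mathbf{qpSm}$ as you say. The one quantitative point you leave vague is the modulus: if $a$ extends to $H^0(\bar X,W_n\Omega^i_{\bar X}(rY_0))$, the correct choice is $Y=sY_0$ with $s\ge p^{n-1}r+1$, not $(r+1)Y_0$. The reason is the Teichm\"uller expansion $[1+t^sg]=[1]+\sum_{j=0}^{n-1}V^j([t]^sg_j)$ together with $V^j(x)\,y=V^j(x\,F^j(y))$: applying $F^j$ to $b_0/[t]^r$ produces a pole of order $p^jr$, so regularity of $d[f]\cdot b$ forces $s-p^jr\ge 1$ for all $j\le n-1$. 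This explicit de Rham--Witt computation is the heart of the argument and cannot be bypassed by the homological reduction you favour.
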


\begin{proof}
Since $W_n\Omega^i_{(-)}$ is a Zariski sheaf and has transfers on smooth and quasi-affine schemes by 
Theorem \eqref{thm-Witt-corr} we can glue the transfers to obtain $W_n\Omega^i_{(-)}\in {\bf PST}$.
We prove that $W_n\Omega^i_{(-)}$ has reciprocity. Let $X\in {\bf Sm}$ be quasi-affine and 
$a\in H^0(X, W_n\Omega^i_X)$ a section. Choose a compactification $X\inj \bar{X}$ as in the proof of 
Theorem \ref{thm-Omega-RF} with $\bar{X}\setminus X$ the support of the Cartier divisor $Y_0$; we can assume that
$\bar{X}$ is projective.
Denote by $W_n\sO_{\bar{X}}(Y_0)$ the invertible $W_n\sO_{\bar{X}}$-module whose isomorphism class 
in $H^1(\bar{X}, W_n\sO_{\bar{X}}^\times)$ is the image of the class of $\sO_{\bar{X}}(Y_0)$ in 
$H^1(\bar{X}, \sO_{\bar{X}}^\times)$ under the map
induced by the Teichm\"uller lift $[-]:H^1(\bar{X}, \sO_{\bar{X}}^\times)\to H^1(\bar{X}, W_n\sO_{\bar{X}}^\times)$.
More precisely, if $t$ is a local coordinate of $Y_0$ on some open $U\subset \bar{X}$ then
$W_n\sO_{\bar{X}}(Y_0)_{|U}= W_n(\sO_U)\cdot \frac{1}{[t]}$. In particular 
\[W_n\Omega^i_X=\varinjlim_r W_n\Omega^i_{\bar{X}}(rY_0),\]
where we set 
$W_n\Omega^i_{\bar{X}}(rY_0):=W_n\Omega^i_{\bar{X}}\otimes_{W_n\sO_{\bar{X}}}W_n\sO_{\bar{X}}(rY_0).$
Hence we find an integer $r\ge 1$ such that $a$ is the restriction of a section in 
$H^0(\bar{X}, W_n\Omega^i_{\bar{X}}(rY_0))$. By Corollary \ref{coro:reduction-to-cartier} it suffices to prove
\eq{thm-Witt-RF1}{Y:=sY_0 \text{ is a modulus for }a, \text{ for } s\ge p^{n-1}r+1.}
To this end take $S\in {\bf Sm}$ and $(\varphi: \bar{C}\to \bar{X}\times S)$ as in \eqref{eq:diag-modulus-cond} and a
function $f\in G(\bar{C}, \gamma_\varphi^*Y)$. By \cite[I, Cor. 3.9]{Il} restriction to dense open subsets is injective
on $W_n\Omega^i_S$. Thus as in the proof of Theorem \ref{thm-Omega-RF} we can assume that
$S, \varphi, f$ have the same properties as in the proof of Theorem \ref{thm-Omega-RF}  
between \eqref{thm-Omega-RF2} and \eqref{thm-Omega-RF3}; further we can achieve that all schemes in question are 
quasi-projective. Denote by $b$ the pullback of $a$ to $C$.
By assumption $b$ extends to a section of $H^0(\bar{C}, W_n\Omega^i_{\bar{C}}(r\gamma_\varphi^*Y_0))$.
As in {\it ibid.} we are reduced to show that
\eq{thm-Witt-RF2}{\sum_j n_j\Tr_{Z_j/S}(b_{|Z_j})=0 \text{ in } H^0(S,W_n\Omega^i_S),}
where we write ${\rm div}_{\bar{C}}(f)=\sum_j n_j Z_j$ and 
$\Tr_{Z_j/S}: H^0(Z_j, W_n\Omega^i_{Z_j})\to H^0(S, W_n\Omega^i_S)$ is the pushforward along the finite map 
$Z_j\to S$. By the functoriality of the pushforward the map $\Tr_{Z_j/S}$ equals the composition 
 (with $D=|{\rm div}_{\bar{C}}(f)|$ and $i_j:Z_j\inj C $  the closed immersion)
\mlnl{ H^0(Z_j,W_n\Omega^i_{Z_j})\xr{i_{j*}} H^1_{D}(C, W_n\Omega^{i+1}_C)
\to H^1(\bar{C}, W_n\Omega^{i+1}_{\bar{C}})\\ 
\xr{p_{\varphi*}} H^0(S, W_n\Omega^i_S).}
As in the proof of Theorem \ref{thm-Omega-RF} the following equality follows from
 \cite[Prop. 2.4.1]{CR2} (see also \cite[II, 3.4]{Gros}) 
\[\sum_j n_j\,i_{j*}(b_{|Z_j}) = -\delta(\tfrac{d[f]}{[f]} b) \text{ in } H^1_D(C, W_n\Omega^{i+1}_C),\]
where $\delta: H^0(C\setminus D, W_n\Omega^{i+1}_C)\to H^1_D(C, W_n\Omega^{i+1}_C)$
is the connecting homomorphism and $[f]\in W_n(k(\bar{C}))$ denotes the Teichm\"uller lift of $f$.
Thus it suffices to show that  $\frac{d[f]}{[f]}b$ extends to a
section of $H^0(\bar{C}\setminus D, W_n\Omega^{i+1}_{\bar{C}})$ to conclude 
the vanishing \eqref{thm-Witt-RF2}.
To this end it suffices to show that  $d([f]) b$ is regular around any point of $Y_0$. 
Let $A$ be the local ring of $\bar{C}$ at a point of $Y_0$
and $t\in A$ an equation for $Y_0$. Then we can write $b= b_0/[t]^r$, for some 
$b_0\in W_n\Omega^i_A$, and $f=1+t^{s}g$, for some 
$g\in A$.  By \cite[Lem. 3.4]{R} 
\[ [f]= [1]+\sum_{j=0}^{n-1}V^j([t]^{s}g_j)\quad \text{in } W_n(A), \text{ for some }g_j\in W_{n-j}(A).\]
For $j\in \{0, \ldots, n-1\}$ and $h=g_j$ we have (using the standard identities for $R, F, V, d $)
\begin{align}
d(V^j([t]^{s}h)) b & = d(V^j([t]^s h) b) -  V^j([t]^s h) db\notag\\
           & = dV^j([t]^s h F^j(b))- 
     V^j([t]^s h) \left(\frac{d b_0}{[t]^r}- r b_0 \frac{1}{[t]^r}\frac{d[t]}{[t]} \right)\notag\\
        & = dV^j([t]^{s-p^j r}h F^j(b_0)) - V^j([t]^{s-p^jr}h F^j(db_0))\notag\\
         &  \phantom{=}       +      V^j([t]^{s-(p^j r+1)}h r F^j(b_0) d[t]).\notag
\end{align}
This expression is regular by the choice of $s$ and hence so is $d([f]) b$. This finishes the proof.
\end{proof}

\begin{rmk}
One can remove the perfectness assumption on $k$ in this section using the same method as in \ref{absolute}.
\end{rmk}

\newpage

\end{document}